\title{Deep learning: a statistical viewpoint}
\author{%
Peter L.~Bartlett\thanks{
 Departments of Statistics and EECS,
UC Berkeley}\\
{\tt peter@berkeley.edu}\\
\and
Andrea Montanari\thanks{Departments of EE and Statistics,
Stanford University}\\
{\tt montanar@stanford.edu}\\
\and
Alexander Rakhlin\thanks{
Department of Brain \& Cognitive Sciences and Statistics \& Data Science Center,
MIT}\\
{\tt rakhlin@mit.edu}
}
\colorlet{linkequation}{blue}
\newcommand{\biassquared}{{\textsc{bias}^2}}
\newcommand{\biassquaredemp}{{\widehat{\textsc{bias}}^2}}
\newcommand{\varianceemp}{{\widehat{\textsc{var}}}}
\def\deq{:=}
\newcommand{\reals}{\ensuremath{\mathbb R}}
\newcommand{\tr}{\ensuremath{{\scriptscriptstyle\mathsf{\,T}}}} 
\newcommand{\trace}{\mathsf{tr}}
\newcommand{\indicator}[1]{\ensuremath{\boldsymbol{1}\left[#1\right]}}
\newcommand{\inner}[1]{\left\langle #1 \right\rangle}
\newcommand{\sign}{\mathrm{sign}}
\newcommand{\bone}{\ensuremath{\mathbf{1}}}
\newcommand{\norm}[1]{\left\|#1\right\|}
\newcommand{\argmin}[1]{\underset{#1}{\mathrm{argmin}} \ }
\newcommand{\En}{\mathbb{E}}  
\newcommand{\algo}{\widehat{f}}
\newcommand{\prederm}{\widehat{f}_{\scalebox{.4}{erm}}}
\newcommand{\be}{\boldsymbol{e}}
\newcommand{\by}{\boldsymbol{y}}
\newcommand{\br}{\boldsymbol{r}}
\newcommand{\bw}{\boldsymbol{w}}
\newcommand{\bepsilon}{\boldsymbol{\epsilon}}
\def\ddefloop#1{\ifx\ddefloop#1\else\ddef{#1}\expandafter\ddefloop\fi}
\def\ddef#1{\expandafter\def\csname c#1\endcsname{\ensuremath{\mathcal{#1}}}}
\def\ddef#1{\expandafter\def\csname s#1\endcsname{\ensuremath{\mathsf{#1}}}}
\def\ddef#1{\expandafter\def\csname b#1\endcsname{\ensuremath{\mathbf{#1}}}}
\newcommand{\dbtilde}[1]{\accentset{\approx}{#1}}
\DeclareSymbolFont{rsfs}{U}{rsfs}{m}{n}
\DeclareSymbolFontAlphabet{\mathscrsfs}{rsfs}
\renewcommand{\P}{\mathbb{P}}
\newcommand{\E}{\mathbb{E}}
\newcommand{\Var}{\text{Var}}
\renewcommand{\S}{\mathbb{S}}
\def\reals{{\mathbb R}}
\newcommand{\eps}{\varepsilon} 
\def\id{{\mathbf I}}
\newcommand{\<}{\langle}
\renewcommand{\>}{\rangle}
\newcommand{\diag}{\text{diag}}
\newcommand{\op}{}
\newcommand{\grad}{\nabla}
\def\sT{{\mathsf T}}
\def\tg{\tilde{g}}
\def\bA{{\boldsymbol A}}
\def\bB{{\boldsymbol B}}
\def\bD{{\boldsymbol D}}
\def\bF{{\boldsymbol F}}
\def\bK{{\boldsymbol K}}
\def\bM{{\boldsymbol M}}
\def\bP{{\boldsymbol P}}
\def\bQ{{\boldsymbol Q}}
\def\bR{{\boldsymbol R}}
\def\bS{{\boldsymbol S}}
\def\bU{{\boldsymbol U}}
\def\bV{{\boldsymbol V}}
\def\bW{{\boldsymbol W}}
\def\bX{{\boldsymbol X}}
\def\bY{{\boldsymbol Y}}
\def\bZ{{\boldsymbol Z}}
\def\tZ{\tilde{Z}}
\def\tbZ{\tilde{\boldsymbol Z}}
\def\tbg{\tilde{\boldsymbol g}}
\def\ba{{\boldsymbol a}}
\def\bb{{\boldsymbol b}}
\def\be{{\boldsymbol e}}
\def\bg{{\boldsymbol g}}
\def\bu{{\boldsymbol u}}
\def\bv{{\boldsymbol v}}
\def\bw{{\boldsymbol w}}
\def\bx{{\boldsymbol x}}
\def\by{{\boldsymbol y}}
\def\bz{{\boldsymbol z}}
\def\bbeta{{\boldsymbol \beta}}
\def\bpsi{{\boldsymbol \psi}}
\def\bphi{{\boldsymbol \phi}}
\def\hbbeta{\widehat{\boldsymbol \beta}}
\def\hbeta{\widehat{\beta}}
\def\btheta{{\boldsymbol \theta}}
\def\bDelta{{\boldsymbol \Delta}}
\def\bPhi{{\boldsymbol \Phi}}
\def\bSigma{{\boldsymbol \Sigma}}
\def\bTheta{{\boldsymbol \Theta}}
\def\bfzero{{\boldsymbol 0}}
\def\bfone{{\boldsymbol 1}}
\def\hba{{\widehat {\boldsymbol a}}}
\def\hf{{\widehat f}}
\def\supp{{\rm supp}}
\def\de{{\rm d}}
\def\He{{\rm He}}
\def\Trace{{\trace}}
\def\lin{{\rm lin}}
\def\de{{\rm d}}
\def\Unif{{\rm Unif}}
\def\He{{\rm He}}
\def\lin{{\rm lin}}
\def\RF{{\rm RF}}
\def\NT{{\rm NT}}
\def\NF{{\rm NF}}
\def\cV{{\mathcal V}}
\def\cG{{\mathcal G}}
\def\cF{{\mathcal F}}
\def\cE{{\mathcal E}}
\def\cS{{\mathcal S}}
\def\cI{{\mathcal I}}
\def\cV{{\mathcal V}}
\def\cG{{\mathcal G}}
\def\cH{{\mathcal H}}
\def\Unif{{\sf Unif}}
\def\normal{{\sf N}}
\def\proj{{\mathsf P}}
\def\RF{{\sf RF}}
\def\NT{{\sf NT}}
\def\reals{{\mathbb R}}
\def\integers{{\mathbb Z}}
\def\naturals{{\mathbb N}}
\def\normal{{\sf N}}
\def\proj{{\mathsf P}}
\def\Unif{{\sf Unif}}
\def\normal{{\sf N}}
\def\proj{{\mathsf P}}
\def\RF{{\sf RF}}
\def\NT{{\sf NT}}
\def\reals{{\mathbb R}}
\def\integers{{\mathbb Z}}
\def\naturals{{\mathbb N}}
\def\proj{{\mathsf P}}
\def\tR{\tilde{R}}
\def\ttR{\dbtilde{R}}
\def\tba{\tilde{\boldsymbol a}}
\def\bq{{\boldsymbol q}}
\def\Lip{{\rm Lip}}
\def\sL{\mbox{\rm\tiny L}}
\def\sNL{\mbox{\rm\tiny NL}}
\def\seff{\mbox{\rm\tiny eff}}
\def\smix{\mbox{\rm\tiny mix}}
\def\seq{\mbox{\rm\tiny seq}}
\def\slin{\mbox{\rm\tiny lin}}
\def\sTV{\mbox{\rm\tiny TV}}
\def\sGF{\mbox{\rm\tiny GF}}
\def\samp{\mbox{\rm\tiny s}}
\def\wid{\mbox{\rm\tiny w}}
\def\obtheta{\overline{\boldsymbol \theta}}
\def\stest{\mbox{\tiny\rm test}}
\def\ERM{{\rm ERM}}
\def\Sp{{\mathbb S}}
\def\oalpha{\overline{\alpha}}
\def\hE{\widehat{{\mathbb E}}}
\def\hh{\hat{h}}
\def\tih{\tilde{h}}
\def\oby{\overline{\boldsymbol y}}
\def\br{{\boldsymbol r}}
\def\bfe{{\boldsymbol e}}
\def\bff{{\boldsymbol f}}
\def\oq{\overline{q}}
\def\ratio{\zeta}
\def\cuB{\mathscrsfs{B}}
\def\cuV{\mathscrsfs{V}}
\def\SNR{{\sf SNR}}
\def\rank{{\rm rank}}
\def\hrho{\widehat{\rho}}
\def\eval{s}
\def\Eval{{\boldsymbol S}}
\def\uF{\underline{F}}
\def\ulambda{\underline{\lambda}}
\def\Risk{{L}}
\def\hRisk{\widehat{L}}
\newcommand{\Expect}{{\mathbb E}}
\newcommand{\calX}{{\mathcal X}}
\newcommand{\calY}{{\mathcal Y}}
\newcommand{\calZ}{{\mathcal Z}}
\renewcommand{\Re}{{\mathbb R}}
\newcommand{\cas}{\overset{as}{\rightarrow}}    
\newcommand{\ignore}[1]{}
\newtheorem{theorem}{Theorem}[section]
\newtheorem{lemma}[theorem]{Lemma}
\newtheorem{corollary}[theorem]{Corollary}
\newtheorem{example}[theorem]{Example}
\newtheorem{assumption}[theorem]{Assumption}
\newtheorem{remark}[theorem]{Remark}
\newtheoremstyle{myremark} 
    {\topsep}                    
    {\topsep}                    
    {\rm}                        
    {}                           
    {\bf}                        
    {.}                          
    {.5em}                       
    {}  
\theoremstyle{myremark}
\newtheorem{newremark}{Remark}[section]
\begin{document}

\maketitle

\begin{abstract}
The remarkable practical success of deep learning has revealed some
major surprises from a theoretical perspective.  In particular,
simple gradient methods easily find near-optimal solutions to
non-convex optimization problems, and despite giving a near-perfect
fit to training data without any explicit effort to control model
complexity, these methods exhibit excellent predictive accuracy.
We conjecture that specific principles underlie these phenomena: that
overparametrization allows gradient methods to find interpolating
solutions, that these methods implicitly impose regularization,
and that overparametrization leads to benign overfitting, that is,
accurate predictions despite overfitting training data.  In this
article, we survey recent progress in statistical learning theory
that provides examples illustrating these principles in simpler
settings.  We first review classical uniform convergence results
and why they fall short of explaining aspects of the behavior of
deep learning methods.  We give examples of implicit regularization
in simple settings, where gradient methods lead to minimal norm
functions that perfectly fit the training data.  Then we review
prediction methods that exhibit benign overfitting, focusing
on regression problems with quadratic loss.  For these methods,
we can decompose the prediction rule into a simple component that
is useful for prediction and a spiky component that is useful for
overfitting but, in a favorable setting, does not harm prediction
accuracy.  We focus specifically on the linear regime for neural
networks, where the network can be approximated by a linear model.
In this regime, we demonstrate the success of gradient flow, and
we consider benign overfitting with two-layer networks, giving an
exact asymptotic analysis that precisely demonstrates the impact of
overparametrization.  We conclude by highlighting the key challenges
that arise in extending these insights to realistic deep learning
settings.
\end{abstract}

\tableofcontents

\section{Introduction}\label{sec:intro}

The past decade has witnessed dramatic advances in machine learning
that have led to major breakthroughs in computer vision, speech
recognition, and robotics.  These achievements are based on a
powerful and diverse toolbox of techniques and algorithms that now
bears the name `deep learning'; see, for example, \cite{gbc-dl-16}.
Deep learning has evolved from the decades-old methodology of neural
networks: circuits of parametrized nonlinear functions, trained by
gradient-based methods. Practitioners have made major architectural
and algorithmic innovations, and have exploited technological
advances, such as increased computing power, distributed computing
architectures, and the availability of large amounts of digitized
data. The 2018 Turing Award celebrated these advances, a reflection
of their enormous impact \cite{lbh-dl-15}.

Broadly interpreted, deep learning can be viewed as a family of
highly nonlinear statistical models that are able to encode highly
nontrivial representations of data. A prototypical example is a {\em
feed-forward neural network with $L$ layers}, which is a parametrized
family of functions $\bx\mapsto f(\bx;\btheta)$ defined on $\Re^d$ by
  \begin{equation}\label{eqn:FFNN}
    f(\bx;\btheta) := \sigma_L(\bW_L\sigma_{L-1}(\bW_{L-1}
      \cdots \sigma_1(\bW_1\bx)\cdots)),
  \end{equation}
where the parameters are $\btheta=(\bW_1,\ldots,\bW_L)$
with $\bW_l\in\Re^{d_l\times d_{l-1}}$ and $d_0=d$, and
$\sigma_l: \Re^{d_l}\to\Re^{d_l}$ are fixed nonlinearities,
called {\em activation functions}.  Given a training sample
$(\bx_1,y_1),\ldots,(\bx_n,y_n)\in\Re^{d}\times\Re^{d_L}$, the
parameters $\btheta$ are typically chosen by a gradient method to
minimize the {\em empirical risk},
  \[
    \widehat L(\btheta) := \frac{1}{n}\sum_{i=1}^n \ell(f(\bx_i;\btheta),y_i),
  \]
where $\ell$ is a suitable loss function. The aim is to ensure
that this model generalizes well, in the sense that $f(\bx;\btheta)$
is an accurate prediction of $y$ on a subsequent $(\bx,y)$ pair.
It is important to emphasize that deep learning is a data-driven
approach: these are rich but generic models, and the architecture,
parametrization and nonlinearities are typically chosen without
reference to a specific model for the process generating the data.

While deep learning has been hugely successful in the hands of
practitioners, there are significant gaps in our understanding
of what makes these methods successful. Indeed, deep learning
reveals some major surprises from a theoretical perspective:
deep learning methods can find near-optimal solutions to highly
non-convex empirical risk minimization problems, solutions that
give a near-perfect fit to noisy training data, but despite making
no explicit effort to control model complexity, these methods lead
to excellent prediction performance in practice.

To put these properties in perspective, it is helpful to recall
the three competing goals that statistical prediction methods must
balance: they require expressivity, to allow the richness of real
data to be effectively modelled; they must control statistical
complexity, to make the best use of limited training data; and
they must be computationally efficient.  The classical approach to
managing this trade-off involves a rich, high-dimensional model,
combined with some kind of regularization, which encourages
simple models but allows more complexity if that is warranted
by the data. In particular, complexity is controlled so that
performance on the training data, that is, the empirical risk,
is representative of performance on independent test data,
specifically so that the function class is simple enough that
sample averages $\widehat L(\btheta)$ converge to expectations
$L(\btheta):=\Expect\ell(f(\bx;\btheta),y)$ uniformly across the
function class. And prediction methods are typically formulated
as convex optimization problems---for example with a convex loss
$\ell$ and parameters $\btheta$ that enter linearly---which can be
solved efficiently.

The deep learning revolution built on two surprising empirical
discoveries that are suggestive of radically different ways of
managing these trade-offs.  First, deep learning exploits rich
and expressive models, with many parameters, and the problem
of optimizing the fit to the training data appears to simplify
dramatically when the function class is rich enough, that is, when
it is sufficiently overparametrized.  In this regime, simple,
local optimization approaches, variants of stochastic gradient methods,
are extraordinarily successful at finding near-optimal fits to
training data, even though the nonlinear parametrization---see equation \eqref{eqn:FFNN}---implies that the optimization problems
that these simple methods solve are notoriously non-convex. {\it
A posteriori,\/} the idea that overparametrization could lead to
tractability might seem natural, but it would have seemed completely
foolish from the point of view of classical learning theory: the
resulting models are outside the realm of uniform convergence,
and therefore should not be expected to generalize well.

The second surprising empirical discovery was that these models are
indeed outside the realm of uniform convergence. They are enormously
complex, with many parameters, they are trained with no explicit
regularization to control their statistical complexity, and they
typically exhibit a near-perfect fit to noisy training data, that
is, empirical risk close to zero. Nonetheless this overfitting
is benign, in that they produce excellent prediction performance
in a number of settings. Benign overfitting appears to contradict
accepted statistical wisdom, which insists on a trade-off between
the complexity of a model and its fit to the data. Indeed, the rule
of thumb that models fitting noisy data too well will not generalize
is found in most classical texts on statistics and machine learning
\cite{friedman2001elements,wasserman2013all}.  This viewpoint has
become so prevalent that the word `overfitting' is often taken
to mean both fitting data better than should be expected and also
giving poor predictive accuracy as a consequence. In this paper,
we use the literal meaning of the word `overfitting'; deep learning
practice has demonstrated that poor predictive accuracy is not an
inevitable consequence.

This paper reviews some initial steps towards understanding these two
surprising aspects of the success of deep learning. We have two working
hypotheses:
\begin{description}
\item[Tractability via overparametrization.] Classically,
tractable statistical learning is achieved by restricting
to linearly parametrized classes of functions and convex
objectives. A fundamentally new principle appears to be at work in
deep learning. Although the objective is highly non-convex, we
conjecture that the hardness of the optimization problem depends
on the relationship between the dimension of the parameter space
(the number of optimization variables) and the sample size (which,
when we aim for a near-perfect fit to training data, we can think
of as the number of constraints), that is, tractability is
achieved if and only if we choose a model that is sufficiently
under-constrained or, equivalently, overparametrized.

\item[Generalization via implicit regularization.]  Even if
overparametrized models simplify the optimization task, classically
we would have believed that good generalization properties would
be restricted to either an underparametrized regime or a suitably
regularized regime. Statistical wisdom suggests that a method
that takes advantage of too many degrees of freedom by perfectly
interpolating noisy training data will be poor at predicting new
outcomes. In deep learning, training algorithms appear to induce
a bias that breaks the equivalence among all the models that
interpolate the observed data.  Because these models interpolate
noisy data, the classical statistical perspective would suggest that
this bias cannot provide sufficient regularization to give good
generalization, but in practice it does. We conjecture that deep
learning models can be decomposed into a low-complexity component
for which classical uniform convergence occurs and a high-complexity
component that enables a perfect fit to training data,
and if the model is suitably overparameterized, this perfect
fit does not have a significant impact on prediction accuracy.
\end{description}

As we shall see, both of these hypotheses are supported by results in
specific scenarios, but there are many intriguing open questions in
extending these results to realistic deep learning settings.

It is worth noting that none of the results that we review here make
a case for any optimization or generalization benefits of increasing
depth in deep learning.  Although it is not the focus here, another
important aspect of deep learning concerns how deep neural networks can
effectively and parsimoniously express natural functions that are
well matched to the data that arise in practice. It seems likely
that depth is crucial for these issues of expressivity.

\subsection{Overview}

Section~\ref{sec:slt} starts by reviewing some results from classical
statistical learning theory that are relevant to the problem of
prediction with deep neural networks. It describes an explicit
probabilistic formulation of prediction problems. Consistent with
the data-driven perspective of deep learning, this formulation assumes little
more than that the $(\bx,y)$ pairs are sampled independently from
a fixed probability distribution.  We explain the role played by
uniform bounds on deviations between risk and empirical risk,
  \[
    \sup_{f\in \cF}\left| L(f) - \widehat L(f)\right|,
  \]
in the analysis of the generalization question for functions chosen
from a class $\cF$.
We show how
a partition of a rich function class $\cF$ into a complexity hierarchy
allows regularization methods that balance the statistical complexity
and the empirical risk to enjoy the best bounds on generalization
implied by the uniform convergence results.  We consider consequences
of these results for general pattern classification problems, for
easier ``large margin'' classification problems and for regression
problems, and we give some specific examples of risk bounds for
feed-forward networks.  Finally, we consider the implications of
these results for benign overfitting: If an algorithm chooses an
interpolating function to minimize some notion of complexity, what
do the uniform convergence results imply about its performance? We
see that there are very specific barriers to analysis of this kind
in the overfitting regime; an analysis of benign overfitting must
make stronger assumptions about the process that generates the data.


In Section~\ref{sec:implicit}, we review results on the implicit
regularization that is imposed by the algorithmic approach
ubiquitous in deep learning: gradient methods. We see examples of function
classes and loss functions where gradient methods, suitably
initialized, return the empirical risk minimizers that minimize
certain parameter norms. While all of these examples involve
parameterizations of linear functions with convex losses, we shall see
in Section~\ref{sec:efficient} that this linear/convex viewpoint
can be important for nonconvex optimization problems that arise in
neural network settings.

Section~\ref{sec:benign} reviews analyses of benign overfitting.
We consider extreme cases of overfitting, where the prediction
rule gives a perfect interpolating fit to noisy data. In all the
cases that we review where this gives good predictive accuracy,
we can view the prediction rule as a linear combination of two
components: $\widehat f=\widehat f_0+\Delta$. The first, $\widehat f_0$, is a
simple component that is useful for prediction, and the second,
$\Delta$, is a spiky component that is useful for overfitting.
Classical statistical theory explains the good predictive accuracy
of the simple component. The other component is not useful for
prediction, but equally it is not harmful for prediction.  The first
example we consider is the classical Nadaraya-Watson kernel smoothing
method with somewhat strange, singular kernels, which lead to an
interpolating solution that, for a suitable choice of the kernel
bandwidth, enjoys minimax estimation rates. In this case, we can view
$\widehat f_0$ as the prediction of a standard kernel smoothing method
and $\Delta$ as a spiky component that is harmless for prediction
but allows interpolation.  The other examples we consider are for
high-dimensional linear regression. Here, `linear' means linearly
parameterized, which of course allows for the richness of highly
nonlinear features, for instance the infinite dimensional feature
vectors that arise in reproducing kernel Hilbert spaces (RKHSs).
Motivated by the results of Section~\ref{sec:implicit}, we study
the behavior of the minimum norm interpolating linear function.
We see that it can be decomposed into a prediction component and an
overfitting component, with the split determined by the eigenvalues
of the data covariance matrix. The prediction component corresponds
to a high-variance subspace and the overfitting component to the
orthogonal, low-variance subspace.  For sub-Gaussian features,
benign overfitting occurs if and only if the high-variance
subspace is low-dimensional (that is, the prediction component
is simple enough for the corresponding subspace of functions to
exhibit uniform convergence) and the low-variance subspace has
high effective dimension and suitably low energy. In that case,
we see a {\em self-induced regularization}: the projection of the
data on the low-variance subspace is well-conditioned, just as
it would be if a certain level of statistical regularization were
imposed, so that even though this subspace allows interpolation,
it does not significantly deteriorate the predictive accuracy.
(Notice that this self-induced regularization is a consequence of
the decay of eigenvalues of the covariance matrix, and should not be
confused with the implicit regularization, which is a consequence
of the gradient optimization method and leads to the minimum norm
interpolant.)  Using direct arguments that avoid the sub-Gaussian
assumption, we see similar behavior of the minimum norm interpolant
in certain infinite-dimensional RKHSs, including an example of an
RKHS with fixed input dimension where benign overfitting cannot
occur and examples of RKHSs where it does occur for suitably
increasing input dimension, again corresponding to decompositions
into a simple subspace---in this case, a subspace of polynomials,
with dimension low enough for uniform convergence---and a complex
high-dimensional orthogonal subspace that allows
benign overfitting.

In Section~\ref{sec:efficient}, we consider a specific regime where
overparametrization allows a non-convex empirical risk minimization
problem to be solved efficiently by gradient methods: a linear
regime, in which a parameterized function can be accurately
approximated by its linearization about an initial parameter
vector.  For a suitable parameterization and initialization, we
see that a gradient method remains in the linear regime, enjoys
linear convergence of the empirical risk, and leads to a solution
whose predictions are well approximated by the linearization at
the initialization. In the case of two-layer networks, suitably
large overparametrization and initialization suffice.  On the
other hand, the mean-field limit for wide two-layer networks,
a limit that corresponds to a smaller---and perhaps more
realistic---initialization, exhibits an essentially different
behavior, highlighting the need to extend our understanding beyond
linear models.

Section~\ref{sec:NTK} returns to benign overfitting, focusing
on the linear regime for two specific families of two-layer
networks: a random features model, with randomly initialized
first-layer parameters that remain constant throughout training,
and a neural tangent model, corresponding to the linearization about
a random initialization. Again, we see decompositions into a simple
subspace (of low-degree polynomials) that is useful for prediction
and a complex orthogonal subspace that allows interpolation without
significantly harming prediction accuracy.

Section~\ref{sec:future} outlines future directions. Specifically,
for the two working hypotheses of tractability via
overparametrization and generalization via implicit regularization,
this section summarizes the insights from the examples that we
have reviewed---mechanisms for implicit regularization, the role of
dimension, decompositions into prediction and overfitting components,
data-adaptive choices of these decompositions, and the tractability
benefits of overparameterization. It also speculates on how these
might extend to realistic deep learning settings.

\section{Generalization and uniform convergence}
\label{sec:slt}

This section reviews  uniform convergence results from
statistical learning theory and their implications for prediction
with rich families of functions, such as those computed by neural
networks.  In classical statistical analyses, it is common to
posit a specific probabilistic model for the process generating
the data and to estimate the parameters of that model; see,
for example,~\cite{bd-ms-07}.  In contrast, the approach in this
section is motivated by viewing neural networks as defining rich,
flexible families of functions that are useful for prediction in
a broad range of settings. We make only weak assumptions about
the process generating the data, for example, that it is sampled
independently from an unknown distribution, and we aim for the best
prediction accuracy.

\subsection{Preliminaries}

Consider a prediction problem in a probabilistic setting, where we
aim to use data to find a function $f$ mapping from an input space
$\calX$ (for example, a representation of images) to an output space
$\calY$ (for example, a finite set of labels for those images). We
measure the quality of the predictions that $f:\calX\to\calY$ makes
on an $(\bx,y)$ pair using the loss $\ell(f(\bx),y)$, which represents
the cost of predicting $f(\bx)$ when the actual outcome is $y$. For
example, if $f(\bx)$ and $y$ are real-valued, we might consider the
square loss, $\ell(f(\bx),y)=(f(\bx)-y)^2$. We assume that we have
access to a training sample of input-output pairs $(\bx_1,y_1),
\ldots,(\bx_n,y_n)\in\calX\times\calY$, chosen independently from
a probability distribution $\P$ on $\calX\times\calY$. These data are
used to choose $\widehat f:\calX\to\calY$, and we would like $\widehat f$ to
give good predictions of the relationship between subsequent $(\bx,y)$
pairs in the sense that the {\em risk of $\widehat f$}, denoted
  \[
    L(\widehat f):= \Expect\ell(\widehat f(\bx),y),
  \]
is small, where $(\bx,y)\sim \P$ and $\Expect$ denotes expectation (and
if $\widehat f$ is random, for instance because it is chosen based on
random training data, we use $L(\widehat f)$ to denote the conditional
expectation given $\widehat f$).  We are interested in ensuring that
the excess risk of $\widehat f$,
  \[
    L(\widehat f) - \inf_{f}L(f),
  \]
is close to zero, where the infimum is over all measurable functions.
 Notice that we assume only that $(\bx,y)$ pairs
are independent and identically distributed; in particular, we do not
assume any functional relationship between $\bx$ and $y$.

Suppose that we choose $\widehat f$ from a set of functions
$\cF\subseteq\calY^\calX$. For instance, $\cF$ might be the set of
functions computed by a deep network with a particular architecture
and with particular constraints on the parameters in the network.
A natural approach to using the sample to choose $\widehat f$ is to
minimize the {\em empirical risk} over the class $\cF$. Define
  \begin{equation}\label{eqn:erm}
    \prederm
    \in\argmin{f\in \cF} \widehat L(f),
  \end{equation}
where the empirical risk, 
$$\widehat L(f) :=
\frac{1}{n}\sum_{i=1}^n \ell(f(\bx_i),y_i),$$ is the expectation of
the loss under the empirical distribution defined by the sample.
Often, we consider classes of functions $\bx\mapsto f(\bx;\btheta)$
parameterized by $\btheta$, and we use $L(\btheta)$ and $\widehat L(\btheta)$
to denote $L(f(\cdot;\btheta))$ and $\widehat L(f(\cdot;\btheta))$,
respectively.

We can split the excess risk of the empirical risk minimizer
$\prederm$ into two components, 
  \begin{align}
	  \label{eq:est_approx_decomp}
    L(\prederm) - \inf_{f}L(f)
      = \left(L(\prederm) - \inf_{f\in \cF}L(f)\right) +
      \left(\inf_{f\in \cF}L(f) - \inf_{f}L(f)\right),
  \end{align}
the second reflecting how well functions in the class $\cF$ can
approximate an optimal prediction rule and the first reflecting
the statistical cost of estimating such a prediction rule from the
finite sample.  For a more complex function class $\cF$, we should
expect the approximation error to decrease and the estimation error
to increase. We focus on the estimation error, and on controlling it
using uniform laws of large numbers.

\subsection{Uniform laws of large numbers}

Without any essential loss of generality,
suppose that a minimizer $f_{\cF}^*\in\arg\min_{f\in \cF} L(f)$ exists.
Then we can split the estimation error of an empirical risk minimizer
$\prederm$ defined in~\eqref{eqn:erm} into three components:
  \begin{align}
    \lefteqn{L(\prederm) - \inf_{f\in \cF}L(f)} & \notag\\
      & = L(\prederm)-L(f_{\cF}^*) \notag \\
      &= \left[ L(\prederm) - \widehat L(\prederm) \right]
        + \left[ \widehat L(\prederm) - \widehat L(f_{\cF}^*) \right]
        + \left[ \widehat L(f_{\cF}^*) - L(f_{\cF}^*) \right].
        \label{eqn:decomp} 
  \end{align}
The second term cannot be positive since $\prederm$ minimizes empirical
risk.  The third term converges to zero by the law of large numbers
(and if the random variable $\ell(f_{\cF}^*(\bx),y)$ is sub-Gaussian, then with
probability exponentially close to $1$ this term is $O(n^{-1/2})$;
see, for example,~\cite[Chapter~2]{blm-ci-13} and~\cite{v-hdp-18} for the
definition of sub-Gaussian and for a review of concentration
inequalities of this kind).  The first
term is more interesting. Since $\prederm$ is chosen using the data,
$\widehat L(\prederm)$ is a biased estimate of $L(\prederm)$,
and so we cannot simply apply a law of large numbers. One approach
is to use the crude upper bound
  \begin{equation}\label{eqn:loosebound}
    L(\prederm) - \widehat L(\prederm)
      \le \sup_{f\in\cF}\left|L(f) - \widehat L(f)\right|,
  \end{equation}
and hence bound the estimation error in terms of this uniform bound.
The following theorem shows that such uniform bounds on deviations
between expectations and sample averages are intimately related to
a notion of complexity of the {\em loss class} $\ell_{\cF}=\{(\bx,y)\mapsto
\ell(f(\bx),y):f\in \cF\}$ known as the {\em Rademacher complexity}.
For a probability distribution $\P$ on a measurable space
$\calZ$, a sample $\bz_1,\ldots,\bz_n\sim \P$, and a function class
$\cG\subset\Re^{\calZ}$, define the Rademacher complexity of $\cG$ as
  \begin{align*}
    R_n(\cG) &:= \Expect\sup_{g\in\cG}
      \left|\frac{1}{n}\sum_{i=1}^n \epsilon_i g(\bz_i)\right|,
  \end{align*}
where $\epsilon_1,\ldots,\epsilon_n\in\{\pm 1\}$ are independent
and uniformly distributed.
\begin{theorem}\label{thm:Rad}
For any $\cG\subset[0,1]^{\calZ}$ and any probability distribution
$\P$
on $\calZ$,
  \[
    \frac{1}{2}R_n(\cG) - \sqrt\frac{\log 2}{2n}
    \le \Expect\sup_{g\in\cG}
      \left|\Expect g - \widehat\Expect g\right|
    \le 2R_n(\cG),
  \]
where $\widehat\Expect g=n^{-1}\sum_{i=1}^n g(\bz_i)$ and
$\bz_1,\ldots,\bz_n$ are chosen i.i.d. according to $\P$.
Furthermore, with probability at least $1-2\exp(-2\epsilon^2 n)$ over
$\bz_1,\ldots,\bz_n$,
  \[
    \Expect\sup_{g\in\cG}
      \left|\Expect g - \widehat\Expect g\right|
        - \epsilon \le
    \sup_{g\in\cG}
      \left|\Expect g - \widehat\Expect g\right|
    \le \Expect\sup_{g\in\cG}
      \left|\Expect g - \widehat\Expect g\right|
        + \epsilon.
  \]
Thus, $R_n(\cG)\to 0$ if and only if 
$\sup_{g\in\cG}\left|\Expect g - \widehat\Expect g\right| \cas 0$.
\end{theorem}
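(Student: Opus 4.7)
The plan is to prove the theorem in three pieces and combine them: the two displayed inequalities relating $R_n(\cG)$ to $\E\sup_g|\E g - \hE g|$ follow from the classical symmetrization and desymmetrization arguments with an independent ``ghost'' sample; the concentration statement is a direct application of McDiarmid's bounded-differences inequality; and the almost-sure equivalence drops out by combining these via Borel--Cantelli and dominated convergence.

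For the upper bound $\E\sup_g|\hE g - \E g|\le 2R_n(\cG)$, I would draw an independent ghost sample $z_1',\ldots,z_n'\sim\P$ and write $\hE' g = n^{-1}\sum_i g(z_i')$. Since $\E g = \E_{z'}\hE' g$, Jensen's inequality gives $\E_z\sup_g|\hE g - \E g|\le \E_{z,z'}\sup_g|\hE g - \hE' g|$. Because the pairs $(z_i,z_i')$ and $(z_i',z_i)$ are identically distributed, the random vector $(g(z_i)-g(z_i'))_i$ has the same joint law as $(\epsilon_i(g(z_i)-g(z_i')))_i$ for independent Rademacher signs, so the right-hand side equals $\E\sup_g|n^{-1}\sum_i\epsilon_i(g(z_i)-g(z_i'))|$, and a triangle inequality splits this into two copies of $R_n(\cG)$.

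For the lower bound, start from $R_n(\cG)=\E\sup_g|n^{-1}\sum_i\epsilon_i g(z_i)|$ and decompose $g(z_i)=(g(z_i)-\E g)+\E g$; the triangle inequality then gives $R_n(\cG)\le \E\sup_g|n^{-1}\sum_i\epsilon_i(g(z_i)-\E g)| + \sup_g|\E g|\cdot \E|n^{-1}\sum_i\epsilon_i|$. Cauchy--Schwarz bounds the Rademacher average by $n^{-1/2}$ and $g\in[0,1]$ bounds $\sup_g|\E g|$ by $1$, so the second term is at most $n^{-1/2}$. For the first term, use $\E g = \E_{z'}g(z_i')$ and Jensen's inequality a second time to replace $g(z_i)-\E g$ by $\E_{z'}[g(z_i)-g(z_i')]$ inside the sum, drop the $\epsilon_i$ by the same symmetry argument, and apply the triangle inequality once more to recover $2\E\sup_g|\hE g - \E g|$. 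Rearranging and using the crude numerical inequality $1/(2\sqrt n)\le \sqrt{\log 2/(2n)}$ yields the stated lower bound.

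For the concentration claim, the functional $(z_1,\ldots,z_n)\mapsto \sup_g|\hE g - \E g|$ has bounded differences of $1/n$, since $g\in[0,1]$ shifts any $\hE g$ by at most $1/n$ when a single sample is replaced, so McDiarmid's inequality gives the claimed two-sided deviation with probability at least $1-2\exp(-2\epsilon^2 n)$. The equivalence then follows by combining all three pieces: if $R_n(\cG)\to 0$ the upper bound forces $\E\sup_g|\hE g - \E g|\to 0$, and a Borel--Cantelli argument based on the McDiarmid tail upgrades this to a.s.\ convergence; conversely, a.s.\ convergence of the $[0,1]$-valued quantity $\sup_g|\hE g - \E g|$ yields convergence of its expectation by dominated convergence, and the lower bound then forces $R_n(\cG)\to 0$. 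The only real obstacle is bookkeeping: the ghost-sample trick must be applied cleanly in both directions, and for the lower bound one has to keep track of the small additive $\sqrt{\log 2/(2n)}$ cost that arises from replacing $\E g$ by a ghost-sample average inside the Rademacher sum.
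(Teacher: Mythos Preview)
The paper does not give its own proof of this theorem; it simply cites the classical references \cite{kp-rpbrfl-00,k-rpsrm-01,bbl-msee-02,bm-rgcrbsr-02,k-lrcaoiirm-06} immediately after the statement. Your proposal is correct and is precisely the standard argument found in those sources: symmetrization with a ghost sample for the upper bound, desymmetrization (centering plus the ghost-sample trick, with the $\E|n^{-1}\sum_i\epsilon_i|\le n^{-1/2}$ cost absorbed into the $\sqrt{\log 2/(2n)}$ term) for the lower bound, McDiarmid's bounded-differences inequality for the concentration statement, and Borel--Cantelli together with dominated convergence for the final equivalence. There is nothing to compare against beyond noting that your write-up matches the textbook route the paper points to.
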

See~\cite{kp-rpbrfl-00,k-rpsrm-01,bbl-msee-02,%
bm-rgcrbsr-02} and \cite{k-lrcaoiirm-06}.
This theorem shows that for bounded losses, a uniform bound
  \[
    \sup_{f\in\cF} \left|L(f) - \widehat L(f)\right|
  \]
on the maximal deviations between risks and empirical risks of any
$f$ in $\cF$ is tightly concentrated around its expectation, which is
close to the Rademacher complexity $R_n(\ell_{\cF})$.  Thus, we can bound
the excess risk of $\prederm$ in terms of the sum of the approximation
error $\inf_{f\in\cF}L(f) - \inf_{f}L(f)$ and this
bound on the estimation error.

\subsection{Faster rates}

Although the approach~\eqref{eqn:loosebound} of bounding the
deviation between the risk and empirical risk of $\prederm$ by the
maximum for any $f\in\cF$ of this deviation appears to be very
coarse, there are many situations where it cannot be improved by
more than a constant factor without stronger assumptions (we will see
examples later in this section). However,
there are situations where it can be significantly improved. As
an illustration, provided $\cF$ contains functions $f$ for which
the variance of $\ell(f(\bx),y)$ is positive, it is easy to see that
$R_n(\ell_{\cF})=\Omega(n^{-1/2})$.
Thus, the best bound on the estimation error implied by
Theorem~\ref{thm:Rad} must go to zero no faster than $n^{-1/2}$, but
it is possible for the risk of the empirical minimizer to converge to
the optimal value $L(f_{\cF}^*)$ faster than this.  For example,
when $\cF$ is suitably simple, this occurs for a nonnegative bounded
loss, $\ell:\calY\times\calY\to[0,1]$, when there is a function
$f_{\cF}^*$ in $\cF$ that gives perfect predictions, in the sense that almost
surely $\ell(f_{\cF}^*(\bx),y)=0$. In that case, the following theorem
is an example that gives a faster rate in terms of the
{\em worst-case empirical Rademacher complexity},
      \[
        \bar R_n(\cF) = \sup_{\bx_1,\ldots,\bx_n\in\calX}
          \Expect\left[\left.\sup_{f\in\cF}
          \left|\frac{1}{n}\sum_{i=1}^n \epsilon_i
          f(\bx_i)\right|\right| \bx_1,\ldots,\bx_n\right].
      \]
Notice that, for any probability distribution on $\calX$,
$R_n(\cF)\le\bar R_n(\cF)$.
  \begin{theorem}\label{theorem:fastRad}
    There is a constant $c>0$ such that for a bounded function class
    $\cF\subset [-1,1]^\calX$, for $\ell(\widehat y,y)=(\widehat
    y-y)^2$, and for any
    distribution $\P$ on $\calX\times[-1,1]$, with probability at least
    $1-\delta$, a sample $(\bx_1,y_1),\ldots,(\bx_n,y_n)$ satisfies for
    all $f\in \cF$,
      \[
        L(f) \le (1+c)\widehat L(f) + c\left(\log n\right)^4
        \bar R_n^2(\cF) + \frac{c\log(1/\delta)}{n}.
      \]
  \end{theorem}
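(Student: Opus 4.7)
The plan is to prove the bound via a combination of three ingredients: (i) the self-bounding property of the squared loss, (ii) a Talagrand-Bousquet style concentration inequality that exploits the variance-to-mean ratio, and (iii) a peeling/stratification argument with a chained bound on the local Rademacher complexity.

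First I would exploit the squared loss structure. Since $f(\bx), y\in[-1,1]$, the loss $\ell(f(\bx),y)=(f(\bx)-y)^2$ lies in $[0,4]$, is $8$-Lipschitz in $f(\bx)$, and obeys the self-bounding inequality $\ell(a,y)^{2}\le 4\cdot\ell(a,y)\cdot\|\ell\|_\infty$ which in turn gives the crucial variance bound $\Var(\ell(f(\bx),y))\le 16\,L(f)$. This is what will convert a slow-rate $\bar R_n(\cF)$ into a fast-rate $\bar R_n^2(\cF)$: intuitively, functions with small risk have loss of small variance, and Talagrand's inequality then behaves like a Bernstein bound, giving a term $\sqrt{L(f)\cdot\text{Rademacher}/n}$ that we can AM-GM against $L(f)$.

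Next I would peel the class according to the scale of $L(f)$. For $r_j=2^j r_0$ with $r_0$ to be chosen, let $\cF_j=\{f\in\cF:L(f)\le r_j\}$. On each $\cF_j$, Bousquet's version of Talagrand's inequality gives, with probability $\ge 1-\delta_j$, simultaneously for every $f\in\cF_j$,
\[
L(f)-\widehat L(f)\le 2\,\E\sup_{g\in\cF_j}\bigl(L(g)-\widehat L(g)\bigr)+c\sqrt{\tfrac{r_j\log(1/\delta_j)}{n}}+c\tfrac{\log(1/\delta_j)}{n}.
\]
A union bound over $j\le\log_2(4/r_0)$ with $\delta_j=\delta/j^2$ costs only an extra $\log\log n$ factor. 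The expected supremum on the right-hand side is controlled, via the Ledoux-Talagrand contraction inequality applied to the $8$-Lipschitz loss, by a constant times the local empirical Rademacher complexity $R_n(\cF_j)$.

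The main obstacle, and the most technical step, is bounding the \emph{local} Rademacher complexity $R_n(\cF_j)$ in terms of the \emph{worst-case} quantity $\bar R_n(\cF)$ in such a way that it scales like $\sqrt{r_j}$ rather than with a constant. The standard tool is Dudley's entropy integral combined with a Sudakov-type conversion of $\bar R_n(\cF)$ into covering numbers: roughly, $\epsilon\sqrt{\log N(\epsilon,\cF,L_2(\widehat\P_n))}\lesssim\sqrt{n}\,\bar R_n(\cF)$, yielding $R_n(\cF_j)\lesssim\sqrt{r_j}\,\bar R_n(\cF)\log n$ up to additional polylogarithmic factors. Plugging this back into the peeling bound and writing $t=\log(1/\delta)$, every $f\in\cF_j$ satisfies
\[
L(f)\le\widehat L(f)+c\sqrt{L(f)}\,\bar R_n(\cF)(\log n)^{2}+c\sqrt{\tfrac{L(f)\,t}{n}}+c\tfrac{t}{n}.
\]
Finally I apply the inequality $\sqrt{ab}\le\eta a+b/(4\eta)$ to both cross terms with a small $\eta$, absorb the resulting $\eta L(f)$ into the left-hand side, and rearrange. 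This produces $L(f)\le(1+c)\widehat L(f)+c(\log n)^{4}\bar R_n^2(\cF)+c\,t/n$, where the $(\log n)^4$ collects one factor from each application of the entropy integral (two logs) together with the peeling and union-bound logs. The key subtlety throughout is that the peeling is stated in terms of $L(f)$ (an unobserved quantity), so after establishing the inequality on $\cF_j$ one must verify that for every $f\in\cF$ there is a $j$ with $f\in\cF_j$ and that the resulting bound is uniform in $j$.
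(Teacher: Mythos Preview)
Your outline is correct and matches the paper's approach: the paper gives no self-contained proof but simply cites \cite[Theorem~1]{srebro2010optimistic} and applies the AM--GM inequality, and your sketch (self-bounding of the squared loss, Talagrand--Bousquet concentration with peeling, and a Dudley--Sudakov conversion from the global $\bar R_n(\cF)$ to a local complexity scaling like $\sqrt{r}$) is exactly the argument behind that cited theorem; the paper's footnote also confirms that any exponent larger than $3$ on $\log n$ suffices, consistent with your log-counting.

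One clarification to make the entropy step go through as written: the localization and Dudley integral should be carried out on the \emph{loss class} $\ell_{\cF}$ rather than on $\cF$ itself. The constraint $L(f)\le r_j$ does not bound the $L_2$-radius of $f$ (e.g.\ $f\equiv y$ has $L(f)=0$ but nonzero norm), so $R_n(\cF_j)$ need not scale like $\sqrt{r_j}$. What does hold is that each $\ell_f$ with $L(f)\le r_j$ satisfies $\E[\ell_f^2]\le 4\,\E[\ell_f]\le 4r_j$ by the self-bounding property you already identified, so the loss class restricted to level $r_j$ has $L_2$-radius $\le 2\sqrt{r_j}$; the Dudley--Sudakov step then yields the local Rademacher of $\ell_{\cF_j}$ bounded by $\sqrt{r_j}\,\bar R_n(\ell_\cF)\,\mathrm{polylog}$, and contraction returns you to $\bar R_n(\cF)$. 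This is precisely where smoothness (not merely the Lipschitz property) of the squared loss is used.
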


In particular, when $L(f_{\cF}^*)=0$, the empirical minimizer has
$\widehat L(\prederm)=0$, and so with high probability, $L(\prederm) = \tilde
O\left(\bar R_n^2(\cF)\right)$, which can be as small as $\tilde
O(1/n)$ for a suitably simple class $\cF$.

Typically, faster rates like these arise when the variance of the
excess loss is bounded in terms of its expectation, for instance
  \[
    \Expect\left[\ell(f(\bx),y)-\ell(f_{\cF}^*(\bx),y)\right]^2 \le
c\Expect\left[\ell(f(\bx),y)-\ell(f_{\cF}^*(\bx),y)\right].
  \]
For a bounded nonnegative loss with $L(f_{\cF}^*)=0$, this so-called
Bernstein property is immediate, and it has been exploited
 in that case to give
fast rates for prediction with binary-valued~\cite{vc71,vc-tpr-74}
and real-valued~\cite{h-dtgpmnn-92,p-urltep-95,bl-iudsam-99}
function classes.  Theorem~\ref{theorem:fastRad}, which
follows from~\cite[Theorem~1]{srebro2010optimistic} and the
AM-GM inequality\footnote{The exponent on the $\log$ factor in
Theorem~\ref{theorem:fastRad} is larger than the result in the
cited reference; any exponent larger than $3$ suffices. See
\cite[Equation~(1.4)]{rv-crpscb-06}.}, relies
on the smoothness of the quadratic loss to give a bound for that
case in terms of the worst-case empirical Rademacher complexity.
There has been a significant body of related work over
the last thirty years. First, for quadratic loss in this
well-specified setting, that is, when $f^*(\bx)=\Expect[y|\bx]$
belongs to the class $\cF$, faster rates have been obtained even without
$L(f^*)=0$~\cite{vdg-erf-90}.  Second, the Bernstein property can
occur without the minimizer of $L$ being in $\cF$; indeed, it arises
for convex $\cF$ with quadratic loss~\cite{lbw-ealnnbf-96} or more
generally strongly convex losses~\cite{m-iscugd-02}, and this has
been exploited to give fast rates based on several other notions
of complexity~\cite{bbm-lrc-05,k-lrcaoiirm-06,pmlr-v40-Liang15}.
Recent techniques~\cite{mendelson2020extending} eschew concentration
bounds and hence give weaker conditions for convergence of $L(\prederm)$
to $L(f_{\cF}^*)$, without the requirement that the random variables
$\ell(f(\bx),y)$ have light tails.  Finally, while we have defined $\cF$
as the class of functions used by the prediction method, if it is
viewed instead as the benchmark (that is, the aim is to predict
almost as well as the best function in $\cF$, but the prediction
method can choose a prediction rule $\widehat f$ that is not necessarily
in $\cF$), then similar fast rates are possible under even weaker
conditions, but the prediction method must be more complicated than
empirical risk minimization; see~\cite{rakhlin2017empirical}.

\subsection{Complexity regularization}

The results we have seen give bounds on the excess risk of $\prederm$ in
terms of a sum of approximation error and a bound on the estimation
error that depends on the complexity of the function class $\cF$.
Rather than choosing the complexity of the function class $\cF$ in
advance, we could instead split a rich class $\cF$ into a complexity
hierarchy and choose the appropriate complexity based on the data,
with the aim of managing this approximation-estimation tradeoff.
We might define subsets $\cF_r$ of a rich class $\cF$, indexed by a
complexity parameter $r$. We call each $\cF_r$ a {\em complexity
class}, and we say that it has complexity $r$.

There are many classical examples of this approach. For instance,
support vector machines (SVMs)~\cite{cv-svn-95} use a reproducing
kernel Hilbert space (RKHS) $\cH$, and the complexity class $\cF_r$
is the subset of functions in $\cH$ with RKHS norm no more than $r$.
As another example, Lasso~\cite{t-rssvl-96} uses the set $\cF$ of
linear functions on a high-dimensional space, with the complexity
classes $\cF_r$ defined by the $\ell_1$ norm of the parameter vector.
Both SVMs and Lasso manage the approximation-estimation trade-off
by balancing the complexity of the prediction rule and its fit to
the training data: they minimize a combination of empirical risk
and some increasing function of the complexity $r$.

The following theorem gives an illustration of the effectiveness
of this kind of complexity regularization. In the first part of
the theorem, the complexity penalty for a complexity class is a
uniform bound on deviations between expectations and sample averages
for that class.  We have seen that uniform deviation bounds of
this kind imply upper bounds on the excess risk of the empirical
risk minimizer in the class.  In the second part of the theorem, the
complexity penalty appears in the upper bounds on excess risk that
arise in settings where faster rates are possible. In both cases, the
theorem shows that when the bounds hold, choosing the best penalized
empirical risk minimizer in the complexity hierarchy leads to the best
of these upper bounds.
\begin{theorem}\label{thm:complexity-reg}
For each $\cF_r\subseteq \cF$, define an empirical risk minimizer
  \begin{align*}
    \prederm^r & \in\argmin{f\in\cF_r}\widehat L(f).
  \end{align*}
Among these, select the one with complexity $\widehat r$ that gives
an optimal balance between the empirical risk and a complexity
penalty $p_r$:
  \begin{align}\label{eqn:PERM}
    \widehat f&=\prederm^{\widehat r}, &
    \widehat r & \in \argmin{r} \left(\widehat L(\prederm^r) + p_r\right).
  \end{align}
  \begin{enumerate}
    \item In the event that the complexity penalties are uniform deviation
    bounds:
    \begin{equation}\label{eqn:unifdev}
      \text{for all $r$, }
      \sup_{f\in \cF_r}
        \left|L(f)-\widehat L(f)\right|
        \le p_r,
    \end{equation}
    then we have the oracle inequality
      \begin{align}\label{eqn:oracleslow}
         L(\widehat f) - \inf_{f} L(f)
        \le \inf_r \left( \inf_{f\in \cF_r}  L(f)
          - \inf_{f} L(f) + 2p_r\right).
      \end{align}
    \item Suppose that the complexity classes and penalties are
    ordered, that is,
      \[
        r\le s\text{ implies } \cF_r\subseteq \cF_s\text{ and } p_r\le p_s,
      \]
    and fix $f_r^* \in \arg\min_{f\in\cF_r} L(f)$. In the event
    that the complexity penalties satisfy the uniform
    relative deviation bounds
    \begin{align}\label{eqn:unifreldev}
      \text{for all $r$, }
      \sup_{f\in\cF_r}\left(
         L(f)-
         L(f^*_r) -
        2\left(\widehat L(f) -
        \widehat L(f^*_r)\right)\right)
        & \le 2p_r/7 \\
      \text{and }
      \sup_{f\in\cF_r}\left(
        \widehat L(f)-
        \widehat L(f^*_r) -
        2\left( L(f) -
         L(f^*_r)\right)\right)
        & \le 2p_r/7,\notag
    \end{align}
    then we have the oracle inequality
      \begin{align}\label{eqn:oraclefast}
         L(\widehat f) - \inf_{f} L(f)
        \le \inf_r \left( \inf_{f\in\cF_r}  L(f)
          - \inf_{f} L(f) + 3p_r\right).
      \end{align}
  \end{enumerate}
\end{theorem}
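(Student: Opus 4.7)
The plan is to chain the choice of $\widehat r$ in~\eqref{eqn:PERM} with the assumed deviation bounds. Part~1 reduces to a direct four-step argument, while Part~2 requires a case split on the relation between $\widehat r$ and the comparison index $r$ in the infimum, culminating in a self-bounding step that controls $p_{\widehat r}$ by $p_r$ plus an excess risk term.

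For Part~1, I would fix an arbitrary $r$ and bound
\[
L(\widehat f) \;\le\; \widehat L(\widehat f) + p_{\widehat r}
\;\le\; \widehat L(\prederm^r) + p_r
\;\le\; \widehat L(f_r^*) + p_r
\;\le\; L(f_r^*) + 2p_r,
\]
using, in order, the uniform deviation bound~\eqref{eqn:unifdev} on $\cF_{\widehat r}$ applied to $\widehat f\in\cF_{\widehat r}$; the definition of $\widehat r$; the fact that $\prederm^r$ minimizes $\widehat L$ over $\cF_r$ and $f_r^*\in\cF_r$; and~\eqref{eqn:unifdev} on $\cF_r$ applied to $f_r^*$. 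Subtracting $\inf_f L(f)$ and taking the infimum over $r$ yields~\eqref{eqn:oracleslow}.

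For Part~2, I would fix $r$ and split on whether $\widehat r\le r$ or $\widehat r>r$. If $\widehat r\le r$, then monotonicity puts $\widehat f\in\cF_r$, and bound~\eqref{eqn:unifreldev}(a) applied to $\widehat f$ in $\cF_r$ gives $L(\widehat f)-L(f_r^*)\le 2(\widehat L(\widehat f)-\widehat L(f_r^*))+2p_r/7$. The choice of $\widehat r$ together with $\widehat L(\prederm^r)\le \widehat L(f_r^*)$ shows $\widehat L(\widehat f)-\widehat L(f_r^*)\le p_r-p_{\widehat r}\le p_r$ (taking $f=f_r^*$ in~\eqref{eqn:unifreldev} shows $p_r\ge 0$), so $L(\widehat f)-L(f_r^*)\le 16p_r/7\le 3p_r$.

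If instead $\widehat r>r$, then $f_r^*\in\cF_{\widehat r}$. Bound~(a) applied to $\widehat f\in\cF_{\widehat r}$, combined with $\widehat L(\widehat f)\le\widehat L(f_{\widehat r}^*)$ (ERM on $\cF_{\widehat r}$), gives the preliminary estimate $L(\widehat f)\le L(f_{\widehat r}^*)+2p_{\widehat r}/7$. The main obstacle is that $p_{\widehat r}$ may exceed $p_r$, and both halves of~\eqref{eqn:unifreldev} are needed to tame it. Applying~(a) to $\widehat f$ in the reverse direction, using $L(\widehat f)\ge L(f_{\widehat r}^*)$, yields $\widehat L(f_{\widehat r}^*)-\widehat L(\widehat f)\le p_{\widehat r}/7$; applying~(b) to $f_r^*\in\cF_{\widehat r}$ yields $\widehat L(f_r^*)-\widehat L(f_{\widehat r}^*)\le 2(L(f_r^*)-L(f_{\widehat r}^*))+2p_{\widehat r}/7$. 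Adding these telescopes to $\widehat L(f_r^*)-\widehat L(\widehat f)\le 2(L(f_r^*)-L(f_{\widehat r}^*))+3p_{\widehat r}/7$. Feeding this into the rearranged selection inequality $p_{\widehat r}\le p_r+\widehat L(f_r^*)-\widehat L(\widehat f)$ and solving gives $(4/7)p_{\widehat r}\le p_r+2(L(f_r^*)-L(f_{\widehat r}^*))$, hence $2p_{\widehat r}/7\le p_r/2+(L(f_r^*)-L(f_{\widehat r}^*))$. Substituting back into the preliminary estimate cancels $L(f_{\widehat r}^*)$ and leaves $L(\widehat f)-L(f_r^*)\le p_r/2\le 3p_r$. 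Taking the infimum over $r$ and subtracting $\inf_f L(f)$ delivers~\eqref{eqn:oraclefast}. The hard part throughout is this self-bounding manipulation, which requires both directions of the relative deviation assumption to trap the reverse empirical gap and then convert an empirical gap between $f_r^*$ and $f_{\widehat r}^*$ into a risk gap.
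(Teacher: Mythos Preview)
Your argument is correct. The paper does not give a detailed proof of this theorem; it only remarks that Part~1 follows from the decomposition~\eqref{eqn:decomp} (citing \cite{bbl-msee-02}) and that Part~2 is elementary (citing \cite{b-freeoims-07}). Your four-step chain for Part~1 is precisely that decomposition adapted to the penalized setting, and your case analysis and self-bounding manipulation for Part~2 are sound and in the spirit of the cited reference.
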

These are called {\em oracle inequalities}
because~\eqref{eqn:oracleslow} (respectively~\eqref{eqn:oraclefast})
gives the error bound that follows from the best of the uniform
bounds~\eqref{eqn:unifdev} (respectively~\eqref{eqn:unifreldev}), as
if we have access to an oracle who knows the complexity that gives the
best bound.  The proof of the first part is a straightforward
application of the same decomposition as~\eqref{eqn:decomp}; see, for
example,~\cite{bbl-msee-02}. The proof of the second part, which
allows significantly smaller penalties $p_r$ when faster rates are
possible, is also elementary; see~\cite{b-freeoims-07}.  In both
cases, the broad approach to managing the trade-off between
approximation error and estimation error is qualitatively the
same: having identified a complexity hierarchy $\{\cF_r\}$ with
corresponding excess risk bounds $p_r$, these results show the
effectiveness of choosing from the hierarchy a function $f$ that
balances the complexity penalty $p_r$ with the fit to the training
data $\widehat L(\prederm^r)$.

Later in this section, we will see examples of upper bounds on
estimation error for neural network classes $\cF_r$ indexed by a
complexity parameter $r$ that depends on properties of the network,
such as the size of the parameters. Thus, a prediction method that
trades off the fit to the training data with these measures of
complexity would satisfy an oracle inequality.

\subsection{Computational complexity of empirical risk minimization}
\label{sec:Intractability}

To this point, we have considered the statistical performance of the
empirical risk minimizer $\prederm$ without considering the computational
cost of solving this optimization problem.  The classical cases
where it can be solved efficiently involve linearly parameterized
function classes, convex losses, and convex complexity penalties, so
that penalized empirical risk minimization is a convex optimization
problem.  For instance, SVMs exploit a linear function class (an
RKHS, $\cH$), a convex loss, 
	\[
		\ell(f(\bx),y) := (1-yf(\bx))\vee 0 ~~\text{for}~~ f:\cX\to\Re ~~\text{and}~~ y\in\{\pm 1\}, 
	\] 
and a convex complexity
penalty, 
	\[
		\cF_r=\{f\in\cH:\|f\|_{\cH}\le r\},\, p_r=r/\sqrt n,
	\] 
and choosing $\widehat f$ according to~\eqref{eqn:PERM} corresponds
to solving a quadratic program. Similarly, Lasso involves linear
functions on $\Re^d$, quadratic loss, and a convex penalty,
	\[
		\cF_r=\{\bx\mapsto \langle\bx,\bbeta\rangle:\|\bbeta\|_1\le r\},\,
p_r=r\sqrt{\log(d)/n}.
\] 
Again, minimizing complexity-penalized empirical risk corresponds to solving a
quadratic program.

On the other hand, the optimization problems that arise in a
classification setting, where functions map to a discrete set,
have a combinatorial flavor, and are often computationally hard in
the worst case. For instance, empirical risk minimization over the
set of linear classifiers
  \[
    \cF = \left\{\bx\mapsto\sign\left(\langle \bw, \bx\rangle\right)
        :\bw\in\Re^d\right\}
  \]
is NP-hard~\cite{jp-dhp-78,gj-ci-79}.  In contrast, if there is a
function in this class that classifies all of the training data
correctly, finding an empirical risk minimizer is equivalent
to solving a linear program, which can be solved efficiently.
Another approach to simplifying the algorithmic challenge of
empirical risk minimization is to replace the discrete loss for
this family of thresholded linear functions with a surrogate convex
loss for the family of linear functions. This is the approach used
in SVMs: replacing a nonconvex loss with a convex loss allows for
computational efficiency, even when there is no thresholded linear
function that classifies all of the training data correctly.

However, the corresponding optimization problems for
neural networks appear to be more difficult. Even when
$\widehat L(\prederm)=0$, various natural empirical risk
minimization problems over families of neural networks are
NP-hard~\cite{j-nndcl-90,br-t3nnn-92,dss-ctnncaf-95}, and this is
still true even for convex losses~\cite{v-itnnsse-98,bbd-hrnnap-02}.

In the remainder of this section, we focus on the statistical
complexity of prediction problems with neural network function classes
(we shall return to computational complexity considerations in
Section~\ref{sec:efficient}). We review
estimation error bounds involving these classes, focusing particularly
on the Rademacher complexity. The Rademacher
complexity of a loss class $\ell_{\cF}$ can vary dramatically with
the loss $\ell$.  For this reason, we consider separately discrete
losses, such as those used for classification, convex upper bounds
on these losses, like the SVM loss and other large margin losses
used for classification, and Lipschitz losses used for regression.

\subsection{Classification}

We first consider loss classes for the problem of classification. For
simplicity, consider a two-class classification problem, where
$\calY=\{\pm 1\}$, and define the $\pm 1$ loss, $\ell_{\pm
1}(\widehat y,y) = - y\widehat y$. Then for $\cF\subset\{\pm 1\}^{\calX}$,
$R_n(\ell_{\cF})= R_n(\cF)$, since the distribution of $\epsilon_i\ell_{\pm
1}(f(\bx_i),y_i)=-\epsilon_iy_if(\bx_i)$ is the same as that of
$\epsilon_if(\bx_i)$. The following theorem shows that the Rademacher
complexity depends on a combinatorial dimension of $\cF$, known as
the VC-dimension~\cite{vc71}.

\begin{theorem}\label{thm:vcdim}
For $\cF\subseteq[-1,1]^{\calX}$ and for any distribution on $\calX$,
  \[
    R_n(\cF) \le \sqrt{\frac{2\log(2 \Pi_{\cF}(n))}{n}},
  \]
where
  \[
    \Pi_{\cF}(n) = \max\left\{\left|\left\{(f(\bx_1),\ldots,f(\bx_n)):f\in \cF
    \right\}\right|:\bx_1,\ldots,\bx_n\in\calX\right\}.
  \]
If $\cF\subseteq\{\pm 1\}^{\calX}$ and
$n\ge d=d_{VC}(\cF)$, then
$$ \Pi_{\cF}(n) \le \left(en/d\right)^d,$$
where $d_{VC}(\cF) := \max\left\{d: \Pi_{\cF}(d)=2^d\right\}$.
In that case, for any distribution on $\calX$,
  \[
    R_n(\cF)=O\left( \sqrt{\frac{d\log(n/d)}{n}}\right),
  \]
and conversely, for some probability distribution,
$R_n(\cF) =\Omega\left(\sqrt{d/n}\right)$.
\end{theorem}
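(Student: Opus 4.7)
The plan is to establish the three claims in sequence. For the Rademacher complexity bound in terms of $\Pi_\cF(n)$, I would first condition on the sample $\bx_1,\ldots,\bx_n$ to reduce to a finite-class problem. Fix the sample and let $T = \{(f(\bx_1),\ldots,f(\bx_n)) : f \in \cF\} \subset [-1,1]^n$; then $|T| \le \Pi_\cF(n)$ and every $\bt \in T$ has $\|\bt\|_2 \le \sqrt n$. Conditionally on the sample, each random variable $n^{-1}\sum_i \epsilon_i t_i$ is mean-zero and sub-Gaussian with variance proxy at most $1/n$, so Massart's finite class lemma gives the conditional bound $\sqrt{2\log(2\Pi_\cF(n))/n}$ on $\E[\sup_{\bt \in T} |n^{-1}\sum_i \epsilon_i t_i| \mid \bx_1,\ldots,\bx_n]$; integrating over the sample preserves this deterministic bound.

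For the Sauer-Shelah inequality, I would use the standard shifting argument. Fixing any $\bx_1,\ldots,\bx_n \in \calX$, identify each distinct sign pattern of $\cF$ on the sample with the subset $\{i : f(\bx_i) = +1\} \subseteq [n]$, producing a family $\cC \subseteq 2^{[n]}$ of cardinality $|\{(f(\bx_1),\ldots,f(\bx_n)) : f \in \cF\}|$ whose VC-dimension as a set system on $[n]$ is at most $d$ (since any shattered $I \subseteq [n]$ pulls back to a shattered set in $\calX$). For each coordinate $i \in [n]$, the shift operator that replaces $C \in \cC$ by $C \setminus \{i\}$ whenever this does not collide with an existing member of $\cC$ preserves $|\cC|$ and does not increase the VC-dimension. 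Iterating in all coordinates produces a downward-closed family $\cC^*$ of the same cardinality, and downward closure together with VC-dimension at most $d$ forces $|\cC^*| \le \sum_{k=0}^d \binom{n}{k}$. The elementary inequality $\sum_{k=0}^d \binom{n}{k} \le (en/d)^d$ for $n \ge d$ completes the combinatorial step, and taking the maximum over samples gives $\Pi_\cF(n) \le (en/d)^d$; combining with the first part yields $R_n(\cF) = O(\sqrt{d\log(n/d)/n})$.

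For the matching lower bound, fix a set $\{\bx_1,\ldots,\bx_d\}$ shattered by $\cF$ and take $\P$ uniform on it. Writing $n_j = |\{i : \bz_i = \bx_j\}|$, shattering allows, for each realization of $\bepsilon$, the choice of $f \in \cF$ with $f(\bx_j) = \sign\bigl(\sum_{i : \bz_i = \bx_j} \epsilon_i\bigr)$, giving $\sup_{f \in \cF} \bigl|n^{-1}\sum_i \epsilon_i f(\bz_i)\bigr| \ge n^{-1} \sum_{j=1}^d \bigl|\sum_{i : \bz_i = \bx_j} \epsilon_i\bigr|$. A Khintchine-type lower bound yields $\E\bigl|\sum_{i=1}^m \epsilon_i\bigr| \ge c\sqrt m$ for a universal $c > 0$ and $m \ge 1$. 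Combined with concentration of multinomial counts (for $n \gtrsim d$, one has $n_j \ge n/(2d)$ for all $j$ with high probability), this gives $\E\sum_j \sqrt{n_j} = \Omega(\sqrt{nd})$, hence $R_n(\cF) = \Omega(\sqrt{d/n})$.

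The main obstacle is the Sauer-Shelah step: one must verify carefully that the shifting operator preserves cardinality and does not introduce any new shattered set, which is a purely combinatorial but delicate check. The probabilistic pieces---Massart's lemma for the upper bound and the $\Omega(\sqrt m)$ lower bound on $\E|\sum_{i=1}^m \epsilon_i|$ for the lower bound---are classical and plug in routinely.
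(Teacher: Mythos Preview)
The paper does not actually give a proof of this theorem; it is stated as a classical result with references to \cite{vc71} and \cite{t-sbgep-94}, so there is no in-paper argument to compare against. Your outline is the standard route and is essentially correct: Massart's finite-class lemma for the growth-function bound, the shifting (down-compression) proof of Sauer--Shelah, and a shattered-set construction for the lower bound are exactly the tools one expects here.

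One small point to tighten in the lower bound. You invoke ``concentration of multinomial counts'' to get $n_j \ge n/(2d)$ for all $j$ with high probability, but when $n/d$ is a small constant this fails: with $n_j \sim \mathrm{Bin}(n,1/d)$ of mean $1$ or $2$, the event $\{n_j = 0\}$ has constant probability. A cleaner way is to bound $\E\sqrt{n_j}$ directly. For $\mu := n/d \ge 2$, a second-moment argument (e.g.\ $\E\sqrt{X} \ge (\E X - \E[X^2]/(2\mu))/\sqrt{2\mu}$ with $\E[X^2] = \mu + \mu^2(1-1/n)$) gives $\E\sqrt{n_j} \ge c\sqrt{n/d}$, hence $R_n(\cF) \ge c' d \cdot \sqrt{n/d}/n = c'\sqrt{d/n}$. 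For $n \le 2d$ the target $\sqrt{d/n}$ is bounded below by a constant, and you get $R_n(\cF) \ge c''$ immediately by shattering $\min(n,d)$ points and choosing $f$ to match the signs of the $\epsilon_i$ exactly. With this adjustment the argument goes through for all $n$.
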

These bounds imply that, for the worst case probability
distribution, the uniform deviations between sample averages and
expectations grow like $\tilde\Theta(\sqrt{d_{VC}(\cF)/n})$,
a result of~\cite{vc71}. The log factor in the upper bound can
be removed; see~\cite{t-sbgep-94}.  Classification problems are
an example where the crude upper bound~\eqref{eqn:loosebound}
cannot be improved without stronger assumptions: the {\em
minimax excess risk} is essentially the same as these uniform
deviations. In particular, these results show that empirical
risk minimization leads, for any probability distribution,
to excess risk that is $O(\sqrt{d_{VC}(\cF)/n})$, but conversely,
for every method that predicts a $\widehat f\in \cF$, there is
a probability distribution for which the excess risk is
$\Omega(\sqrt{d_{VC}(\cF)/n})$~\cite{vc-tpr-74}.  When there
is a prediction rule in $\cF$ that predicts perfectly, that is
$L(f_{\cF}^*)=0$, the upper and lower bounds can be improved
to $\tilde\Theta(d_{VC}(\cF)/n)$~\cite{behw-lvd-89,ehkv-glbnenl-89}.

These results show that $d_{VC}(\cF)$ is critical for uniform
convergence of sample averages to probabilities, and more generally
for the statistical complexity of classification with a function
class $\cF$. The following theorem summarizes the known bounds on the
VC-dimension of neural networks with various piecewise-polynomial
nonlinearities. Recall that a
feed-forward neural network with $L$ layers is defined
by a sequence of layer widths $d_1,\ldots,d_L$ and functions
$\sigma_l: \Re^{d_l}\to\Re^{d_l}$ for $l=1,\ldots,L$. It is a
family of $\Re^{d_L}$-valued functions on $\Re^d$ parameterized
by $\btheta=(\bW_1,\ldots,\bW_L)$; see~\eqref{eqn:FFNN}.
We often consider scalar nonlinearities
$\sigma:\Re\to\Re$ applied componentwise, that is,
$\sigma_l(v)_i:=\sigma(v_i)$.  For instance, $\sigma$ might be the
scalar nonlinearity used in the {\em ReLU (rectified linear unit)},
$\sigma(\alpha)=\alpha\vee 0$.  We say that this family has $p$
parameters if there is a total of $p$ entries in the matrices
$\bW_1,\ldots,\bW_L$. We say that $\sigma$ is {\em piecewise polynomial}
if it can be written as a sum of a constant number of polynomials,
  \[
    \sigma(x) = \sum_{i=1}^k \indicator{x\in I_i} p_i(x),
  \]
where the intervals $I_1,\ldots,I_k$ form a partition of $\Re$ and the
$p_i$ are polynomials.

\begin{theorem}\label{thm:VCdimNNs}
Consider feed-forward neural networks $\cF_{L,\sigma}$ with $L$ layers,
scalar output (that is, $d_L=1$), output nonlinearity
$\sigma_L(\alpha)=\sign(\alpha)$, and scalar nonlinearity $\sigma$ at
every other layer. Define
  \[
    d_{L,\sigma,p}=\max\left\{d_{VC}(\cF_{L,\sigma}) :
      \text{$\cF_{L,\sigma}$ has $p$ parameters}
    \right\}.
  \]
\begin{enumerate}
\item\label{part:NNVC:0} For $\sigma$ piecewise constant,
  $d_{L,\sigma,p}= \tilde \Theta\left(p\right)$.
\item\label{part:NNVC:1} For $\sigma$ piecewise linear,
  $d_{L,\sigma,p}= \tilde \Theta\left(pL\right)$.
\item\label{part:NNVC:2} For $\sigma$ piecewise polynomial,
  $d_{L,\sigma,p}= \tilde O\left(pL^2\right)$.
\end{enumerate}
\end{theorem}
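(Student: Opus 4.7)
The plan is to establish the upper bounds via a sign-pattern counting argument and the matching lower bounds through explicit shattering constructions. For the upper bound direction, I fix any $n$ inputs $\bx_1,\ldots,\bx_n$ and regard $\btheta\mapsto\sign(f(\bx_i;\btheta))$ as $n$ functions on the parameter space $\reals^p$; by definition of $d_{VC}$, it suffices to upper bound the largest $n$ for which at least $2^n$ sign patterns can be realized as $\btheta$ ranges over $\reals^p$. The central tool is Warren's theorem (and its Milnor--Thom refinements): $m$ real polynomials of degree at most $D$ in $p$ real variables realize at most $(c m D/p)^p$ distinct sign vectors provided $m\ge p$. To apply it to networks with piecewise polynomial $\sigma$, first partition $\reals^p$ according to the activation pattern, i.e., which polynomial piece of $\sigma$ is active at each unit of each intermediate layer for each of the $n$ inputs; within each such cell, $\btheta\mapsto f(\bx_i;\btheta)$ is a \emph{single} polynomial.

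The three parts then differ only in the degrees and the counting of cells. For piecewise polynomial $\sigma$ of bounded piece-degree $d$, inside each cell the composed polynomial has degree at most $d^L$ in $\btheta$, and the cell boundaries themselves are cut out by polynomials of growing degrees across layers; a standard layerwise induction (of Goldberg--Jerrum/Bartlett--Maiorov--Meir type), applying Warren's theorem once to the collection of sign-determining polynomials, yields the $\tilde O(pL^2)$ bound of part~\ref{part:NNVC:2}. For piecewise linear $\sigma$, within each cell the composed polynomial remains \emph{linear} in $\btheta$, so the exponential-in-$L$ degree blowup disappears; a more careful telescoping argument that counts cell boundaries layer by layer, rather than flattening to one Warren application, saves a factor of $L$ and yields the $\tilde O(pL)$ of part~\ref{part:NNVC:1}. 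For piecewise constant $\sigma$, all intermediate layers compute locally constant functions of $\btheta$, so only the weights $\bW_L$ entering the final sign contribute continuous degrees of freedom, and the cell count plus a one-layer threshold argument gives $\tilde O(p)$ for part~\ref{part:NNVC:0}.

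For the lower bounds, part~\ref{part:NNVC:0} follows from the classical Baum--Haussler construction that shatters $\Omega(p)$ points using thresholded linear combinations. The $\Omega(pL)$ lower bound of part~\ref{part:NNVC:1} is more delicate and is the main obstacle: the Bartlett--Harvey--Liaw--Mehrabian \emph{bit-extraction} construction builds a depth-$L$ piecewise linear network that, using $\Theta(p)$ weights per layer, successively extracts binary digits of real-valued inputs via the triangle-wave map implemented by ReLUs, so that $\Omega(pL)$ suitably chosen inputs can be mapped to arbitrary labels. Verifying this construction requires tracking how independent parameters can be packed across depth without the earlier layers' computations destroying later degrees of freedom. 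The matching upper bound in the piecewise linear case is the other delicate point: a naive application of Warren's theorem to the cell-defining polynomials loses the crucial $L$ factor, so the tight analysis must bound the number of distinct activation patterns as $\btheta$ varies layer by layer, exploiting that cell boundaries introduced by layer $\ell$ are polynomials of degree only $\ell$, not $L$, in $\btheta$.
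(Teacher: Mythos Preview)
The paper does not supply its own proof of this theorem; immediately after the statement it attributes part~\ref{part:NNVC:0} to Baum--Haussler, the upper bound in part~\ref{part:NNVC:1} to Bartlett--Harvey--Liaw--Mehrabian, and the lower bound in part~\ref{part:NNVC:1} together with part~\ref{part:NNVC:2} to Bartlett--Maiorov--Meir, with a pointer to Chapter~8 of Anthony--Bartlett for an exposition. Your sketch is an accurate summary of what those references actually do: Warren-type sign-pattern counting over a cell decomposition of parameter space for the upper bounds (with the layerwise telescoping refinement being precisely the BHLM contribution that recovers the missing factor of $L$ in the piecewise-linear case), and explicit shattering constructions for the lower bounds.

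Two small corrections relative to the paper's attributions and to the underlying arguments. First, the $\Omega(pL)$ bit-extraction lower bound predates BHLM; it is due to Bartlett--Maiorov--Meir (1998), which is exactly how the paper credits it---BHLM's novelty is the matching \emph{upper} bound. Second, your explanation of the piecewise-constant upper bound (``only the weights $\bW_L$ entering the final sign contribute continuous degrees of freedom'') is not how that argument goes: every weight matters, since each threshold unit's output on the $n$ inputs depends on its own incoming weights. The actual $\tilde O(p)$ bound comes from bounding the growth function unit by unit---each threshold unit with $w$ incoming weights realizes at most $(en/w)^w$ dichotomies of its $n$ input vectors---and multiplying across all units, which yields $\log\Pi_{\cF}(n)\le \sum_i w_i\log(en/w_i)\le p\log(en)$ and hence $d_{VC}=O(p\log p)$. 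Your cell-decomposition framing can be made to work, but the cells are not carved by global hyperplanes in $\reals^p$; the partition has to be built layer by layer, conditioning on earlier units' binary outputs, which is effectively the same product argument.
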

Part~\ref{part:NNVC:0} is from~\cite{bh-wsngvg-89}.  The upper bound
in part~\ref{part:NNVC:1} is from~\cite{bhlm-ntvcdpdbfplnn-19}.
The lower bound in part~\ref{part:NNVC:1} and
the bound in part~\ref{part:NNVC:2}
are from~\cite{bmm-alvdbppn-98}.  There are also upper bounds
for the smooth sigmoid $\sigma(\alpha)=1/(1+\exp(-\alpha))$ that are
quadratic in $p$; see~\cite{km-pbvdsgpnn-97}. See Chapter~8
of~\cite{ab-nnltf-99} for a review.

The theorem shows that the VC-dimension of these neural networks
grows at least linearly with the number of parameters in the network,
and hence to achieve small excess risk or uniform convergence of
sample averages to probabilities for discrete losses, the sample size
must be large compared to the number of parameters in these networks.

There is an important caveat to this analysis: it captures
arbitrarily fine-grained properties of real-valued functions,
because the operation of thresholding these functions is very
sensitive to perturbations, as the following example shows.
\begin{example}
  For $\alpha>0$, define the nonlinearity $\tilde r(x):=(x+\alpha\sin
  x)\vee 0$ and the following one-parameter class of functions
  computed by two-layer networks with these nonlinearities:
    \[
      \cF_{\tilde r} := \left\{ x\mapsto \sign(\pi + \tilde r(wx) -
      \tilde r(wx+\pi)):
        w\in\Re\right\}.
    \]
  Then $d_{VC}(\cF_{\tilde r})=\infty$.
\end{example}
Indeed, provided $wx\ge\alpha$, $\tilde r(wx)=wx+\alpha\sin(wx)$, hence $\pi +
\tilde r(wx) - \tilde r(wx+\pi)=2\alpha\sin(wx)$. This shows that the set
of functions in $\cF_{\tilde r}$ restricted to $\mathbb{N}$ contains
    \begin{align*}
      \left\{ x\mapsto\sign(\sin(wx)):w\ge \alpha\right\}
      & =
      \left\{ x\mapsto\sign(\sin(wx)) :w\ge 0\right\},
    \end{align*}
and the VC-dimension of the latter class of functions on $\mathbb{N}$
is infinite; see, for example,~\cite[Lemma~7.2]{ab-nnltf-99}.  Thus,
with an arbitrarily small perturbation of the ReLU nonlinearity, the
VC-dimension of this class changes from a small constant to infinity.
See also~\cite[Theorem~7.1]{ab-nnltf-99}, which gives a similar
result for a slightly perturbed version of a sigmoid nonlinearity.

As we have seen, the requirement that the sample size grows with
the number of parameters is at odds with empirical experience:
deep networks with far more parameters than the number of training
examples routinely give good predictive accuracy.  It is plausible
that the algorithms used to optimize these networks are not
exploiting their full expressive power. In particular, the analysis
based on combinatorial dimensions captures arbitrarily fine-grained
properties of the family of real-valued functions computed by a deep
network, whereas algorithms that minimize a convex loss might not
be significantly affected by such fine-grained properties.  Thus,
we might expect that replacing the discrete loss $\ell_{\pm 1}$
with a convex surrogate, in addition to computational convenience,
could lead to reduced statistical complexity.
 The empirical success of gradient methods
with convex losses for overparameterized thresholded
real-valued classifiers was observed both in neural networks
\cite{mp-rhpld-90}, \cite{lgt-lnnt-97}, \cite{clg-onn-01} and in
related classification methods \cite{dc-bdt-95}, \cite{q-bbc-96},
\cite{b-ac-98}. It was noticed that classification performance
can improve as the number of parameters is increased even after
all training examples are classified correctly \cite{q-bbc-96},
\cite{b-ac-98}.\footnote{Both phenomena were observed more
recently in neural networks; see \cite{zhang2016understanding}
and \cite{neyshabur2017geometry}.} These observations motivated
{\em large margin analyses} \cite{b-scpcnn-98},
\cite{schapire1998boosting}, which reduce classification problems
to regression problems.

\subsection{Large margin classification}

Although the aim of a classification problem is to minimize the
expectation of a discrete loss, if we consider classifiers such
as neural networks that consist of thresholded real-valued functions
obtained by minimizing a surrogate loss---typically a convex
function of the real-valued prediction---then it turns out that we
can obtain bounds on estimation error by considering approximations
of the class of real-valued functions.  This is important because
the statistical complexity of that function class can be considerably
smaller than that of the class of thresholded functions. In effect,
for a well-behaved surrogate loss, fine-grained properties of
the real-valued functions are not important. If the surrogate
loss $\ell$ satisfies a Lipschitz property, we can relate the
Rademacher complexity of the loss class $\ell_{\cF}$ to that of the
function class $\cF$ using the {\em Ledoux-Talagrand contraction
inequality}~\cite[Theorem~4.12]{lt-pbsip-91}.

\begin{theorem}\label{thm:contraction}
Suppose that, for all $y$, $\widehat y\mapsto\ell(\widehat y,y)$
is $c$-Lipschitz and satisfies $\ell(0,y)=0$. Then
$R_n(\ell_{\cF})\le 2cR_n(\cF)$.
\end{theorem}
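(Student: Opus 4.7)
The plan is a classical symmetrization and peeling argument that uses the hypothesis $\ell(0,y)=0$ to reduce the problem to the one-sided Ledoux--Talagrand contraction inequality, which in turn is proved by stripping off one coordinate at a time.

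\textbf{Step 1: condition on the sample and rescale.} Condition on $\{(\bx_i,y_i)\}_{i=1}^n$ and set $\phi_i(u):=c^{-1}\ell(u,y_i)$, so that each $\phi_i:\reals\to\reals$ is a $1$-Lipschitz contraction with $\phi_i(0)=0$. Writing $T:=\{(f(\bx_1),\ldots,f(\bx_n)) : f\in\cF\}\subset\reals^n$, the conclusion $R_n(\ell_{\cF})\le 2cR_n(\cF)$ will follow, after dividing by $n$, undoing the factor $c^{-1}$, and taking expectation over the sample, from the deterministic bound
$$\E_{\epsilon}\sup_{t\in T}\Big|\sum_{i=1}^n \epsilon_i\phi_i(t_i)\Big| \;\le\; 2\,\E_{\epsilon}\sup_{t\in T}\Big|\sum_{i=1}^n \epsilon_i t_i\Big|.$$

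\textbf{Step 2: augmentation removes the absolute value.} Enlarge the index set to $T_0:=T\cup\{\bzero\}$. Because $\phi_i(0)=0$, the point $\bzero$ contributes $0$ to every weighted sum, so both $\sup_{t\in T_0}\sum_i\epsilon_i\phi_i(t_i)$ and $\sup_{t\in T_0}\sum_i(-\epsilon_i)\phi_i(t_i)$ are nonnegative for every realization of $\epsilon$. Using $|a|=a^+ + (-a)^+$ together with the fact that $\epsilon$ and $-\epsilon$ have the same distribution,
$$\E_{\epsilon}\sup_{t\in T}\Big|\sum_i \epsilon_i\phi_i(t_i)\Big| \;\le\; 2\,\E_{\epsilon}\sup_{t\in T_0}\sum_i \epsilon_i\phi_i(t_i).$$

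\textbf{Step 3: one-coordinate peeling.} It remains to show $\E_\epsilon\sup_{T_0}\sum_i\epsilon_i\phi_i(t_i)\le \E_\epsilon\sup_{T_0}\sum_i\epsilon_i t_i$, which is in turn bounded by $\E_\epsilon\sup_{T}|\sum_i\epsilon_i t_i|$ since the middle quantity is nonnegative via $t=\bzero$. Do this one coordinate at a time: condition on $\epsilon_2,\ldots,\epsilon_n$ and set $R(t):=\sum_{i\ge 2}\epsilon_i\phi_i(t_i)$. Averaging over $\epsilon_1\in\{\pm 1\}$,
$$\E_{\epsilon_1}\sup_{t\in T_0}\big[\epsilon_1\phi_1(t_1)+R(t)\big] \;=\; \tfrac{1}{2}\sup_{t,t'\in T_0}\big[\phi_1(t_1)-\phi_1(t_1')+R(t)+R(t')\big].$$
Bound $\phi_1(t_1)-\phi_1(t_1')\le|t_1-t_1'|$. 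Since $|t_1-t_1'|+R(t)+R(t')$ is symmetric under the swap $(t,t')\leftrightarrow(t',t)$, writing $|t_1-t_1'|=\max(t_1-t_1',\,t_1'-t_1)$ and pulling the max outside shows that its supremum over $T_0\times T_0$ equals $\sup_{t,t'}[t_1-t_1'+R(t)+R(t')]=2\E_{\epsilon_1}\sup_t[\epsilon_1 t_1+R(t)]$. Thus $\phi_1$ is stripped off; iterating for $i=2,\ldots,n$ removes all the $\phi_i$.

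The main obstacle is the peeling step. The natural Lipschitz estimate produces only $|t_1-t_1'|$, whereas to identify the result with $\E_{\epsilon_1}\sup_t[\epsilon_1 t_1+R(t)]$ one needs the signed quantity $t_1-t_1'$ inside the supremum. The passage from $|t_1-t_1'|$ to $t_1-t_1'$ is legal only under the swap-symmetry of the surrounding expression, which is available for a one-sided Rademacher sum but fails for the absolute-value version. Reducing the original absolute-value average to a one-sided one is the purpose of Step 2, and that augmentation is legitimized precisely by the hypothesis $\ell(0,y)=0$, explaining why this hypothesis is essential (rather than an artifact).
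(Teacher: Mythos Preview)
Your proof is correct and is exactly the classical Ledoux--Talagrand argument. The paper does not actually give a proof of this statement: it simply attributes it to \cite[Theorem~4.12]{lt-pbsip-91} and moves on, so you have supplied precisely the argument the paper defers to. Your handling of the absolute value via augmentation by $\bzero$ (using $\ell(0,y)=0$) followed by the one-coordinate peeling with the swap-symmetry trick is the standard route and matches the cited reference.
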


Notice that the assumption that $\ell(0,y)=0$ is
essentially without loss of generality: adding a fixed
function to $\ell_{\cF}$ by replacing $\ell(\widehat y,y)$
with $\ell(\widehat y,y)-\ell(0,y)$ shifts the Rademacher
complexity by $O\left(1/\sqrt{n}\right)$.

For classification with $y\in\{-1,1\}$, the hinge loss $\ell(\widehat
y,y) = (1-y\widehat y)\vee 0$ used by SVMs and the logistic loss
$\ell(\widehat y,y) = \log\left(1+\exp(-y\widehat y)\right)$ are examples
of convex, $1$-Lipschitz surrogate losses.  The quadratic loss
$\ell(\widehat y,y)=(\widehat y-y)^2$ and the exponential loss
$\ell(\widehat
y,y) := \exp(-y\widehat y)$ used by AdaBoost~\cite{fs-dgolab-97}
(see Section~\ref{sec:implicit}) are also convex, and they are
Lipschitz when functions in $\cF$ have bounded range.

We can write all of these surrogate losses as $\ell_\phi(\widehat y,y)
:= \phi(\widehat y y)$ for some function $\phi:\Re\to[0,\infty)$. The
following theorem relates the excess risk to the excess surrogate
risk. It is simpler to state when $\phi$ is convex and when,
rather than $\ell_{\pm 1}$, we consider a shifted, scaled version,
defined as $\ell_{01}(\widehat y,y): = \indicator{\widehat y\not=y}$.
We use
$L_{01}(f)$ and $L_{\phi}(f)$ to denote $\Expect\ell_{01}(f(\bx),y)$
and $\Expect\ell_\phi(f(\bx),y)$ respectively.

\begin{theorem}\label{thm:psicomp}
For a convex function $\phi:\Re\to[0,\infty)$, define $\ell_\phi(\widehat
y,y) := \phi(\widehat y y)$ and
$C_\theta(\alpha):=(1+\theta)\phi(\alpha)/2+(1-\theta)\phi(-\alpha)/2$,
and define $\psi_\phi:[0,1]\to[0,\infty)$ as
  $  \psi_\phi(\theta)
  :=
      \inf\left\{ C_\theta(\alpha)
        : \alpha\le 0\right\}
      -\inf\left\{ C_\theta(\alpha)
        : \alpha\in\Re\right\}$.
Then we have the following.
\begin{enumerate}
\item For any measurable $\widehat f:\calX\to\Re$ and any probability
distribution $\P$ on $\calX\times\calY$,
  \[
    \psi_\phi\left(L_{01}(\widehat f) - \inf_f L_{01}(f)\right) \le
    L_{\phi}(\widehat f) - \inf_f L_{\phi}(f),
  \]
where the infima are over measurable functions $f$.
\item For $|\calX|\ge 2$, this inequality cannot hold if
$\psi_\phi$ is replaced by any larger function:
  \begin{align*}
    &\sup_\theta \inf\left\{
    L_{\phi}(\widehat f) - \inf_f L_{\phi}(f) - \psi_\phi(\theta):
    \right.  \\*
    &\qquad\qquad\qquad\qquad\left.
    \text{$\P$, $\widehat f$ satisfy }
    L_{01}(\widehat f) - \inf_f L_{01}(f) = \theta\right\} = 0.
  \end{align*}
\item $\psi_\phi(\theta_i)\to 0$ implies
$\theta_i\to 0$ if and only if both $\phi$ is differentiable at zero and
$\phi'(0)<0$.
\end{enumerate}
\end{theorem}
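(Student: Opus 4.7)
The plan is to handle the three parts in sequence, anchored in a pointwise conditional-on-$\bx$ analysis. Writing $\eta(\bx)=\P(y=1\mid\bx)$ and $\theta(\bx)=2\eta(\bx)-1$, the Bayes-rule expressions $\inf_{f}L_\phi=\E H(\theta(\bx))$ (with $H(\theta):=\inf_\alpha C_\theta(\alpha)$) and $\inf_{f}L_{01}=\E\tfrac{1-|\theta(\bx)|}{2}$ identify the conditional excess $\phi$-risk at $\bx$ as $C_{\theta(\bx)}(\hat f(\bx))-H(\theta(\bx))$ and the conditional excess $0$-$1$ risk as $|\theta(\bx)|\indicator{\sign(\hat f(\bx))\ne\sign(\theta(\bx))}$. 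The heart of Part~1 is the pointwise inequality
\[
\psi_\phi\bigl(|\theta|\indicator{\sign(\alpha)\ne\sign(\theta)}\bigr)\le C_\theta(\alpha)-H(\theta),
\]
valid for all $\theta\in[-1,1]$ and $\alpha\in\Re$. When $\sign(\alpha)=\sign(\theta)$ the left side is $\psi_\phi(0)=0$; otherwise, the symmetry $C_\theta(\alpha)=C_{-\theta}(-\alpha)$ reduces to the case $\theta\ge 0$, where $\alpha\le 0$ is feasible for the infimum defining $\psi_\phi(\theta)$, giving the bound at once. Taking expectations and applying Jensen's inequality to $Z:=|\theta(\bx)|\indicator{\sign(\hat f(\bx))\ne\sign(\theta(\bx))}$, whose mean equals $L_{01}(\hat f)-\inf_f L_{01}$, yields the claim.

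Part~2 follows from a direct construction: given $\theta\in[0,1]$ and $\epsilon>0$, fix $\bx_0\in\calX$, put all probability mass at $\bx_0$ with $\eta(\bx_0)=(1+\theta)/2$, and choose $\hat f(\bx_0)=\alpha^\epsilon<0$ satisfying $C_\theta(\alpha^\epsilon)\le\inf_{\alpha\le 0}C_\theta(\alpha)+\epsilon$ (approaching $0$ from below if the infimum is attained at $\alpha=0$). Then $L_{01}(\hat f)-\inf_f L_{01}=\theta$ and $L_\phi(\hat f)-\inf_f L_\phi\le\psi_\phi(\theta)+\epsilon$, so no function strictly larger than $\psi_\phi$ can replace it in Part~1; the hypothesis $|\calX|\ge 2$ merely ensures room for the construction. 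Part~3 is then a subdifferential calculation: since $C_\theta$ is convex in $\alpha$, $\psi_\phi(\theta)>0$ iff every minimizer of $C_\theta$ is strictly positive iff the right derivative $\tfrac{1+\theta}{2}\phi'_+(0)-\tfrac{1-\theta}{2}\phi'_-(0)$ is strictly negative. This holds simultaneously for every $\theta\in(0,1]$ precisely when $\phi'_+(0)=\phi'_-(0)<0$, i.e.\ when $\phi$ is differentiable at $0$ with $\phi'(0)<0$. Conversely, any failure of this condition produces a right-interval $[0,\delta]$ with $\delta>0$ on which $\psi_\phi$ vanishes, so the constant sequence $\theta_i=\delta/2$ violates consistency. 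In the good case, convexity of $\psi_\phi$ combined with $\psi_\phi(0)=0$ forces $\psi_\phi(\theta)/\theta$ to be nondecreasing on $(0,1]$, so $\psi_\phi(\theta_i)\to 0$ forces $\theta_i\to 0$.

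The only genuinely nontrivial ingredient is convexity of $\psi_\phi$, required for the Jensen step in Part~1 and the monotonicity argument in Part~3. I would derive this from convexity of $\phi$ as follows: in the classification-calibrated case $\phi'(0)<0$, the subdifferential analysis above shows every minimizer of $C_\theta$ is strictly positive for $\theta>0$, hence $\inf_{\alpha\le 0}C_\theta(\alpha)=C_\theta(0)=\phi(0)$ and $\psi_\phi(\theta)=\phi(0)-H(\theta)$; this is convex because $H$, being the infimum of functions affine in $\theta$, is concave. The remaining non-calibrated cases would be handled by showing $\psi_\phi$ equals its convex envelope, but in those cases Part~3 already renders the consistency conclusion vacuous.
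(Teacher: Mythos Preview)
The paper does not prove this result; it attributes it to Bartlett, Jordan and McAuliffe (2006). Your outline is essentially their argument: reduce to the conditional problem, establish the pointwise bound $\psi_\phi\bigl(|\theta|\,\indicator{\sign\alpha\ne\sign\theta}\bigr)\le C_\theta(\alpha)-H(\theta)$, then average via Jensen. Parts~2 and~3 are handled correctly.

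The one real gap is exactly where you flag it, but your treatment is off. You need convexity of $\psi_\phi$ for the Jensen step, and Part~1 is asserted for \emph{every} convex $\phi$, calibrated or not; saying ``Part~3 renders the consistency conclusion vacuous'' does not discharge the obligation to prove Part~1 in the non-calibrated case. The missing argument is short. Set $d_+(\theta)=\tfrac{1+\theta}{2}\phi'_+(0)-\tfrac{1-\theta}{2}\phi'_-(0)$, an affine function of $\theta$ with $d_+(0)\ge 0$. If $d_+\ge 0$ on all of $[0,1]$, then for every $\theta$ either $0$ minimizes $C_\theta$ or the minimizer lies in $(-\infty,0)$, so the constraint $\alpha\le 0$ is never active and $\psi_\phi\equiv 0$. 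Otherwise $d_+$ has negative slope; since $d_-(\theta)=\tfrac{1+\theta}{2}\phi'_-(0)-\tfrac{1-\theta}{2}\phi'_+(0)$ has the same slope and $d_-(0)\le 0$, we get $d_-(\theta)\le 0$ throughout, so whenever $d_+(\theta)\ge 0$ the point $0$ minimizes $C_\theta$ and $H(\theta)=\phi(0)$, while whenever $d_+(\theta)<0$ the constrained infimum equals $C_\theta(0)=\phi(0)$. In this case $\psi_\phi(\theta)=\phi(0)-H(\theta)$ identically, convex since $H$ is concave. Either way $\psi_\phi$ is convex, and your Jensen step goes through unconditionally. (This also explains why $|\calX|\ge 2$ is superfluous for convex $\phi$: since $\psi_\phi$ already equals its biconjugate, the one-point construction in your Part~2 is enough; the two-point requirement arises only in the cited reference's more general setting where $\phi$ need not be convex.)
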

For example, for the hinge loss $\phi(\alpha)= (1-\alpha)\vee 0$,
the relationship between excess risk and excess $\phi$-risk is
given by $\psi_\phi(\theta)=|\theta|$, for the quadratic loss
$\phi(\alpha)=(1-\alpha)^2$, $\psi_\phi(\theta)=\theta^2$,
and for the exponential loss $\phi(\alpha) =
\exp(-\alpha)$, $\psi_\phi(\theta)=1-\sqrt{1-\theta^2}$.
Theorem~\ref{thm:psicomp} is from~\cite{bjm-ccrb-05}; see
also \cite{l-nmblfc-04,lv-brcrbm-04} and \cite{z-sbccbcrm-04}.

Using~\eqref{eqn:decomp},~\eqref{eqn:loosebound},
and Theorems~\ref{thm:Rad} and~\ref{thm:contraction} to bound
$\Expect\ell_{\phi,\widehat f}-\inf_f\Expect\ell_{\phi,f}$ in terms of
$R_n(\cF)$ and combining with Theorem~\ref{thm:psicomp} shows that, if
$\phi$ is $1$-Lipschitz then with high probability,
\begin{equation}\label{eqn:phiriskbound}
  \psi_\phi\left(L_{01}(\widehat f)-\inf_fL_{01}(f)\right)
  \le 4R_n(\cF) + O\left(\frac{1}{\sqrt n}\right) +
  \inf_{f\in\cF}L_{\phi}(f)-\inf_fL_{\phi}(f).
\end{equation}
Notice that in addition to the Rademacher complexity of the
real-valued class $\cF$, this bound includes an approximation error
term defined in terms of the surrogate loss; the binary-valued
prediction problem has been reduced to a real-valued problem.

Alternatively, we could consider more naive bounds: If
a loss satisfies the pointwise inequality $\ell_{01}(\widehat
y,y)\le\ell_{\phi}(\widehat y,y)$, then we have an upper bound on risk in
terms of surrogate risk: $L_{01}(\widehat f)\le L_{\phi}(\widehat f)$.  In fact,
Theorem~\ref{thm:psicomp} implies that pointwise inequalities
like this are inevitable for any reasonable convex loss. Define a
surrogate loss $\phi$ as {\em classification-calibrated} if any $f$
that minimizes the surrogate risk $L_\phi(f)$ will also minimize the
classification risk $L_{01}(f)$.  Then part~3 of the theorem shows
that if a convex surrogate loss $\phi$ is classification-calibrated
then it satisfies
  \[
    \text{for all $\widehat y, y$, } \frac{\ell_\phi(\widehat y,y)}{\phi(0)}
    = \frac{\phi(\widehat y y)}{\phi(0)}
    \ge 1[\widehat y y \le 0] = \ell_{01}(\widehat y,y).
  \]
Thus, every classification-calibrated convex surrogate loss, suitably
scaled so that $\phi(0)=1$, is an upper bound on the discrete loss
$\ell_{01}$, and hence immediately gives an upper bound on risk in
terms of surrogate risk: $L_{01}(\widehat f)\le L_{\phi}(\widehat f)$.  Combining
this with Theorems~\ref{thm:Rad} and~\ref{thm:contraction} shows
that, if $\phi$ is also $1$-Lipschitz then with high probability,
\begin{equation}\label{eqn:phiupper}
  L_{01}(\widehat f)\le \widehat L_{\phi}(\widehat f) + 4R_n(\cF)
    + O\left(\frac{1}{\sqrt n}\right).
\end{equation}

\subsection{Real prediction}

For a real-valued function class $\cF$, there is an analog of
Theorem~\ref{thm:vcdim} with the VC-dimension of $\cF$ replaced by the
{\em pseudodimension} of $\cF$, which is the VC-dimension of
$\left\{(\bx,y)\mapsto \indicator{f(\bx)\ge y}: f\in \cF\right\}$;
see~\cite{p-epta-90}.  Theorem~\ref{thm:VCdimNNs} is true with the
output nonlinearity $\sigma_L$ of $\cF_{L,\sigma}$ replaced by any
Lipschitz nonlinearity and with $d_{VC}$ replaced by the
pseudodimension. However, using this result to obtain bounds on the
excess risk of an empirical risk minimizer would again require the
sample size to be large compared to the number of parameters.

Instead, we can bound $R_n(\ell_{\cF})$ more directly in many
cases. With a bound
on $R_n(\cF)$ for a class $\cF$ of real-valued functions computed by
neural networks, we can then apply Theorem~\ref{thm:contraction} to
relate $R_n(\ell_{\cF})$ to $R_n(\cF)$, provided the loss is a Lipschitz
function of its first argument.  This is the case, for example,
for absolute loss $\ell(\widehat y,y)=|\widehat y-y|$, or for quadratic
loss $\ell(\widehat y,y)=(\widehat y-y)^2$ when $\cY$ and the range of
functions  in $\cF$ are bounded.

The following result gives a bound on Rademacher complexity for neural
networks that use a bounded, Lipschitz nonlinearity, such as the
sigmoid function
  \[
    \sigma(x) = \frac{1-\exp(-x)}{1+\exp(-x)}.
  \]

\begin{theorem}\label{thm:sigmoidRn}
For two-layer neural networks defined on $\calX=[-1,1]^d$,
  \[
    \cF_B = \left\{\bx\mapsto \sum_{i=1}^k
        b_i\sigma\left(\langle \bw_i,\bx\rangle\right)
        : \|\bb\|_1\le 1, \,
        \|\bw_i\|_1\le B,\, k\ge 1 \right\},
  \]
where the nonlinearity $\sigma:\Re\to[-1,1]$ is $1$-Lipschitz and has
$\sigma(0)=0$,
  \[
    R_n(\cF_B) \le B \sqrt{\frac{2\log 2d}{n}}.
  \]
\end{theorem}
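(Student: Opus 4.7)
The plan is to peel $\cF_B$ down to a finite class of $2d$ coordinate projections via three reductions: the outer $\ell_1$ convex combination of neurons, the $1$-Lipschitz nonlinearity $\sigma$, and the inner $\ell_1$ constraint on each weight vector $\bw_i$. A sub-Gaussian maximal inequality then finishes the argument.

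For the first reduction, the constraint $\|\bb\|_1 \le 1$ means that every $f \in \cF_B$ is a signed convex combination of elements of the single-neuron class
\[
\cG_B := \bigl\{\bx \mapsto \sigma(\langle \bw, \bx\rangle) : \|\bw\|_1 \le B\bigr\}.
\]
Since $g \mapsto n^{-1}\sum_i \epsilon_i g(\bx_i)$ is linear in $g$, the supremum of its absolute value over this symmetric convex hull is attained at an extreme point, and hence $R_n(\cF_B) = R_n(\cG_B)$.

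For the second reduction, I would invoke the Ledoux--Talagrand contraction principle (the tool behind Theorem~\ref{thm:contraction}): because $\sigma$ is $1$-Lipschitz with $\sigma(0) = 0$, composition with $\sigma$ does not increase the Rademacher complexity, so
\[
R_n(\cG_B) \le R_n(\cH_B), \qquad \cH_B := \bigl\{\bx \mapsto \langle \bw, \bx\rangle : \|\bw\|_1 \le B\bigr\}.
\]
The third reduction uses $\ell_1$/$\ell_\infty$ duality to obtain the exact identity
\[
R_n(\cH_B) = \E\sup_{\|\bw\|_1 \le B}\Bigl|\Bigl\langle \bw, \tfrac{1}{n}\sum_{i=1}^n \epsilon_i \bx_i\Bigr\rangle\Bigr| = \frac{B}{n}\,\E\max_{j \in \{1,\ldots,d\}} \Bigl|\sum_{i=1}^n \epsilon_i x_{ij}\Bigr|.
\]
For each coordinate $j$, the variable $\sum_i \epsilon_i x_{ij}$ is mean-zero sub-Gaussian with variance proxy $\sum_i x_{ij}^2 \le n$, since $|x_{ij}| \le 1$ on $\calX = [-1,1]^d$. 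The standard bound on the expected maximum of $2d$ sub-Gaussian variables (accounting for the two signs inside the absolute value) then gives
\[
\E\max_{j,\pm} \pm \sum_{i=1}^n \epsilon_i x_{ij} \le \sqrt{2n\log(2d)},
\]
and chaining the three steps together yields the claimed bound.

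The main subtlety is tracking constants across the contraction step: the version of Ledoux--Talagrand needed to avoid an extra factor of $2$ is the sharp one-sided form, applied after rendering the class symmetric by passing to $\cG_B \cup (-\cG_B)$ (which does not change $R_n$). Aside from this bookkeeping, the argument is an essentially routine chain of a convex-hull reduction, a Lipschitz comparison, and a Massart-type finite-class bound derived from Hoeffding sub-Gaussian tails.
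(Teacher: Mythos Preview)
Your three-step reduction (convex-hull peeling of the outer $\ell_1$ layer, Lipschitz contraction to remove $\sigma$, then $\ell_1/\ell_\infty$ duality plus a finite-class sub-Gaussian maximal inequality) is exactly the argument the paper has in mind; the paper itself only says the proof ``uses the contraction inequality (Theorem~\ref{thm:contraction}) and elementary properties of Rademacher complexity,'' and you have filled in precisely those steps. One caution on the constant: your symmetrization trick does not quite kill the factor of $2$ as stated, because the one-sided Ledoux--Talagrand lemma requires a \emph{single} contraction $\phi_i$ per coordinate, whereas on $\cG_B\cup(-\cG_B)$ the relevant contraction is $\sigma$ on one half and $-\sigma$ on the other---but this is the same constant-tracking issue that the paper's sketch (which invokes Theorem~\ref{thm:contraction}, itself carrying the factor $2$) leaves unresolved, so your proposal is at least as sharp as the paper's.
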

Thus, for example, applying~\eqref{eqn:phiriskbound} in this
case with a Lipschitz convex loss $\ell_\phi$ and the corresponding
$\psi_\phi$ defined by Theorem~\ref{thm:psicomp}, shows that with high
probability the minimizer $\prederm$ in $\cF_B$ of
$\widehat\Expect\ell_{f}$ satisfies
  \[
    \psi_\phi\left(L_{01}(\prederm)-\inf_fL_{01}(f)\right)
      \le O\left(B \sqrt{\frac{\log d}{n}}\right) +
      \inf_{f\in  \cF_B}L_{\phi}(f)-\inf_fL_{\phi}(f).
  \]
If, in addition, $\ell_\phi$ is scaled so that it is an upper bound on
$\ell_{01}$, applying~\eqref{eqn:phiupper} shows that with high
probability every $f\in \cF_B$ satisfies
  \[
    L_{01}(f) \le  \widehat L_{\phi}(f)
            + O\left(B\sqrt{\frac{\log d}{n}}\right).
  \]
Theorem~\ref{thm:sigmoidRn} is from~\cite{bm-rgcrbsr-02}. The proof
uses the contraction inequality (Theorem~\ref{thm:contraction})
and elementary properties of Rademacher complexity.

The following theorem gives similar error bounds for networks with
Lipschitz nonlinearities that, like the ReLU nonlinearity, do not
necessarily have a bounded range. The definition of the function class
includes deviations of the parameter matrices $\bW_i$ from fixed
`centers' $\bM_i$.

\begin{theorem}\label{thm:spectralN}
Consider a feed-forward network with $L$ layers, fixed vector
nonlinearities $\sigma_i:\Re^{d_i}\to\Re^{d_i}$ and parameter
$\btheta=(\bW_1,\ldots,\bW_L)$ with
$\bW_i\in\Re^{d_i\times d_{i-1}}$, for $i=1,\ldots,L$,
which computes functions
  \[
    f(\bx;\btheta) =
      \sigma_L(\bW_L\sigma_{L-1}(\bW_{L-1} \cdots \sigma_1(\bW_1\bx)\cdots)),
  \]
where $d_0=d$ and $d_L=1$. Define $\bar d =d_0\vee\cdots\vee d_L$.
Fix matrices $\bM_i\in\Re^{d_i\times d_{i-1}}$, for $i=1,\ldots,L$, and
define the class of functions on the unit Euclidean ball in $\Re^d$,
  \[
    \cF_r = \left\{f(\cdot,\btheta): \prod_{i=1}^L \|\bW_i\| \left(\sum_{i=1}^L
    \frac{\|\bW_i^\top-\bM_i^\top\|_{2,1}^{2/3}}{\|\bW_i\|^{2/3}}
    \right)^{3/2}\le r \right\},
  \]
where $\|\bA\|$ denotes the spectral norm of the matrix $\bA$ and
$\|\bA\|_{2,1}$ denotes the sum of the $2$-norms of its columns.
If the $\sigma_i$ are all $1$-Lipschitz and the surrogate
loss $\ell_\phi$ is a $b$-Lipschitz upper bound on the classification
loss $\ell_{01}$, then with probability at least $1-\delta$, every
$f\in \cF_r$ has
  \[
    L_{01}(f) \le \widehat L_{\phi}(f)
      + \tilde O\left(\frac{rb\log\bar d
      +\sqrt{\log(1/\delta)}}{\sqrt{n}}\right).
  \]
\end{theorem}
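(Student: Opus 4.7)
The plan is to bound $R_n(\cF_r)$ and then combine it with standard reductions. Since $\ell_\phi$ is a $b$-Lipschitz upper bound on $\ell_{01}$, we have the pointwise inequality $L_{01}(f)\le L_\phi(f)$. Applying the concentration half of Theorem~\ref{thm:Rad} to the loss class $\ell_\phi\circ\cF_r$ (rescaled to $[0,1]$) and the contraction inequality (Theorem~\ref{thm:contraction}) reduces the problem to showing
\[
  R_n(\cF_r)\le \tilde O\!\left(\frac{r\log\bar d}{\sqrt n}\right),
\]
from which the claimed bound on $L_{01}(f)-\widehat L_\phi(f)$ follows after a union bound absorbing the $\sqrt{\log(1/\delta)/n}$ tail.

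To bound $R_n(\cF_r)$, I would pass through covering numbers via Dudley's entropy integral: it suffices to show, for any sample $\bx_1,\ldots,\bx_n$ in the unit Euclidean ball, that
\[
  \log \cN\!\left(\epsilon,\, \cF_r|_{\bx_{1:n}},\, \|\cdot\|_2\right)
    \le \tilde O\!\left(\frac{r^{2}\log\bar d}{\epsilon^{2}}\right),
\]
where the norm is the empirical $\ell_2$ norm on the sample. The covers will be built layer by layer. The central technical step is an induction on $L$ using the Lipschitz-in-weights bound
\[
  \max_{j\le n}\bigl|f(\bx_j;\btheta)-f(\bx_j;\btheta')\bigr|
    \le \Bigl(\prod_{i=1}^L\|\bW_i\|\Bigr)\sum_{i=1}^L\frac{\|\bW_i-\bW_i'\|_{\op}}{\|\bW_i\|},
\]
which follows from the $1$-Lipschitz property of each $\sigma_i$ and a telescoping argument. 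This reduces the problem to covering each layer's weights in operator norm as perturbations of the fixed center $\bM_i$, with accuracies $\epsilon_i$ that add up (after multiplication by $\prod_j\|\bW_j\|/\|\bW_i\|$) to $\epsilon$.

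At each individual layer, the required operator-norm covering number of the image $\{\bW_i\bH_{i-1}:\|\bW_i^\top-\bM_i^\top\|_{2,1}\le a_i\}$, where $\bH_{i-1}$ is the matrix of layer-$(i{-}1)$ activations on the sample, is bounded by Maurey's sparsification lemma: any column of $(\bW_i-\bM_i)\bH_{i-1}$ is a convex combination of scaled columns of $\bH_{i-1}$ and can be $k$-sparsified with error $O(a_i\|\bH_{i-1}\|_{\op}/\sqrt k)$, giving
\[
  \log\cN_i(\epsilon_i)\lesssim \frac{a_i^{2}\|\bH_{i-1}\|_{\op}^{2}}{\epsilon_i^{2}}\log(2\bar d^{2}).
\]
Summing these per-layer bounds and using the inductively controlled $\|\bH_{i-1}\|_{\op}\le \prod_{j<i}\|\bW_j\|\cdot\sqrt n$ (since the inputs lie in the unit ball), one arrives at
\[
  \log\cN(\epsilon,\cF_r|_{\bx_{1:n}},\|\cdot\|_2)\lesssim \frac{\log(\bar d)}{\epsilon^{2}}\Bigl(\prod_i\|\bW_i\|\Bigr)^{2}\sum_i\frac{\|\bW_i-\bM_i\|_{2,1}^{2}}{\|\bW_i\|^{2}\,\beta_i^{2}},
\]
where $\{\beta_i\}$ is a free allocation of the total budget $\epsilon$ across layers. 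Optimizing over $\{\beta_i\}$ via Hölder's inequality is precisely what yields the mixed $\ell_{2/3}$-type combination defining $\cF_r$; the minimizing choice converts $\sum_i(\cdots)^{2}/\beta_i^{2}$ into $(\sum_i(\cdots)^{2/3})^{3}$, producing the bound $r^{2}$. Plugging into Dudley's integral gives $R_n(\cF_r)\lesssim r\sqrt{\log\bar d}/\sqrt n$ up to polylog factors.

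The main obstacle will be the layer-wise covering bookkeeping: stating the Lipschitz-in-weights lemma with the right normalization, propagating the sample-dependent operator-norm bounds on hidden activations without losing factors of depth, and carrying out the Hölder optimization to obtain exactly the $(2/3,3/2)$-power form rather than a looser sum. Once this is in place, everything else is the standard Dudley+concentration machinery already invoked by Theorem~\ref{thm:Rad}.
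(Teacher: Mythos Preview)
Your proposal is correct and matches the approach the paper points to. The paper does not actually prove Theorem~\ref{thm:spectralN}; it cites \cite{snmbnn-manng-17} and remarks only that ``the proof uses different techniques (covering numbers rather than the Rademacher complexity) to address the key technical difficulty, which is controlling the scale of vectors that appear throughout the network.'' Your outline---layerwise covers of $\bW_i\bH_{i-1}$ via Maurey sparsification, propagation of $\|\bH_{i-1}\|_{\op}$ through the $1$-Lipschitz nonlinearities, H\"older optimization over the per-layer budgets to produce the $(2/3,3/2)$ exponents, and Dudley's integral to pass back to Rademacher complexity---is precisely the argument of that cited paper, so there is nothing to compare beyond noting that you have filled in what the survey deliberately left as a reference.
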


Theorem~\ref{thm:spectralN} is from \cite{snmbnn-manng-17}.
The proof uses different techniques (covering numbers rather than
the Rademacher complexity) to address the key technical difficulty,
which is controlling the scale of vectors that appear throughout
the network.

When the nonlinearity has a {\em $1$-homogeneity property}, the
following result gives a simple direct bound on the Rademacher
complexity in terms of the Frobenius norms of the weight matrices
(although it is worse than Theorem~\ref{thm:spectralN}, even with
$\bM_i=0$, unless the ratios $\|\bW_i\|_F/\|\bW_i\|$ are close to $1$).
We say that $\sigma:\Re\to\Re$ is $1$-homogeneous
if $\sigma(\alpha x) = \alpha\sigma(x)$ for all $x\in\Re$ and
$\alpha\ge 0$.  Notice that the ReLU nonlinearity $\sigma(x)=x\vee 0$
has this property.

\begin{theorem}\label{thm:FrobeniusRn}
Let $\bar\sigma:\Re\to\Re$ be a fixed $1$-homogeneous
nonlinearity, and define the componentwise version
$\sigma_i:\Re^{d_i}\to\Re^{d_i}$ via
$\sigma_i(\bx)_j=\bar\sigma(\bx_j)$.
Consider a network with $L$ layers of these nonlinearities
and parameters $\btheta=(\bW_1,\ldots,\bW_L)$,
which computes functions
  \[
    f(\bx;\btheta) =
      \sigma_L(\bW_L\sigma_{L-1}(\bW_{L-1} \cdots \sigma_1(\bW_1\bx)\cdots)).
  \]
Define the class of functions on the unit Euclidean ball in $\Re^d$,
  \[
    \cF_B = \left\{f(\cdot;\btheta): \|\bW_i\|_F\le B \right\},
  \]
where $\|\bW_i\|_F$ denotes the Frobenius norm of $\bW_i$.
Then we have
  \[
    R_n(\cF_B) \lesssim \frac{\sqrt{L}B^L}{\sqrt n}.
  \]
\end{theorem}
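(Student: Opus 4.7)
The plan is to prove the bound by a peeling argument that strips off the layers one at a time, from $L$ down to $1$, exploiting positive 1-homogeneity of $\bar\sigma$ to absorb the Frobenius-norm bound on each $\bW_\ell$ as a multiplicative factor $B$. A naive use of the Ledoux--Talagrand contraction inequality (Theorem~\ref{thm:contraction}) at every layer would introduce a factor of $2$ per peel and, more seriously, a $\sqrt{\log n}$ per peel when converting back from a worst-case-over-coordinates supremum, yielding $(\log n)^{L/2}$ in total. To avoid this I would follow the strategy of Golowich, Rakhlin and Shamir: lift everything to a moment-generating function (MGF) of the Rademacher supremum, so that the $L$ peeling steps combine additively inside a single exponent rather than multiplicatively as logarithmic losses.

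First I would dispatch the outermost layer directly. Because $d_L=1$, the matrix $\bW_L$ is a row vector $\bw_L$ with $\|\bw_L\|_2=\|\bW_L\|_F\le B$, and 1-homogeneity forces $\bar\sigma$ to be of the form $\bar\sigma(z)=\bar\sigma(1)z_+ + \bar\sigma(-1)(-z)_+$, hence 1-Lipschitz with $\bar\sigma(0)=0$. Applying the contraction inequality once (to get rid of the outer $\sigma_L$) and then Cauchy--Schwarz against $\bw_L$ gives
\[
  nR_n(\cF_B)\;\le\;2B\,\mathbb{E}\sup_{\bW_1,\ldots,\bW_{L-1}}\Bigl\|\sum_{i=1}^n \epsilon_i\,\bh_{L-1}(\bx_i)\Bigr\|,
\]
where $\bh_{L-1}$ is the hidden vector produced by the first $L-1$ layers. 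I would then upper bound this by an MGF: for $\lambda>0$,
\[
  \mathbb{E}\sup\Bigl\|\textstyle\sum_i \epsilon_i\bh_{L-1}(\bx_i)\Bigr\|
  \;\le\;\tfrac{1}{\lambda}\log \mathbb{E}\sup \exp\!\Bigl(\lambda\Bigl\|\textstyle\sum_i \epsilon_i \bh_{L-1}(\bx_i)\Bigr\|\Bigr),
\]
and linearize the norm via $\exp(\lambda\|v\|)\le 2\,\mathbb{E}_{\bg}\exp(\lambda\bg^{\top}v)$ for a random unit vector $\bg$ (e.g. a standard Gaussian, up to a constant). This replaces $\|\cdot\|$ by an inner product that can be absorbed into the next layer.

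The core peeling step would then be applied inductively $L-1$ more times. At layer $\ell$, one writes $\bg^{\top}\sigma(\bW_\ell \bh_{\ell-1}(\bx_i))=\sum_j g_j\,\bar\sigma(\bw_{\ell,j}^{\top}\bh_{\ell-1}(\bx_i))$ and, because $\bar\sigma$ is positively homogeneous, pulls the Frobenius-norm constraint $\|\bW_\ell\|_F\le B$ out as a multiplicative factor $B$, folding the remaining direction into a new unit vector that plays the role of $\bg$ at the next level. Each such step multiplies the effective $\lambda$ by $B$ and contributes one additive term inside the exponent. After $L$ such peels one reaches a subgaussian MGF of Rademacher sums in the raw inputs $\bx_i$, and using $\|\bx_i\|\le 1$ bounds it by $\exp(c\lambda^{2}B^{2L}n)$. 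Combining and optimizing $\lambda\sim 1/(B^{L}\sqrt{n/L})$ yields $R_n(\cF_B)\lesssim \sqrt{L}\,B^{L}/\sqrt{n}$.

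The main obstacle is engineering the peeling lemma so that each step simultaneously (i) handles the vector-valued inner Rademacher sum, (ii) trades the Frobenius-norm constraint on a matrix for a factor $B$ without disturbing the remaining layers, and (iii) avoids the logarithmic overhead that contraction would impose. The MGF lifting is precisely the device that makes all three compatible, and positive 1-homogeneity of $\bar\sigma$ is essential: without it, the scalar outside $\bar\sigma(\cdot)$ cannot be pulled inside, the multiplicative accounting of the $B^L$ breaks, and one is forced back onto contraction with its $(\log n)^{L/2}$ penalty. A minor bookkeeping annoyance is that the 1-homogeneity is only \emph{positive}, so the linearizing vector $\bg$ must be handled carefully (sign-by-sign, or by decomposing into positive and negative parts of each coordinate); this is the one spot I would expect to consume most of the technical effort.
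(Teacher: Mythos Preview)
The paper does not prove Theorem~\ref{thm:FrobeniusRn}; it simply attributes the result to \cite{pmlr-v75-golowich18a} (Golowich, Rakhlin, Shamir). Your plan is precisely that proof: lift the Rademacher supremum to its MGF, use positive homogeneity at each layer to write $\sup_{\|W\|_F\le B}\bigl\|\sum_i\epsilon_i\sigma(W h_i)\bigr\|=B\sup_{\|u\|\le 1}\bigl|\sum_i\epsilon_i\bar\sigma(u^\top h_i)\bigr|$, apply the convex-function form of Ledoux--Talagrand contraction to the scalar class $\{u^\top h\}$, and pick up a factor $2B$ per layer; the resulting $2^L$ becomes an additive $L\log 2/\lambda$ after taking logs and yields the $\sqrt{L}$ upon optimizing $\lambda$ against the sub-Gaussian MGF of $\bigl\|\sum_i\epsilon_i\bx_i\bigr\|$.

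One correction: the step ``linearize the norm via $\exp(\lambda\|v\|)\le 2\,\mathbb{E}_{\bg}\exp(\lambda\bg^{\top}v)$ for a random unit vector $\bg$'' is not how the argument runs and is not right as stated. For a \emph{unit} $\bg$ the inequality is false (the expectation sits strictly below the sup), and for a standard Gaussian $\bg$ it holds but then $\|\bg\|\asymp\sqrt{d_\ell}$ contaminates the next peel with width dependence. The correct move is deterministic: use $\|v\|=\sup_{\|u\|\le 1}u^\top v$ directly, so that after the homogeneity step you already have a scalar Rademacher process over the symmetric class $\{x\mapsto u^\top h(x):\|u\|\le 1,\ h\in H\}$, and the convex-function contraction applies to $g=\exp(\lambda B\,\cdot)$ with no auxiliary randomization. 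This also dissolves your ``sign-by-sign'' worry at the end: positive homogeneity is only invoked to pull the nonnegative scalar $\|w_j\|$ through $\bar\sigma$, and the remaining unit directions are handled by a sup, not by a signed random vector.
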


This result is from \cite{pmlr-v75-golowich18a}, which also shows that
it is possible to remove the $\sqrt{L}$ factor at the cost of a worse
dependence on $n$. See also~\cite{pmlr-v40-Neyshabur15}.

\subsection{The mismatch between benign overfitting and uniform
convergence}
\label{sec:slt_mismatch}

It is instructive to consider the implications of the generalization
bounds we have reviewed in this section for the phenomenon of
benign overfitting, which has been observed in deep learning. For
concreteness, suppose that $\ell$ is the quadratic loss.  Consider a
neural network function $\widehat f\in \cF$ chosen so that $\widehat
L(\widehat
f)=0$. For an appropriate complexity hierarchy $\cF=\bigcup_r \cF_r$,
suppose that $\widehat f$ is chosen to minimize the complexity
$r(\widehat
f)$, defined as the smallest $r$ for which $\widehat f\in \cF_r$, subject
to the interpolation constraint $\widehat L(f)=0$. What do the bounds
based on uniform convergence imply about the excess risk $L(\widehat
f)-\inf_{f\in\cF}L(f)$ of this {\em minimum-complexity interpolant}?

Theorems~\ref{thm:sigmoidRn},~\ref{thm:spectralN},
and~\ref{thm:FrobeniusRn} imply upper bounds on risk in terms of
various notions of scale of network parameters. For these bounds
to be meaningful for a given probability distribution, there must
be an interpolating $\widehat f$ for which the complexity $r(\widehat f)$
grows suitably slowly with the sample size $n$ so that the excess
risk bounds converge to zero.

An easy example is when there is an $f^*\in \cF_r$ with $L(f^*)=0$,
where $r$ is a fixed complexity. Notice that this implies not
just that the conditional expectation is in $\cF_r$, but that there
is no noise, that is, almost surely $y=f^*(\bx)$. In that case,
if we choose $\widehat f$ as the interpolant $\widehat L(\widehat f)=0$ with
minimum complexity, then its complexity will certainly satisfy
$r(\widehat f)\le r(f^*)=r$. And then as the sample size $n$ increases,
$L(\widehat f)$ will approach zero.  In fact, since $\widehat L(\widehat f)=0$,
Theorem~\ref{theorem:fastRad} implies a faster rate in this case:
$L(\widehat f) = O((\log n)^4\bar R_n^2(\cF_r))$.

Theorem~\ref{thm:complexity-reg} shows that if we were to balance
the complexity with the fit to the training data, then we can hope
to enjoy excess risk as good as the best bound for any $\cF_r$ in
the complexity hierarchy.  If we always choose a perfect fit to the
data, there is no trade-off between complexity and empirical risk,
but when there is a prediction rule $f^*$ with finite complexity
and zero risk, then once the sample size is sufficiently large,
the best trade-off does correspond to a perfect fit to the data.
To summarize: when there is no noise, that is, when $y=f^*(x)$,
and $f^*\in \cF$, classical theory shows that a minimum-complexity
interpolant $\widehat f\in \cF$ will have risk $L(\widehat f)$ converging to
zero as the sample size increases.

But what if there is noise, that is, there is no deterministic
relationship between $\bx$ and $y$? Then it turns out that the bounds
on the excess risk $L(\widehat f)-L(f_{\cF}^*)$ presented in this section
must become vacuous: they can never decrease below a constant, no
matter how large the sample size.  This is because these bounds
do not rely on any properties of the distribution on $\calX$,
and hence are also true in a {\em fixed design} setting, where the
excess risk is at least the noise level. 

To make this precise, fix $\bx_1,\ldots,\bx_n\in\calX$ and define
the {\em fixed design risk}
$$ L_{|\bx}(f) := \frac{1}{n}\sum_{i=1}^n
      \Expect\left[\left.\ell(f(\bx_i),y)\right|\bx=\bx_i\right].$$
Then the decomposition~\eqref{eqn:decomp} extends to this risk:
for any $\widehat f$ and $f^*$,
  \begin{align*}
    \lefteqn{L_{|\bx}(\widehat f) - L_{|\bx}(f^*)} & \\*
      &= \left[ L_{|\bx}(\widehat f) - \widehat L(\widehat f) \right]
        + \left[ \widehat L(\widehat f) - \widehat L(f^*) \right]
        + \left[ \widehat L(f^*) - L_{|\bx}(f^*) \right].
  \end{align*}
For a nonnegative loss, the second term is nonpositive when $\widehat
L(\widehat f)=0$, and the last term is small for any fixed $f^*$.
Fix $f^*(\bx)=\Expect[y|\bx]$, and suppose we choose $\widehat f$ from a
class $\cF_r$. The same proof as that of Theorem~\ref{thm:Rad}
gives a Rademacher complexity bound on the first term above, and
\cite[Theorem~4.12]{lt-pbsip-91} implies the same contraction
inequality as in Theorem~\ref{thm:contraction} when $\widehat
y\mapsto\ell(\widehat y,y)$ is $c$-Lipschitz:
  \begin{align*}
    \Expect\sup_{f\in\cF_r} \left|L_{|\bx}(f) - \widehat L(f)\right|
    &\le 2 \Expect\left[\left.
      \sup_{f\in\cF_r}
      \left|\frac{1}{n}\sum_{i=1}^n \epsilon_i \ell(f(\bx_i),y_i)\right|
      \right|\bx_1,\ldots,\bx_n\right] \\
    &\le 4c \bar R_n(\cF_r).
  \end{align*}
Finally, although Theorems~\ref{thm:sigmoidRn} and
Theorem~\ref{thm:FrobeniusRn} are stated as bounds on the Rademacher
complexity of $\cF_r$, they are in fact bounds on $\bar R_n(\cF_r)$,
the worst-case empirical Rademacher complexity of $\cF$.

Consider the complexity hierarchy defined in Theorem~\ref{thm:sigmoidRn} 
or Theorem~\ref{thm:FrobeniusRn}.
For the minimum-complexity interpolant $\widehat f$, these theorems give
bounds that depend on the complexity $r(\widehat f)$, that is, bounds
of the form $L(\widehat f)-L(f^*)\le B(r(\widehat f))$ (ignoring
the fact that that the minimum complexity $r(\widehat f)$ is random;
making the bounds uniform over $r$ would give a worse bound).
Then these observations imply that
  \[
    \Expect \left[L_{|\bx}(\widehat f) - L_{|\bx}(f^*)\right]
    = \Expect L_{|\bx}(\widehat f) - L(f^*) \le \Expect B(r(\widehat f)).
  \]
But  then
  \begin{align*}
    \Expect B(r(\widehat f))
    & \ge \Expect\left[L_{|\bx}(\widehat f) - L(f^*)\right] 
    = \frac{1}{n}\sum_{i=1}^n\Expect\left(\widehat f(\bx_i)-f^*(\bx_i)\right)^2
    = L(f^*).
  \end{align*}
Thus, unless there is no noise, the upper bound on excess risk must be
at least as big as a constant.

\cite{bartlett2020failures} use a similar comparison between
prediction problems in random design and fixed design settings
to demonstrate situations where benign overfitting occurs but a
general family of excess risk bounds---those that depend only on
properties of $\widehat f$ and do not increase too quickly with
sample size---must sometimes be very loose. \cite{nk-ucmuegdl-19}
present a scenario where, with high probability, a classification
method gives good predictive accuracy but uniform convergence
bounds must fail for any function class that contains the
algorithm's output.  Algorithmic stability approaches---see
\cite{devroye1979distribution} and \cite{bousquet2002stability}---also aim to
identify sufficient conditions for closeness of risk and empirical
risk, and appear to be inapplicable in the interpolation
regime.  These examples illustrate that to understand benign
overfitting, new analysis approaches are necessary that exploit
additional information.  We shall review results of this kind in
Section~\ref{sec:benign}, for minimum-complexity interpolants in
regression settings. The notion of complexity that is minimized
is obviously of crucial importance here; this is the topic of the
next section.

\section{Implicit regularization}
\label{sec:implicit}

When the model $\cF$ is complex enough to ensure zero empirical error, such as in the case of overparametrized neural networks, the set of empirical minimizers may be large. Therefore, it may very well be the case that some empirical minimizers generalize well while others do not. Optimization algorithms introduce a bias in this choice: an iterative method may converge to a solution with certain properties. Since this bias is a by-product rather than an explicitly enforced property, we follow the recent literature and call it \emph{implicit regularization}. In subsequent sections, we shall investigate statistical consequences of such implicit regularization.

Perhaps the simplest example of implicit regularization is gradient descent on the square-loss objective with linear functions:
\begin{align}
	\label{eq:grad_descent_square_loss_linear}
	\btheta_{t+1} = \btheta_t - \eta_t \grad \widehat{L}(\btheta_t),~~~~ \widehat{L}(\btheta)= \frac{1}{n}\norm{\bX \btheta-\by}^2_2,~~ \btheta_0=\boldsymbol{0}\in\reals^d,
\end{align}
where $\bX = [\bx_1,\ldots,\bx_n]^\tr\in \reals^{n\times d}$ and
$\by=[y_1,\ldots,y_n]^\tr$ are the training data, and $\eta_t>0$ is
the step size.
While the set of minimizers of the square-loss objective in the
overparametrized ($d>n$) regime  is an affine subspace of dimension at least $d-n$, gradient descent (with any choice of step size that ensures convergence) converges to a very specific element of this subspace: the minimum-norm solution
\begin{align}
	\label{eq:min_norm_linear}
	\widehat{\btheta} = \argmin{\btheta} \Big\{ \| \btheta \|_2: \inner{\btheta,\bx_i} = y_i ~\text{for all}~ i \leq n \Big\}.
\end{align}
This minimum-norm interpolant can be written in closed form as
\begin{align}
	\widehat{\btheta} = \bX^\dagger \by,
\end{align}	
where $\bX^\dagger$ denotes the pseudoinverse.
It can also be seen as a limit of \emph{ridge regression}
\begin{align}
	\label{eq:reg_least_squares}
	\btheta_{\lambda} = \argmin{\btheta} \frac{1}{n}\norm{\bX\theta-\by}^2_2 + \lambda\norm{\btheta}^2_2
\end{align}
as $\lambda\to 0^+$. The connection between minimum-norm interpolation \eqref{eq:min_norm_linear} and the ``ridgeless'' limit of ridge regression will be fruitful in the following sections when statistical properties of these methods are analyzed and compared.

To see that the iterations in
\eqref{eq:grad_descent_square_loss_linear} converge to the
minimum-norm solution, observe that the Karush-Kuhn-Tucker (KKT) conditions for the
constrained optimization problem \eqref{eq:min_norm_linear} are
$\bX \btheta = \by$ and $\btheta+\bX^\tr \boldsymbol{\mu} = 0$
for Lagrange multipliers $\boldsymbol{\mu}\in\reals^n$. Both
conditions are satisfied (in finite time or in the limit) by any
procedure that interpolates the data while staying in the span of the
rows of $\bX$, including \eqref{eq:grad_descent_square_loss_linear}.
It should be clear that a similar statement holds for more general
objectives $\widehat{L}(\btheta) = n^{-1}\sum_i
\ell(\inner{\btheta,\bx_i},y_i)$ under appropriate assumptions on
$\ell$. Furthermore, if started from an arbitrary $\btheta_0$,
gradient descent (if it converges) selects a solution that is closest to
the initialization with respect to $\norm{\cdot}_2$.

Boosting is another notable example of implicit regularization arising from the choice of the optimization algorithm, this time for the problem of classification. Consider the linear classification objective
\begin{align}
	\label{eq:boosting_classification_loss}
	\widehat{L}_{01}(\btheta) = \frac{1}{n}\sum_{i=1}^n
    \indicator{-y_i \inner{\btheta, \bx_i} \ge 0}
\end{align}
where $y_1,\ldots,y_n\in\{\pm1\}$. In the classical formulation of the boosting problem, the coordinates of vectors $\bx_i$ correspond to features computed by functions in some class of base classifiers.  
Boosting was initially proposed as a method for minimizing empirical classification loss \eqref{eq:boosting_classification_loss} by iteratively updating $\btheta$. In particular, AdaBoost \cite{fs-dgolab-97} corresponds to \emph{coordinate descent} on the exponential loss function 
\begin{align}
	\label{eq:explosssurrogate}
	\btheta\mapsto\frac{1}{n}\sum_{i=1}^n \exp\{-y_i \inner{\btheta,\bx_i}\}
\end{align} 
\cite{b-ac-98,friedman2001}. Notably, the minimizer of this
surrogate loss does not exist in the general separable case, and there
are multiple directions along which the objective decreases to $0$ as
$\norm{\btheta}\to\infty$. The AdaBoost optimization procedure and its
variants were observed empirically to shift the distribution of
margins (the values $y_i\inner{\btheta_t,\bx_i}$, $i=1,\ldots,n$)
during the optimization process in the positive direction even after
empirical classification error becomes zero, which in part motivated
the theory of large margin classification \cite{schapire1998boosting}. In the separable case, convergence to the direction of the maximizing $\ell_1$ margin solution 
\begin{align}
	\label{eq:min_norm_linear_l1}
	\widehat{\btheta} = \argmin{\btheta} \Big\{\| \btheta \|_1: y_i\inner{\btheta,\bx_i} \geq 1 ~\text{for all}~ i \leq n \Big\}
\end{align}
was shown in \cite{zhang2005boosting} and \cite{telgarsky2013margins} assuming small enough step size, where separability means positivity of the margin
\begin{align}
	\label{eq:max_l1_margin}
	\max_{\norm{\btheta}_1 = 1} \min_{i\in[n]}~ y_i\inner{\btheta,\bx_i}.
\end{align}

More recently, \cite{soudry2018implicit} and \cite{ji2018risk} have shown that gradient (rather than coordinate) descent on \eqref{eq:explosssurrogate} and separable data lead to a solution with direction approaching that of the maximum $\ell_2$ (rather than $\ell_1$) margin separator
\begin{align}
	\label{eq:min_norm_linear_l2}
	\widehat{\btheta} = \argmin{\btheta} \Big\{ \| \btheta \|_2 : y_i\inner{\btheta,\bx_i} \geq 1 ~\text{for all}~ i \leq n \Big\}.
\end{align}
We state the next theorem from \cite{soudry2018implicit} for the case of logistic loss, although essentially the same statement---up to a slightly modified step size upper bound---holds for any smooth loss function that has appropriate exponential-like tail behavior, including $\ell(u)=e^{-u}$  \cite{soudry2018implicit,ji2018risk}. 
\begin{theorem}
	Assume the data $\bX$, $\by$ are linearly separable. For logistic
    loss $\ell(u)=\log(1+\exp\{-u\})$, any step size $\eta\leq 8
    \lambda^{-1}_{\text{max}}(n^{-1}\boldsymbol{\bX}^\tr \boldsymbol{\bX})$,
    and any initialization $\btheta_0$, the gradient descent
    iterations
	$$\btheta_{t+1} = \btheta_t - \eta \nabla \widehat{L}(\btheta_t), ~~~\widehat{L}(\btheta) = \frac{1}{n}\sum_{i=1}^n \ell (y_i \inner{\bx_i, \btheta})$$
	satisfy $\btheta_t = \widehat{\btheta} \cdot \log t + \rho_t$
	where $\widehat{\btheta}$ is the $\ell_2$ max-margin solution in \eqref{eq:min_norm_linear_l2}.
	Furthermore, the residual grows at most as $\norm{\rho_t}= O(\log\log t)$, and thus
	$$\lim_{t\to\infty} \frac{\btheta_t}{\norm{\btheta_t}_2} = \frac{\widehat{\btheta}}{\|\widehat{\btheta}\|_2}.$$
\end{theorem}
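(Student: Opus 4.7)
The plan is to follow the architecture of \cite{soudry2018implicit}. First, I would establish the coarse facts that $\widehat{L}(\btheta_t)\to 0$ and $\|\btheta_t\|\to\infty$. The logistic loss has $|\ell''|\le 1/4$, so $\widehat{L}$ has smoothness constant at most $\lambda_{\max}(n^{-1}\bX^\tr \bX)/4$, and the hypothesis $\eta\le 8/\lambda_{\max}(n^{-1}\bX^\tr \bX)$ therefore ensures the standard descent lemma
\[
\widehat{L}(\btheta_{t+1})\le \widehat{L}(\btheta_t)-\tfrac{\eta}{2}\|\nabla\widehat{L}(\btheta_t)\|_2^2 .
\]
Summing in $t$ gives $\sum_t\|\nabla\widehat{L}(\btheta_t)\|_2^2<\infty$. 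On linearly separable data the infimum of $\widehat{L}$ is $0$ but is not attained, while $\|\nabla\widehat{L}(\btheta)\|$ is bounded away from $0$ on any compact set; this forces $\|\btheta_t\|_2\to\infty$ and $\widehat{L}(\btheta_t)\to 0$, so eventually every margin $y_i\langle \bx_i,\btheta_t\rangle$ is positive and large.

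Next I would derive the log-rate ansatz from the exponential tail of $\ell$. Since $-\ell'(u)=1/(1+e^u)=e^{-u}(1+O(e^{-u}))$ as $u\to\infty$, once all margins are large we have
\[
-\nabla\widehat{L}(\btheta_t) = \frac{1}{n}\sum_{i=1}^n e^{-y_i\langle\bx_i,\btheta_t\rangle}\,y_i\bx_i\;\bigl(1+o(1)\bigr).
\]
If we posit $\btheta_t=\widehat{\btheta}\log t+\tilde{\btheta}+\rho_t$ with $\rho_t=o(\log t)$ and $\widehat{\btheta}$ the $\ell_2$ max-margin direction normalized so that the support vectors have $y_i\langle \bx_i,\widehat{\btheta}\rangle=1$, then all non-support-vector terms decay faster than $1/t$, and the gradient is asymptotic to $\frac{1}{nt}\sum_{i\in \mathrm{SV}}e^{-y_i\langle \bx_i,\tilde{\btheta}\rangle}y_i\bx_i$. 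Matching this against the desired increment $\widehat{\btheta}(\log(t+1)-\log t)\sim \widehat{\btheta}/t$ reproduces exactly the KKT stationarity condition of the max-margin program \eqref{eq:min_norm_linear_l2}: $\widehat{\btheta}=\sum_{i\in\mathrm{SV}}\alpha_i y_i\bx_i$ with $\alpha_i\ge 0$. So I would \emph{define} $\tilde{\btheta}$ to be any solution of the (possibly underdetermined) equations $\alpha_i = e^{-y_i\langle\bx_i,\tilde{\btheta}\rangle}/n$ for $i\in \mathrm{SV}$, whose existence follows from the dual characterization of the max-margin SVM together with the assumption that the support vectors span the relevant subspace (and whose uniqueness modulo orthogonal complement is irrelevant).

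With this $\tilde{\btheta}$ fixed, the main analytic step is to control the residual $\rho_t \deq \btheta_t - \widehat{\btheta}\log t - \tilde{\btheta}$. Subtracting the update for the ansatz term from the gradient descent update gives a recursion of the schematic form
\[
\rho_{t+1}-\rho_t = \eta\Bigl(-\nabla\widehat{L}(\btheta_t)-\frac{\widehat{\btheta}}{t}\Bigr) + O\bigl(t^{-2}\bigr).
\]
Splitting the gradient into the support-vector contribution (which by construction cancels $\widehat{\btheta}/t$ up to second-order terms in $\rho_t$) and the non-support-vector contribution (which is $O(t^{-(1+\delta)})$ for some $\delta>0$ coming from the margin gap), one obtains $\|\rho_{t+1}-\rho_t\| = O(1/(t\log t)) + O(t^{-(1+\delta)})$, and telescoping gives $\|\rho_t\|=O(\log\log t)$. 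The direction statement $\btheta_t/\|\btheta_t\|_2\to \widehat{\btheta}/\|\widehat{\btheta}\|_2$ is then an immediate consequence of $\|\rho_t\|=o(\log t)$.

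The hard part will be Step 3: tracking $\rho_t$ requires \emph{quantitative} separation between support vectors and non-support vectors throughout the trajectory, not merely in the limit. In particular, one must show that the identity of the support-vector set stabilizes once $t$ is large enough, and that the linearization of $-\nabla\widehat{L}$ around the ansatz contracts the residual (or at least grows it only logarithmically). This requires combining the exponential tail estimates of $\ell'$ with a careful second-order expansion in $\rho_t$, and it is where the specific choice of $\tilde{\btheta}$ via the KKT/dual conditions is essential. All other steps---the descent lemma, the identification of $\widehat{\btheta}$, and the telescoping---are comparatively routine once this residual analysis is in place.
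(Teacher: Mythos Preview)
The paper does not actually prove this theorem; it is stated as a cited result from \cite{soudry2018implicit} (and \cite{ji2018risk}) with no accompanying proof in the body or the supplementary material. Your proposal explicitly follows the architecture of \cite{soudry2018implicit}, which is exactly the source the paper defers to, so there is nothing to compare against here beyond noting that your outline is faithful to the original argument.
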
	

These results have been extended to multi-layer fully connected neural
networks and convolutional neural networks (without nonlinearities)
in  \cite{gunasekar2018implicit}. On the other hand,
\cite{gunasekar18a} considered the implicit bias arising from other
optimization procedures, including mirror descent, 
steepest descent, and AdaGrad, both in the case when the global minimum is attained (as for the square loss) and when the global minimizers are at infinity (as in the classification case with exponential-like tails of the loss function). We refer to \cite{ji2019refined} and \cite{nacson2019convergence} and references therein for further studies on faster rates of convergence to the direction of the max margin solution (with more aggressive time-varying step sizes) and on milder assumptions on the loss function. 

In addition to the particular optimization algorithm being employed, implicit regularization arises from the choice of model parametrization. Consider re-parametrizing the least-squares objective in \eqref{eq:grad_descent_square_loss_linear} as
\begin{align}
	\label{eq:sqrt_reparam}
	\min_{\bu\in\reals^d} \norm{\bX \btheta(\bu) - \by}^2_2,
\end{align}
where $\btheta(\bu)_i = \bu_i^2$ is the coordinate-wise square. \cite{gunasekar2017implicit} show that if $\btheta_\infty(\alpha)$ is the limit point of gradient flow on \eqref{eq:sqrt_reparam} with initialization $\alpha \boldsymbol{1}$ and the limit $\widehat{\btheta}=\lim_{\alpha\to 0} \btheta_\infty(\alpha)$ exists and satisfies $\bX \widehat{\btheta} = \by$, then it must be that
\begin{align}
	\widehat{\btheta} \in \argmin{\btheta\in \reals^d_+} \Big\{ \norm{\btheta}_1 : \inner{\btheta,\bx_i} = y_i ~\text{for all}~ i \leq n \Big\}.
\end{align}
In other words, in that case, gradient descent on the reparametrized
problem with infinitesimally small step sizes and infinitesimally
small initialization converges to the minimum $\ell_1$ norm solution
in the original space. More generally,
\cite{gunasekar2017implicit} and \cite{li2018algorithmic} proved an analogue of
this statement for matrix-valued $\btheta$ and $\bx_i$, establishing
convergence to the minimum nuclear-norm solution under additional assumptions on the $\bx_i$. The matrix version of the problem can be written as
$$\min_{\bU,\bV} \sum_{i=1}^n \ell (\inner{\bU\bV^\tr, \bx_i}, y_i),$$
which can be viewed, in turn, as an empirical risk minimization objective for a two-layer neural network with linear activation functions. 

In summary, in overparametrized problems that admit multiple minimizers of the empirical objective, the choice of the optimization method and the choice of parametrization both play crucial roles in selecting a minimizer with certain properties. As we show in the next section, these properties of the solution can ensure good generalization properties through novel mechanisms that go beyond the realm of uniform convergence.

\section{Benign overfitting}
\label{sec:benign}

We now turn our attention to generalization properties of specific
solutions that interpolate training data. As emphasized in
Section~\ref{sec:slt}, mechanisms of uniform convergence alone cannot
explain good statistical performance of such methods, at least in the
presence of noise.

For convenience, in this section we focus our attention on regression problems with square loss $\ell (f(\bx),y) = (f(\bx)-y)^2$. In this case, the regression function $f^*=\En[y|\bx]$ is a minimizer of $L(f)$, and excess loss can be written as
$$L(f) - L(f^*) = \En(f(\bx)-f^*(\bx))^2 =
\norm{f-f^*}^2_{L^2(\P)}.$$
We assume that for any $\bx$, conditional variance of the noise $\xi= y-f^*(\bx)$ is at most $\sigma_\xi^2$, and we write $\xi_i=y_i-f^*(\bx_i)$.

As in the previous section, we say that a solution $\algo$ is \emph{interpolating} if 
\begin{align}
	\algo(\bx_i) = y_i, ~~~ i=1,\ldots,n.
\end{align}
For learning rules $\algo$ expressed in closed form---such as local
methods and linear and kernel regression---it is convenient to employ a bias-variance decomposition that is different from the approximation-estimation error decomposition \eqref{eq:est_approx_decomp} in Section~\ref{sec:slt}.
First, for $\bX = [\bx_1,\ldots,\bx_n]^\tr\in \reals^{n\times d}$ and
$\by=[y_1,\ldots,y_n]^\tr$, conditionally on $\bX$, define
\begin{align}
	\label{eq:emp_bias_var_def}
	\biassquaredemp = \En_\bx\left(f^*(\bx) - \En_{\by} \algo(\bx)\right)^2,~~~~~ \varianceemp = \En_{\bx,\by}\left(\algo(\bx) - \En_{\by} \algo(\bx)\right)^2.
\end{align}
It is easy to check that
\begin{align}
	\En\|\algo-f^*\|^2_{L^2(\P)} = \En_{\bX} \left[ \biassquaredemp \right] + \En_{\bX} \left[\varianceemp\right].
\end{align}
In this section we consider linear (in $\by$) estimators of the form
$\algo(\bx) = \sum_{i=1}^n y_i \omega_i(\bx).$
For such estimators we have 
\begin{equation}\label{eqn:bias}
\biassquaredemp = \En_\bx\left(f^*(\bx)- \sum_{i=1}^n f^*(\bx_i) \omega_i(\bx)\right)^2
\end{equation}
and
\begin{equation}\label{eqn:var}
\varianceemp = \En_{\bx,\boldsymbol{\xi}}\left(\sum_{i=1}^n \xi_i
\omega_i(\bx)\right)^2 \leq \sigma^2_\xi \sum_{i=1}^n
\En_\bx\left(\omega_i(\bx)\right)^2,
\end{equation}
with equality if conditional noise variances are equal to $\sigma_\xi^2$ at each $\bx$.

In classical statistics, the balance between bias and variance is achieved by tuning an explicit parameter. Before diving into the more unexpected interpolation results, where the behavior of bias and variance are driven by novel self-regularization phenomena, we discuss the bias-variance tradeoff in the context of one of the oldest statistical methods.

\subsection{Local methods: Nadaraya-Watson}
\label{sec:local_methods}

Consider arguably the simplest nontrivial interpolation procedure, the
$1$-nearest neighbour (1-NN) $\algo(\bx)=y_{\mathsf{nn}(\bx)}$, where
$\mathsf{nn}(\bx)$ is the index of the datapoint closest to $\bx$ in
Euclidean distance. While we could view $\algo$ as an empirical minimizer in some effective class $\cF$ of possible functions (as a union for all possible $\{\bx_1,\ldots,\bx_n\}$), this set is large and growing with $n$. Exploiting the particular form of $1$-NN is, obviously, crucial. Since typical distances to the nearest neighbor in $\reals^d$ decay as $n^{-1/d}$ for i.i.d. data, in the noiseless case ($\sigma_\xi=0$) one can guarantee consistency and nonparametric rates of convergence of this interpolation procedure under continuity and smoothness assumptions on $f^*$ and the underlying measure.
 Perhaps more interesting is the case when the $\xi_i$ have non-vanishing variance. 
Here $1$-NN is no longer consistent in general (as can be easily seen
by taking $f^*=0$ and independent Rademacher $\xi_i$ at random
$\bx_i\in[0,1]$), although its asymptotic risk is at most $2L(f^*)$
\cite{cover1967nearest}. The reason for inconsistency is insufficient
averaging of the $y$-values, and this deficiency can be addressed by
averaging over the $k$ nearest neighbors with $k$ growing with $n$.
Classical smoothing methods generalize this idea of local averaging;
however, averaging forgoes empirical fit to data in favor of estimating the regression function under smoothness assumptions. While this has been the classical view, estimation is not necessarily at odds with fitting the training data for these local methods, as we show next.

The Nadaraya-Watson (NW) smoothing estimator
\cite{nadaraya1964estimating,watson1964smooth} is defined as
\begin{align}
	\label{eq:NW}
	\algo(\bx) = \sum_{i=1}^n y_i \omega_i(\bx),~~~~~~~ \omega_i(\bx) = \frac{K((\bx-\bx_i)/h)}{\sum_{j=1}^n K((\bx-\bx_j)/h)},
\end{align}	
where $K(u):\reals^d\to\reals_{\geq 0}$ is a kernel and $h> 0$ is a bandwidth parameter. For standard kernels used in practice---such as the Gaussian, uniform, or Epanechnikov kernels---the method averages the $y$-values in a local neighborhood around $\bx$, and, in general, does not interpolate. However, as noted by \cite{devroye1998hilbert}, a kernel that is singular at $0$ does interpolate the data. While the Hilbert kernel $K(u)=\norm{u}_2^{-d}$, suggested in \cite{devroye1998hilbert}, does not enjoy non-asymptotic rates of convergence, its truncated version
\begin{align}
	\label{eq:ker_nw}
	K(u) = \norm{u}^{-a}_2 \indicator{\norm{u}_2\leq 1},~ u\in\reals^d
\end{align}
 with a smaller power $0<a<d/2$ was shown in \cite{belkin2019does} to
 lead to minimax optimal rates of estimation under the corresponding smoothness assumptions. Notably, the NW estimator with the kernel in \eqref{eq:ker_nw} is necessarily interpolating the training data for any choice of $h$.

Before stating the formal result, define the H\"older class $H(\beta,L)$, for $\beta\in(0,1]$, as the class of functions $f:\reals^d\to \reals$ satisfying
$$\forall \bx,\bx'\in\reals^d,~~~~ |f(\bx)-f(\bx')|\leq L\norm{\bx-\bx'}_2^\beta.$$
The following result appears in \cite{belkin2019does}; see also \cite{belkin2018overfitting}:
\begin{theorem}
	\label{thm:NW}
	Let $f^*\in H(\beta,L)$ for $\beta\in(0,1]$ and $L>0$. Suppose the marginal density $p$ of $\bx$ satisfies $0<p_{\text{min}}\leq p(\bx) \leq p_{\text{max}}$ for all $\bx$ in its support. Then the estimator \eqref{eq:NW} with kernel \eqref{eq:ker_nw} satisfies\footnote{In the remainder of this paper, the symbol $\lesssim$ denotes inequality up to a multiplicative constant.}
		\begin{align}
			\En_{\bX} \left[\biassquaredemp\right] \lesssim
            h^{2\beta},~~~~\En_{\bX} \left[\varianceemp\right]
            \lesssim \sigma^2_\xi (nh^d)^{-1} \, .
		\end{align}
\end{theorem}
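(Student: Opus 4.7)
I would use the bias-variance decomposition already recorded in \eqref{eqn:bias}-\eqref{eqn:var} and treat the two terms separately. The bias is essentially a one-line consequence of H\"older smoothness together with the fact that the kernel in \eqref{eq:ker_nw} has compact support in the unit ball. The variance is the substantive part: one must show that, despite the singularity of $K$ at the origin, the random ratio $\sum_i \omega_i(\bx)^2$ behaves on average like $1/(nh^d)$, matching the standard non-singular bandwidth rate.

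\textbf{Bias.} Because $K$ is supported in the unit ball, on the event $E_h(\bx) := \{\exists\, i\le n :\ \|\bx-\bx_i\|\le h\}$ the weights $\omega_i(\bx)$ form a convex combination supported only on indices $i$ with $\|\bx-\bx_i\|\le h$. Applying \eqref{eqn:bias} and the H\"older bound pointwise,
\[
 \Big|f^*(\bx)-\sum_{i=1}^n f^*(\bx_i)\omega_i(\bx)\Big|
 \le \sum_{i=1}^n |f^*(\bx)-f^*(\bx_i)|\,\omega_i(\bx)\le L h^\beta .
\]
The event $E_h(\bx)^c$ has probability $(1-\Prob(\|\bx-\bx_1\|\le h))^n$, which is at most $\exp(-c_d p_{\min} n h^d)$; under the natural convention that $\hat f$ is defined to be $0$ (or any bounded value) on $E_h^c$, this event contributes a term exponentially small in $nh^d$ to $\En_{\bX}\biassquaredemp$, which is dominated by $h^{2\beta}$ in the regime of interest.

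\textbf{Variance.} Introduce the random quantities
\[
 S_k(\bx) := \sum_{j=1}^n \|\bx-\bx_j\|^{-ak}\indicator{\|\bx-\bx_j\|\le h},\qquad k=1,2,
\]
so that $\sum_i \omega_i(\bx)^2 = S_2(\bx)/S_1(\bx)^2$. Because $p$ is bounded and $ak<d$ for $k\in\{1,2\}$ (using $a<d/2$), a direct polar-coordinate calculation gives
\[
 \En_{\bx_1}\|\bx-\bx_1\|^{-ak}\indicator{\|\bx-\bx_1\|\le h}\asymp h^{\,d-ak},
 \qquad \En S_k(\bx)\asymp n h^{\,d-ak}.
\]
To bound $\En_{\bX}[S_2/S_1^2]$ I would condition on the number of data points in $B_h(\bx)$. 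Let $N_h(\bx):=\#\{j:\bx_j\in B_h(\bx)\}$, a $\mathrm{Bin}(n,q)$ variable with $q\asymp h^d$, and let $A(\bx):=\{N_h(\bx)\ge (1/2)\En N_h(\bx)\}$. A Chernoff bound gives $\Prob(A(\bx)^c)\le \exp(-c\, nh^d)$. On $A(\bx)$ the crude deterministic bound $S_1(\bx)\ge N_h(\bx)\cdot h^{-a}$ yields $S_1(\bx)\gtrsim n h^{d-a}$, hence
\[
 \En_{\bX}\!\left[\tfrac{S_2(\bx)}{S_1(\bx)^2}\indicator{A(\bx)}\right]
 \lesssim \frac{\En S_2(\bx)}{(nh^{d-a})^2}
 \asymp \frac{nh^{d-2a}}{n^2 h^{2d-2a}}=\frac{1}{nh^d}.
\]
On $A(\bx)^c$ I would invoke the trivial bound $\sum_i\omega_i(\bx)^2\le\sum_i\omega_i(\bx)=1$, so the contribution of this event is at most $\Prob(A(\bx)^c)\le e^{-c\,nh^d}$, which is absorbed into the $1/(nh^d)$ term. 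Integrating over $\bx\sim p$ and multiplying by $\sigma_\xi^2$ through \eqref{eqn:var} gives the stated variance bound.

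\textbf{Main obstacle.} The delicate point is the variance: because $K$ is singular at $0$, both numerator and denominator in $\omega_i(\bx)$ can be extremely large for $\bx$ near some $\bx_i$, so a direct moment computation for $\omega_i^2$ is awkward. The idea above---lower bounding $S_1$ by the \emph{count} $N_h(\bx)$ (which is tightly concentrated around $nh^d$) rather than by the full singular sum---avoids having to prove a concentration inequality for $S_1$ itself and cleanly decouples the singularity in $S_2$ from the denominator. Handling the low-probability event $A(\bx)^c$, where the ratio is not well-controlled, is done by the deterministic bound $\sum_i\omega_i^2\le 1$ together with the exponentially small probability, which is how the assumption $nh^d\to\infty$ (implicit in the regime where the stated rates are nontrivial) enters.
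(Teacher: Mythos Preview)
The paper does not prove this theorem; it simply attributes it to \cite{belkin2019does} (see the sentence preceding the theorem statement), so there is no in-paper proof to compare against. Your sketch is therefore to be judged on its own merits, and it is essentially sound: the bias argument is the standard one for compactly supported kernels with convex weights, and your variance argument is the right idea. The point that makes the singular kernel harmless is precisely the one you isolate, namely that the condition $a<d/2$ keeps $\En\|\bx-\bx_1\|^{-2a}\indicator{\|\bx-\bx_1\|\le h}$ finite and of order $h^{d-2a}$, so that the numerator $S_2$ has the ``right'' mean, while the denominator can be lower-bounded using only the count $N_h(\bx)$ and the worst-case contribution $h^{-a}$.

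Two minor points worth tightening. First, boundedness of $f^*$ on the support (needed to control the $E_h(\bx)^c$ contribution to the bias) is not a consequence of the H\"older condition alone; it follows here because $p\ge p_{\min}>0$ forces the support of $\P$ to have finite Lebesgue measure, hence to be bounded, and then H\"older continuity gives $\|f^*\|_\infty<\infty$. Second, the Chernoff bound $\Prob(N_h(\bx)<\tfrac12\En N_h(\bx))\le e^{-c\,nh^d}$ requires $q=\Prob(\|\bx-\bx_1\|\le h)\gtrsim h^d$, which holds at interior points but needs a mild regularity assumption on the support (e.g.\ a cone condition) to hold uniformly near the boundary; this is the standard boundary caveat for local averaging methods and is implicit in \cite{belkin2019does} as well. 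Neither issue affects the rates you obtain.
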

The result can be extended to smoothness parameters $\beta>1$
\cite{belkin2019does}. The choice of $h=n^{-1/(2\beta+d)}$
balances the two terms and leads to minimax optimal rates for H\"older classes \cite{tsybakov2008introduction}. 

In retrospect, Theorem~\ref{thm:NW} should not be surprising, and we mention
it here for pedagogical purposes. It should be clear from the
definition \eqref{eq:NW} that the behavior of the kernel at $0$,
and in particular the presence of a singularity, determines
whether the estimator fits the training data exactly. This is,
however, decoupled from the level of smoothing, as given by the
bandwidth parameter $h$.  In particular, it is the choice of $h$
alone that determines the bias-variance tradeoff, and the value
of the empirical loss cannot inform us whether the estimator is
over-smoothing or under-smoothing the data.

The NW estimator with the singular kernel can be also viewed as
adding small ``spikes'' at the datapoints on top of the general
smooth estimate that arises from averaging the data in a neighborhood
of radius $h$.  This suggests a rather obvious scheme for changing
any estimator $\algo_0$ into an interpolating one by adding small
deviations around the datapoints: $\algo(\bx) \deq \algo_0(\bx)
+ \Delta(\bx)$, where $\Delta(\bx_j)=y_i-\algo_0(\bx_j)$ but
$\|\Delta\|_{L^2(\P)}=o(1)$.  The component $\algo_0$ is useful
for prediction because it is smooth, whereas the spiky
component $\Delta$ is useful for interpolation but does not harm
the predictions of $\algo$. Such combinations have been observed
experimentally in other settings and described as ``spiked-smooth''
estimates \cite{wyner2017explaining}.  The examples that we see
below suggest that interpolation may be easier to achieve with
high-dimensional data than with low-dimensional data, and this is
consistent with the requirement that the overfitting component
$\Delta$ is benign: it need not be too ``irregular'' in high
dimensions, since typical distances between datapoints in $\reals^d$
scale at least as $n^{-1/d}$.

\subsection{Linear regression in the interpolating regime}
\label{sec:lin_regression}

In the previous section, we observed that the spiky part of
the NW estimator, which is responsible for interpolation,
does not hurt the out-of-sample performance when measured in
$L^2(\P)$.  The story for minimum-norm interpolating linear and
kernel regression is significantly more subtle: there is also
a decomposition into a prediction component and an overfitting
component, but there is no explicit parameter that trades off bias
and variance. The decomposition depends on the distribution of the
data, and the overfitting component provides a \emph{self-induced
regularization}\footnote{ This is not to be confused with {\em
implicit regularization}, discussed in Section~\ref{sec:implicit},
which describes the properties of the particular empirical
risk minimizer that results from the choice of an optimization
algorithm.  Self-induced regularization is a statistical property
that also depends on the data-generating mechanism.}, similar to the
regularization term in ridge regression~\eqref{eq:reg_least_squares},
and this determines the bias-variance trade-off.

Consider the problem of linear regression in the over-parametrized
regime. We assume that the regression function $f^*(\bx) =
f(\bx;\btheta^*)=\inner{\btheta^*,\bx}$ with $\btheta^*,\bx\in
\reals^d$.  We also assume $\Expect\bx=0$.  (While we present the
results for finite $d>n$, all the statements in this section hold
for separable Hilbert spaces of infinite dimension.)

It is easy to see that the excess square loss can be written as
  $$
    L(\widehat{\btheta})-L(\btheta^*)
    = \Expect\left(f(\widehat\btheta)-f(\btheta^*)\right)^2
    = \|\widehat{\btheta}-\btheta^*\|^2_{\bSigma},
  $$
where we write $\|\bv\|_\Sigma^2:=\bv^\tr\bSigma\bv$ and
$\bSigma = \En \bx\bx^\tr$.
Since $d>n$, there is not enough data to learn all the $d$ directions
of $\btheta^*$ reliably, unless $\bSigma$ has favorable spectral
properties. To take advantage of such properties, classical
methods---as described in Section~\ref{sec:slt}---resort to explicit
regularization (shrinkage) or model complexity control, which
inevitably comes at the expense of not fitting the noisy data exactly.
In contrast, we are interested in estimates that interpolate the data.
Motivated by the properties of the gradient descent method~\eqref{eq:grad_descent_square_loss_linear}, we consider the minimal norm linear
function that fits the data $\bX$, $\by$ exactly:
\begin{align}\label{eq:minnormlin}
	\widehat{\btheta} = \argmin{\btheta} \Big\{ \| \btheta \|_2: \inner{\btheta,\bx_i} = y_i ~\text{for all}~ i \leq n \Big\}.
\end{align}
The solution has a closed form and yields the estimator
\begin{align}
	\label{eq:def_linear_reg}
	\algo(\bx) = \langle \widehat{\btheta}, \bx \rangle = \langle \bX^\dagger \by, \bx \rangle
	= (\bX \bx)^\tr (\bX\bX^\tr)^{-1} \by,
\end{align}	
which can also be written as $\algo(\bx) = \sum_{i=1}^n y_i
\omega_i(\bx)$, with
\begin{align}
	\omega_i(\bx) = (\bx^\tr \bX^\dagger)_i = (\bX \bx)^\tr (\bX\bX^\tr)^{-1} \be_i.
\end{align}	
Thus, from \eqref{eqn:bias}, the bias term can be written as
\begin{align}
	\label{eq:bias_linear_emp}
	\biassquaredemp = \En_\bx \inner{P^\perp \bx,\btheta^*}^2 = \norm{ \bSigma^{1/2} P^{\perp} \btheta^*}^2_2,
\end{align}
where $P^\perp = \bI_d - \bX^\tr (\bX\bX^\tr)^{-1}\bX$, and
from \eqref{eqn:var}, the variance term is
\begin{align}
	\label{eq:var_linear}
	\varianceemp \leq \sigma^2_\xi \cdot \En_\bx\norm{(\bX\bX^\tr)^{-1} (\bX \bx)}^2_2 = \sigma^2_\xi \cdot \trace\left((\bX\bX^\tr)^{-2} \bX \bSigma \bX^\tr \right).
\end{align}

We now state our assumptions.
\begin{assumption}
	\label{assmpt:linear_reg}
	Suppose $\bz=\bSigma^{-1/2} \bx$ is $1$-sub-Gaussian. Without loss of generality, assume
    $\bSigma=\diag(\lambda_1, \ldots,\lambda_d)$
    with $\lambda_1\geq  \cdots \ge\lambda_d$.
\end{assumption}

The central question now is: Are there mechanisms that
can ensure small bias and variance of the minimum-norm
interpolant? Surprisingly, we shall see that the answer is yes. To
this end, choose an index $k\in\{1,\ldots,d\}$ and consider
the subspace spanned by the top $k$ eigenvectors corresponding
to $\lambda_1,\ldots,\lambda_k$. Write $\bx^\tr=[\bx_{\leq
k}^\tr,\bx_{> k}^\tr]$.  For an appropriate choice of $k$,
it turns out the decomposition of the minimum-norm interpolant as
$\langle\widehat\btheta,\bx\rangle = \langle\widehat\btheta_{\leq k},\bx_{\leq
k}\rangle + \langle\widehat\btheta_{> k},\bx_{> k}\rangle$ corresponds
to a decomposition into a prediction component and an interpolation
component. Write the data matrix as
$\bX = [\bX_{\leq k}, \bX_{>k}]$ and
\begin{align}
	\label{eq:splitting_two_parts}
	\bX\bX^\tr = \bX_{\leq k}\bX_{\leq k}^\tr + \bX_{> k}\bX_{> k}^\tr.
\end{align}
Observe that if the eigenvalues of the second part were to be
contained in an interval $[\gamma/c,c \gamma]$ for some $\gamma$ and a
constant $c$, we could write
\begin{align}
	\label{eq:replace_tail_reg}
	\bX_{\leq k}\bX_{\leq k}^\tr + \gamma \bM,
\end{align}
where $c^{-1}\bI_n\preceq\bM\preceq c\bI_n$.  If we replace
$\bM$ with the approximation $\bI_n$ and substitute this expression
into \eqref{eq:def_linear_reg}, we see that $\gamma$ would have
an effect similar to \emph{explicit} regularization through a
ridge penalty: if that approximation were precise, the first $k$
components of $\widehat\btheta$ would correspond to
\begin{align}
	\widehat{\btheta}_{\leq k} = \argmin{\btheta \in\reals^k} \norm{\bX_{\leq k} ~ \btheta - \by}^2_2 + \gamma\norm{\btheta}^2_2,
\end{align}
since this has the closed-form solution $\bX_{\leq k}^\tr (\bX_{\leq
k}\bX_{\leq k}^\tr + \gamma\bI_n)^{-1} \by$. Thus, if $\gamma$ is not too
large, we might expect this  approximation to have a minimal impact on the bias and
variance of the prediction component.

It is, therefore, natural to ask when to expect such a near-isotropic
behavior arising from the ``tail'' features. The following lemma
provides an answer to this question \cite{bartlett2020benign}:
\begin{lemma} 
	\label{eq:near_isometry_indep}
	Suppose coordinates of $\bSigma^{-1/2}\bx$ are independent. Then there exists a constant $c>0$ such that, with probability at least $1-2\exp\{-n/c\}$,
\begin{align*}
  \frac{1}{c} \sum_{i > k} \lambda_i - c\lambda_{k+1} n
  & \leq \lambda_{\text{min}}(\bX_{> k}\bX_{> k}^\tr) \\*
  &\leq \lambda_{\text{max}}(\bX_{> k}\bX_{> k}^\tr) 
  \leq c\left( \sum_{i > k} \lambda_i + \lambda_{k+1} n\right).
\end{align*}
\end{lemma}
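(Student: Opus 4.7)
The plan is a standard Bernstein-plus-net argument for the operator norm of $\bX_{>k}\bX_{>k}^\tr$ around its expectation, exploiting the independent-coordinates hypothesis. Writing $\bX_{>k}=\bZ_{>k}\bSigma_{>k}^{1/2}$, the matrix $\bZ_{>k}\in\reals^{n\times(d-k)}$ has jointly independent, $O(1)$-sub-Gaussian entries of unit variance (independence across rows is from the i.i.d.\ sample, independence across columns is the assumed coordinate-independence of $\bSigma^{-1/2}\bx$). Hence $\bX_{>k}\bX_{>k}^\tr=\bZ_{>k}\bSigma_{>k}\bZ_{>k}^\tr$ has expectation $\bigl(\sum_{i>k}\lambda_i\bigr)\bI_n$, and it is enough to prove the operator-norm deviation bound
\[
  \Big\|\bZ_{>k}\bSigma_{>k}\bZ_{>k}^\tr-\Big(\sum_{i>k}\lambda_i\Big)\bI_n\Big\|\le c\Big(\sum_{i>k}\lambda_i+n\lambda_{k+1}\Big)
\]
with probability at least $1-2e^{-n/c}$. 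Both inequalities in the lemma then follow, with the lower bound being vacuously satisfied by positive semidefiniteness whenever $\tfrac{1}{c}\sum_{i>k}\lambda_i-cn\lambda_{k+1}\le 0$.

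For a fixed unit vector $\bu\in\reals^n$, let $\bZ^{(i)}$ be the $i$-th column of $\bZ_{>k}$; then
\[
  \bu^\tr\bZ_{>k}\bSigma_{>k}\bZ_{>k}^\tr\bu=\sum_{i>k}\lambda_i\,(\bu^\tr\bZ^{(i)})^2
\]
is a sum of independent random variables. Each $\bu^\tr\bZ^{(i)}$ is $O(1)$-sub-Gaussian (by independence of the entries of $\bZ^{(i)}$ and $\|\bu\|_2=1$), so $\lambda_i(\bu^\tr\bZ^{(i)})^2$ is sub-exponential with mean $\lambda_i$ and $\psi_1$-norm of order $\lambda_i$. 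Bernstein's inequality for sums of independent sub-exponentials, combined with the crude estimate $\sum_{i>k}\lambda_i^2\le\lambda_{k+1}\sum_{i>k}\lambda_i$, yields
\[
  \P\!\Bigl(\bigl|\bu^\tr\bZ_{>k}\bSigma_{>k}\bZ_{>k}^\tr\bu-\sum_{i>k}\lambda_i\bigr|\ge t\Bigr)\le 2\exp\!\Bigl(-c_1\min\!\Bigl\{\tfrac{t^2}{\lambda_{k+1}\sum_{i>k}\lambda_i},\,\tfrac{t}{\lambda_{k+1}}\Bigr\}\Bigr).
\]
Choosing $t=C\bigl(\sum_{i>k}\lambda_i+n\lambda_{k+1}\bigr)$ and using $(\sum_{i>k}\lambda_i+n\lambda_{k+1})^2\ge 4n\lambda_{k+1}\sum_{i>k}\lambda_i$ (AM-GM), both arguments of the $\min$ are at least $c_2(C)\,n$ with $c_2(C)$ growing in $C$, so the per-$\bu$ failure probability is at most $2e^{-c_2(C)n}$.

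To extend to all of $S^{n-1}$, take a $\tfrac14$-net $\cN\subset S^{n-1}$ with $|\cN|\le 9^n$, and invoke the standard symmetric-matrix bound $\|\bA\|\le 2\max_{\bu\in\cN}|\bu^\tr\bA\bu|$. A union bound over $\cN$ costs a factor $9^n=e^{n\log 9}$, which is beaten by $e^{-c_2(C)n}$ once $C$ is chosen to be a sufficiently large absolute constant, yielding the claimed uniform operator-norm bound with probability at least $1-2e^{-n/c}$.

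The only real obstacle is bookkeeping the constant $C$: it must be large enough (i) to dominate the net entropy $n\log 9$ via Bernstein's exponent, and (ii) so that a two-sided operator-norm deviation of size $c\bigl(\sum_{i>k}\lambda_i+n\lambda_{k+1}\bigr)$ can be absorbed into the $-cn\lambda_{k+1}$ slack on the $\lambda_{\min}$ side of the lemma. Neither step is delicate; the argument is essentially textbook sub-Gaussian Gram-matrix concentration, with the only substantive trick being the use of $\sum_{i>k}\lambda_i^2\le\lambda_{k+1}\sum_{i>k}\lambda_i$ to replace the usual Frobenius term in Bernstein by a quantity matched to the $\lambda_{k+1}$ scale of the tail.
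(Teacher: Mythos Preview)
The paper does not supply its own proof of this lemma; it attributes the result to \cite{bartlett2020benign}. Your Bernstein-plus-net strategy is exactly the standard argument, and the per-vector tail bound together with the $\lambda_{\max}$ conclusion are correct.

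There is, however, a genuine tension in your constant bookkeeping on the $\lambda_{\min}$ side that is not ``just bookkeeping''. Requirement~(i) forces $C$ to exceed roughly $(\log 9)/c_1$, which for generic sub-Gaussian Bernstein constants makes $C>1/2$. After the net doubling, the centered operator-norm bound then gives $\lambda_{\min}\ge (1-2C)\sum_{i>k}\lambda_i-2Cn\lambda_{k+1}$, which is always nonpositive and so cannot recover the lemma's lower bound in the only non-vacuous regime $r_k(\bSigma)\gg n$. Your requirement~(ii) is implicitly asking for $C<1/2$, in direct conflict with~(i). The repair is to decouple the two coefficients in $t$: take $t=\tfrac14\sum_{i>k}\lambda_i+Cn\lambda_{k+1}$ with only the second coefficient large. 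By the same AM-GM step you already use, both Bernstein arguments are then $\gtrsim Cn$, so a sufficiently large absolute $C$ beats the net, while after doubling the lower bound reads $\tfrac12\sum_{i>k}\lambda_i-2Cn\lambda_{k+1}$, which has the lemma's form. This is a two-line patch, but as written the single symmetric choice $t=C(\sum_{i>k}\lambda_i+n\lambda_{k+1})$ does not close the lower bound.
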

The condition of independence of coordinates in
Lemma~\ref{eq:near_isometry_indep} is satisfied for Gaussian
$\bx$. It can be relaxed to the following small-ball assumption:
\begin{align}
	\label{eq:small_ball}
	\exists c>0:~ \P(c\norm{\bx}_2^2\geq \En\norm{\bx}_2^2)\geq 1-\delta.
\end{align}
Under this assumption, the conclusion of Lemma~\ref{eq:near_isometry_indep} still holds with probability at least $1-2\exp\{-n/c\}-n\delta$ \cite{tsigler2020benign}.

An appealing consequence of Lemma~\ref{eq:near_isometry_indep}
is the small condition number of $\bX_{> k}\bX_{> k}^\tr$ for any
$k$ such that $\sum_{i>k}\lambda_i \gtrsim \lambda_{k+1}n$. Define
the \emph{effective rank} for a given index $k$ by $$r_k(\bSigma)
= \frac{\sum_{i>k} \lambda_i}{\lambda_{k+1}}.$$ We see that
$r_k(\bSigma) \ge b n$ for some constant $b$ implies that the set
of eigenvalues of $\bX_{> k}\bX_{> k}^\tr$ lies in the interval
$[\gamma/c,c\gamma]$ for
  \begin{equation*}
    \gamma = \sum_{i>k} \lambda_i,
  \end{equation*}
and thus the scale of the self-induced regularization
in~\eqref{eq:replace_tail_reg} is
the sum of the tail eigenvalues of the covariance operator.
Interestingly, the reverse implication also holds: if for some $k$ the
condition number of $\bX_{> k}\bX_{> k}^\tr$ is at most $\kappa$ with
probability at least $1-\delta$, then effective rank $r_k(\bSigma)$ is
at least $c_\kappa n$ with probability at least
$1-\delta-c\exp\{-n/c\}$ for some constants $c,c_\kappa$. Therefore,
the condition $r_k(\bSigma)\gtrsim n$ characterizes the indices $k$
such that $\bX_{> k}\bX_{> k}^\tr$ behaves as a scaling of $\bI_d$,
and the scaling is proportional to $\sum_{i>k}\lambda_i$. We may call
the smallest such index $k$ \emph{the effective dimension}, for
reasons that will be clear in a bit.

How do the estimates on tail eigenvalues help in controlling the
variance of the minimum-norm interpolant? Define
$$\bSigma_{\leq k}=\diag(\lambda_1,\ldots, \lambda_k),~~~ \bSigma_{> k}=\diag(\lambda_{k+1},\ldots, \lambda_d).$$
Then, omitting $\sigma_\xi^2$ for the moment, the variance upper bound in \eqref{eq:var_linear} can be estimated by
\begin{align}
	\trace\left((\bX\bX^\tr)^{-2} \bX \bSigma \bX^\tr \right) 
		&\lesssim \trace\left((\bX\bX^\tr)^{-2} \bX_{\leq k}
        \bSigma_{\leq k} \bX^\tr_{\leq k} \right) \notag\\
		&\qquad {}
        + \trace\left((\bX\bX^\tr)^{-2} \bX_{> k} \bSigma_{> k} \bX^\tr_{> k} \right). \label{eq:lin_var_decomp2}
\end{align}
The first term is further upper-bounded by 
\begin{align}
	\trace\left((\bX_{\leq k}\bX_{\leq k}^\tr)^{-2} \bX_{\leq k} \bSigma_{\leq k} \bX^\tr_{\leq k} \right),
\end{align}
and its expectation corresponds to the variance of $k$-dimensional
regression, which is of the order of $k/n$. On the other hand, by Bernstein's inequality, with probability at least $1-2\exp^{-cn}$,
\begin{align}
	\trace(\bX_{> k} \bSigma_{> k} \bX^\tr_{> k}) \lesssim n\sum_{i>k}
    \lambda_i^2,
\end{align}
so we have that the second term in~\eqref{eq:lin_var_decomp2} is, with high probability, of order at most
$$\frac{n\sum_{i>k} \lambda_i^2}{(\sum_{i>k} \lambda_i)^2}.$$
Putting these results together, we have the following theorem \cite{tsigler2020benign}:
\begin{theorem}
	\label{thm:var_linear}
    Fix $\delta<1/2$.
    Under Assumption~\ref{assmpt:linear_reg}, 
	suppose for some $k$ the condition number of $\bX_{> k}\bX_{>
    k}^\tr$ is at most $\kappa$ with probability at least $1-\delta$.
    Then
	\begin{align}
		\label{eq:var_linear_condition}
		\varianceemp \lesssim \sigma^2_\xi \kappa^2
        \log\left(\frac{1}{\delta}\right) 
        \left( \frac{k}{n} + \frac{ n \sum_{i>k} \lambda_i^2}{(\sum_{i>k} \lambda_i)^2}\right) 
	\end{align}	
	with probability at least $1-2\delta$.
\end{theorem}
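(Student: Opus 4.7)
The plan is to bound the right-hand side of~\eqref{eq:var_linear}, namely $T := \trace((\bX\bX^\tr)^{-2}\bX\bSigma\bX^\tr)$, by splitting $\bSigma$ into its head and tail parts via the eigenbasis decomposition in~\eqref{eq:lin_var_decomp2}, and controlling each of the two resulting traces $T_1$ and $T_2$ on an event of probability at least $1-\delta$. A union bound then yields the overall probability $1-2\delta$.

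For the tail piece $T_2 = \trace((\bX\bX^\tr)^{-2}\bX_{>k}\bSigma_{>k}\bX_{>k}^\tr)$, I would use the crude inequality $T_2 \le \|(\bX\bX^\tr)^{-1}\|^2\, \trace(\bX_{>k}\bSigma_{>k}\bX_{>k}^\tr)$. On the condition-number event, combining $\bX\bX^\tr \succeq \bA := \bX_{>k}\bX_{>k}^\tr$ with the assumption on $\kappa(\bA)$ and the identity $\En\trace(\bA) = n\gamma$ (where $\gamma := \sum_{i>k}\lambda_i$) yields $\lambda_{\min}(\bA) \gtrsim \gamma/\kappa$, so $\|(\bX\bX^\tr)^{-1}\|^2 \lesssim \kappa^2/\gamma^2$. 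A Bernstein inequality applied to the i.i.d.\ sub-exponential summands $\bx_{j,>k}^\tr\bSigma_{>k}\bx_{j,>k}$, each with mean $\sum_{i>k}\lambda_i^2$, gives $\trace(\bX_{>k}\bSigma_{>k}\bX_{>k}^\tr)\lesssim n\sum_{i>k}\lambda_i^2\log(1/\delta)$ with probability at least $1-\delta$. Multiplying produces the second term in the stated bound.

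For the head piece $T_1 = \trace((\bX\bX^\tr)^{-2}\bX_{\leq k}\bSigma_{\leq k}\bX_{\leq k}^\tr)$, the key step is the Sherman--Morrison--Woodbury push-through identity applied to $\bX\bX^\tr = \bA + \bX_{\leq k}\bX_{\leq k}^\tr$:
\[
(\bX\bX^\tr)^{-1}\bX_{\leq k} \;=\; \bA^{-1}\bX_{\leq k}(\bI_k + \bN)^{-1}, \qquad \bN := \bX_{\leq k}^\tr\bA^{-1}\bX_{\leq k}.
\]
Substituting, $T_1 = \trace\bigl((\bI_k+\bN)^{-1}\bX_{\leq k}^\tr\bA^{-2}\bX_{\leq k}(\bI_k+\bN)^{-1}\bSigma_{\leq k}\bigr)$. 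On the condition-number event $\bA^{-2}\preceq (\kappa/\gamma)^2 \bI_n$, and sub-Gaussian concentration of $\widetilde\bX_{\leq k}^\tr\widetilde\bX_{\leq k}$ around $n\bI_k$ (for $\widetilde\bX_{\leq k}=\bX_{\leq k}\bSigma_{\leq k}^{-1/2}$) gives $\bX_{\leq k}^\tr\bX_{\leq k} \asymp n\bSigma_{\leq k}$, so $\bN$ is of order $(n/\gamma)\bSigma_{\leq k}$ (up to $\kappa$). Diagonalizing in the eigenbasis of $\bSigma$, the trace collapses to
\[
T_1 \;\lesssim\; \kappa^2 \sum_{i\leq k}\frac{n\lambda_i^2}{(\gamma + n\lambda_i)^2} \;\leq\; \kappa^2\sum_{i\leq k}\frac{n\lambda_i^2}{(n\lambda_i)^2} \;=\; \frac{\kappa^2 k}{n},
\]
where the inequality $\gamma + n\lambda_i \geq n\lambda_i$ is exactly the ``ridge-like'' self-induced regularization identified around~\eqref{eq:replace_tail_reg}: the tail eigenvalues act as a ridge penalty of strength $\gamma$ on the first $k$ coordinates, bounding each coordinate's contribution to the variance by $1/n$.

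The main obstacle is making the diagonalization in the head-piece calculation rigorous: $\bN$, $(\bI_k+\bN)^{-1}$, $\bX_{\leq k}^\tr\bA^{-2}\bX_{\leq k}$, and $\bSigma_{\leq k}$ do not exactly commute, since $\bA^{-1}$ is not proportional to $\bI_n$ and the sample Gram matrix $\bX_{\leq k}^\tr\bX_{\leq k}$ is only close to $n\bSigma_{\leq k}$. A rigorous argument has to use matrix concentration to sandwich $\bN$ between $c_-(n/\gamma)\bSigma_{\leq k}$ and $c_+(n/\gamma)\bSigma_{\leq k}$ in the PSD order, with constants depending only on $\kappa$, and then exploit operator monotonicity of $x\mapsto x/(1+x)$ together with trace inequalities of the form $\trace(\bM\bP)\le \|\bM\|\trace(\bP)$ for PSD $\bM,\bP$, so that the approximation errors are absorbed into powers of $\kappa$ and $\log(1/\delta)$ rather than producing terms that scale with $n$ or $d$ and overwhelm the desired $k/n$ rate.
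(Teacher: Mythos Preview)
Your approach is essentially the paper's: the decomposition~\eqref{eq:lin_var_decomp2}, the tail bound via Bernstein (the paper states exactly that consequence just above the theorem), and the reduction of the head term to the variance of $k$-dimensional OLS. Your Woodbury push-through is precisely the way to make that reduction explicit.

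The one place to tighten is your resolution of the non-commutativity obstacle. The sandwich $c_-(n/\gamma)\bSigma_{\leq k}\preceq\bN\preceq c_+(n/\gamma)\bSigma_{\leq k}$ combined with a generic $\trace(\bM\bP)\le\|\bM\|\trace(\bP)$ is too lossy here: pulling out $\|\bM\|$ (or $\|\bSigma_{\leq k}^{1/2}(\bI+\bN)^{-1}\bSigma_{\leq k}^{1/2}\|$) picks up an extra factor of order $n\lambda_1/\gamma$ that is \emph{not} absorbed by $\kappa$ or $\log(1/\delta)$, and $(1+x)^{-2}$ is not operator monotone so the sandwich on $\bN$ alone does not control $(\bI+\bN)^{-2}$. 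The clean trick, which lands directly on the paper's reduction, is to observe
\[
\bM=\bX_{\leq k}^\tr\bA^{-2}\bX_{\leq k}\;\preceq\;\|\bA^{-1}\|\,\bX_{\leq k}^\tr\bA^{-1}\bX_{\leq k}\;=\;\|\bA^{-1}\|\,\bN.
\]
Then $T_1\le\|\bA^{-1}\|\,\trace\bigl(\bSigma_{\leq k}^{1/2}(\bI_k+\bN)^{-1}\bN(\bI_k+\bN)^{-1}\bSigma_{\leq k}^{1/2}\bigr)$, and the middle is a \emph{function of $\bN$ alone}, so the scalar bound $x/(1+x)^2\le 1/x$ gives $(\bI_k+\bN)^{-1}\bN(\bI_k+\bN)^{-1}\preceq\bN^{-1}\preceq\|\bA\|\,(\bX_{\leq k}^\tr\bX_{\leq k})^{-1}$, hence
\[
T_1\;\lesssim\;\kappa(\bA)\,\trace\bigl(\bSigma_{\leq k}(\bX_{\leq k}^\tr\bX_{\leq k})^{-1}\bigr)\;\lesssim\;\kappa\cdot\frac{k}{n},
\]
the last step by sub-Gaussian concentration of $\bX_{\leq k}^\tr\bX_{\leq k}$ around $n\bSigma_{\leq k}$. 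This is exactly the paper's ``its expectation corresponds to the variance of $k$-dimensional regression.'' (A minor separate point: you need concentration of $\trace(\bA)$ around $n\gamma$, not just $\En\trace(\bA)=n\gamma$, to conclude $\lambda_{\min}(\bA)\gtrsim\gamma/\kappa$.)
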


We now turn to the analysis of the bias term. Since the projection operator in \eqref{eq:bias_linear_emp} annihilates any vector in the span of the rows of $\bX$, we can write 
\begin{align}
	\label{eq:simple_bias_calc}
	\biassquaredemp = \norm{\bSigma^{1/2} P^\perp  \btheta^*}^2_2 
	&= \norm{(\bSigma - \widehat{\bSigma})^{1/2}P^\perp  \btheta^*}^2_2,
\end{align}
where $\widehat{\bSigma} = n^{-1}\bX^\tr \bX$ is the sample covariance operator. Since projection contracts distances, we obtain an upper bound 
\begin{align}
	\norm{(\bSigma - \widehat{\bSigma})^{1/2} \btheta^*}^2_2 \leq
    \norm{\btheta^*}^2_2 \times \norm{\bSigma-\widehat{\bSigma}}.
\end{align}
The rate of approximation of the covariance operator by its sample-based counterpart has been studied in \cite{koltchinskii2017concentration}, and we conclude 
	\begin{align}
		\label{eq:bias_linear_koltchinskii}
		\biassquared \lesssim \norm{\btheta^*}^2_{\bSigma}  \max\left\{\sqrt{\frac{r_0(\bSigma)}{n}}, \frac{r_0(\bSigma)}{n}\right\}
	\end{align}	
(see \cite{bartlett2020benign} for details).

The upper bound in \eqref{eq:bias_linear_koltchinskii} can be sharpened significantly by analyzing the bias in the two subspaces, as proved in \cite{tsigler2020benign}:
\begin{theorem}
	\label{thm:bias_split_linear}
	Under the assumptions of Theorem~\ref{thm:var_linear}, for
    $n\gtrsim\log(1/\delta)$, with probability at least $1-2\delta$,
	\begin{align}
		\label{eq:bias_split_linear}
		\biassquaredemp \lesssim \kappa^4\left[\norm{\btheta^*_{\leq k}}_{\bSigma_{\leq k}^{-1}}^2 \left( \frac{\sum_{i>k} \lambda_i}{n}\right)^2 + \norm{\btheta^*_{>k}}^2_{\bSigma_{>k}}\right].
	\end{align}	
\end{theorem}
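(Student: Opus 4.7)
The plan is to split $\btheta^* = \btheta^*_{\leq k} + \btheta^*_{>k}$ (each viewed as a vector in $\reals^d$ supported on its own block of coordinates) and bound
\begin{equation*}
\biassquaredemp = \|\bSigma^{1/2}P^\perp\btheta^*\|_2^2 \leq 2\|\bSigma^{1/2}P^\perp\btheta^*_{\leq k}\|_2^2 + 2\|\bSigma^{1/2}P^\perp\btheta^*_{>k}\|_2^2,
\end{equation*}
treating the two pieces by different mechanisms. The whole argument rests on the observation that, under the hypotheses of Theorem~\ref{thm:var_linear}, Lemma~\ref{eq:near_isometry_indep} implies the tail Gram matrix $\bA := \bX_{>k}\bX_{>k}^\tr$ is sandwiched as $(\gamma/\kappa)\bI_n \preceq \bA \preceq \gamma\kappa\bI_n$ with $\gamma := \sum_{i>k}\lambda_i$, so that in the block identity $\bX\bX^\tr = \bX_{\leq k}\bX_{\leq k}^\tr + \bA$ the matrix $\bA$ plays the role of an implicit ridge penalty of scale $\gamma$.

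For the head piece, I would apply Sherman--Morrison--Woodbury to obtain
\begin{equation*}
\bX_{\leq k}^\tr (\bX\bX^\tr)^{-1} = (\bI_k + \bK)^{-1}\bX_{\leq k}^\tr \bA^{-1}, \qquad \bK := \bX_{\leq k}^\tr \bA^{-1}\bX_{\leq k},
\end{equation*}
from which the top-$k$ block of $P^\perp \btheta^*_{\leq k}$ simplifies to $(\bI_k+\bK)^{-1}\btheta^*_{\leq k}$. Replacing $\bA^{-1}$ by $\gamma^{-1}\bI_n$ up to factors of $\kappa$, and using sub-Gaussian concentration of $\bX_{\leq k}^\tr \bX_{\leq k}$ around $n\bSigma_{\leq k}$, the operator $\bK$ becomes comparable to $(n/\gamma)\bSigma_{\leq k}$, and an entrywise computation then gives
\begin{equation*}
\|\bSigma_{\leq k}^{1/2}(\bI_k+\bK)^{-1}\btheta^*_{\leq k}\|_2^2 \lesssim \sum_{i\leq k}\frac{\lambda_i (\theta_i^*)^2}{(1+n\lambda_i/\gamma)^2} \lesssim \Bigl(\frac{\gamma}{n}\Bigr)^2 \|\btheta^*_{\leq k}\|^2_{\bSigma_{\leq k}^{-1}},
\end{equation*}
which matches the first term of~\eqref{eq:bias_split_linear} and is exactly the classical ridge-regression bias at regularization $\gamma$. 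The complementary contamination block of $P^\perp \btheta^*_{\leq k}$ (its coordinates outside the top eigenspace, arising from $\bA^{-1}\bX_{>k}^\tr\bX_{\leq k}(\bI_k+\bK)^{-1}\btheta^*_{\leq k}$) is handled by combining the Bernstein estimate $\bX_{>k}\bSigma_{>k}\bX_{>k}^\tr \lesssim \kappa\gamma\lambda_{k+1}\bI_n$ with $\lambda_{k+1}\lesssim \gamma/n$ (from $r_k(\bSigma)\gtrsim n$), which absorbs the contribution into another copy of $(\gamma/n)^2\|\btheta^*_{\leq k}\|^2_{\bSigma_{\leq k}^{-1}}$ up to powers of $\kappa$.

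For the tail piece, I would expand $\bSigma^{1/2}P^\perp\btheta^*_{>k} = \bSigma^{1/2}\btheta^*_{>k} - \bSigma^{1/2}\bX^\tr(\bX\bX^\tr)^{-1}\bv$ with $\bv := \bX_{>k}\btheta^*_{>k}$. The first term contributes exactly $\|\btheta^*_{>k}\|^2_{\bSigma_{>k}}$. For the second, $\|\bv\|_2^2$ concentrates at $n\|\btheta^*_{>k}\|^2_{\bSigma_{>k}}$ by Hanson--Wright, while $(\bX\bX^\tr)^{-1}\bX\bSigma\bX^\tr(\bX\bX^\tr)^{-1}$ has operator norm at most $(\kappa/\gamma)^2\|\bX\bSigma\bX^\tr\|$, and $\|\bX\bSigma\bX^\tr\|$ is controlled by a matrix Bernstein bound of order $\sum_i\lambda_i^2 + \lambda_1\trace(\bSigma)$; after cancellation using $\gamma\gtrsim n\lambda_{k+1}$ this reduces to a constant multiple of $\|\btheta^*_{>k}\|^2_{\bSigma_{>k}}$ with the required $\kappa$-dependence.

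The main obstacle is the head-piece argument, and specifically the contamination block: $P^\perp \btheta^*_{\leq k}$ has nonzero coordinates outside the top eigenspace, so the Woodbury reduction leaves a cross term coupling $\bX_{\leq k}$, $\bX_{>k}$, and $\bA^{-1}$ that must be controlled uniformly in the design. The $\kappa^4$ prefactor arises precisely here: two powers come from $\bA^{-1}$ appearing squared inside the implicit ridge expression, and two more come from the sandwich $(\gamma/\kappa)\bI_n \preceq \bA \preceq \gamma\kappa\bI_n$ when converting the Woodbury identity into an honest ridge-bias inequality. Careful block-matrix bookkeeping together with sub-Gaussian concentration of $\bX_{\leq k}$ is what is needed to propagate the near-isotropy of $\bA$ through these nonlinear steps.
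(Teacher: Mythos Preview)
Your overall strategy—split $\btheta^*$ into head and tail blocks, treat $\bA=\bX_{>k}\bX_{>k}^\tr$ as a self-induced ridge of scale $\gamma=\sum_{i>k}\lambda_i$, and use Woodbury on the head—is exactly the approach the paper cites from \cite{tsigler2020benign}. The head-piece calculation is essentially sound: the needed inequality $\|\bSigma_{\leq k}^{1/2}(\bI_k+\bK)^{-1}\bSigma_{\leq k}^{1/2}\|_{\op}\lesssim \kappa\gamma/n$ follows from $\bK\succeq c\,(n/(\kappa\gamma))\bSigma_{\leq k}$ after conjugating by $\bSigma_{\leq k}^{-1/2}$, even though $\bK$ and $\bSigma_{\leq k}$ need not commute. (Your intermediate coordinate-wise ridge expression is therefore only a heuristic, but the final bound survives.)

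There is, however, a genuine gap in the tail-piece argument. Bounding
\[
\|\bSigma^{1/2}\bX^\tr(\bX\bX^\tr)^{-1}\bv\|_2^2\ \le\ (\kappa/\gamma)^2\,\|\bX\bSigma\bX^\tr\|_{\op}\,\|\bv\|_2^2
\]
does not work, because $\bX\bSigma\bX^\tr$ contains the head block $\bX_{\leq k}\bSigma_{\leq k}\bX_{\leq k}^\tr$, whose operator norm is of order $n\lambda_1^2$; combined with $\|\bv\|_2^2\asymp n\|\btheta^*_{>k}\|_{\bSigma_{>k}}^2$ this leaves an uncontrolled factor $n^2\lambda_1^2/\gamma^2$, and nothing in the hypotheses forces $n\lambda_1\lesssim\gamma$. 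The operator-norm step throws away precisely the cancellation you need: $(\bX\bX^\tr)^{-1}$ already damps the head directions, but the crude bound cannot see this. The fix is to split this term by the block structure of $\bSigma$ as well,
\[
\|\bSigma^{1/2}\bX^\tr(\bX\bX^\tr)^{-1}\bv\|_2^2
= \|\bSigma_{\leq k}^{1/2}\bX_{\leq k}^\tr(\bX\bX^\tr)^{-1}\bv\|_2^2
+ \|\bSigma_{>k}^{1/2}\bX_{>k}^\tr(\bX\bX^\tr)^{-1}\bv\|_2^2,
\]
and treat the first summand with the \emph{same} Woodbury identity $\bX_{\leq k}^\tr(\bX\bX^\tr)^{-1}=(\bI_k+\bK)^{-1}\bX_{\leq k}^\tr\bA^{-1}$ you used for the head piece; only then do the large eigenvalues $\lambda_1,\dots,\lambda_k$ get cancelled by $(\bI_k+\bK)^{-1}$. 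The second summand does yield to $\bA\preceq\bX\bX^\tr$ together with $n\lambda_{k+1}\lesssim\gamma$. So the tail piece is not actually simpler than the head: both halves of the decomposition require the Woodbury mechanism, and this is where the full $\kappa^4$ accumulates.
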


The following result shows that without further assumptions, the
bounds on variance and bias given in Theorems~\ref{thm:var_linear}
and~\ref{thm:bias_split_linear} cannot be improved by more than
constant factors; see \cite{bartlett2020benign} and
\cite{tsigler2020benign}.

\begin{theorem}\label{thm:lower_linear}
    There are absolute constants $b$ and $c$ such that for
    Gaussian $\bx\sim\normal(0,\bSigma)$, where $\bSigma$ has
    eigenvalues $\lambda_1\ge\lambda_2\ge\cdots$,
    with probability at least $1-\exp(-n/c)$,
	\begin{align*}
		\varianceemp \gtrsim
        1\wedge\left(
		\sigma^2_\xi \left( \frac{k}{n}
		+ \frac{ n \sum_{i>k} \lambda_i^2}{(\sum_{i>k}
		\lambda_i)^2}\right)\right),
	\end{align*}
    where $k$ is the effective dimension,
    $ k = \min\left\{l: r_l(\bSigma)\ge bn\right\}$.
    Furthermore, for any $\btheta\in\Re^d$, if the regression function
    $f^*(\cdot)=\langle\cdot,\btheta^*\rangle$, where
    $\theta^*_i=\epsilon_i\theta_i$ and
    $\bepsilon=(\epsilon_1,\ldots,\epsilon_d)\sim\Unif\left(\{\pm 1\}^d\right)$,
    then with probability at least $1-\exp(-n/c)$,
	\begin{align*}
		\Expect_{\bepsilon}\biassquaredemp \gtrsim
		\left[\norm{\btheta^*_{\leq k}}_{\bSigma_{\leq k}^{-1}}^2
		\left( \frac{\sum_{i>k} \lambda_i}{n}\right)^2 +
		\norm{\btheta^*_{>k}}^2_{\bSigma_{>k}}\right].
	\end{align*}
\end{theorem}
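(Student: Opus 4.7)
The plan is to apply the Sherman--Morrison formula column by column to reduce both $\varianceemp$ and $(P^\perp\bSigma P^\perp)_{ii}$ to scalar quantities, then control these via the near-isotropy of $\bX_{>k}\bX_{>k}^\tr$ from Lemma~\ref{eq:near_isometry_indep}. Write $\bx^{(i)}\in\reals^n$ for the $i$-th column of $\bX$ and $\bA_{\setminus i}:=\bX\bX^\tr-\bx^{(i)}(\bx^{(i)})^\tr$. Sherman--Morrison yields
\[
(\bX\bX^\tr)^{-1}\bx^{(i)} \;=\; \frac{\bA_{\setminus i}^{-1}\bx^{(i)}}{1+\alpha_i},\qquad \alpha_i\;:=\;(\bx^{(i)})^\tr\bA_{\setminus i}^{-1}\bx^{(i)}.
\]
Since $\bA_{\setminus i}=\bX_{\leq k,\setminus i}\bX_{\leq k,\setminus i}^\tr+\bX_{>k}\bX_{>k}^\tr$ and the first summand has rank at most $k-1$, Lemma~\ref{eq:near_isometry_indep} (applicable since Gaussianity of $\bx$ gives independent coordinates) together with Weyl's inequality ensures that at least $n-k+1$ eigenvalues of $\bA_{\setminus i}$ lie in $[\gamma/c, c\gamma]$, where $\gamma:=\sum_{j>k}\lambda_j$, and the remaining eigenvalues exceed $\gamma$. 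Hence $\trace(\bA_{\setminus i}^{-1})\asymp n/\gamma$ and $\trace(\bA_{\setminus i}^{-2})\asymp n/\gamma^2$. Coupling $\bx^{(i)}\sim\normal(0,\lambda_i\bI_n)$ (independent of $\bA_{\setminus i}$) with Hanson--Wright gives $\alpha_i\asymp\lambda_i n/\gamma$ and $\|\bA_{\setminus i}^{-1}\bx^{(i)}\|^2\asymp\lambda_i n/\gamma^2$ with high probability. The effective-dimension hypothesis $r_k(\bSigma)\gtrsim n$ forces $\lambda_j\leq\gamma/(bn)$ for $j>k$, so $\alpha_i=O(1)$ for tail $i>k$ while $\alpha_i\gg 1$ for head $i\leq k$.

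With these estimates in hand, expanding the variance in the eigenbasis of $\bSigma$,
\[
\varianceemp \;=\; \sigma_\xi^2\sum_{i=1}^d\lambda_i\|(\bX\bX^\tr)^{-1}\bx^{(i)}\|^2 \;=\; \sigma_\xi^2\sum_{i=1}^d\frac{\lambda_i\|\bA_{\setminus i}^{-1}\bx^{(i)}\|^2}{(1+\alpha_i)^2},
\]
each head coordinate $i\leq k$ contributes $\lambda_i\cdot(\lambda_i n/\gamma^2)/(\lambda_i n/\gamma)^2\asymp 1/n$, summing to $k/n$; each tail coordinate $i>k$ contributes $\lambda_i\cdot(\lambda_i n/\gamma^2)/1=\lambda_i^2 n/\gamma^2$, summing to $n\sum_{i>k}\lambda_i^2/\gamma^2$; the truncation at $1$ in the stated bound absorbs the boundary regime in which these scales cease to be sub-constant. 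For the bias, expanding $(P^\perp\bSigma P^\perp)_{ii}$ with $P^\perp=\bI-\bX^\tr(\bX\bX^\tr)^{-1}\bX$ and simplifying using Sherman--Morrison yields the identity
\[
(P^\perp\bSigma P^\perp)_{ii} \;=\; \frac{\lambda_i}{(1+\alpha_i)^2} + \sum_{j\neq i}\lambda_j\bigl((\bx^{(j)})^\tr(\bX\bX^\tr)^{-1}\bx^{(i)}\bigr)^2 \;\geq\; \frac{\lambda_i}{(1+\alpha_i)^2}.
\]
Since $\Expect_\bepsilon[\epsilon_i\epsilon_j]=\delta_{ij}$ gives $\Expect_\bepsilon\biassquaredemp=\sum_i(\theta^*_i)^2(P^\perp\bSigma P^\perp)_{ii}$, substituting $\lambda_i/(1+\alpha_i)^2\gtrsim\lambda_i$ for tail $i>k$ produces $\|\btheta^*_{>k}\|^2_{\bSigma_{>k}}$, while $\lambda_i/(1+\alpha_i)^2\asymp\gamma^2/(n^2\lambda_i)$ for head $i\leq k$ produces $\|\btheta^*_{\leq k}\|^2_{\bSigma_{\leq k}^{-1}}(\gamma/n)^2$.

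The principal technical difficulty is transferring the in-expectation estimates for $\alpha_i$ and $\|\bA_{\setminus i}^{-1}\bx^{(i)}\|^2$ into high-probability bounds that hold simultaneously for all $d$ coordinates, since $d$ may greatly exceed $n$ and a naive union bound would be vacuous. Gaussianity is essential here because it provides exact independence of $\bx^{(i)}$ from $\bA_{\setminus i}$, making Hanson--Wright directly applicable with sharp sub-Gaussian constants; for the variance lower bound one in fact needs only one-sided control (each summand is non-negative), and for the bias the non-negative cross terms in the identity above can simply be dropped, both of which weaken the union-bound requirement. Matching the resulting lower bounds with the upper bounds of Theorems~\ref{thm:var_linear} and~\ref{thm:bias_split_linear} to within absolute constants is what justifies the $\Theta$-type matching claimed in the statement.
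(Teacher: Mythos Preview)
The paper does not supply its own proof here; it defers to \cite{bartlett2020benign} and \cite{tsigler2020benign}. Your column-wise Sherman--Morrison reduction, the eigenvalue sandwich on $\bA_{\setminus i}$ from Lemma~\ref{eq:near_isometry_indep} plus Weyl interlacing, Hanson--Wright for the quadratic forms $\alpha_i$ and $\|\bA_{\setminus i}^{-1}\bx^{(i)}\|^2$, and the diagonal identity $(P^\perp\bSigma P^\perp)_{ii}=\lambda_i/(1+\alpha_i)^2+(\text{nonnegative terms})$ are exactly the ingredients those references use, so your outline is correct and matches the intended argument.

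Two remarks. First, for head indices $i\le k$ you only get $\alpha_i\gtrsim 1$, not $\alpha_i\gg 1$: minimality of $k$ gives $r_{k-1}(\bSigma)<bn$ and hence $\lambda_k n/\gamma>1/b$, but this ratio need not diverge. This does not damage the conclusion, because the per-coordinate head variance contribution $\lambda_i^2 n/(\gamma+\lambda_i n)^2\asymp 1/n$ and the head bias contribution $\lambda_i/(1+\alpha_i)^2\asymp\gamma^2/(\lambda_i n^2)$ already hold once $\lambda_i n/\gamma$ is merely bounded below by a constant. Second, the union-bound issue you flag is handled in the cited references essentially as you anticipate: the relevant per-coordinate deviations (the chi-squared norms $\|\bx^{(i)}\|^2/\lambda_i$ and their projections onto the low-eigenvalue subspace of $\bA_{\setminus i}$) have tails of order $e^{-cn}$, so a union over all $d$ coordinates is absorbed whenever $\log d=O(n)$; for the head part only $k\le n$ coordinates need simultaneous control, which is unproblematic.
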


A discussion of Theorems~\ref{thm:var_linear},
\ref{thm:bias_split_linear} and~\ref{thm:lower_linear} is in order.
First, the upper and lower bounds match up to constants, and
in particular both involve the decomposition  of $\widehat f$
into a prediction component $\widehat
f_0(\bx):=\langle\widehat\btheta_{\leq k},\bx_{\leq k}\rangle$
and an interpolation component
$\Delta(\bx):=\langle\widehat\btheta_{> k},\bx_{>
k}\rangle$ with distinct bias and variance contributions, so
this decomposition is not an artifact of our analysis.  Second, the
$\norm{\btheta^*_{>k}}^2_{\bSigma_{>k}}$ term in the bias and the
$k/n$ term in the variance for the prediction component
$\widehat f_0$
correspond to the terms we would get by performing ordinary
least-squares (OLS) restricted to the first $k$ coordinates of
$\btheta$. Provided $k$ is small compared to $n$, there is enough
data to estimate the signal in this $k$-dimensional component, and
the bias contribution is the approximation error due to truncation
at $k$.  The other aspect of the interpolating component
$\Delta$ that could
harm prediction accuracy is its variance term. The definition of
the effective dimension $k$ implies that this is no more than a
constant, and it is small if the tail eigenvalues decay slowly and
$d-k\gg n$, for in that case, the ratio of the squared $\ell_1$
norm to the squared $\ell_2$ norm of these eigenvalues is large
compared to $n$;
overparametrization is important.
Finally, the bias and variance terms are similar to
those that arise in ridge regression~\eqref{eq:reg_least_squares},
with the regularization coefficient determined by the self-induced
regularization. Indeed, define
  \begin{align}
    \lambda = \frac{b}{n}\sum_{i>k}\lambda_i
  \end{align}
for the constant $b$ in the definition of the effective dimension
$k$.  That definition implies that $\lambda_k\ge \lambda\ge
\lambda_{k+1}$, so we can write the bias and variance terms, within
constant factors, as
  \begin{align*}
    \biassquaredemp & \approx \sum_{i=1}^d
      {\theta^*_i}^2
      \frac{\lambda_i}{\left(1+\lambda_i/\lambda\right)^2}, &
    \varianceemp & \approx \frac{\sigma_\xi^2}{n}\sum_{i=1}^d
      \left(\frac{\lambda_i}{\lambda+\lambda_i}\right)^2.
  \end{align*}
These are reminiscent of the bias and variance terms that arise
in ridge regression~\eqref{eq:reg_least_squares}. Indeed,
a ridge regression estimate in a fixed design setting with
$\bX^\tr\bX=\diag(s_1,\ldots,s_d)$ has precisely these bias and
variance terms with $\lambda_i$ replaced by $s_i$; see, for example,
\cite[Lemma~1]{JMLR:v14:dhillon13a}. In Section~\ref{sec:dpropn},
we shall see the same bias-variance decomposition arise in a related
setting, but with the dimension growing with sample size.

\subsection{Linear regression in Reproducing Kernel Hilbert Spaces}
\label{sec:ker_regression}

Kernel methods are among the core algorithms in machine learning and
statistics. These methods were introduced to machine learning in the
pioneering work of \cite{aiserman1964theoretical} as a generalization
of the Perceptron algorithm to nonlinear functions by lifting the
$\bx$-variable to a high- or infinite-dimensional feature space. Our
interest in studying kernel methods here is two-fold: on the one hand,
as discussed in detail in Sections~\ref{sec:efficient} and
\ref{sec:NTK}, sufficiently wide neural networks with random
initialization stay close to a certain kernel-based solution during
optimization and are essentially equivalent to a minimum-norm
interpolant; on the other hand, it has been noted that kernel methods
exhibit similar surprising behavior of benign interpolation to neural networks \cite{belkin2018understand}. 

A kernel method in the regression setting amounts to choosing a
feature map $\bx\mapsto \phi(\bx)$ and computing a (regularized)
linear regression solution in the feature space. While
Section~\ref{sec:lin_regression} already addressed the question of
overparametrized linear regression, the non-linear feature map
$\phi(\bx)$ might not satisfy Assumption~\ref{assmpt:linear_reg}.
In this section, we study interpolating RKHS regression estimates
using a more detailed analysis of certain random kernel matrices.

Since the linear regression solution involves inner products of
$\phi(\bx)$ and $\phi(\bx')$, the feature maps do not need to be
computed explicitly. Instead, kernel methods rely on a kernel function $k:\cX\times\cX\to \reals$ that, in turn, corresponds to an RKHS $\cH$. A classical method is kernel ridge regression (KRR)
\begin{align}
	\algo = \argmin{f\in\cH} \frac{1}{n}\sum_{i=1} (f(\bx_i)-y_i)^2 + \lambda\norm{f}^2_{\cH},
\end{align}
which has been extensively analyzed through the lens of bias-variance tradeoff with an appropriately tuned parameter $\lambda>0$ \cite{caponnetto2007optimal}. As $\lambda\to 0^+$, we obtain a minimum-norm interpolant
\begin{align}
	\label{eq:kernel_min_norm_interpolant}
	\algo = \argmin{f\in\cH} \Big\{ \| f \|_{\cH}: f(\bx_i) = y_i ~\text{for all}~ i \leq n \Big\},
\end{align}
which has the closed-form solution
\begin{align}
	\algo(\bx) = 
    K(\bx,\bX)^\tr K(\bX,\bX)^{-1} \by,
\end{align}
assuming $K(\bX,\bX)$ is invertible; see \eqref{eq:minnormlin}
and~\eqref{eq:def_linear_reg}. Here $K(\bX,\bX)\in\reals^{n\times n}$ is the kernel matrix with $$[K(\bX,\bX)]_{i,j} = k(\bx_i,\bx_j) ~~\text{and}~~ K(\bx,\bX)=[k(\bx,\bx_1),\ldots,k(\bx,\bx_n)]^\tr.$$
Alternatively, we can write the solution as 
$$\algo(\bx) = \sum_{i=1}^n y_i \omega_i(\bx) ~~\text{with}~~ \omega_i(\bx) = K(\bx,\bX) K(\bX,\bX)^{-1}\be_i,$$ which makes it clear that $\omega_i(\bx_j)=\indicator{i=j}$.
We first describe a setting where this approach does not lead to
benign overfitting.

\subsubsection{The Laplace kernel with constant dimension}

	We consider the Laplace (exponential) kernel on $\Re^d$
    with parameter $\sigma>0$:
	$$k_\sigma(\bx,\bx') = \sigma^{-d} \exp\{-\norm{\bx-\bx'}_2/\sigma \}.$$
The RKHS norm corresponding to this kernel can be related to a
Sobolev norm, and its RKHS has been shown
\cite{bach2017breaking,geifman2020similarity,chen2020deep}
to be closely related to the RKHS
corresponding to the Neural Tangent Kernel (NTK), which we study in
Section~\ref{sec:NTK}.

To motivate the lower bound, consider $d=1$. In this case, the
minimum-norm solution with the Laplace kernel corresponds to a rope
hanging from nails at heights $y_i$ and locations $\bx_i\in\reals$. If
points are ordered $\bx_{(1)}\leq \bx_{(2)}\leq \ldots \leq
\bx_{(n)}$, the form of the minimum-norm solution between two adjacent
points $\bx_{(i)},\bx_{(i+1)}$ is only affected by the values
$y_{(i)},y_{(i+1)}$ at these locations. As $\sigma\to\infty$, the
interpolant becomes piece-wise linear, while for $\sigma\to 0$, the
solution is a sum of spikes at the datapoints and zero everywhere
else. In both cases, the interpolant is not consistent: the error
$\En\|\algo-f^*\|^2_{L^2(\P)}$ does not converge to $0$ as $n$ increases. Somewhat surprisingly, there is no choice of $\sigma$ that can remedy the problem, even if $\sigma$ is chosen in a data-dependent manner. 

The intuition carries over to the more general case, as long as $d$ is a constant. The following theorem appears in \cite{rakhlin2019consistency}:

\begin{theorem}
    Suppose $f^*$ is a smooth function defined on a unit ball in $\reals^d$. Assume the probability distribution of $\bx$ has density that is bounded above and away from $0$. Suppose the noise random variables $\xi_i$ are independent Rademacher.\footnote{$\mathbb{P}(\xi_i=\pm1)=1/2$.} For fixed $n$ and odd $d$, with probability at least $1-O(n^{-1/2})$, for any choice $\sigma>0$,
    $$\|\algo-f^*\|^2_{L^2(\P)}  =  \Omega_d(1).$$
\end{theorem}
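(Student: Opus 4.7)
The plan is to lower-bound the noise component of the $L^2$ error and use the scale-invariance of the Laplace kernel to reduce the estimate to a fixed Sobolev problem independent of $\sigma$.

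\textbf{Step 1 (bias--variance).} Since $\algo(\bx)=\sum_i y_i\omega_i(\bx)$ is linear in $\by$, decompose $\algo - f^* = (\algo_0 - f^*) + \algo_\xi$ with $\algo_0 = \sum_i f^*(\bx_i)\omega_i$ and $\algo_\xi = \sum_i \xi_i\omega_i$. Under independent Rademacher $\xi_i$, conditional on $\bX$,
\begin{align*}
\En_\bxi \|\algo-f^*\|_{L^2(\P)}^2 = \|\algo_0 - f^*\|_{L^2(\P)}^2 + V, \qquad V := \sum_i \|\omega_i\|_{L^2(\P)}^2.
\end{align*}
It therefore suffices to establish (a) a deterministic lower bound $V = \Omega_d(1)$ for every design and every $\sigma$, and (b) that $\|\algo_\xi\|^2_{L^2(\P)} = \bxi^\tr\bG\bxi$, with $\bG_{ij}:=\inner{\omega_i,\omega_j}_{L^2(\P)}$, is $\Omega(V)$ with probability $1-O(n^{-1/2})$ over $\bxi$. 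Item (b) follows from a Paley--Zygmund-type second-moment argument using $\|\bG\|_F^2 \le V\cdot\trace(\bG) = V^2$ (valid since $\bG\succeq 0$), combined with a Markov bound on the bias/cross contributions.

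\textbf{Step 2 (scale reduction).} The Laplace kernel satisfies the exact scaling $k_\sigma(\bx,\bx') = \sigma^{-d}k_1(\bx/\sigma,\bx'/\sigma)$, and the change of variables $\bz = \bx/\sigma$ converts the minimum-$\cH_{k_\sigma}$-norm interpolation at $\{\bx_i\}$ into minimum-$\cH_{k_1}$-norm interpolation at $\{\bx_i/\sigma\}\subset B_{1/\sigma}(0)$, with $L^2(\P)$ absorbed into the pushforward measure. This eliminates $\sigma$ from the problem: it suffices to lower-bound $V$ for the fixed kernel $k_1$, uniformly over all $n$-point designs in $\reals^d$. For odd $d$, the Fourier characterization $\widehat{k_1}(\eta)\propto (1+\|\eta\|^2)^{-(d+1)/2}$ identifies $\cH_{k_1}$ (up to equivalent norms) with the integer-order Sobolev space $H^{(d+1)/2}(\reals^d)$, and the classical embedding gives $H^{(d+1)/2}(\reals^d)\hookrightarrow C^{1/2}(\reals^d)$ with explicit constants.

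\textbf{Step 3 (Sobolev lower bound and main obstacle).} The reproducing identity gives $\omega_i(\bx_i) = 1$, and the Hölder embedding forces $|\omega_i(\bx)| \ge 1/2$ on a ball around $\bx_i$ whose radius is quantified by $\|\omega_i\|_\cH$. Integrating against the rescaled input measure and combining with the dual identity $\sum_i \|\omega_i\|_\cH^2 = \trace(K(\bX,\bX)^{-1})$ yields $V\ge c_d > 0$ after optimising the trade-off between per-weight $\cH$-norm and $L^2$ ball volume. The \emph{main obstacle} is clustered designs: a single $\|\omega_i\|_\cH$ can blow up when another $\bx_j$ sits close to $\bx_i$, collapsing its Hölder ball. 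The fix is to argue on cluster-level aggregates $\sum_{i\in C}\omega_i$ rather than individual weights; the interpolation constraints on a cluster (value $1$ at one point and $0$ at nearby ones) still force $\Omega(1)$ $L^2$ mass via an oscillation/orthogonality argument, even when individual weights are badly behaved. Finally, the uncountable quantifier ``for any $\sigma>0$'' is handled by a monotonicity-in-$\sigma$ observation together with a finite cover of the effective range of $\sigma$ (values outside that range are ruled out because $V(\sigma)$ diverges at both scaling extremes when the data sit in a fixed ball), reducing the uncountable union to a finite one.
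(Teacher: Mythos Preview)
The paper does not prove this theorem; it is quoted from \cite{rakhlin2019consistency} with only an informal one-sentence explanation, so there is no in-paper proof to compare against.

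Your proposal has a genuine gap independent of that. The core reduction in Step~1(a)---that $V(\sigma)=\sum_i\|\omega_i\|_{L^2(\P)}^2=\Omega_d(1)$ for every design and every $\sigma$---is false. As $\sigma\to 0$ with the design fixed, $K(\bX,\bX)=\sigma^{-d}(I_n+o(1))$ because the off-diagonal entries are exponentially smaller than the diagonal, so $\omega_i(\bx)\approx\exp(-\|\bx-\bx_i\|/\sigma)$ and $\|\omega_i\|_{L^2(\P)}^2\le p_{\max}\int_{\reals^d} e^{-2\|\bx-\bx_i\|/\sigma}\,d\bx=O(\sigma^d)$; hence $V(\sigma)=O(n\sigma^d)\to 0$. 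Your Step~3 assertion that $V$ ``diverges at both scaling extremes'' therefore has the small-$\sigma$ end backwards, and in that regime the $\Omega_d(1)$ error is carried entirely by the bias $\|\algo_0-f^*\|_{L^2}^2$, which your argument discards after the decomposition. The rescaling in Step~2 does not rescue this: after $\bz=\bx/\sigma$ the pushforward density is $\sigma^d p(\sigma\bz)=O(\sigma^d)$, so the $L^2(\P)$-mass of each rescaled weight against that measure still vanishes. A correct argument must work with the full error $\algo-f^*$ rather than the noise component alone; the cited reference does this via the identification of the Laplace RKHS for odd $d$ with the Sobolev space $H^{(d+1)/2}$ together with a Gagliardo--Nirenberg-type multiplicative inequality linking $\|g\|_{L^\infty}$, $\|g\|_{L^2}$ and the Sobolev seminorm, which is the genuinely scale-invariant ingredient (the additive H\"older embedding you invoke in Step~3 is not).
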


Informally, 
the minimum-norm interpolant with the Laplace kernel does not have the
flexibility to both estimate the regression function and generate
interpolating spikes with small $L^2(\P)$ norm if the dimension $d$ is
small. For high-dimensional data, however, minimum-norm
interpolation with the same kernel can be more benign, as we see
in the next section.

\subsubsection{Kernels on $\Re^d$ with $d\asymp n^{\alpha}$}

Since $d=O(1)$ may lead to inconsistency of the minimum-norm interpolator, we consider here a scaling $d\asymp n^{\alpha}$ for $\alpha\in(0,1]$. 
Some assumption on the independence of coordinates is needed to circumvent the lower bound of the previous section, and we assume the simplest possible scenario: each coordinate of $\bx\in\reals^d$ is independent.
\begin{assumption} 
	\label{asmpt:polynomials}
	Assume that $\bx\sim \P = p^{\otimes d}$ such that $z\sim p$ is
    mean-zero, that for some $C>0$ and $\nu>1$, $\P(|z|\geq t)\leq
    C(1+t)^{-\nu}$ for all $t\geq 0$, and that $p$ does not contain atoms.
\end{assumption}

We only state the results for the inner-product kernel
$$k(\bx,\bx') = h\left(\frac{\inner{\bx,\bx'}}{d}\right),~~~~ h(t) = \sum_{i=0}^\infty \alpha_i t^i,~~~~ \alpha_i\geq 0$$
and remark that more general rotationally invariant kernels (including
NTK: see Section~\ref{sec:NTK}) exhibit the same behavior under the
independent-coordinate assumption \cite{liang2020multiple}.

For brevity, define $\boldsymbol{K} = n^{-1} K(\bX,\bX)$.
Let $\br=(r_1,\cdots,r_d) \geq 0$ be a multi-index, and write
$\norm{\br}=\sum_{i=1}^d r_i$. With this notation, each entry of the kernel matrix can be expanded as
	\begin{align*}
		n \boldsymbol{K}_{ij} & = \sum_{\iota=0}^\infty \alpha_\iota
        \left( \frac{\langle \bx_i, \bx_j\rangle}{d} \right)^{\iota} =
        \sum_{\br} ~ c_{\br} \alpha_{\norm{\br}} p_{\br}(\bx_i)
        p_{\br}(\bx_j)/ d^{\norm{\br}}
	\end{align*}
	with
		$$c_{\br} = \frac{(r_1+\cdots+r_d)!}{r_1!\cdots r_d!},$$ and
        the monomials are $p_{\br}(\bx_i) = (\bx_i[1])^{r_1}\cdots
        (\bx_i[d])^{r_d}$ . If $h$ has infinitely many positive
        coefficients $\alpha$, each $\bx$ is lifted to an
        infinite-dimensional space. However, the
        resulting feature map $\phi(\bx)$ is not (in
        general) sub-Gaussian. Therefore, results from Section~\ref{sec:lin_regression} are not immediately applicable and a more detailed analysis that takes advantage of the structure of the feature map is needed.
		
As before, we separate the high-dimensional feature map into two parts, one corresponding to the prediction component, and the other corresponding to the overfitting part of the minimum-norm interpolant. More precisely, the truncated function $h^{\leq \iota}(t) = \sum_{i=0}^\iota \alpha_i t^i$ leads to the degree-bounded component of the empirical kernel:
	\begin{align*}
		n \boldsymbol{K}_{ij}^{[\leq \iota]} :=  \sum_{\norm{\br}\leq \iota}~ c_{\br} \alpha_{\norm{\br}} p_{\br}(\bx_i) p_{\br}(\bx_j)/ d^{\norm{\br}},~~~ n \boldsymbol{K}^{[\leq \iota]} =\Phi \Phi^\top
	\end{align*}
	with data $\bX\in\reals^{n\times d}$ transformed into polynomial features $\Phi \in \reals^{n \times \binom{\iota+d}{\iota}}$ defined as
	\begin{align*}
		\Phi_{i, \br} = \left(c_{\br} \alpha_{\norm{\br}}\right)^{1/2}  p_{\br}(\bx_i) / d^{\norm{\br}/2} \enspace.
	\end{align*}
	
	The following theorem reveals the staircase structure of the
    eigenvalues of the kernel, with $\Theta(d^{\iota})$ eigenvalues of
    order $\Omega(d^{-\iota})$, as long as $n$ is large enough to
    sketch these directions; see
    \cite{liang2020multiple} and \cite{ghorbani2020linearized}.
	\begin{theorem}
		\label{thm:structure_eigenvalues_polynomials}
		Suppose $\alpha_0,\ldots,\alpha_{\iota_0}>0$ and
        $d^{\iota_0}\log d = o(n)$. Under
        Assumption~\ref{asmpt:polynomials}, with probability at least
        $1-\exp^{-\Omega(n/d^{\iota_0})}$, for any $\iota\leq
        \iota_0$, $\boldsymbol{K}^{[\leq \iota]}$ has ${\iota +
        d}\choose \iota$ nonzero eigenvalues, all of them larger than
        $C d^{-\iota}$ and the range of $\boldsymbol{K}^{[\leq
        \iota]}$ is the span of
        $$\left\{(p(\bx_1),\ldots,p(\bx_n)): p~~ \text{multivariable polynomial of degree at most } \iota \right\}.$$
	\end{theorem}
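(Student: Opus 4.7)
The plan is to exploit the factorization $n\,\boldsymbol{K}^{[\leq \iota]}=\Phi\Phi^\top$ stated in the excerpt. Since $\mathrm{range}(\Phi\Phi^\top)=\mathrm{range}(\Phi)$ is the span of the columns of $\Phi$, and each column is a nonzero scalar multiple of $(p_{\br}(\bx_1),\ldots,p_{\br}(\bx_n))^\top$ for some multi-index with $\norm{\br}\leq \iota$, the range statement is immediate. The same factorization reduces the eigenvalue claim to a lower bound on the smallest nonzero eigenvalue of the dual Gram matrix $\Phi^\top\Phi/n\in\reals^{N\times N}$, $N=\binom{d+\iota}{\iota}$, because the nonzero spectra of $\Phi\Phi^\top/n$ and $\Phi^\top\Phi/n$ coincide. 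So everything reduces to showing that $\Phi^\top\Phi/n$ has full rank $N$ with its smallest eigenvalue at least $Cd^{-\iota}$.

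Next I would pass to an orthonormal polynomial basis. Let $\{q_k\}_{k\geq 0}$ be orthonormal polynomials for $p$, which exist since $p$ has no atoms and finite moments of every order up to some $2\iota_0$ by Assumption~\ref{asmpt:polynomials} (the tail bound gives finite moments of all orders $<\nu-1$, and we may assume $\nu$ is large enough for the finitely many moments we need, or truncate as below). Set $Q_{\br}(\bx)=\prod_j q_{r_j}(\bx[j])$, so $\En Q_{\br}Q_{\br'}=\delta_{\br,\br'}$ under $\P=p^{\otimes d}$. Since each monomial $\bx[j]^{r_j}$ is a triangular combination of $q_0,\ldots,q_{r_j}$ with coefficients depending only on $r_j$ (and on the moments of $p$), the matrix $\Phi$ factors as $\Phi=\tilde\Phi M^\top$, where $\tilde\Phi_{i,\br}=(c_{\br}\alpha_{\norm{\br}})^{1/2}Q_{\br}(\bx_i)/d^{\norm{\br}/2}$ and $M$ is invertible with condition number depending only on $\iota_0$. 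A direct calculation gives $\En[\tilde\Phi^\top\tilde\Phi/n]=D$, a diagonal matrix with $D_{\br,\br}=c_{\br}\alpha_{\norm{\br}}/d^{\norm{\br}}\geq \alpha_{\norm{\br}}/d^{\iota}\geq cd^{-\iota}$, where $c=\min_{k\leq \iota_0}\alpha_k>0$ by hypothesis.

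The main technical step, and the step I expect to be the main obstacle, is a matrix concentration bound of the form
\begin{equation*}
\norm{\tilde\Phi^\top\tilde\Phi/n - D}_{op}\leq \frac{1}{2}\lambda_{\min}(D)
\end{equation*}
with probability at least $1-\exp(-\Omega(n/d^{\iota_0}))$. Standard matrix Bernstein requires a uniform bound on the rank-one summands $\tilde\Phi_{i,\cdot}\tilde\Phi_{i,\cdot}^\top/n$, which does not hold because under Assumption~\ref{asmpt:polynomials} the marginal $p$ has only polynomial tails, so the features $Q_{\br}(\bx_i)$ are not sub-Gaussian and can be very large. The plan is a standard truncation: (i) define a good event $\cA$ on which $\max_{i\leq n}\norm{\bx_i}_\infty\leq n^{1/\nu'}$ for some $1<\nu'<\nu$, so that $\P(\cA^c)$ is negligible; (ii) on $\cA$, each $Q_{\br}(\bx_i)$ is a polynomial of degree at most $\iota_0$ in truncated coordinates and hence deterministically bounded, making the summand operator norm $O(d^{\iota_0}\cdot n^{2\iota_0/\nu'}/n)$; (iii) the Bernstein variance parameter $\norm{\sum_i \En X_i^2}_{op}$ is controlled via $\trace(D)=\sum_{k\leq\iota_0}\alpha_k = O(1)$ by the orthogonality, yielding a variance of order $1/n$; and (iv) combining with the matrix dimension $N\asymp d^{\iota_0}$, matrix Bernstein yields a fluctuation of order $\sqrt{d^{\iota_0}\log d/n}$, which is $o(d^{-\iota_0})$ exactly under the hypothesis $d^{\iota_0}\log d=o(n)$.

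Combining these ingredients, on the concentration event we have $\lambda_{\min}(\tilde\Phi^\top\tilde\Phi/n)\geq \lambda_{\min}(D)/2\geq (c/2)d^{-\iota}$, so $\tilde\Phi$ has full column rank $N$, whence $\Phi=\tilde\Phi M^\top$ also has rank $N$, giving $\boldsymbol{K}^{[\leq\iota]}$ exactly $N=\binom{d+\iota}{\iota}$ nonzero eigenvalues. Each is bounded below by $\lambda_{\min}(\Phi^\top\Phi/n)\geq \sigma_{\min}(M)^2\cdot (c/2)d^{-\iota}\gtrsim d^{-\iota}$, absorbing $\iota_0$-dependent constants into the final $C$.
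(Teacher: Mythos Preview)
Your overall architecture matches the paper's: factor $n\boldsymbol{K}^{[\le\iota]}=\Phi\Phi^\top$, read off the range from the columns of $\Phi$, pass to an orthogonal polynomial basis $\{Q_{\br}\}$ so that the population Gram matrix becomes diagonal with entries $\asymp d^{-\|\br\|}$, and then lower-bound the smallest eigenvalue of the empirical Gram matrix. The paper's sketch following the theorem does exactly this, noting that the change of basis $\Phi\leftrightarrow\Psi$ ``does not distort the eigenvalues by more than a multiplicative constant'' (a nontrivial fact from \cite{liang2020multiple} that you are passing over with ``$M$ has condition number depending only on $\iota_0$'', but set that aside).

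The genuine gap is in your concentration step. Assumption~\ref{asmpt:polynomials} only gives \emph{polynomial} tails, $\P(|z|\ge t)\le C(1+t)^{-\nu}$ with some $\nu>1$. Your truncation event $\cA=\{\max_i\|\bx_i\|_\infty\le n^{1/\nu'}\}$ therefore satisfies $\P(\cA^c)\lesssim nd\cdot n^{-\nu/\nu'}$, which is only polynomially small; this alone already prevents you from reaching the claimed probability $1-\exp(-\Omega(n/d^{\iota_0}))$. Moreover, under such weak moments the matrix-Bernstein variance proxy $\|\E[(Y_i-D)^2]\|_{\op}$ involves fourth moments of the $Q_{\br}$, which you cannot control by $\trace(D)$ alone and which may not even be finite for small $\nu$. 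In short, truncation plus matrix Bernstein is the wrong tool for heavy-tailed features when an exponential probability is required.

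The paper resolves this by invoking the \emph{small-ball method} of \cite{koltchinskii2015bounding} to lower-bound $\lambda_{\min}(\Psi^\top\Psi/n)$. That method needs only a uniform small-ball (anti-concentration) estimate for one-dimensional marginals $\langle v,\Psi_{1,\cdot}\rangle$---no moment or sub-Gaussian hypotheses---and produces a lower bound on the smallest singular value with probability $1-\exp(-cn)$ after accounting for an $\eps$-net of size $\exp(O(N))\le\exp(O(d^{\iota_0}))$, which is exactly how the exponent $n/d^{\iota_0}$ arises. If you want to repair your argument, replace the Bernstein step by a small-ball/VC argument of this type; the rest of your outline then goes through.
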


	The component $\boldsymbol{K}^{[\leq \iota]}$ of the kernel matrix sketches the low-frequency component of the signal in much the same way as the corresponding $\bX_{\leq k}\bX_{\leq k}^\tr$ in linear regression sketches the top $k$ directions of the population distribution (see Section~\ref{sec:lin_regression}).
	
Let us explain the key ideas behind the proof of Theorem~\ref{thm:structure_eigenvalues_polynomials}.
	In correspondence with the sample covariance operator $n^{-1}\bX_{\leq k}^\tr \bX_{\leq k}$ in the linear case, we define the sample covariance operator 
	$\Theta^{[\leq \iota]} := n^{-1} \Phi^\top \Phi.$ 
	If the monomials $p_{\br}(\bx)$ were orthogonal in
    $L^2(\P)$,
    then we would have:
		\begin{align*}
			\mathbb{E}\left[ \Theta^{[\leq \iota]} \right] = {\rm diag}(C(0),~ \cdots, ~C(\iota')d^{-\iota'} ,~  \cdots,~ \underbrace{C(\iota) d^{-\iota}}_{\binom{d+\iota-1}{d-1}~\text{such entries}} )
		\end{align*}
	where $C(\iota)$  denotes constants that depend on $\iota$. Since under our general assumptions on the distribution this orthogonality does not necessarily hold, we employ the Gram-Schmidt process on the basis $\{1,t,t^2,\ldots\}$ with respect to $L^2(p)$ to produce an orthogonal polynomial basis $q_0,q_1,\ldots$. This yields new features
	\begin{align*}
		\Psi_{i,\br} = \left(c_{\br} \alpha_{\norm{\br}}\right)^{1/2} q_{\br}(\bx_i) / d^{\norm{\br}/2},~~~ q_{\br}(\bx)=\prod_{j \in [d]} q_{r_j}(\bx[j]).
	\end{align*}
	As shown in \cite{liang2020multiple}, these features are weakly dependent and the orthogonalization process does not distort the eigenvalues of the covariance matrix by more than a multiplicative constant. A small-ball method \cite{koltchinskii2015bounding} can then be used to prove the lower bound for the eigenvalues of $\Psi\Psi^\tr$ and thus establish Theorem~\ref{thm:structure_eigenvalues_polynomials}.

We now turn to variance and bias calculations. The analogue of \eqref{eq:var_linear} becomes
\begin{align}
	\label{eq:var_upper_kernel}
	\varianceemp \leq \sigma^2_\xi \cdot \En_\bx\norm{K(\bX,\bX)^{-1} K(\bX,\bx)}^2_2 
\end{align}
and, similarly to \eqref{eq:splitting_two_parts}, we split the kernel matrix into two parts, according to the degree $\iota$.

The following theorem establishes an upper bound on \eqref{eq:var_upper_kernel} \cite{liang2020multiple}:
\begin{theorem}
	\label{thm:var_multiple_descent}
	Under Assumption~\ref{asmpt:polynomials} and the additional
    assumption of sub-Gaussianity of the distribution $p$ for the coordinates of $\bx$,
if $\alpha_1,\ldots,\alpha_\iota>0$, there exists $\iota'\geq 2\iota+3$
with $\alpha_{\iota'}>0$, and $d^\iota\log d \lesssim n\lesssim
d^{\iota+1}$, then with probability at least $1-\exp^{-\Omega(n/d^\iota)}$,
	\begin{align}
		\label{eq:multiple_descent}
		\varianceemp \lesssim \sigma_\xi^2 \cdot \left(\frac{d^\iota}{n} + \frac{n}{d^{\iota+1}}\right).
	\end{align}
\end{theorem}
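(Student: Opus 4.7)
The plan mirrors the variance analysis for linear regression in Theorem~\ref{thm:var_linear}: use the closed-form upper bound \eqref{eq:var_upper_kernel}, split the kernel matrix into a low-degree and a high-degree part, show that the high-degree residual behaves like a scalar multiple of the identity---providing a \emph{self-induced ridge}---while the low-degree part carries the informative directions. With $K=K(\bX,\bX)$ and $\bT_{ij}=\mathbb{E}_\bx[k(\bx_i,\bx)k(\bx_j,\bx)]$, one has $\varianceemp\le\sigma_\xi^2\operatorname{tr}(K^{-1}\bT K^{-1})$, and I decompose both $K=K_L+K_H$ and $\bT=\bT_L+\bT_H$ according to the degree split $k=k^{[\le\iota]}+k^{[>\iota]}$.

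The central technical step, and the main obstacle, is a near-isometry statement for $K_H$: under the scaling $d^{\iota}\log d\lesssim n\lesssim d^{\iota+1}$ and the hypothesis that $\alpha_{\iota'}>0$ for some $\iota'\ge 2\iota+3$,
$$\|K_H-\lambda\,\bI_n\|_{\mathrm{op}}\le \tfrac12\,\lambda$$
with probability $1-\exp\{-\Omega(n/d^{\iota})\}$, where $\lambda=\mathbb{E}[k^{[>\iota]}(\bx,\bx)]=\Theta(1)$. This is the kernel counterpart of Lemma~\ref{eq:near_isometry_indep} but is delicate because the monomial features $p_{\br}(\bx)$ are not sub-Gaussian and the kernel contains infinitely many cross-products. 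I would prove it by passing to an orthogonal polynomial basis $\{q_{\br}\}$ in $L^2(p)$---as sketched after Theorem~\ref{thm:structure_eigenvalues_polynomials}---so that cross-degree terms vanish in expectation, and then bounding the operator norm of each degree-$j$ block ($j>\iota$) by a matrix Bernstein argument combined with a small-ball estimate. The hypothesis $\iota'\ge 2\iota+3$ is what guarantees enough decay in the off-diagonal entries of $K_H$ for their aggregate contribution to be $o(\lambda)$ at the critical dimension $n\asymp d^{\iota+1}$.

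Granted the near-isometry, I split $\operatorname{tr}(K^{-1}\bT K^{-1})\le\operatorname{tr}(K^{-1}\bT_L K^{-1})+\operatorname{tr}(K^{-1}\bT_H K^{-1})$. For the low-degree term, Theorem~\ref{thm:structure_eigenvalues_polynomials} forces the eigenvalues of $K$ restricted to the range of $\Phi$ (of dimension $\binom{d+\iota}{\iota}\asymp d^{\iota}$) to be at least $\Omega(nd^{-\iota})$; combined with a control of $\bT_L$ on this same polynomial subspace, the classical OLS-variance computation yields a contribution of $O(d^{\iota}/n)$. For the high-degree term, the near-isometry gives $K\succeq(\lambda/2)\bI_n$, so $\operatorname{tr}(K^{-1}\bT_H K^{-1})\le 4\lambda^{-2}\operatorname{tr}(\bT_H)$; a Bernstein estimate on $\operatorname{tr}(\bT_H)$---the kernel analogue of the bound on $\operatorname{tr}(\bX_{>k}\bSigma_{>k}\bX_{>k}^{\top})$ used in the derivation of \eqref{eq:lin_var_decomp2}---shows that the dominant contribution comes from the degree-$(\iota+1)$ block and is $\asymp n\cdot d^{\iota+1}\cdot d^{-2(\iota+1)}=n/d^{\iota+1}$. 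Combining the two pieces and absorbing $\lambda=\Theta(1)$ into constants gives the claimed bound on the event where both Theorem~\ref{thm:structure_eigenvalues_polynomials} and the near-isometry hold, which has probability $1-\exp\{-\Omega(n/d^{\iota})\}$.
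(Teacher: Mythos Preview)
Your proposal is correct and follows essentially the same approach as the paper. The paper itself only sketches the argument---it states the variance bound \eqref{eq:var_upper_kernel}, says ``similarly to \eqref{eq:splitting_two_parts}, we split the kernel matrix into two parts, according to the degree $\iota$,'' and defers the details to \cite{liang2020multiple}---and your plan expands exactly this outline: the degree-based splitting $K=K_L+K_H$, the near-isometry of $K_H$ playing the role of Lemma~\ref{eq:near_isometry_indep}, the orthogonal-polynomial change of basis to handle the non-sub-Gaussian features (as the paper describes after Theorem~\ref{thm:structure_eigenvalues_polynomials}), the use of Theorem~\ref{thm:structure_eigenvalues_polynomials} for the $d^\iota/n$ low-degree contribution, and the Bernstein-type tail bound on the degree-$(\iota{+}1)$ block yielding $n/d^{\iota+1}$.
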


Notice that the behavior of the upper bound changes as $n$ increases
from $d^\iota$ to $d^{\iota+1}$. At $d\asymp n^\iota$, variance is
large since there is not enough data to reliably estimate all the
$d^\iota$ directions in the feature space. As $n$ increases, variance
in the first $d^\iota$ directions decreases; new directions in
the data appear (those corresponding to monomials of degree $\iota+1$,
with smaller population eigenvalues) but cannot be reliably estimated.
This second part of \eqref{eq:multiple_descent} grows linearly with
$n$, similarly to the second term in \eqref{eq:var_linear_condition}.
The split between these two terms occurs at the effective dimension
defined in Section~\ref{sec:lin_regression}.

Two aspects of the \emph{multiple-descent} behavior of the upper bound
\eqref{eq:multiple_descent} should be noted. First, variance is small
when $d^\iota \ll n \ll d^{\iota+1}$, between the peaks; second, the
valleys become deeper as $d$ becomes larger, with variance at most $d^{-1/2}$ at $n=d^{\iota+1/2}$. 

We complete the discussion of this section by exhibiting one possible upper bound on the bias term \cite{liang2020multiple}:
\begin{theorem}
	Assume the regression function can be written as
    $$f^*(\bx)=\int
    k(\bx,\bz)\rho_*(\bz)\P(d\bz) ~~\text{with}~~ \int
    \rho_*^4(\bz)\P(d\bz)\leq c.$$ Let Assumption~\ref{asmpt:polynomials} hold, and suppose $\sup_\bx k(\bx,\bx)\lesssim 1$. Then
	\begin{align}
		\biassquaredemp \lesssim \delta^{-1/2} \left( \En_\bx\norm{K(\bX,\bX)^{-1}K(\bX,\bx)}^2_2 + \frac{1}{n}\right)
	\end{align} 
	with probability at least $1-\delta$. The above expectation is precisely $\varianceemp/\sigma^2_\xi$ and can be bounded as in Theorem~\ref{thm:var_multiple_descent}.
\end{theorem}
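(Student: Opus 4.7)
The plan is to reduce the bias of the minimum-norm kernel interpolant to the variance coefficient $V(\bX):=\En_\bx\|K(\bX,\bX)^{-1}K(\bX,\bx)\|^2_2$ via a duality argument based on the integral representation of $f^*$, and then convert the resulting expectation-level inequality into a high-probability bound via Chebyshev applied to a small residual; the $\delta^{-1/2}$ will arise from the square-root tail of the Chebyshev deviation. The main manipulation exploits linearity of the minimum-norm interpolation map. Writing $\bar{f}(\bx):=\En_\by\algo(\bx)=\sum_i f^*(\bx_i)\omega_i(\bx)$ for the conditional mean of $\algo$ (with $\omega_i(\bx)=K(\bx,\bX)K(\bX,\bX)^{-1}\be_i$) and using $f^*(\bx)=\En_\bz[\rho_*(\bz)k(\bx,\bz)]$, I commute the integral past the weighted sum to obtain
\[
f^*(\bx)-\bar f(\bx)=\En_\bz\rho_*(\bz)\,g_\bx(\bz),\qquad g_\bx(\bz):=k(\bx,\bz)-K(\bx,\bX)K(\bX,\bX)^{-1}K(\bX,\bz),
\]
where $g$ is symmetric, $g_\bx(\bz)=g_\bz(\bx)$, and vanishes whenever either argument is a training input. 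Cauchy-Schwarz in $\bz$ combined with $\En_\bz\rho_*^2\le(\En_\bz\rho_*^4)^{1/2}\le\sqrt c$ then yields $\biassquaredemp\le\sqrt c\cdot A(\bX)$, where $A(\bX):=\En_{\bx,\bz}g_\bx^2(\bz)$.

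\textbf{Feature-space comparison and Chebyshev.} Passing to feature coordinates $k(\bx,\bz)=\langle\phi(\bx),\phi(\bz)\rangle$ with $\Psi=[\phi(\bx_1)\,|\,\cdots\,|\,\phi(\bx_n)]$ and orthogonal projector $P_\Psi=\Psi(\Psi^\tr\Psi)^{-1}\Psi^\tr$, the residual factors as $g_\bx(\bz)=\phi(\bx)^\tr Q\phi(\bz)$ with $Q=I-P_\Psi$, and therefore $A(\bX)=\trace((Q\bSigma_\phi)^2)$ where $\bSigma_\phi=\En[\phi\phi^\tr]$. The hypothesis $\sup_\bx k(\bx,\bx)\lesssim 1$ forces $\|\bSigma_\phi\|_{\mathrm{op}}\lesssim 1$, which reduces the bound to $A(\bX)\lesssim\trace(Q\bSigma_\phi)=\trace(\bSigma_\phi)-\trace(P_\Psi\bSigma_\phi)$. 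I then split along the polynomial-degree decomposition of Theorem~\ref{thm:structure_eigenvalues_polynomials}: the low-degree block contributes a term comparable to $V(\bX)$ (both quantities measure how well the sample fills the dominant eigendirections of $\bSigma_\phi$), while the high-degree block contributes a residual $R(\bX)$ with $\En R\lesssim 1/n$ and $\Var(R)\lesssim 1/n^2$, the latter coming from Bernstein-type inequalities for quadratic forms in the independent coordinates of Assumption~\ref{asmpt:polynomials} combined with the tail decay of the staircase eigenvalues. Chebyshev applied to $R(\bX)$ yields $R(\bX)\le\En R+\sqrt{\Var(R)/\delta}\lesssim 1/n+1/(n\sqrt\delta)\lesssim 1/(n\sqrt\delta)$ with probability at least $1-\delta$. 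Combining with $A(\bX)\lesssim V(\bX)+R(\bX)$ and absorbing the $\sqrt c$ factor into $\lesssim$,
\[
\biassquaredemp\lesssim V(\bX)+\frac{1}{n\sqrt\delta}\le \delta^{-1/2}\left(V(\bX)+\frac{1}{n}\right),
\]
which is exactly the claimed inequality.

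\textbf{Main obstacle.} The hard step is the pointwise splitting $A(\bX)\le V(\bX)+R(\bX)$ with a well-concentrated residual $R$. Executing it requires quantifying how the random projection $P_\Psi$ aligns with the anisotropic population covariance $\bSigma_\phi$: on the low-degree block the staircase structure of Theorem~\ref{thm:structure_eigenvalues_polynomials} ensures the sample is rich enough to fill the relevant eigenspace, which lets one match $\trace(P_\Psi\bSigma_\phi)$ against the polynomial-degree analogue of the self-induced regularization of Section~\ref{sec:lin_regression} (and hence against $V(\bX)$); on the high-degree block, the independent-coordinate assumption of Assumption~\ref{asmpt:polynomials} delivers the concentration needed to control the residual at the $1/n^2$-variance scale. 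Everything else is either a mechanical Cauchy-Schwarz or a standard application of Chebyshev's inequality.
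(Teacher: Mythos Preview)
Your reduction to $A(\bX)=\En_{\bx,\bz}g_\bx^2(\bz)=\trace((Q\bSigma_\phi)^2)$ is correct, but the next step fails. Passing to $A(\bX)\le\|\bSigma_\phi\|_{\op}\trace(Q\bSigma_\phi)$ is fatal here: under the staircase spectrum, the degree-$\iota$ eigenspace carries total mass $\alpha_\iota$ in $\trace(\bSigma_\phi)$, and for every $\iota$ above the effective degree the projection $P_\Psi$ captures essentially none of it. Hence the high-degree part of $\trace(Q\bSigma_\phi)$ equals $\sum_{\iota>\iota_0}\alpha_\iota$, a fixed positive constant independent of $n$, not $O(1/n)$. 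Your asserted bounds $\En R\lesssim 1/n$ and $\Var(R)\lesssim 1/n^2$ are therefore false, and no degree-splitting of $\trace(Q\bSigma_\phi)$ can recover $V(\bX)+1/n$. A related symptom: your Cauchy--Schwarz only consumes $\|\rho_*\|_{L^2}\le c^{1/4}$, so the $L^4$ hypothesis is idle in your argument---a sign that the decomposition is not the intended one.

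The argument in \cite{liang2020multiple} avoids $\trace(Q\bSigma_\phi)$ entirely. One first uses Jensen $(\En_\bz\rho_* g_\bx)^2\le\En_\bz[\rho_*^2 g_\bx^2]$, integrates in $\bx$, and then applies Cauchy--Schwarz to $\rho_*^2$ against $h(\bz):=\En_\bx g_\bx^2(\bz)$; this is precisely where the fourth moment enters, giving $\biassquaredemp\le\sqrt c\,(\En_\bz h^2)^{1/2}$. The crucial observation is a Schur-complement (``add-one-in'') identity: appending $\bz$ to the training set as an $(n{+}1)$-th point, the last coordinate of $K(\tilde\bX,\tilde\bX)^{-1}K(\tilde\bX,\bx)$ equals $g_\bx(\bz)/\psi(\bz)$, where $\psi(\bz)=k(\bz,\bz)-K(\bz,\bX)K(\bX,\bX)^{-1}K(\bX,\bz)\le\sup_\bw k(\bw,\bw)\lesssim 1$. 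Thus $h(\bz)\lesssim\En_\bx\omega_{n+1}^2(\bx;\tilde\bX)$, a single coordinate of the $(n{+}1)$-sample variance functional. Exchangeability of $(\bx_1,\dots,\bx_n,\bz)$ then produces the factor $1/(n{+}1)$, and Markov's inequality over the draw of $\bX$ supplies the $\delta^{-1/2}$. The staircase structure of Theorem~\ref{thm:structure_eigenvalues_polynomials} is only invoked afterwards, to bound $V$ via Theorem~\ref{thm:var_multiple_descent}; it plays no role in connecting bias to variance.
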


\subsubsection{Kernels on $\Re^d$ with $d\asymp n$}
\label{sec:dpropn}

We now turn our attention to the regime  $d\asymp n$ and investigate
the behavior of minimum norm interpolants in the RKHS in this
high-dimensional setting. Random kernel matrices in the $d\asymp n$
regime have been extensively studied in the last ten years. As shown
in \cite{el2010spectrum}, under assumptions specified below,
the kernel matrix can be approximated in operator norm by
$$K(\bX,\bX) \approx c_1\frac{\bX\bX^\tr}{d} + c_2 \bI_n,$$
that is, a linear kernel plus a scaling of the identity. While this
equivalence can be viewed as a negative result about the utility of
kernels in the $d\asymp n$ regime, the term $c_2 \bI_n$ provides
implicit regularization for the minimum-norm interpolant in the RKHS
\cite{liang2020just}.

We make the following assumptions.
\begin{assumption}
	\label{asmpt:kernel_nd_regime}
        We assume that coordinates of $\bz=\bSigma^{-1/2}\bx$ are
        independent, with zero mean and unit variance, so that
        $\bSigma=\En \bx\bx^\tr$.
        Further assume there are constants
        $0<\eta,M<\infty$, such that the following hold.
        \begin{enumerate}
        \item[$(a)$] For all $i\le d$,
          $\E[|\bz_i|^{8+\eta}]\le M$.
        \item[$(b)$] $\|\bSigma\| \le M$, $d^{-1}\sum_{i=1}^d\lambda_i^{-1}\le M$, where $\lambda_1,\dots,\lambda_d$ are the eigenvalues of $\bSigma$.
        \end{enumerate}
\end{assumption}
	
Note that, for $i\neq j$, the rescaled scalar products $\inner{\bx_i,\bx_j}/d$ are typically of order $1/\sqrt{d}$.  We can therefore approximate the kernel function by its Taylor expansion around $0$. To this end, define
\begin{align*}
  \alpha &:= h(0)+ h''(0)\frac{\trace(\bSigma^2)}{2d^2},~ \beta := h'(0),\\
  \gamma &:= \frac{1}{h'(0)}\big[h(\trace(\bSigma)/d) - h(0)-h'(0)\trace(\bSigma/d)\big].
\end{align*}

Under Assumption~\ref{asmpt:kernel_nd_regime}, a variant of a result of \cite{el2010spectrum} implies that for  some $c_0\in (0,1/2)$, the following holds with high probability 
\begin{align}
	\norm{K(\bX,\bX)-K^{\text{lin}}(\bX,\bX)} \lesssim d^{-c_0} \, 
\end{align}
where
\begin{align}
	K^{\text{lin}}(\bX,\bX) = \beta \frac{\bX\bX^\tr}{d} + \beta \gamma \bI_n + \alpha \bone\bone^\tr.
\end{align}
To make the self-induced regularization due to the ridge apparent, we
develop an upper bound on the variance of the minimum-norm interpolant in \eqref{eq:var_upper_kernel}. Up to an additive diminishing factor, this expression can be replaced by 
\begin{align}
	\sigma_\xi^2 \cdot \trace \left((\bX\bX^\tr + d\gamma \bI_n)^{-2}\bX\bSigma\bX^\tr \right),
\end{align}
where we assumed without loss of generality that $\alpha=0$. Comparing to \eqref{eq:lin_var_decomp2},
we observe that here implicit regularization arises due to the `curvature' of the kernel, in addition to any favorable tail behavior in the spectrum of $\bX\bX^\tr$. Furthermore, this regularization arises under rather weak assumptions on the random variables even if Assumption~\ref{assmpt:linear_reg} is not satisfied. A variant of the development in \cite{liang2020just} yields a more interpretable upper bound of
\begin{align}
	\varianceemp \lesssim \sigma^2_\xi \cdot \frac{1}{\gamma} \left(\frac{k}{n} + \lambda_{k+1}\right)
\end{align}
for any $k\geq 1$ \cite{liang2020amend}; the proof is in the Supplementary
Material. Furthermore, a high probability bound on the bias
	\begin{align}
		\label{eq:bias_kernel_ridgeless}
		\biassquaredemp \lesssim \norm{f^*}^2_{\cH} \cdot \inf_{0\leq k\leq n} \left\{ \frac{1}{n}\sum_{j>k} \lambda_j(\frac{1}{d}\bX\bX^\tr) + \gamma + \sqrt{\frac{k}{n}}\right\}
	\end{align}
can be established with basic tools from empirical process theory under boundedness assumptions on $\sup_{\bx} k(\bx,\bx)$ \cite{liang2020just}. 

With more recent developments on the bias and variance of linear
interpolants in \cite{hastie2019surprises}, a significantly more
precise statement can be derived for the $d\asymp n$ regime.
The proof of the following theorem is in the Supplementary
Material.
\begin{theorem}
  \label{thm:npropd}
  Let $0< M,\eta<\infty$ be fixed constants and suppose that
  Assumption~\ref{asmpt:kernel_nd_regime} holds with $M^{-1}\le d/n\le
  M$. Further assume that  $h$ is continuous on $\reals$ and smooth in a
  neighborhood of $0$ with $h(0), h'(0)>0$,
  that $\|f^*\|_{L^{4+\eta}(\P)}\le M$ and that the $z_i$ are $M$-subgaussian.
  Let $y_i=f^*(\bx_i)+\xi_i$, $\E(\xi_i^2)= \sigma_{\xi}^2$,
  and $\bbeta_0 := \bSigma^{-1}\E[\bx f^*(\bx)]$.
  Let $\lambda_*>0$ be the unique positive solution of
        \begin{align}
          n \Big(1-\frac{\gamma}{\lambda_*}\Big) = \Trace\Big( \bSigma(\bSigma+\lambda_*\id)^{-1}\Big)\, .\label{eq:FixedPointProportional}
        \end{align}
        Define $\cuB(\bSigma,\bbeta_0)$ and $\cuV(\bSigma)$ by
        \begin{align}
          \cuV(\bSigma) &:=\frac{\Trace\big( \bSigma^2(\bSigma+\lambda_*\id)^{-2}\big)}{n-\Trace\big( \bSigma^2(\bSigma+\lambda_*\id)^{-2}\big)}
                          \, , \label{eq:VarianceProportional}\\
          \cuB(\bSigma,\bbeta_0) &:= \frac{\lambda_*^2\<\bbeta_0,(\bSigma+\lambda_*\id)^{-2}\bSigma\bbeta_0\>}
           {1-n^{-1} \Trace\big(
           \bSigma^2(\bSigma+\lambda_*\id)^{-2}\big)}\,
           .\label{eq:BiasProportional}
        \end{align}
        Finally, let $\biassquaredemp$ and  $\varianceemp$ denote the
        squared bias and variance for the minimum-norm interpolant \eqref{eq:kernel_min_norm_interpolant}. 
	Then there exist  $C,c_0>0$  (depending also on the constants in Assumption~\ref{asmpt:kernel_nd_regime})
        such that the following holds with probability at least $1-Cn^{-1/4}$
        (here $\proj_{>1}$ denotes the projector orthogonal to affine
        functions in $L^2(\P)$):
	\begin{align}
          \big|\biassquaredemp -
          \cuB(\bSigma,\bbeta_0)-\|\proj_{>1}f^*\|_{L^2}^2
           (1+\cuV(\Sigma)) \big|&\le C n^{-c_0}\, , \label{eq:bias_kernel_linear_precise}\\
           \big|	\varianceemp-  \sigma_{\xi}^2\cuV(\bSigma) \big|&\le C n^{-c_0}\, .\label{eq:var_kernel_linear_precise}
	\end{align}
      \end{theorem}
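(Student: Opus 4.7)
The plan is to reduce the analysis of the minimum-norm RKHS interpolant to that of a linear ridge regression estimator in the proportional regime, and then invoke the sharp bias/variance asymptotics of ridge regression in that regime (as in Hastie--Montanari--Rosset--Tibshirani). The self-induced ridge parameter $\lambda_*$ of \eqref{eq:FixedPointProportional} will arise as the deterministic equivalent of the naive ridge $\gamma d/n$ under the standard high-dimensional renormalization.

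The first step is linearization. Under Assumption~\ref{asmpt:kernel_nd_regime}, a variant of El Karoui's theorem (already quoted in the excerpt) gives
\[
\bigl\|K(\bX,\bX) - \beta\bX\bX^\tr/d - \beta\gamma\,\bI_n - \alpha\bone\bone^\tr\bigr\|_{\op} \lesssim d^{-c_0}
\]
with high probability, together with the parallel expansion $K(\bx,\bX) = h(0)\bone + h'(0)\bX\bx/d + r(\bx)$ whose remainder $r(\bx)$ is small in $L^2(\P)$. Because the linearized matrix has smallest eigenvalue at least $\beta\gamma > 0$, a resolvent identity shows $\|K(\bX,\bX)^{-1} - K^{\text{lin}}(\bX,\bX)^{-1}\|_{\op} \lesssim d^{-c_0}$, so up to an $o(1)$ error in $L^2(\P)$,
\[
\algo(\bx) \approx \bx^\tr \bX^\tr\bigl(\bX\bX^\tr + \gamma d\, \bI_n\bigr)^{-1}\by + c_n,
\]
i.e.\ linear ridge regression with naive ridge $\lambda_n = \gamma d/n$ plus a scalar intercept $c_n$ absorbing the rank-one $\alpha\bone\bone^\tr$ correction.

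Next, decompose the regression function as $f^*(\bx) = \bbeta_0^\tr\bx + g(\bx)$, with $\bbeta_0 = \bSigma^{-1}\E[\bx f^*(\bx)]$ and $g := \proj_{>1} f^*$ the $L^2(\P)$-residual orthogonal to affine functions. Writing $y_i = \bbeta_0^\tr\bx_i + g(\bx_i) + \xi_i$ and $\bg := (g(\bx_1),\dots,g(\bx_n))^\tr$, the bias and variance can be computed at fixed $\bX$ (hence fixed $\bg$). Orthogonality of $g$ to affine functions in $L^2(\P)$ makes the cross term in $\E_\bx(f^*-\E_\by \algo)^2$ vanish, and the bias splits into the ridge bias on the linear signal $\bbeta_0$ (giving $\cuB(\bSigma,\bbeta_0)$ by the proportional-regime ridge formulas), plus $\|g\|_{L^2}^2$ (the inherent miss because $g$ lies outside the linear model), plus the squared $L^2(\P)$-mass of $\bx^\tr(\bX^\tr\bX/n + \lambda_n\bI)^{-1}\bX^\tr\bg/n$. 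The last quantity is computed exactly as a ridge variance, treating $\bg$ as noise-like data with empirical second moment $\|g\|_{L^2}^2$, and yields $\|g\|_{L^2}^2\,\cuV(\bSigma)$. The genuine noise $\bxi$ contributes only to variance, giving $\sigma_\xi^2\,\cuV(\bSigma)$. The fixed-point equation \eqref{eq:FixedPointProportional} emerges as the self-consistent equation defining the deterministic equivalent of the Stieltjes transform of $\bX^\tr\bX/n$ in terms of $\bSigma$; its solution $\lambda_*$ is the renormalized ridge and produces the expressions $\cuB, \cuV$ in the statement.

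The main technical obstacle is quantitative. Step~1 controls $\|K - K^{\text{lin}}\|_{\op}$ only to order $d^{-c_0}$, whereas the targets $\Trace\bigl(\bSigma^2(\bSigma+\lambda_*\bI)^{-2}\bigr)$ etc.\ are $\Theta(1)$ functionals of the resolvent that must be pinned down to precision $n^{-c_0}$. This requires (i)~a quantitative resolvent identity to upgrade operator-norm closeness to closeness of the bilinear forms $\bx^\tr K^{-1}\bx$ uniformly over typical $\bx$; (ii)~a Marchenko--Pastur-type deterministic equivalent for $(\bX^\tr\bX/n + \lambda\bI)^{-1}$ under the $8+\eta$-moment and sub-Gaussianity assumptions, which is where $\lambda_*$ enters; and (iii)~the limits $n^{-1}\bg^\tr\bg \to \|g\|_{L^2}^2$ and $n^{-1}\bX^\tr\bg \to 0$ at the required rate, which rely on $L^{4+\eta}$-integrability of $f^*$, the $L^2(\P)$-orthogonality of $g$ to affine functions, and Hanson--Wright concentration of quadratic forms in sub-Gaussian vectors. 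Once these three ingredients are assembled, \eqref{eq:bias_kernel_linear_precise}--\eqref{eq:var_kernel_linear_precise} follow directly from the proportional-regime ridge regression asymptotics.
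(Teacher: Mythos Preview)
Your high-level strategy---linearize the kernel via El Karoui, decompose $f^*$ into its affine projection and the orthogonal remainder $g=\proj_{>1}f^*$, and then invoke the proportional-regime ridge asymptotics of Hastie--Montanari--Rosset--Tibshirani---is exactly the route the paper takes. The identification of $\|g\|_{L^2}^2(1+\cuV(\bSigma))$ as the contribution of the nonlinear part, and of $\lambda_*$ as the deterministic equivalent of the self-induced ridge, is also correct.

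There is, however, a genuine gap in your handling of the nonlinear residual vector $\bg=(g(\bx_1),\dots,g(\bx_n))^\tr$. You propose to treat $\bg$ ``as noise-like data'' and to control the relevant quadratic forms by ``Hanson--Wright concentration of quadratic forms in sub-Gaussian vectors'' together with $n^{-1}\bX^\tr\bg\to 0$. This is not enough: the quantities you need to control are of the form $\bg^\tr\bR(\bX)\bg$ and $\bbeta_0^\tr\bX^\tr\bR(\bX)\bg$, where $\bR(\bX)$ is built from the resolvent $(\bX\bX^\tr/d+\gamma\bI)^{-1}$. Since $\bg$ is a \emph{deterministic} function of $\bX$, Hanson--Wright does not apply, and the simple orthogonality $\E[\bx\,g(\bx)]=0$ only kills cross terms \emph{after} the outer $\E_\bx$, not the parameter-space cross term between the ridge bias on $\bbeta_0$ and the ridge ``variance'' on $\bg$. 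The paper's proof devotes its main technical effort precisely to this decoupling: a sequence of leave-one-out and leave-two-out resolvent expansions (their Lemmas on $\E\{Y_{ij}g(\bz_i)g(\bz_j)\}$, $\E\{Y_{ij}Y_{kl}g(\bz_i)g(\bz_j)g(\bz_k)g(\bz_l)\}$, etc.) that exploit the orthogonality of $g$ to constants and linear functions at the level of \emph{individual} resolvent entries, not just their deterministic equivalent. Only after this decoupling does $\bg$ genuinely behave like independent noise in the ridge variance formula, and only then can the mixed term $R_{\smix}$ be shown to vanish at rate $n^{-c_0}$.

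A secondary imprecision: the rank-one term $\alpha\bone\bone^\tr$ is not simply ``absorbed'' into an intercept. The paper has to argue separately (via Sherman--Morrison and bounds on $\langle\bone,\bK_0^{-1}\bone\rangle$) that the contributions $\langle\ba,\bK^{-2}\ba\rangle$ and analogous terms are $O(1/n)$. Your item~(iii) also needs $\|f^*\|_{L^{4+\eta}}$ rather than sub-Gaussianity of $g$ itself, since $g$ need not be sub-Gaussian.
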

	  
A few remarks are in order. First, note that the left hand side of \eqref{eq:FixedPointProportional} is strictly increasing in $\lambda_*$, while the right
hand side is strictly decreasing. By considering the limits as $\lambda_*\to 0$ and $\lambda_*\to\infty$, it is easy to
see that this equation indeed admits a unique solution.
Second, the bias estimate in \eqref{eq:bias_kernel_ridgeless} requires
$f^*\in\cH$, while the bias calculation in
\eqref{eq:bias_kernel_linear_precise} does not make this assumption, but instead incurs an approximation error for non-linear components of $f^*$.

We now remark that the minimum-norm interpolant with kernel $K^{\text{lin}}$ is simply ridge regression with respect to the plain covariates $\bX$ and ridge penalty proportional to $\gamma$:
\begin{align}
  (\widehat{\theta_0},\widehat{\btheta}) &:=\argmin{\theta_0,\btheta}
  \frac{1}{d}\big\|\by-\theta_0-\bX\btheta\big\|_2^2+\gamma \|\btheta\|^2_2 \, .\label{eq:VanillaRidge}
\end{align}
The intuition is that the minimum-norm interpolant for the original kernel takes the form $\algo(\bx)  = \widehat{\theta}_0+\<\widehat{\btheta},\bx\> +\Delta(\bx)$.
Here $\widehat{\theta}_0+\<\widehat{\btheta},\bx\>$ is a simple
component, and $\Delta(\bx)$ is an overfitting component: a function
that is small in $L^2(\P)$ but allows interpolation of the data.

The characterization in \eqref{eq:FixedPointProportional},
\eqref{eq:VarianceProportional}, and \eqref{eq:BiasProportional}
can be shown to imply upper bounds that are related to the analysis in
Section~\ref{sec:lin_regression}.
\begin{corollary}
  Under the assumptions of Theorem \ref{thm:npropd}, further assume
   that $f^*(\bx) = \<\bbeta_0,\bx\>$ is linear and
  that there is an integer $k\in\naturals$, and
  a constant $c_*>0$  such that  $r_k(\bSigma)+(n\gamma/c_*\lambda_{k+1})\ge  (1+c_*)n$.
   Then there exists $c_0\in(0,1/2)$ such that, with high probability,
  the following hold as long as the right-hand side is less than one:
  \begin{align}
    \biassquaredemp  &\le
    4\Big(\gamma+\frac{1}{n}\sum_{i=k+1}^d\lambda_i\Big)^2
    \|\bbeta_{0,\le
    k}\|_{\bSigma^{-1}}^2+\|\bbeta_{0,>k}\|_{\bSigma}^2+n^{-c_0}\,
    ,\label{eq:BiasProportional2}\\
    \varianceemp& \le \frac{2k\sigma_\xi^2}{n} +
    \frac{4n \sigma_\xi^2}{c_*}
                  \frac{\sum_{i=k+1}^d \lambda_i^2}{(n\gamma/c_*+\sum_{i=k+1}^{d}\lambda_i)^2}+n^{-c_0}\, . \label{eq:VarProportional}
  \end{align}
  Further, under the same assumptions, the effective regularization $\lambda_*$
  (that is, the unique solution of \eqref{eq:FixedPointProportional}), satisfies
\begin{align}
  \gamma+\frac{c_*}{1+c_*}\frac{1}{n}\sum_{i=k+1}^{d}\lambda_i \le \lambda_*\le
  2\gamma+\frac{2}{n}\sum_{i=k+1}^d\lambda_i\, .
\end{align}
\end{corollary}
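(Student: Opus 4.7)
The corollary will follow from (i) establishing the stated two-sided bound on $\lambda_*$ directly from the fixed-point equation \eqref{eq:FixedPointProportional}, and then (ii) substituting it into the closed-form expressions $\cuB(\bSigma,\bbeta_0)$ and $\cuV(\bSigma)$ from Theorem~\ref{thm:npropd}, splitting the resulting sums at the index $k$. The $n^{-c_0}$ slack comes straight from \eqref{eq:bias_kernel_linear_precise}--\eqref{eq:var_kernel_linear_precise}. The term $\|\proj_{>1}f^*\|_{L^2}^2(1+\cuV(\bSigma))$ in \eqref{eq:bias_kernel_linear_precise} simply drops out, since $f^*(\bx)=\<\bbeta_0,\bx\>$ is linear and $\E\bx=0$, so $\proj_{>1}f^*=0$.

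\textbf{Step 1 (the heart of the argument): bounds on $\lambda_*$.} Write $S:=\sum_{i>k}\lambda_i$ and $F(\lambda):=n(1-\gamma/\lambda)-\sum_i \lambda_i/(\lambda_i+\lambda)$, so that $F(\lambda_*)=0$ and $F$ is strictly increasing. It therefore suffices to show $F(\hat\lambda)\ge 0$ and $F(\tilde\lambda)\le 0$ with $\hat\lambda:=2\gamma+2S/n$ and $\tilde\lambda:=\gamma+\tfrac{c_*}{1+c_*}\tfrac{S}{n}$. For the upper bound on $\lambda_*$, I would split $\sum_i \lambda_i/(\lambda_i+\hat\lambda)$ at $k$, bounding each of the first $k$ terms by $1$ and each of the tail terms by $\lambda_i/\hat\lambda$; the inequality $F(\hat\lambda)\ge 0$ then reduces to $(n-2k)(\gamma+S/n)\ge 0$, i.e., $k\le n/2$, which is implied by the regime in which the claimed bounds are nontrivial. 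For the lower bound, rewrite the hypothesis $r_k(\bSigma)+n\gamma/(c_*\lambda_{k+1})\ge(1+c_*)n$ as $\lambda_{k+1}\le (S+n\gamma/c_*)/((1+c_*)n)$ and use elementary algebra to verify $\tilde\lambda\ge c_*\lambda_{k+1}$; this then gives
\[
\sum_{i>k}\frac{\lambda_i}{\lambda_i+\tilde\lambda}\ge \frac{S}{\lambda_{k+1}+\tilde\lambda}\ge \frac{c_*S}{(1+c_*)\tilde\lambda} = \frac{n(\tilde\lambda-\gamma)}{\tilde\lambda},
\]
which rearranges to $F(\tilde\lambda)\le 0$. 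The main obstacle here is that the hypothesis controls only the \emph{mixture} $r_k(\bSigma)+n\gamma/(c_*\lambda_{k+1})$, not $\lambda_{k+1}$ alone, so the trick is to extract the algebraic inequality $\tilde\lambda\ge c_*\lambda_{k+1}$, which is exactly what is needed to replace $\lambda_{k+1}+\tilde\lambda$ by a constant multiple of $\tilde\lambda$ in the denominator above.

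\textbf{Step 2: substitution.} Let $T:=\Trace(\bSigma^2(\bSigma+\lambda_*I)^{-2})=\sum_i\lambda_i^2/(\lambda_i+\lambda_*)^2$. For the variance, I would split at $k$: $T\le k+\sum_{i>k}\lambda_i^2/\lambda_*^2$, then apply $\lambda_*\ge\tilde\lambda$ to bound the tail by a constant multiple of $n\sum_{i>k}\lambda_i^2/(n\gamma/c_*+S)^2$. For the denominator, the crude estimate $T\le \sum_i\lambda_i/(\lambda_i+\lambda_*)=n(1-\gamma/\lambda_*)$ gives $n-T\ge n\gamma/\lambda_*$, which combined with the upper bound on $\lambda_*$ shows $n-T\gtrsim n$ in the regime of interest, yielding \eqref{eq:VarProportional}. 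For the bias, the pointwise inequality $\lambda_*^2\lambda_i/(\lambda_i+\lambda_*)^2\le\min\{\lambda_*^2/\lambda_i,\lambda_i\}$ (from $(\lambda_i+\lambda_*)^2\ge\lambda_i^2\vee\lambda_*^2$), applied coordinate-by-coordinate with the first branch for $i\le k$ and the second for $i>k$, reduces the numerator of $\cuB$ to $\lambda_*^2\|\bbeta_{0,\le k}\|_{\bSigma^{-1}}^2+\|\bbeta_{0,>k}\|_{\bSigma}^2$; substituting $\lambda_*\le\hat\lambda=2\gamma+2S/n$ and handling the denominator $1-T/n$ as above yields \eqref{eq:BiasProportional2}. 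Combining these deterministic bounds on $\cuB$ and $\cuV$ with Theorem~\ref{thm:npropd} then produces the stated inequalities, with the $n^{-c_0}$ terms absorbed into the slack already present in \eqref{eq:bias_kernel_linear_precise}--\eqref{eq:var_kernel_linear_precise}.
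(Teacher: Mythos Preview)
Your approach is the same as the paper's: sandwich $\lambda_*$ by checking the sign of $F(\lambda)=n(1-\gamma/\lambda)-\sum_i\lambda_i/(\lambda_i+\lambda)$ at the two candidate endpoints, then split the sums defining $\cuB$ and $\cuV$ at index $k$ and substitute. Your Step~1 is correct and in fact a little cleaner than the paper's version, which introduces an auxiliary comparison function $\underline F$ rather than directly evaluating $F(\tilde\lambda)$; the extraction of $\tilde\lambda\ge c_*\lambda_{k+1}$ from the hypothesis is precisely the key algebraic step in both arguments.

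There is one genuine gap in Step~2, in your treatment of the denominator $n-T$. The identity $T\le\sum_i\lambda_i/(\lambda_i+\lambda_*)=n(1-\gamma/\lambda_*)$ is valid and gives $n-T\ge n\gamma/\lambda_*$, but combining this with $\lambda_*\le 2\gamma+2S/n$ only yields
\[
n-T\ \ge\ \frac{n\gamma}{2\gamma+2S/n},
\]
which is \emph{not} of order $n$ when $S/n\gg\gamma$ (exactly the situation where the tail self-regularization dominates the kernel-curvature term $\gamma$). So the claimed conclusion $n-T\gtrsim n$ does not follow from this route. The paper's fix is simpler and uses something you already have: the split-at-$k$ bound $T\le k+\lambda_*^{-2}\sum_{i>k}\lambda_i^2$, together with the stipulation in the statement that the right-hand side of the final inequalities be less than one, forces this upper bound on $T$ to be at most $n/2$, whence $n-T\ge n/2$ and $1-T/n\ge 1/2$ for both the variance and bias denominators. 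In other words, the ``right-hand side less than one'' clause is not a mere triviality disclaimer; it is what closes the denominator. With that substitution, the rest of your Step~2 goes through unchanged.
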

Note that apart from the $n^{-c_0}$ term,
\eqref{eq:BiasProportional2}
recovers the result of Theorem \ref{thm:bias_split_linear},
while \eqref{eq:VarProportional} recovers Theorem \ref{thm:var_linear}
(setting $\gamma=0$), both with improved constants
but  limited to the proportional regime.
We remark that analogues of Theorems~\ref{thm:var_linear}, \ref{thm:bias_split_linear}, and \ref{thm:lower_linear} for ridge regression with $\gamma\neq 0$ can be found in \cite{tsigler2020benign}.

The formulas \eqref{eq:FixedPointProportional},
\eqref{eq:VarianceProportional}, and \eqref{eq:BiasProportional}
might seem somewhat mysterious. However, they have an appealing interpretation in
terms of a simpler model that we will refer to as a `sequence model' (this terminology comes from classical
statistical estimation theory \cite{johnstone2019book}).
As stated precisely in the remark below, the sequence model
is a linear regression model in which the design matrix is deterministic (and diagonal),
and the noise and regularization levels are  determined via a fixed point equation. 
\begin{remark}
Assume without loss of generality $\bSigma = \diag(\lambda_1,\dots,\lambda_d)$.
In the sequence model we observe $\by^{\seq}\in\reals^d$ distributed according to
\begin{align}
  \by_i^{\seq} & = \lambda_i^{1/2}\beta_{0,i}+\frac{\tau}{\sqrt{n}}g_i \, ,\;\;\;   (g_i)_{i\le d}\sim_{iid}\normal(0,1)\, ,
\end{align}
where $\tau$ is a parameter given below. We then perform ridge regression with regularization $\lambda_*$:
\begin{align}
  \hbbeta^{\seq}(\lambda_*)& :=\argmin{\bbeta}  \big\|\by^{\seq}-\bSigma^{1/2}\bbeta\big\|_2^2+\lambda_*\|\bbeta\|^2_2\, ,
\end{align}
which can be written in closed form as
\begin{align} 
  \hbeta^{\seq}_i(\lambda_*)& = \frac{\lambda_i^{1/2} y^{\seq}_i}{\lambda_*+\lambda_i}\, .
\end{align}
The noise level $\tau^2$ is then fixed via the condition
$\tau^2 = \sigma_\xi^2+\E \|
\hbbeta^{\seq}(\lambda_*)-\bbeta_0\|_2^2$. Then under the assumption
that $f^*$ is linear, Theorem \ref{thm:npropd} states that
\begin{align}
  \E\{(f^*(\bx)-\hf(\bx))^2|\bX\} =  \E \|  \hbbeta^{\seq}(\lambda_*)-\bbeta_0\|_2^2 +O(n^{-c_0})
\end{align}
with high probability.
\end{remark}

To conclude this section, we summarize the insights gained from
the analyses of several models in the interpolation regime.
First, in all cases, the interpolating solution $\algo$ can
be decomposed into a prediction (or simple) component and an
overfitting (or spiky) component. The latter ensures interpolation
without hurting prediction accuracy.  In the next section,
we show, under appropriate conditions on the parameterization
and the initialization, that gradient methods can be accurately
approximated by their linearization, and hence can be viewed as
converging to a minimum-norm linear interpolating solution despite
their non-convexity.
In Section~\ref{sec:NTK}, we return to the question of generalization,
focusing specifically on two-layer neural networks in linear regimes.

\section{Efficient optimization}
\label{sec:efficient}

The empirical risk minimization (ERM) problem is, in general, intractable even in simple cases.
Section \ref{sec:Intractability} gives examples of such hardness results.
The classical approach to address this conundrum is to construct convex surrogates of the non-convex
ERM problem. The problem of learning a linear classifier provides an
easy-to-state---and yet subtle---example.
Considering the $0$-$1$ loss, ERM reads
\begin{align}
  \mbox{minimize}\;\;\;\; \hRisk_{01}(\btheta) :=\frac{1}{n}\sum_{i=1}^n\indicator{y_i\<\btheta,\bx_i\> \le 0}\, .\label{eq:ERM01}
\end{align}
Note however that the original problem \eqref{eq:ERM01} is not always intractable. If there exists $\btheta\in\reals^p$
such that $\hRisk(\btheta)=0$, then finding $\btheta$  amounts to
solving a set of $n$ linear inequalities. This  can be
done in polynomial time.  In other words, when the model is sufficiently rich to interpolate the data, an interpolator can be
constructed efficiently.

In the case of linear classifiers, tractability arises because of the specific structure of the function
class (which is linear in the parameters $\btheta$), but one might wonder whether it is instead a more general phenomenon.
The problem of finding an interpolator can be phrased as a constraint optimization problem. Write the empirical
risk as 
$$\hRisk(\btheta) = \frac{1}{n}\sum_{i=1}^n\ell(\btheta;y_i,\bx_i).$$  Then we are seeking $\btheta\in \Theta$ such that
\begin{align}
  \ell(\btheta;y_i,\bx_i) = 0\;\;\; \text{for all}~ i\le n\, .
\end{align}
Random constraint satisfaction problems have been studied in depth over the last twenty years, although under different distributions
from those arising from neural network theory.
Nevertheless, a recurring observation is that, when the number of free parameters is sufficiently
large compared to the number of constraints, these problems (which
are NP-hard in the worst case) become tractable;
see, for example, \cite{frieze1996analysis,achlioptas1997analysis} and \cite{coja2010better}.

These remarks motivate a fascinating working hypothesis:
 modern neural networks are tractable \emph{because} they are overparametrized.

Unfortunately, a satisfactory theory of this phenomenon is still lacking, with an important exception:
the linear regime. This is a training regime in which the network can be approximated by a linear model, with a random featurization map
associated with the training initialization. We discuss these results in Section~\ref{sec:LinearRegime}.

While the linear theory can explain a number of phenomena observed in practical neural networks, it also
misses some important properties. We will discuss these points, and results beyond the linear regime, in Section~\ref{sec:BeyondLinear}.

\subsection{The linear regime}
\label{sec:LinearRegime}

Consider a neural network with parameters $\btheta\in\reals^p$: for an input $\bx\in\reals^d$
the network outputs $f(\bx;\btheta)\in\reals$.
We consider training using the square loss
\begin{align}
  \hRisk(\btheta) := \frac{1}{2n}\sum_{i=1}^n
  \big(y_i-f(\bx_i;\btheta)\big)^2
  = \frac{1}{2n}\big\|\by-f_n(\btheta)\big\|_{2}^2\, .
\end{align}
%
Here $\by=(y_1,\ldots,y_n)$ and $f_n:\reals^p\to\reals^n$ maps the
parameter vector $\btheta$ to the evaluation of $f$ at the $n$ data
points, $f_n:\btheta\mapsto (f(\bx_1;\btheta),\dots,f(\bx_n;\btheta))$.
We minimize this empirical risk using gradient flow, with initialization $\btheta_0$:
\begin{align}
  \frac{\de \btheta_t}{\de t} = \frac{1}{n}\bD f_n(\btheta_t)^{\sT}(\by-f_n(\btheta_t))\, .\label{eq:GradFlow}
\end{align}
Here $\bD f_n(\btheta)\in\reals^{n\times p}$ is the Jacobian matrix of the map
$f_n$.
Our focus on the square loss and continuous time is for simplicity of exposition.
Results of the type presented below have been proved for more general loss functions
and for
discrete-time and stochastic gradient methods.

As first argued in \cite{jacot2018neural}, in a highly overparametrized regime it can happen that  $\btheta$ changes only slightly with
respect to the initialization $\btheta_0$. This suggests comparing the original gradient flow with the one
obtained by linearizing the right-hand side of \eqref{eq:GradFlow} around the initialization $\btheta_0$:
\begin{align}
  \frac{\de \obtheta_t}{\de t} = \frac{1}{n}\bD f_n(\btheta_0)^{\sT}\big(\by-f_n(\btheta_0)-\bD  f_n(\btheta_0)(\obtheta_t-\btheta_0)\big)\, .
  \label{eq:LinearizedEvolution}
\end{align}
More precisely, this is  the gradient flow for the risk function
\begin{align} 
  \hRisk_{\slin}(\obtheta) := \frac{1}{2n}\| \by-f_n(\btheta_0)-\bD
  f_n(\btheta_0) (\obtheta-\btheta_0)\|_{2}^2,
  \end{align}
which is obtained by replacing $f_n(\btheta)$ with its first-order Taylor expansion at $\btheta_0$. 
  Of course,   $\hRisk_{\slin}(\obtheta)$ is quadratic in $\obtheta$. 
  In particular, if the Jacobian $\bD f_n(\btheta_0)$ has full row rank, the set of global minimizers $\ERM_0:=\{\obtheta: \;  \hRisk_{\slin}(\obtheta) = 0 \}$ forms an affine space of dimension $p-n$. 
In this case, gradient flow converges to $\obtheta_{\infty}\in\ERM_0$,
which---as discussed in Section~\ref{sec:implicit}---minimizes the $\ell_2$ distance from the initialization:
\begin{align}
  \obtheta_{\infty}:= \argmin{}\Big\{\|\obtheta-\btheta_0\|_2:\;\;\;
  \bD f_n (\btheta_0) (\obtheta-\btheta_0) = \by-f_n(\btheta_0)\Big\}\, . \label{eq:MinNormTheta}
\end{align}

The linear (or `lazy' ) regime is a training regime in which $\btheta_t$ is well approximated by $\obtheta_t$ at all times. 
Of course if $f_n(\btheta)$ is an affine function of $\btheta$, that is, if $\bD f_n(\btheta)$ is constant, then
we have $\btheta_t=\obtheta_t$ for all times $t$. It is therefore natural to quantify deviations
from linearity by defining  the Lipschitz constant
\begin{align}
  \Lip(\bD f_n) := \sup_{\btheta_1\neq\btheta_2}\frac{\|\bD f_n(\btheta_1)-\bD f_n(\btheta_2)\|_{\op}}{\|\btheta_1-\btheta_2\|_2}\, .
\end{align}
(For a matrix $\bA\in \reals^{n\times p}$, we define
$\|\bA\|_{\op}:=\sup_{\bx\neq 0}\|\bA\bx\|_2/\|\bx\|_2$.)
It is also useful to define a population version of the last quantity.
For this, we assume as usual that samples are i.i.d. draws
$(\bx_i)_{i\le n}\sim_{iid} \P$,
and with a slight abuse of notation, we
view $f:\btheta\mapsto f(\btheta)$ as a map from $\reals^p$ to
$L^2(\P):=L^2(\reals^d;\P)$.
We let $\bD f(\btheta)$ denote the differential of this map at $\btheta$,
which is a linear operator, $\bD f(\btheta):\reals^p\to
L^2(\P)$. The corresponding operator
norm and Lipschitz constant are given by
\begin{align}
  \|\bD f(\btheta)\|_{\op}& :=
  \sup_{\bv\in\reals^p\setminus\{0\}}\frac{\|\bD
  f(\btheta)
  \bv\|_{L^2(\P)}}{\|\bv\|_2}\, ,\\
  \Lip(\bD f) &:= \sup_{\btheta_1\neq\btheta_2}\frac{\|\bD f(\btheta_1) -\bD f(\btheta_2)\|_{\op}}{\|\btheta_1-\btheta_2\|_2}\, .
\end{align}

The next theorem establishes sufficient conditions for $\btheta_t$ to
remain in the linear regime in terms of the singular values
and Lipschitz constant of the Jacobian. Statements of this type were proved in several papers, starting with
\cite{du2018gradient}; see, for example,  \cite{allen2019convergence,du2019gradient,zou2020gradient,oymak2020towards} and \cite{liu2020linearity}.
We follow the abstract point of view  in \cite{oymak2019overparameterized} and \cite{chizat2019lazy}. 
\begin{theorem}\label{thm:Linearization}
    Assume
  \begin{align}
    \Lip(\bD f_n)\, \|\by-f_n(\btheta_0)\|_{2} < \frac{1}{4} \sigma^2_{\min}(\bD f_n(\btheta_0))\, .
    \label{eq:ConditionLinearization}
  \end{align}
  Further define 
  $$\sigma_{\max}:=\sigma_{\max}(\bD f_n(\btheta_0)), \sigma_{\min}:=\sigma_{\min}(\bD  f_n(\btheta_0)). $$
  Then the following hold for all $t>0$:
  \begin{enumerate}
  \item The empirical risk decreases exponentially fast to $0$, with  rate $\lambda_0 = \sigma^2_{\min}/(2n)$:
    \begin{align}
      \hRisk(\btheta_t)\le \hRisk(\btheta_0) \, e^{-\lambda_0 t}\, .
      \label{eq:R-Linearization}
    \end{align}
  \item The parameters stay close to the initialization and are closely tracked by those of the linearized flow.
    Specifically, letting $L_n:=\Lip(\bD f_n)$,
  \begin{align}
    \|\btheta_t-\btheta_0\|_2& \le \frac{2}{\sigma_{\min}}\, \|\by-f_n(\btheta_0)\|_2\, ,\label{eq:CloseToInit}\\
    \|\btheta _t-\obtheta_t\|_2 &\le
      \Big\{\frac{32\sigma_{\max}}{\sigma^2_{\min}}
      \|\by-f_n(\btheta_0)\|_{2}+ \frac{16L_n}{\sigma^3_{\min}}
      \|\by-f_n(\btheta_0)\|^2_{2}\Big\} \notag\\*
    & \qquad\qquad {}
      \wedge
      \frac{180L_n\sigma_{\max}^2}{\sigma^5_{\min}}
      \|\by-f_n(\btheta_0)\|^2_{2}
\,.                                  \label{eq:Coupling}
  \end{align}
\item The models constructed by gradient flow and by the linearized flow are similar on test data.
  Specifically, writing $f^{\slin}(\btheta) = f(\btheta_0)+\bD f(\btheta_0) (\btheta-\btheta_0)$, we have  
  \begin{align}
      \lefteqn{\|f(\btheta_t)
      -f^{\slin}(\obtheta_t)\|_{L^2(\P)}}
      & \notag\\*
      & \le \Big\{4\, \Lip(\bD f)\frac{1}{\sigma_{\min}^2}+
    180\|\bD
    f(\btheta_0)\|_{\op}\frac{L_n\sigma_{\max}^2}{\sigma_{\min}^5}\Big\}\|\by-f_n(\btheta_0)\|_2^2\, .  \label{eq:ModelDeviationLinearized}
  \end{align}
\end{enumerate}
\end{theorem}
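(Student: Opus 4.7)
The strategy is a bootstrap (continuity) argument confining $\btheta_t$ to a neighborhood of $\btheta_0$ on which the Jacobian stays well-conditioned, from which all three conclusions follow by fairly standard ODE manipulations. I set $\br_t := \by-f_n(\btheta_t)$ and consider the stopping time $T^* := \sup\{t\ge 0 : \sigma_{\min}(\bD f_n(\btheta_s))\ge \sigma_{\min}/2 \text{ for all } s\le t\}$. On $[0,T^*)$ the identity $\tfrac{d}{dt}\hRisk(\btheta_t)=-\|\nabla \hRisk(\btheta_t)\|_2^2 = -\tfrac{1}{n^2}\|\bD f_n(\btheta_t)^{\sT}\br_t\|_2^2$ combined with the Polyak--\L{}ojasiewicz-type lower bound $\|\bD f_n(\btheta_t)^{\sT}\br_t\|_2^2\ge (\sigma_{\min}^2/4)\|\br_t\|_2^2$ immediately yields \eqref{eq:R-Linearization}.

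To close the bootstrap I derive \eqref{eq:CloseToInit} on $[0,T^*)$. From $\tfrac{d}{dt}\|\br_t\|_2^2 = -2n\|\dot\btheta_t\|_2^2$ together with $\|\bD f_n(\btheta_s)^{\sT}\br_s\|_2 \ge \sigma_{\min}(s)\|\br_s\|_2$, a one-line manipulation produces
\[
\|\dot\btheta_t\|_2 \;\le\; -\sigma_{\min}(t)^{-1}\,\tfrac{d\|\br_t\|_2}{dt};
\]
integrating and using $\sigma_{\min}(t)\ge \sigma_{\min}/2$ gives $\|\btheta_t-\btheta_0\|_2\le 2\|\br_0\|_2/\sigma_{\min}$. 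Lipschitzness of $\bD f_n$ together with hypothesis \eqref{eq:ConditionLinearization} then yields $\|\bD f_n(\btheta_t)-\bD f_n(\btheta_0)\|_{\op}\le 2L_n\|\br_0\|_2/\sigma_{\min} < \sigma_{\min}/2$, so $\sigma_{\min}(\bD f_n(\btheta_t))>\sigma_{\min}/2$ strictly, and continuity forces $T^*=\infty$.

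For the coupling \eqref{eq:Coupling}, set $\bdelta_t:=\btheta_t-\obtheta_t$ and subtract \eqref{eq:LinearizedEvolution} from \eqref{eq:GradFlow}. Adding and subtracting $\bD f_n(\btheta_0)^{\sT}\br_t$ splits the resulting error into a Jacobian-mismatch term of order $L_n\|\btheta_t-\btheta_0\|_2\|\br_t\|_2/n$ and a second-order term controlled by the Taylor remainder $\|f_n(\btheta_t)-f_n(\btheta_0)-\bD f_n(\btheta_0)(\btheta_t-\btheta_0)\|_2 \le \tfrac12 L_n\|\btheta_t-\btheta_0\|_2^2$. Integrating against the exponential decay of $\|\br_t\|_2$ from part (1) and applying Gr\"onwall's lemma yields the $\tfrac{180 L_n \sigma_{\max}^2}{\sigma_{\min}^5}\|\br_0\|_2^2$ estimate; the alternative bound follows from the triangle inequality $\|\bdelta_t\|_2\le \|\btheta_t-\btheta_0\|_2+\|\obtheta_t-\btheta_0\|_2$, bounding the second summand via the closed form of \eqref{eq:LinearizedEvolution} (giving $\tfrac{32\sigma_{\max}}{\sigma_{\min}^2}\|\br_0\|_2$) plus an $\tfrac{16L_n}{\sigma_{\min}^3}\|\br_0\|_2^2$ correction accounting for $\by-f_n(\btheta_0)$ not lying in the column space of $\bD f_n(\btheta_0)$. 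Finally, \eqref{eq:ModelDeviationLinearized} follows from the decomposition
\[
f(\btheta_t) - f^{\slin}(\obtheta_t) \;=\; \bigl[f(\btheta_t) - f^{\slin}(\btheta_t)\bigr] + \bD f(\btheta_0)(\btheta_t - \obtheta_t),
\]
bounding the first bracket in $L^2(\P)$ by $\tfrac{1}{2}\Lip(\bD f)\|\btheta_t-\btheta_0\|_2^2$ and the second by $\|\bD f(\btheta_0)\|_{\op}\|\bdelta_t\|_2$, then substituting \eqref{eq:CloseToInit} and \eqref{eq:Coupling}.

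The main technical obstacle will be the coupling \eqref{eq:Coupling}: the ``$\wedge$'' structure reflects two distinct ways of accounting for how $\obtheta_t$ accumulates---either through the closed form of the linearized flow (yielding the $\sigma_{\max}/\sigma_{\min}^2$ factor) or through a Gr\"onwall estimate on the difference ODE that trades $\sigma_{\max}$ for additional powers of $\sigma_{\min}^{-1}$ and $L_n$---and obtaining tight constants requires choosing at each step whether to bound $\|\br_s\|_2$ by its exponential decay or by a more direct path-length comparison.
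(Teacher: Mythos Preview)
Your bootstrap argument for part (1) and \eqref{eq:CloseToInit}, and your treatment of part (3), match the paper's. The gap is in the coupling bound \eqref{eq:Coupling}.

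When you subtract the two flows and add/subtract $\bD f_n(\btheta_0)^{\sT}(\by-f_n(\btheta_t))$, the second piece is $\tfrac{1}{n}\bD f_n(\btheta_0)^{\sT}\bigl(\oby_t-f_n(\btheta_t)\bigr)$ with $\oby_t:=f_n(\btheta_0)+\bD f_n(\btheta_0)(\obtheta_t-\btheta_0)$. This is \emph{not} simply the Taylor remainder of $f_n$ at $\btheta_t$: writing it out, $\oby_t-f_n(\btheta_t)$ equals minus that remainder \emph{minus} $\bD f_n(\btheta_0)\bdelta_t$. If you bound it by $\tfrac12 L_n\|\btheta_t-\btheta_0\|_2^2$ you have silently discarded the $\bD f_n(\btheta_0)\bdelta_t$ piece; if instead you keep that piece, it contributes $-\tfrac{1}{n}\bD f_n(\btheta_0)^{\sT}\bD f_n(\btheta_0)\bdelta_t$ to $\dot\bdelta_t$, which is only positive \emph{semi}definite (rank $n<p$), so Gr\"onwall extracts no contraction from it. Either way the remaining forcing term $\tfrac{\sigma_{\max}L_n}{2n}\|\btheta_t-\btheta_0\|_2^2$ does \emph{not} decay in $t$ (since $\btheta_t$ converges to an interpolator at positive distance from $\btheta_0$), and its time integral is unbounded.

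The paper sidesteps this by first working in \emph{output} space: it bounds $\|f_n(\btheta_t)-\oby_t\|_2$ via Gr\"onwall on the $n$-dimensional residual ODEs, where the empirical kernel $\bK_t=\bD f_n(\btheta_t)\bD f_n(\btheta_t)^{\sT}\succeq \tfrac14\sigma_{\min}^2\,\id_n$ \emph{does} supply genuine contraction, yielding an exponentially decaying bound on $\|f_n(\btheta_t)-\oby_t\|_2$. Only then does it pass to parameters, bounding $\|\dot\btheta_t-\dot{\obtheta}_t\|_2$ by the Jacobian-mismatch term plus $\tfrac{\sigma_{\max}}{n}\|f_n(\btheta_t)-\oby_t\|_2$; both terms now decay exponentially, so integration is finite. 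The two branches of the minimum in \eqref{eq:Coupling} arise from two bounds on $\|f_n(\btheta_t)-\oby_t\|_2$ (the crude $2e^{-\lambda_0 t/2}\|\by-f_n(\btheta_0)\|_2$ versus the sharper Gr\"onwall estimate), not from a separate triangle-inequality argument. Your proposed route $\|\bdelta_t\|_2\le\|\btheta_t-\btheta_0\|_2+\|\obtheta_t-\btheta_0\|_2$ would give at most $\tfrac{3}{\sigma_{\min}}\|\by-f_n(\btheta_0)\|_2$, which has the wrong form; and the remark about $\by-f_n(\btheta_0)$ lying outside the column space of $\bD f_n(\btheta_0)$ is moot, since $\sigma_{\min}>0$ forces that column space to be all of $\reals^n$.
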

The bounds in \eqref{eq:R-Linearization} and \eqref{eq:CloseToInit} follow from the main result of \cite{oymak2019overparameterized}.
The coupling bounds in \eqref{eq:Coupling} and
\eqref{eq:ModelDeviationLinearized} are 
proved in the Supplementary Material.

A key role in this theorem is played by the singular values of the
Jacobian at initialization, $\bD f_n(\btheta_0)$. These can also be encoded in the kernel matrix $\bK_{m,0}:=
\bD f_n(\btheta_0) \bD f_n(\btheta_0)^{\sT}\in\reals^{n\times n}$. The importance of this matrix can be easily understood by writing the evolution
of the predicted values $f^{\slin}_{n}(\obtheta_t):= f_n(\btheta_0)+\bD  f_n(\btheta_0)(\obtheta_t-\btheta_0)$. Equation  \eqref{eq:LinearizedEvolution}
implies
\begin{align}
  \frac{\de f^{\slin}_{n}(\obtheta_t)}{\de t} = \frac{1}{n}\bK_{m,0}\big(\by-f^{\slin}_{n}(\obtheta_t)\big)\, .\label{eq:EvolutionPred}
\end{align}
Equivalently, the residuals $\br_t:=\by-f^{\slin}_{n}(\obtheta_t)$ are driven to zero according to
$(\de/\de t)\br_t=-\bK_{m,0}\br_t/n$.

Applying  Theorem \ref{thm:Linearization}  requires the evaluation of
the minimum and maximum singular values of the
Jacobian, as well as its Lipschitz constant. As an example, we consider the case of
two-layer neural networks:
\begin{align}
  f(\bx;\btheta) := \frac{\alpha}{\sqrt{m}}\sum_{j=1}^m b_j\sigma(\<\bw_j,\bx\>),~~
  \btheta= (\bw_1,\dots,\bw_m)\, .\label{eq:Two-Layers-Net}
\end{align}
To simplify our task, we assume the second layer weights $\bb =(b_1,\dots,b_m)\in \{+1,-1\}^m$
to be fixed with an equal number of $+1$s and $-1$s.
Without loss of generality we can assume  that
$b_1=\cdots=b_{m/2}=+1$ and $b_{m/2+1}=\cdots=b_{m}=-1$. 
We train the weights $\bw_1,\dots, \bw_m$ via gradient flow.
The number of parameters is $p=md$. The scaling factor $\alpha$ allows
tuning between different regimes.
We consider two initializations,
denoted by $\btheta_0^{(1)}$ and $\btheta_0^{(2)}$:
  \begin{align}
    \btheta_0^{(1)}:&&  (\bw_i)_{i\le m}&\sim_{i.i.d.} \Unif(\Sp^{d-1});\\
    \btheta_0^{(2)}:&&  (\bw_i)_{i\le m/2}&\sim_{i.i.d.}
    \Unif(\Sp^{d-1}),\, \bw_{m/2+i}=\bw_{i},\, i\le m/2,
  \end{align}
where $\Sp^{d-1}$ denotes the unit sphere in $d$ dimensions.
The important difference between these initializations is that (by the
central limit theorem)
$|f(\bx;\btheta_0^{(1)})| = \Theta(\alpha)$,
while $f(\bx;\btheta_0^{(2)}) = 0$.

It is easy to compute
the Jacobian $\bD f_n(\bx;\btheta)\in\reals^{n\times md}$:
\begin{align}
  [\bD f_n(\bx;\btheta)]_{i,(j,a)} = \frac{\alpha}{\sqrt{m}} b_j\sigma'(\<\bw_j,\bx_i\>)\, x_{ia}\, ,\;\;\;\;
  i\in [n], (j,a)\in [m]\times [d]\, .\label{eq:TwoLayersJacobian}
\end{align}

\begin{assumption}\label{ass:Sigma}
Let $\sigma:\reals\to\reals$ be a fixed activation function which we assume differentiable
  with bounded first and second order derivatives.
  Let $$\sigma = \sum_{\ell\ge 0}\mu_{\ell}(\sigma)h_{\ell}$$ denote its decomposition
  into orthonormal Hermite polynomials. Assume 
  $\mu_{\ell}(\sigma)\neq 0$ for all $\ell\le \ell_0$ for some constant $\ell_0$.
\end{assumption}
\begin{lemma}\label{lemma:TwoLayers}
  Under Assumption \ref{ass:Sigma},  further assume
 $\{(\bx_i,y_i)\}_{i\le n}$ to be i.i.d.\ with $\bx_i
 \sim_{i.i.d.}\normal(0,\id_d)$,  and  $y_i$ $B^2$-sub-Gaussian.
 Then there exist constants $C_i$,
  depending uniquely on $\sigma$, such that the following hold with probability at least $1-2\exp\{-n/C_0\}$,
  provided $md\ge C_0 n\log n$ and $n\le d^{\ell_0}$ (whenever not specified, these hold for both  initializations
  $\btheta_0\in\{\btheta_0^{(1)},\btheta_0^{(2)}\}$):
  \begin{align}
    \|\by-f_n(\btheta^{(1)}_0)\|_2 & \le C_1\big(B+\alpha)\sqrt{n}\, \label{eq:LemmaLin1}\\
                         \|\by-f_n(\btheta^{(2)}_0)\|_2 & \le C_1B\sqrt{n}\, , \label{eq:LemmaLin2}\\ 
    \sigma_{\min}(\bD f_n(\btheta_0)) &\ge C_2\alpha \sqrt{d}\, , \label{eq:LemmaLin3}\\
    \sigma_{\max}(\bD f_n(\btheta_0)) &\le C_3\alpha \big(\sqrt{n}+\sqrt{d}\big)\, , \label{eq:LemmaLin4}\\
    \Lip(\bD f_n) & \le C_4\alpha\sqrt{\frac{d}{m}}\big(\sqrt{n}+\sqrt{d}\big)\, . \label{eq:LemmaLin5}
  \end{align}
  Further
  \begin{align}
    \|\bD f(\btheta_0)\|_{\op}& \le C_1'\alpha\, , \label{eq:LemmaLin6}\\
    \Lip(\bD f) & \le    C_4' \alpha\sqrt{\frac{d}{m}}\, . \label{eq:LemmaLin7}
  \end{align}
  \end{lemma}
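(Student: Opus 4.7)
The seven bounds divide into three natural groups which I address in turn: the residual bounds \eqref{eq:LemmaLin1}--\eqref{eq:LemmaLin2}, the operator-norm and Lipschitz estimates \eqref{eq:LemmaLin4}--\eqref{eq:LemmaLin7}, and the minimum singular value \eqref{eq:LemmaLin3}. Only the last uses Assumption~\ref{ass:Sigma} in a nontrivial way, and it is the main obstacle. Sub-Gaussianity of $y_i$ gives $\|\by\|_2\lesssim B\sqrt n$ with exponential probability. Under $\btheta_0^{(2)}$ the antisymmetric pairing forces $f_n(\btheta_0^{(2)})\equiv 0$, giving \eqref{eq:LemmaLin2}. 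Under $\btheta_0^{(1)}$, conditioning on $\bx_i$ and using $\sum_j b_j=0$, the cross terms in the second moment cancel and
\begin{equation*}
\E\bigl[f(\bx_i;\btheta_0^{(1)})^2\mid\bx_i\bigr]=\alpha^2\,\Var_\bw\bigl[\sigma(\langle\bw,\bx_i\rangle)\mid\bx_i\bigr]=O(\alpha^2),
\end{equation*}
so a Bernstein / Hanson--Wright bound yields $\|f_n(\btheta_0^{(1)})\|_2\lesssim\alpha\sqrt n$, and \eqref{eq:LemmaLin1} follows by the triangle inequality.

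Write $\bD f_n(\btheta_0)=(\alpha/\sqrt m)[\bD^{(1)}\bX,\ldots,\bD^{(m)}\bX]$ with $\bD^{(j)}=b_j\diag(\sigma'(\langle\bw_j,\bx_i\rangle))$. Then $\bD f_n(\btheta_0)\bD f_n(\btheta_0)^\sT=(\alpha^2/m)\sum_j\bD^{(j)}\bX\bX^\sT\bD^{(j)}$, whose operator norm is at most $\alpha^2\|\sigma'\|_\infty^2\|\bX\|^2\lesssim\alpha^2(\sqrt n+\sqrt d)^2$ by the standard Gaussian matrix bound, yielding \eqref{eq:LemmaLin4}. The population bound \eqref{eq:LemmaLin6} follows by applying the Cauchy--Schwarz identity $(m^{-1/2}\sum_j b_jc_j)^2\le\sum_j c_j^2$ to $c_j(\bx)=\sigma'(\langle\bw_j,\bx\rangle)\langle\bx,\bv^{(j)}\rangle$ and using $\E\langle\bx,\bv^{(j)}\rangle^2=\|\bv^{(j)}\|_2^2$. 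For the Lipschitz bounds, $|\sigma'(u)-\sigma'(u')|\le\|\sigma''\|_\infty|u-u'|$ and a direct Frobenius-norm computation give
\begin{equation*}
\|\bD f_n(\btheta)-\bD f_n(\btheta')\|_F^2\le\frac{\alpha^2\|\sigma''\|_\infty^2}{m}\bigl(\max_i\|\bx_i\|_2^2\bigr)\|\bX\|^2\|\btheta-\btheta'\|_2^2;
\end{equation*}
combining with $\max_i\|\bx_i\|_2^2\lesssim d+\log n$ and $\|\bX\|^2\lesssim n+d$ yields \eqref{eq:LemmaLin5}. The analogous Hilbert--Schmidt calculation in $L^2(\P)$, using the identity $\E_\bx\langle\bu,\bx\rangle^2\|\bx\|_2^2=(d+2)\|\bu\|_2^2$ for $\bx\sim\normal(0,\id_d)$, produces \eqref{eq:LemmaLin7}.

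The real work is \eqref{eq:LemmaLin3}. Let $\bK:=\bD f_n(\btheta_0)\bD f_n(\btheta_0)^\sT/\alpha^2$, so
\begin{equation*}
K_{ii'}=\langle\bx_i,\bx_{i'}\rangle\cdot\frac{1}{m}\sum_{j=1}^m\sigma'(\langle\bw_j,\bx_i\rangle)\sigma'(\langle\bw_j,\bx_{i'}\rangle).
\end{equation*}
I would prove $\lambda_{\min}(\bK)\gtrsim d$ in two steps. First, conditionally on $\bX$, a matrix Bernstein / Hanson--Wright argument over the i.i.d.\ weights $\bw_j$ gives $\|\bK-\bar\bK\|=o(d)$ with the required exponential probability provided $md\gtrsim n\log n$, where $\bar\bK_{ii'}=\langle\bx_i,\bx_{i'}\rangle\,\E_\bw[\sigma'(\langle\bw,\bx_i\rangle)\sigma'(\langle\bw,\bx_{i'}\rangle)]$. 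Second, and more delicately, one must show $\lambda_{\min}(\bar\bK)\gtrsim d$. Expanding $\sigma'$ in the spherical-harmonic basis on $\Sp^{d-1}$ (equivalently, using the Hermite expansion $\sigma=\sum_\ell\mu_\ell(\sigma)h_\ell$ together with $h_\ell'=\sqrt{\ell}h_{\ell-1}$, so that $\sigma'$ inherits nonvanishing coefficients up to degree $\ell_0-1$) yields a Mercer-type decomposition $\bar\bK=\sum_{\ell\ge 0}\kappa_\ell(d)\,\bQ_\ell$, where $\bQ_\ell$ is the Gram matrix of the degree-$\ell$ spherical-harmonic evaluations at $\bx_1,\ldots,\bx_n$ and the coefficient $\kappa_{\ell_0-1}(d)$ is of order $d$ (the extra factor of $d$ arising from the outer $\langle\bx_i,\bx_{i'}\rangle$). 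Under the hypothesis $n\le d^{\ell_0}$, a small-ball / polynomial-linear-independence argument of the type used in the proof of Theorem~\ref{thm:structure_eigenvalues_polynomials} and in \cite{ghorbani2020linearized,liang2020multiple} shows that $\lambda_{\min}(\bQ_{\ell_0-1})$ is bounded below by a positive constant, so the level-$(\ell_0-1)$ block alone contributes $\Omega(d)$ to $\lambda_{\min}(\bar\bK)$. The main technical subtlety is the passage between Gaussian-Hermite calculations (cleanest when $\bw\sim\normal(0,\id_d/d)$) and the sphere-uniform weights, which can be handled either by working directly with Gegenbauer polynomials on $\Sp^{d-1}$ or via the coupling $\bw=\bg/\|\bg\|$ with $\bg\sim\normal(0,\id_d)$ and $\|\bg\|/\sqrt d=1+O_\P(d^{-1/2})$.
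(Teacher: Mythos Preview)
Your treatment of \eqref{eq:LemmaLin1}--\eqref{eq:LemmaLin2} and \eqref{eq:LemmaLin4}--\eqref{eq:LemmaLin7} is essentially the paper's argument, with only cosmetic differences: the paper controls $\|f_n(\btheta_0^{(1)})\|_2$ via Lipschitzness in $\bW$ and the log-Sobolev inequality on the product of spheres rather than Bernstein/Hanson--Wright, and it quotes \cite{oymak2020towards} for the Lipschitz bound \eqref{eq:LemmaLin5} rather than writing out the Frobenius computation, but the content is the same. Your use of $\E\{\|\bx\|^2\bx\bx^{\sT}\}=(d+2)\id_d$ for \eqref{eq:LemmaLin7} matches the paper exactly.

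The genuine gap is in your argument for \eqref{eq:LemmaLin3}. You isolate the level-$(\ell_0-1)$ block of the Mercer decomposition and assert that ``$\lambda_{\min}(\bQ_{\ell_0-1})$ is bounded below by a positive constant'' under $n\le d^{\ell_0}$. But $\bQ_{\ell_0-1}$ is an $n\times n$ Gram matrix whose rank is at most $B(d,\ell_0-1)\asymp d^{\ell_0-1}$; whenever $d^{\ell_0-1}<n\le d^{\ell_0}$ (which the hypothesis certainly allows) this matrix is singular, so that block alone cannot furnish a lower bound on $\lambda_{\min}(\bar\bK)$. You correctly note that the outer factor $\langle\bx_i,\bx_{i'}\rangle$ contributes an extra $d$ to the eigenvalue \emph{magnitude}, but you miss its second effect: in the Gegenbauer algebra, multiplication by the degree-one kernel $q\mapsto q$ shifts a level-$\ell$ component to a combination of levels $\ell-1$ and $\ell+1$. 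Hence the nonvanishing of $\mu_{\ell_0-1}(\sigma')$ makes the \emph{level-$\ell_0$} eigenvalue of the NT kernel nonzero, and it is $\bQ_{\ell_0}$ (of rank $\asymp d^{\ell_0}\ge n$), or equivalently the tail $\sum_{\ell\ge\ell_0}\kappa_\ell\bQ_\ell\approx c\,\id_n$, that supplies the full-rank piece needed for $\lambda_{\min}(\bar\bK)\gtrsim d$.

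The paper itself does not carry out this decomposition; it simply invokes \cite[Lemma~4]{oymak2020towards} to reduce $\sigma_{\min}(\bD f_n(\btheta_0))$ to $\lambda_{\min}$ of the \emph{expected} NT kernel, and then cites kernel-random-matrix results (e.g.\ \cite{el2010spectrum}) for that lower bound. Your two-step strategy (matrix concentration over the weights, then spectral analysis of $\bar\bK$) is a legitimate alternative route and is in fact what the cited lemmas do under the hood, but to make it go through you must move the argument up one harmonic level.
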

Equations \eqref{eq:LemmaLin1}, \eqref{eq:LemmaLin2} are
straightforward~\cite{oymak2019overparameterized}.
The remaining inequalities are proved in the
Supplementary Material.
  Using these estimates in Theorem \ref{thm:Linearization}, we get the
  following approximation theorem for two-layer neural nets.
  \begin{theorem}\label{thm:Two-Layers-Linear}
    Consider the two layer neural network of \eqref{eq:Two-Layers-Net}
  under the assumptions of Lemma~\ref{lemma:TwoLayers}.
  Further let $\oalpha :=\alpha/(1+\alpha)$ for initialization $\btheta_0=\btheta_0^{(1)}$ and
  $\oalpha :=\alpha$ for $\btheta_0=\btheta_0^{(2)}$.
  Then there exist constants $C_i$,
  depending uniquely on $\sigma$, such that
 if $md\ge C_0 n\log n$, $d\le n\le d^{\ell_0}$ and
  \begin{align}
    \oalpha\ge C_0 \sqrt{\frac{n^2}{md}}\,
    ,\label{eq:ConditionLinearization2}
  \end{align}
  then,
  with probability at least $1-2\exp\{-n/C_0\}$,
  the following hold for all $t\ge 0$.
  \begin{enumerate}
  \item Gradient flow converges exponentially fast to a global
  minimizer. Specifically, letting
    $\lambda_* = C_1\alpha^2d/n$, we have
    \begin{align}
      \hRisk(\btheta_t)\le \hRisk(\btheta_0) \, e^{-\lambda_* t}\, .\label{eq:TrainingConvergence}
    \end{align}
  \item The model constructed by gradient flow and linearized flow are similar on test data,
    namely
    \begin{align}
      \|f(\btheta_t) -f_{\slin}(\obtheta_t)\|_{L^2(\P)}\le C_1\left\{\frac{\alpha}{\oalpha^2}\sqrt{\frac{n^2}{md}} +\frac{1}{\oalpha^2}
      \sqrt{\frac{n^5}{md^4}}\right\}\, .\label{eq:ModelDistanceLin}
      \end{align}
  \end{enumerate}
\end{theorem}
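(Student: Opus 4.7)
The plan is to derive Theorem \ref{thm:Two-Layers-Linear} as a direct consequence of the abstract linearization result, Theorem \ref{thm:Linearization}, with every quantity appearing there controlled by Lemma \ref{lemma:TwoLayers}. Since $d\le n$, the lemma's bounds simplify to $\sigma_{\max}(\bD f_n(\btheta_0))\lesssim \alpha\sqrt{n}$, $\sigma_{\min}(\bD f_n(\btheta_0))\gtrsim \alpha\sqrt{d}$, and $\Lip(\bD f_n)\lesssim \alpha\sqrt{dn/m}$. The key observation for the two initializations is that, treating $B$ as a constant, \eqref{eq:LemmaLin1}--\eqref{eq:LemmaLin2} yield the uniform estimate $\|\by-f_n(\btheta_0)\|_2 \lesssim \alpha\sqrt{n}/\oalpha$; this is precisely the motivation for the two different definitions of $\oalpha$, which absorb the fact that $f(\,\cdot\,;\btheta_0^{(1)})=\Theta(\alpha)$ whereas $f(\,\cdot\,;\btheta_0^{(2)})\equiv 0$.

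I would then verify the linearization condition \eqref{eq:ConditionLinearization}. Combining the estimates above,
\[
\Lip(\bD f_n)\,\|\by-f_n(\btheta_0)\|_2 \;\lesssim\; \alpha\sqrt{dn/m}\cdot \alpha\sqrt{n}/\oalpha \;=\; \alpha^2 n\sqrt{d/m}/\oalpha,
\]
which must be dominated by $\sigma_{\min}^2/4 \gtrsim \alpha^2 d$. Rearranging, this is exactly $\oalpha \gtrsim \sqrt{n^2/(md)}$, which is the hypothesis \eqref{eq:ConditionLinearization2}. Hence Theorem \ref{thm:Linearization} applies, and its part (1) gives \eqref{eq:TrainingConvergence} with rate $\sigma_{\min}^2/(2n)\gtrsim \alpha^2 d/n$, matching the definition of $\lambda_*$.

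For the test-time comparison \eqref{eq:ModelDistanceLin}, I would substitute the lemma's estimates into \eqref{eq:ModelDeviationLinearized}, treating the two summands separately. The first summand evaluates to
\[
4\,\Lip(\bD f)\,\sigma_{\min}^{-2}\,\|\by-f_n(\btheta_0)\|_2^2 \;\lesssim\; \alpha\sqrt{d/m}\cdot (\alpha^2 d)^{-1}\cdot \alpha^2 n/\oalpha^2 \;=\; (\alpha/\oalpha^2)\sqrt{n^2/(md)},
\]
using \eqref{eq:LemmaLin7}. For the second summand, substituting $\|\bD f(\btheta_0)\|_{\op}\lesssim \alpha$ from \eqref{eq:LemmaLin6}, $L_n\lesssim \alpha\sqrt{dn/m}$, $\sigma_{\max}^2\lesssim \alpha^2 n$, $\sigma_{\min}^5\gtrsim \alpha^5 d^{5/2}$, and $\|\by-f_n(\btheta_0)\|_2^2\lesssim \alpha^2 n/\oalpha^2$, the algebra collapses to $(1/\oalpha^2)\sqrt{n^5/(md^4)}$. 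Summing yields \eqref{eq:ModelDistanceLin}.

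The only real subtlety is the bookkeeping around $\oalpha$: one must carefully check that a single normalization correctly captures $\|\by-f_n(\btheta_0)\|_2$ in both the large-$\alpha$ and small-$\alpha$ regimes for initialization $\btheta_0^{(1)}$, while reducing to $\oalpha=\alpha$ for $\btheta_0^{(2)}$. Once this is set up, the rest is routine substitution, and the probability $1-2\exp\{-n/C_0\}$ is inherited verbatim from Lemma \ref{lemma:TwoLayers}.
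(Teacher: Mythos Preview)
Your proposal is correct and follows exactly the same route as the paper: verify that the hypothesis \eqref{eq:ConditionLinearization2} is precisely what makes the abstract condition \eqref{eq:ConditionLinearization} hold once the estimates of Lemma~\ref{lemma:TwoLayers} are plugged in, then read off parts 1 and 3 of Theorem~\ref{thm:Linearization}. In fact you spell out the substitutions more carefully than the paper's own proof, which simply says ``follows from Theorem~\ref{thm:Linearization} using the estimates in Lemma~\ref{lemma:TwoLayers}.''
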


It is instructive to consider Theorem \ref{thm:Two-Layers-Linear} for two different
choices of $\alpha$ (a third one will be considered in
Section~\ref{sec:BeyondLinear}).

For $\alpha =\Theta(1)$, we have $\oalpha =\Theta(1)$  and therefore the two initializations $\{\btheta^{(1)},\btheta_0^{(2)}\}$
behave similarly. In particular, condition \eqref{eq:ConditionLinearization2}  requires $m d\gg n^2$: the number of network parameters must be
quadratic in the sample size. This is significantly stronger than the
simple condition that the network is overparametrized, namely $md\gg n$. 
Under the condition $m d\gg n^2$ we have exponential convergence to vanishing training error, and  the difference between the neural network
and its linearization   is bounded as in \eqref{eq:ModelDistanceLin}.  This bound vanishes for  $m\gg n^5/d^4$. While we
do not expect this condition to be tight, it implies that, \emph{under the choice $\alpha=\Theta(1)$}, sufficiently wide networks behave as linearly
parametrized  models.

For $\alpha \to\infty$, we have $\oalpha\to 1$ for initialization
$\btheta_0^{(1)}$  and therefore Theorem~\ref{thm:Two-Layers-Linear} yields the same
bounds as in the previous paragraph for this initialization. However, for the  initialization $\btheta_0=\btheta_0^{(2)}$
(which is constructed so that $f(\btheta^{(2)}_0) =0$)  we have $\oalpha = \alpha$ and condition \eqref{eq:ConditionLinearization2} is always verified
as $\alpha\to \infty$.
Therefore the conclusions of Theorem~\ref{thm:Two-Layers-Linear} apply under nearly minimal overparametrization,
namely if $md\gg n\log n$. In that case,
the linear model is an arbitrarily good approximation of
the neural net as $\alpha$ grows:  $\|f(\btheta_t)
-f_{\slin}(\obtheta_t)\|_{L^2(\P)}= O(1/\alpha)$.
In other words, an overparametrized neural network can be trained in
the linearized regime by choosing suitable initializations and
suitable scaling of the parameters.

Recall   that, as $t\to\infty$,  $\obtheta_t$ converges to the
min-norm interpolant $\obtheta_{\infty}$; see \eqref{eq:MinNormTheta}.
Therefore, as long as condition \eqref{eq:ConditionLinearization2}  holds and the right-hand side of \eqref{eq:ModelDistanceLin} is negligible,
the generalization properties of the neural network are well approximated by those of min-norm interpolation in a linear model
with featurization map $\bx\mapsto \bD f(\bx;\btheta_0)$. We will study the latter in Section~\ref{sec:NTK}.

In the next subsection we will see that the linear theory outlined here fails to
capture different training schemes in which the network weights genuinely change.

\subsection{Beyond the linear regime?}
\label{sec:BeyondLinear}

For a given dimension $d$ and sample size $n$, we can distinguish two ways to violate the
conditions for the linear regime, as stated for instance in Theorem~\ref{thm:Two-Layers-Linear}.
First, we can reduce the network size $m$. While Theorem \ref{thm:Two-Layers-Linear} does not specify
the minimum $m$ under which the conclusions of the theorem cease to hold,
it is clear that $md\ge n$ is necessary in order for the training
error to vanish as in \eqref{eq:TrainingConvergence}.

However, even if the model is overparametrized, the same condition is violated if $\alpha$ is sufficiently
small. In particular, the limit $m\to\infty$ with $\alpha= \alpha_0/\sqrt{m}$ has attracted considerable attention and is known as the mean field limit.
In order to motivate the mean field analysis, we can suggestively rewrite \eqref{eq:Two-Layers-Net} as
\begin{align}
  f(\bx;\btheta) := \alpha_0\int \! b\,\sigma(\<\bw,\bx\>)\, \hrho(\de\bw,\de b) \, ,
\end{align}
where $\hrho:= m^{-1}\sum_{j=1}^{m}\delta_{\bw_j,b_j}$ is the
empirical distribution
of neuron weights. If the weights are drawn i.i.d.\ from a common distribution $(\bw_j,b_j)\sim\rho$,
we can asymptotically replace $\hrho$ with $\rho$ in the above expression, by the law of large numbers.

The gradient flow \eqref{eq:GradFlow} defines an evolution over the
space of neuron weights,  and hence an evolution in the space of empirical
distributions $\hrho$. It is natural to ask whether this evolution admits a simple characterization. This question was first addressed
by \cite{nitanda2017stochastic,mei2018mean,rotskoff2018neural,sirignano2020mean} and \cite{chizat2018global}. 
\begin{theorem}\label{thm:PDE}
  Initialize the weights so that $\{(\bw_j,b_j)\}_{j\le m}\sim_{i.i.d.} \rho_0$ with $\rho_0$ a probability measure on $\reals^{d+1}$.
  Further, assume the activation function $u\mapsto \sigma(u)$ to be differentiable with $\sigma'$ bounded and Lipschitz continuous,
  and assume $|b_j|\le C$ almost surely under the initialization $\rho_0$, for some constant $C$.
 Then, for any fixed $T\ge 0$, the
  following limit holds  in $L^2(\P)$, uniformly over $t\in [0,T]$:
  \begin{align}
\lim_{m\to\infty}f(\btheta_{mt}) = F(\rho_t) :=\alpha_0\int \! b\,\sigma(\<\bw,\, \cdot\,\>)\, \rho_t(\de\bw,\de b)\, ,\label{eq:MF-Limit}
  \end{align}
  where $\rho_t$ is a probability measure on $\reals^{d+1}$
  that solves the following partial differential equation (to be
  interpreted in the weak sense):
  \begin{align}
    \partial_t\rho_t(\bw,b) & = \alpha_0\nabla (\rho_t(\bw,b)\nabla \Psi(\bw,b;\rho_t))\, ,\label{eq:PDE}\\
    \Psi(\bw,b;\rho) & := \hE\big\{b\sigma(\<\bw,\bx\>)\big(F(\bx;\rho_t)-y\big)\big\}\, .
     \end{align}
     Here the gradient $\nabla$ is with respect to $(\bw,b)$ (gradient in $d+1$ dimensions) if both first- and second-layer
     weights are trained, and only with respect to $\bw$ (gradient in $d$ dimensions) if only first-layer weights are trained.
  \end{theorem}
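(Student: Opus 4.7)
The plan is a standard propagation-of-chaos argument. With $\alpha = \alpha_0/\sqrt{m}$, the gradient of the empirical risk with respect to a single particle $\theta_j := (\bw_j,b_j)$ has size $O(1/m)$, so after the time change $t\mapsto mt$ each particle has $O(1)$ velocity under a drift determined by the empirical measure $\hat\rho^m_t := m^{-1}\sum_j \delta_{\theta_j(t)}$. I would first construct the putative limit as the McKean--Vlasov flow
\[
\frac{d\bar\theta_t}{dt} = -\alpha_0\, \grad \Psi(\bar\theta_t;\rho_t),\qquad \rho_t := \text{Law}(\bar\theta_t),\qquad \bar\theta_0\sim \rho_0,
\]
whose time-marginals one verifies to solve \eqref{eq:PDE} weakly by testing against smooth compactly supported functions. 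Existence and uniqueness up to any horizon $T$ follow from a Picard iteration on $W_1$-continuous measure flows: Assumption \ref{ass:Sigma} together with $|b_0|\le C$ almost surely and the sub-Gaussian tails of $\bx_i$ make $(\theta,\rho)\mapsto \grad\Psi(\theta;\rho)$ jointly Lipschitz in the Euclidean norm on $\theta$ and in $W_1$ on $\rho$, with an a priori bound $|b_t|\le |b_0|e^{C_0T}$ and a Grönwall estimate on $\|\bw_t\|$ guaranteeing that all trajectories stay in a bounded set.

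Next I would couple the particle system to $m$ independent copies $\bar\theta_j(t)$ of the nonlinear flow sharing initial conditions $\bar\theta_j(0)=\theta_j(0)$, so that the $\bar\theta_j(t)$ are i.i.d.\ with common marginal $\rho_t$. Setting $\bar\rho^m_t := m^{-1}\sum_j\delta_{\bar\theta_j(t)}$ and $\Delta_j(t):=\theta_j(t)-\bar\theta_j(t)$, one finds
\begin{align*}
\frac{d\Delta_j}{dt} &= -\alpha_0\bigl[\grad\Psi(\theta_j;\hat\rho^m_t)-\grad\Psi(\bar\theta_j;\hat\rho^m_t)\bigr]\\
&\quad {}-\alpha_0\bigl[\grad\Psi(\bar\theta_j;\hat\rho^m_t)-\grad\Psi(\bar\theta_j;\bar\rho^m_t)\bigr]-\alpha_0 E_j(t),
\end{align*}
where $E_j(t):=\grad\Psi(\bar\theta_j;\bar\rho^m_t)-\grad\Psi(\bar\theta_j;\rho_t)$ is the only genuinely random (``sampling'') term. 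The first two brackets are each bounded by $CL\max_k\|\Delta_k(t)\|_2$ by the Lipschitz estimates of step 1 (for the second one uses $W_1(\hat\rho^m_t,\bar\rho^m_t)\le m^{-1}\sum_k\|\Delta_k(t)\|_2$), so Grönwall yields $\max_j\sup_{t\le T}\|\Delta_j(t)\|_2 \le e^{CLT}\int_0^T\max_j\|E_j(s)\|_2\,ds$.

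The hard part is controlling $E_j(t)$ uniformly in $(t,j)\in[0,T]\times[m]$. For each fixed $(t,j)$ it is an average of $m-1$ i.i.d.\ mean-zero vectors that are bounded (since $b$ and $\sigma'$ are bounded and $\bx$ has sub-Gaussian tails, so we may truncate on a ball of radius $\sqrt{\log m}$ at exponentially small cost), so Hoeffding gives $\|E_j(t)\|_2 = O(\sqrt{\log(1/\delta)/m})$ with probability $1-\delta$. Pointwise estimates are upgraded to a uniform bound via a union bound over $j$ and equicontinuity of $t\mapsto E_j(t)$ (from the ODE with bounded drift), giving $\sup_{t\le T,j}\|E_j(t)\|_2 = O(\sqrt{\log m/m})$ with high probability. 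Combined with the Grönwall step, this yields $\max_j\sup_{t\le T}\|\Delta_j(t)\|_2\to 0$ in probability. Finally, since $f(\bx;\btheta_{mt})-F(\rho_t)(\bx)$ decomposes as an ``intra-coupling'' term of size $O(\max_j\|\Delta_j(t)\|_2)$ uniformly in $\bx$ (by Lipschitzness of $(\bw,b)\mapsto b\sigma(\langle\bw,\bx\rangle)$ on bounded sets) plus a mean-zero average of $m$ i.i.d.\ bounded random functions of $\bx$ whose $L^2(\P)$ norm is $O(1/\sqrt{m})$ by Fubini, we obtain \eqref{eq:MF-Limit}; uniformity in $t\in[0,T]$ comes from a standard density argument together with equicontinuity of both sides in $t$.
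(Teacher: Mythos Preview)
Your propagation-of-chaos sketch is essentially correct and matches the approach underlying the references the paper invokes. Note that the paper does \emph{not} give a self-contained proof of this theorem: it simply states that the result follows by checking the hypotheses of \cite[Theorem~2.6]{chizat2018global}, and that a quantitative version (for bounded $\sigma$) is Theorem~1 of \cite{mei2019mean}. Your argument---construct the McKean--Vlasov flow, verify it solves the continuity equation weakly, couple the interacting particles to i.i.d.\ copies of the nonlinear process, and close via Gr\"onwall plus a law-of-large-numbers bound on the sampling error---is precisely the strategy of \cite{mei2018mean,mei2019mean}, so in spirit you are reproducing one of the cited proofs rather than diverging from the paper.

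Two small corrections. First, you invoke Assumption~\ref{ass:Sigma}, but that assumption belongs to the linear-regime analysis and is not part of the hypotheses here; the theorem assumes only that $\sigma'$ is bounded and Lipschitz and that $|b_j|\le C$ at initialization, and those are exactly what you need for the Lipschitz estimates on $\nabla\Psi$. Second, your appeal to ``sub-Gaussian tails of $\bx_i$'' is unnecessary: $\Psi$ and $\hE$ are defined via the empirical distribution over the \emph{fixed} training set $\{(\bx_i,y_i)\}_{i\le n}$, so $\max_i\|\bx_i\|$ is just a finite constant and no tail argument or truncation is needed. With those adjustments the sketch goes through.
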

This statement can be obtained by checking the conditions of
\cite[Theorem 2.6]{chizat2018global}. A quantitative version can be obtained for
bounded $\sigma$ using Theorem 1 of \cite{mei2019mean}.
  
  A few remarks are in order. First, the limit in \eqref{eq:MF-Limit} requires time to be accelerated by a factor
  $m$. This is to compensate for
  the fact that the function value is scaled by a factor $1/m$. Second, while we stated this theorem as an asymptotic result, for large $m$,
  the evolution described by the PDE~\eqref{eq:PDE} holds at any finite $m$ for the empirical measure $\hrho_t$. In that case, the gradient of
  $\rho_t$ is not well defined, and it is important to interpret this
  equation in the weak sense \cite{ambrosio2008gradient,santambrogio2015optimal}.
  The advantage of working with the average measure $\rho_t$ instead of the empirical one $\hrho_t$ is that the former is deterministic and
  has a positive density (this has important connections to global convergence). Third, quantitative versions of this theorem were proved
  in \cite{mei2018mean,mei2019mean}, and generalizations to multi-layer networks in \cite{nguyen2020rigorous}.

Mean-field theory can be used to prove global convergence results.
Before discussing these results,
  let us emphasize that ---in this regime---  the weights move in a non-trivial way during training,
  despite the fact that the network is infinitely wide. For the sake of simplicity, we will focus on the case
  already treated in the previous section in which the weights
  $b_j\in\{+1,-1\}$ are initialized with signs in equal proportions, and are not changed during training.
  Let us first consider the evolution of the predicted values $F_n(\rho_t) :=(F(\bx_1;\rho_t),\dots ,F(\bx_n;\rho_t))$.
  Manipulating \eqref{eq:PDE}, we get 
  \begin{align}
    \frac{\de\phantom{t}}{\de t}F_n(\rho_t) &= -\frac{1}{n}\bK_t \big(F_n(\rho_t)-\by\big)\, ,\;\;\;\;\; \bK_t = (K_t(\bx_i,\bx_j))_{i,j\le n}\,\\
    K_t(\bx_1,\bx_2)& := \int \<\bx_1,\bx_2\>\sigma'(\<\bw,\bx_1\>) \sigma'(\<\bw,\bx_2\>)\, \rho_t(\de b,\de\bw)\, ,
    \label{eq:EvolutionPredictedMinfty}
  \end{align}
  In the short-time limit we recover the linearized evolution of~\eqref{eq:EvolutionPred}
  \cite{mei2019mean}, but the kernel $K_t$ is now changing with training (with a
  factor $m$ acceleration in time).
  
  It also follows from the same characterization of Theorem
  \ref{thm:PDE} that the weight $\bw_j$ of a neuron with weight
  $(\bw_j,b_j) = (\bw,b)$
  moves at a speed $\hE\{b\,\bx\,\sigma'(\<\bw,\bx\>(F(\bx;\rho_t)-y)\}$.
  This implies
  \begin{align}
    \lim_{m\to\infty}\frac{1}{m}&\big\|\bW_{t+s}-\bW_t\|_F^2 = v_2(\rho_t)\, s^2+o(s^2)\, ,\label{eq:WeightChangeMF}\\
    v_2(\rho_t)&:= \frac{1}{n^2} \<\by-F_n(\rho_t),\bK_{t}(\by-F_n(\rho_t))\>\, .
  \end{align}
  This expression implies that the first-layer weights change
  significantly more than in the linear  regime studied in
  Section~\ref{sec:LinearRegime}.
  As an example, consider the setting of Lemma \ref{lemma:TwoLayers}, namely data $(\bx_i)_{i\le n}\sim_{i.i.d.}\normal(0,\id_d)$,
  an activation function satisfying Assumption \ref{ass:Sigma} and dimension parameters such that $md\ge C n\log n$, $n\le d^{\ell_0}$.
  We further initialize $\rho_0=\Unif(\Sp^{d-1})\otimes \Unif(\{+1,-1\})$ (that is, the vectors $\bw_j$ are uniform on the unit sphere and
  the weights $b_j$ are uniform in $\{+1,-1\}$). Under this
  initialization $\|\bW_0\|_F^2=m$ and hence~\eqref{eq:WeightChangeMF}
  at $t=0$ can be interpreted as describing the initial relative
  change of the first-layer weights.

 Theorem \ref{thm:Linearization} (see \eqref{eq:CloseToInit}) and Lemma \ref{lemma:TwoLayers}
  (see \eqref{eq:LemmaLin1}--\eqref{eq:LemmaLin3})
  imply that, with high probability,
  \begin{align}
    \sup_{t\ge 0}\frac{1}{\sqrt{m}}\|\bW_t-\bW_0\|_F \le C \frac{1}{\oalpha}\sqrt{\frac{n}{md}}\, ,\label{eq:UpperBoundInitialMov}
  \end{align}
  where $\oalpha = \alpha/(1+\alpha)$ for initialization $\btheta^{(1)}_0$ and  $\oalpha = \alpha$ for initialization $\btheta^{(2)}_0$.
   In the mean field regime $\oalpha \asymp \alpha \asymp 1/\sqrt{m}$ and  the right hand side above is of order
   $\sqrt{n/d}$, and hence it does not vanish.
   This is not due to a weakness of the analysis.
   By \eqref{eq:WeightChangeMF}, we can choose $\eps$ a small enough constant so that
  \begin{align}
    \lim_{m\to\infty} \sup_{t\ge 0}\frac{1}{\sqrt{m}}\|\bW_t-\bW_0\|_F \ge \lim_{m\to\infty} \frac{1}{\sqrt{m}}\|\bW_\eps-\bW_0\|_F \ge
    \frac{1}{2}v_2(\rho_0)^{1/2}\, \eps\, .\label{eq:WeightMovement}
  \end{align}
  This is bounded away from $0$ as long as $v_2(\rho_0)$ is non-vanishing. In order to see this, note that $\lambda_{\min}(\bK_0)\ge c_0\, d$
  with high probability   for $c_0$ a   constant (note that $\bK_0$ is a kernel inner product random matrix, and hence
  this claim follows from the general results of \cite{mei2020generalization}). Noting that $F_n(\rho_0) = 0$ (because $\int b \rho_0(\de b,\de\bw)=0$), this implies, with high probability, 
  \begin{align}
    v(\rho_0) &= \frac{1}{n^2}\<\by,\bK_0\by\> \ge  \frac{c_0d}{n^2} \|\by\|_2^2 \ge  \frac{c_0'd}{n}\, .
  \end{align}
  We expect this lower bound to be tight, as can be seen by considering the pure noise case
  $\by\sim\normal(0,\tau^2\id_n)$, which leads to   $v(\rho_0) = \tau^2\Trace(\bK_0)/n^2(1+o_n(1))\asymp d/n$.

  To summarize, \eqref{eq:UpperBoundInitialMov} (setting $\alpha \asymp 1/\sqrt{m}$)
  and \eqref{eq:WeightMovement} conclude that, for $d\le n\le d^{\ell_0}$,
  \begin{align}
    c_1\sqrt{\frac{d}{n}} \le  \lim_{m\to\infty} \sup_{t\ge 0}\frac{1}{\sqrt{m}}\|\bW_t-\bW_0\|_F \le c_2\sqrt{\frac{n}{d}}\, ,
    \label{eq:WeightMovementUB-LB}
\end{align}
hence the limit on the left-hand side of \eqref{eq:WeightMovement} is indeed
non-vanishing as $m\to\infty$ at $n,d$ fixed. In other words, the fact that the 
upper bound in \eqref{eq:UpperBoundInitialMov} is non-vanishing is not an artifact of the bounding technique, but a 
consequence of the change of training regime. We also note a gap between the upper and lower bounds in
\eqref{eq:WeightMovementUB-LB} when $n\gg d$: a better understanding of this quantity is an interesting open problem.
  In conclusion, both a linear and a nonlinear regime can be obtained in the infinite-width limit of two-layer neural networks,
  for different scalings of the normalization factor $\alpha$. 

  As mentioned above, the mean field limit can be used to prove global convergence results, both for two-layer \cite{mei2018mean,chizat2018global}
  and for multilayer  networks \cite{nguyen2020rigorous}. Rather than stating these (rather technical) results formally, it is instructive
  to discuss the nature of fixed points of the evolution \eqref{eq:PDE}: this will also indicate the key role played by the
  support of the distribution $\rho_t$.
  \begin{lemma}
    Assume $t\mapsto \sigma(t)$ to be differentiable with bounded derivative. Let $\hRisk(\rho) = \hE\{[y-F(\bx;\rho)]^2\}$
    be the empirical risk of an infinite-width network with neuron's distribution $\rho$, and define $\psi(\bw;\rho):= \hE\{\sigma(\<\bw,\bx\>) [y-F(\bx;\rho)]\}$.
    \begin{enumerate}
    \item[$(a)$] $\rho_*$ is a global minimizer of $\hRisk$ if and only if $\psi(\bw;\rho_*)=0$ for all $\bw\in\reals^d$.
    \item[$(b)$] $\rho_*$ is a fixed point of the evolution \eqref{eq:PDE} if and only if, for all $(b,\bw)\in\supp(\rho_*)$,
      we have $\psi(\bw;\rho_*)=0$ and $b\nabla_{\bw}\psi(\bw;\rho_*)=0$.
    \end{enumerate}
    The same statement holds if the empirical averages above are replaced by  population averages (that is,
    the empirical risk $\hRisk(\rho)$ is replaced by its population version $\Risk_n(\rho) = \E\{ [y-F(\bx;\rho)]^2\}$).
  \end{lemma}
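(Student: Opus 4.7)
The plan rests on two structural features of the setup: $\hRisk(\rho)$ is convex (in fact quadratic) in $\rho$, since $F(\bx;\rho)$ is linear in $\rho$; and the PDE \eqref{eq:PDE} is the Wasserstein gradient flow of $\hRisk$, which comes equipped with an energy-dissipation identity. Part (a) will follow from convexity and a first-order condition, and part (b) from identifying fixed points with critical points of the dissipation.

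For part (a), I would compute the first variation of $\hRisk$ at $\rho$ in a signed direction $\mu$ with $\int\de\mu=0$,
\[
D\hRisk(\rho)[\mu] \;=\; -2\alpha_0\int b\,\psi(\bw;\rho)\,\mu(\de\bw,\de b).
\]
By convexity, $\rho_*$ is a global minimizer iff $D\hRisk(\rho_*)[\rho-\rho_*]\ge 0$ for every probability measure $\rho$ on $\reals^{d+1}$, which rearranges to
\[
\sup_{\rho}\int b\,\psi(\bw;\rho_*)\,\rho(\de\bw,\de b) \;\le\; \int b\,\psi(\bw;\rho_*)\,\rho_*(\de\bw,\de b).
\]
The supremum is attained on Dirac masses and equals $\sup_{(\bw,b)\in\reals^{d+1}} b\psi(\bw;\rho_*)$; since $b$ ranges over all of $\reals$, this supremum is finite (and necessarily zero, matching the right side) iff $\psi(\bw;\rho_*)=0$ for every $\bw$. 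The converse is immediate: if $\psi(\,\cdot\,;\rho_*)\equiv 0$, then $D\hRisk(\rho_*)\equiv 0$, and convexity promotes a vanishing first variation to global minimality.

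For part (b), I would derive the dissipation identity
\[
\frac{\de}{\de t}\hRisk(\rho_t) \;=\; -2\alpha_0^2\int \bigl\|\nabla_{(\bw,b)}\Psi(\bw,b;\rho_t)\bigr\|_2^2\,\rho_t(\de\bw,\de b)
\]
by differentiating under the expectation, substituting $\partial_t\rho_t=\alpha_0\,\nabla\!\cdot\!(\rho_t\nabla\Psi)$, integrating by parts, and recognizing $\nabla_{(\bw,b)}[b\psi(\bw;\rho_t)]=-\nabla_{(\bw,b)}\Psi(\bw,b;\rho_t)$ from $\Psi=-b\psi$. At a fixed point $\rho_*$ the trajectory is constant, so the left side vanishes and $\nabla_{(\bw,b)}\Psi(\bw,b;\rho_*)=0$ holds $\rho_*$-almost everywhere, and hence on $\supp(\rho_*)$ by continuity of the integrand. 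Computing the two components of $\nabla_{(\bw,b)}(-b\psi)$ yields exactly $\psi(\bw;\rho_*)=0$ and $b\nabla_\bw\psi(\bw;\rho_*)=0$ on $\supp(\rho_*)$. Conversely, if these hold on $\supp(\rho_*)$, testing the weak form of \eqref{eq:PDE} against any smooth $\phi$ gives $\int\phi\,\partial_t\rho_t|_{\rho_t=\rho_*}=-\alpha_0\int\nabla\phi\cdot\nabla\Psi\,\de\rho_*=0$, so $\rho_*$ is stationary.

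The main obstacle will be justifying the formal calculus rigorously when $\rho_*$ is a general (possibly singular) probability measure and the PDE is to be interpreted weakly in the sense of \cite{ambrosio2008gradient,santambrogio2015optimal}: the differentiation under the expectation, the integration by parts in the dissipation identity, and the passage from $\rho_*$-a.e.\ vanishing to vanishing on $\supp(\rho_*)$ all demand some regularity. These are standard in the Wasserstein gradient-flow formalism and are underpinned by the assumed boundedness of $\sigma'$. The population version of the lemma follows verbatim, since the argument uses only linearity of $F$ in $\rho$ and the identity $\Psi=-b\psi$, both of which are unaffected by replacing $\hE$ with $\E$.
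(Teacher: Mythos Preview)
The paper states this lemma without proof; it is offered as a standard structural fact about the mean-field gradient flow, and the surrounding discussion simply takes the characterization for granted. So there is no paper-side argument to compare against.

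Your approach is correct and is the natural one. For part~(a), exploiting convexity of $\rho\mapsto\hRisk(\rho)$ together with the unboundedness of the $b$-coordinate is exactly what forces $\psi(\,\cdot\,;\rho_*)\equiv 0$ from the first-order condition over probability measures. For part~(b), the energy-dissipation route is standard; an equivalent and slightly more direct variant is to test the weak stationarity equation $\nabla\!\cdot(\rho_*\nabla\Psi(\,\cdot\,;\rho_*))=0$ against $\Psi$ itself (suitably truncated), which gives $\int\|\nabla\Psi\|^2\,\de\rho_*=0$ without passing through the time-dependent trajectory. Your identification $\Psi=-b\psi$ and the componentwise reading of $\nabla_{(\bw,b)}\Psi=0$ as $\psi=0$ and $b\nabla_{\bw}\psi=0$ are correct.

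Two minor points worth flagging explicitly. First, your argument in (a) uses that the competitor measures $\rho$ live on all of $\reals^d\times\reals$, so that $b$ is unbounded; this matches the setting of Theorem~\ref{thm:PDE} when both layers are trained, but note the paper elsewhere also discusses freezing $b\in\{\pm1\}$, where the global-minimizer characterization would take a different form. Second, since $\Psi$ grows linearly in $b$, testing against it (or invoking the dissipation identity) requires a moment or support condition on $\rho_*$ in the $b$-direction; you already group this with the weak-formulation technicalities, which is the right place for it.
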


  This statement clarifies that fixed points of the gradient flow are
  only a `small' superset of global minimizers, as $m\to\infty$. Consider for instance the case of an analytic activation function $t\mapsto\sigma(t)$.
 Let $\rho_*$ be a stationary point and assume that its support
 contains a sequence of distinct points $\{
 (b_i,\bw_i)\}_{i\ge 1}$ such that 
 $\{\bw_i\}_{i\ge 1}$ has an accumulation point. Then, by condition $(b)$, $\psi(\bw;\rho_*) = 0$ identically and therefore $\rho_*$ is a
 global minimum. In other words, the only local minima correspond to $\rho_*$ supported on a set of isolated points.
 Global convergence proofs aim at ruling out this case.

 \subsection{Other approaches}

 The mean-field limit is only one of several analytical approaches
 that have been developed
 to understand training beyond the linear regime. A full survey of these directions goes beyond the
 scope of this review. Here we limit ourselves to highlighting a few of them that have a direct connection to
 the analysis in the previous section.

 A natural idea is to view the linearized evolution as the first order in a Taylor expansion,
 and to construct higher order approximations. This can be achieved by writing an ordinary differential equation for the evolution of
 the kernel $\bK_t$ (see
 \eqref{eq:EvolutionPredictedMinfty} for the infinite-width limit). This takes the form
 \cite{huang2020dynamics} 
 \begin{align}
   \frac{\de\phantom{t}}{\de t}\bK_t = -\frac{1}{n}\bK^{(3)}_t\cdot(F_n(\rho_t)-\by)\, ,
 \end{align}
 where $\bK^{(3)}_t\in (\reals^n)^{\otimes 3}$ is a certain higher order kernel (an order-$3$ tensor), which is contracted along one
 direction with $(F_n(\rho_t)-\by)\in \reals^n$. The linearized approximation amounts to replacing $\bK^{(3)}_t$ with $0$.
 A better approximation could be to replace $\bK^{(3)}_t$ with its value at initialization $\bK^{(3)}_0$. This construction can be
 repeated, leading to a hierarchy of increasingly complex (and accurate) approximations. 

 Other approaches towards constructing a Taylor expansion around the linearized evolutions were proposed, among others,  by
 \cite{dyer2019asymptotics} and \cite{hanin2019finite}.

 Note that the linearized approximation relies on the assumption that the Jacobian $\bD f_n(\btheta_0)$ is
 non-vanishing and well conditioned. \cite{bai2019beyond} propose specific neural network parametrizations
 in which the Jacobian at initialization vanishes, and the first non-trivial term in the Taylor expansion is quadratic.
 Under such initializations the gradient flow dynamics is `purely nonlinear'.

\section{Generalization in the linear regime}
\label{sec:NTK}

As discussed in Sections~\ref{sec:slt} and~\ref{sec:benign}, approaches that control the test error via uniform convergence 
fail for overparametrized interpolating models. So far, the most complete generalization results for such models have  been obtained in the linear regime,
namely under the assumption that we can approximate
$f(\btheta)$ by its first order Taylor approximation $f_{\slin}(\btheta) = f(\btheta_0)+\bD f(\btheta)
(\btheta-\btheta_0)$. 
While  Theorem \ref{thm:Linearization} provides a set of sufficient conditions for this approximation
to be accurate, in this section we leave aside the question of whether or when this is indeed the case, and review what we know
about the generalization properties of these linearized models.
We begin in Section \ref{sec:BiasRF} by discussing the inductive bias induced by gradient descent on wide two-layer networks.
Section \ref{sec:Setup-RF} describes a general setup.
Section \ref{sec:RF} reviews random features models: two-layer neural networks in which the first layer is not trained
and entirely random. While these are simpler than neural networks in the linear regime, their generalization
behavior is in many ways similar. Finally, in Section \ref{sec:NT} we review progress on the generalization error of
linearized two-layer networks.

\subsection{The  implicit regularization of gradient-based training}
\label{sec:BiasRF}
  
 As emphasized in previous sections, in an overparametrized setting,  convergence to global minima is not sufficient to characterize the
 generalization properties of neural networks. It is equally important to understand which global minima are selected by the training algorithm,
 in particular by gradient-based training.
 As shown in Section \ref{sec:implicit}, in linear models gradient descent converges
 to the minimum $\ell_2$-norm interpolator.
 Under the assumption that training takes place in the linear regime (see Section \ref{sec:LinearRegime}), we can apply this observation
 to neural networks. 
 Namely, the neural network trained by gradient descent will be well approximated by the  model\footnote{With a slight abuse of notation,
 in this section we parametrize the linearized model by the shift with respect to the initialization $\btheta_0$.}
 $f_{\slin}(\hba) = f(\btheta_0)+\bD f(\btheta_0)\hba$ where $\hba$ minimizes
 $\|\ba\|_2$  among empirical risk minimizers
\begin{align}
  \hba &:= \argmin{\ba\in\reals^p}\Big\{\|\ba\|_2:~  y_i=f_{\slin}(\bx_i;\ba) ~\text{for all}~ i\le n\Big\}\, .\label{eq:MinNormLin}
\end{align}
For simplicity, we will set $f(\bx;\btheta_0)=0$. This can be achieved either by properly constructing the initialization $\btheta_0$
(as in the initialization $\btheta_0^{(2)}$ in Section \ref{sec:LinearRegime}) or by redefining the response vector $\by' =\by-f_n(\btheta_0)$.
If  $f(\bx;\btheta_0)=0$, the interpolation constraint  $y_i=f_{\slin}(\bx_i;\ba)$ for all $i\le n$  can be written as $\bD f_n(\btheta_0)\ba=\by$.

Consider the case of two-layer neural networks in which only first-layer weights are trained.  Recalling the form of
the Jacobian \eqref{eq:TwoLayersJacobian}, we can rewrite \eqref{eq:MinNormLin} as
\begin{align}
  \hba &:= \argmin{\ba\in\reals^{md}}\Big\{\|\ba\|_2:~ y_i=\sum_{j=1}^m\<\ba_j,\bx_i\>\sigma'(\<\bw_j,\bx_i\>)\Big\}\, ,
\end{align}
where we write $\ba=(\ba_1,\dots,\ba_m)$, $\ba_i\in\reals^d$.
In this section we will study the generalization properties of this
{\em neural tangent} (NT) model and some of its close relatives.
Before formally defining our setup, it is instructive to rewrite the norm that we are minimizing
in
function space:
\begin{align}
  \label{eq:NT_Norm_m}
  \|f\|_{\NT,m}:= \inf\Big\{\frac{1}{\sqrt{m}}\|\ba\|_2:~ f(\bx)=\frac{1}{m}\sum_{j=1}^m\<\ba_j,\bx_i\>\sigma'(\<\bw_j,\bx\>)\; \mbox{a.e.}
  \Big\}.
\end{align}
This is an RKHS norm defining  a finite-dimensional subspace of
$L^2(\reals^d,\P)$.
We can also think of it as a finite approximation to the norm
\begin{align}
  \|f\|_{\NT}:= \inf\Big\{\|\ba\|_{L^2(\rho_0)}:~ f(\bx)=\int \!\<\ba(\bw),\bx_i\>\sigma'(\<\bw,\bx\>)\, \rho_0(\de\bw)\Big\}\, .
  \label{eq:NT_Norm}
\end{align}
Here $\ba:\reals^d\to\reals^d$ is a measurable function with
  \[
    \|\ba\|_{L^2(\rho_0)}^2 := \int \|\ba(\bw)\|^2\rho_0(\de \bw)<\infty,
  \]
and we are assuming that the weights $\bw_j$ in \eqref{eq:NT_Norm_m}
are initialized as
$$(\bw_j)_{j\le m}\sim_{i.i.d.}\rho_0.
$$
This is also an RKHS norm whose kernel $K_{\NT}(\bx_1,\bx_2)$
will be described below; see \eqref{eq:K_NT}.

Let us emphasize that moving out of the linear regime leads to different---and possibly more interesting---inductive biases
than those described in \eqref{eq:NT_Norm_m} or \eqref{eq:NT_Norm}.
As an example, \cite{chizat2020implicit} analyze the mean field limit of two-layer networks, trained with logistic loss,
for activation functions that have Lipschitz gradient and are positively $2$-homogeneous. For instance, the square ReLU
$\sigma(x) = (x_+)^2$ with fixed second-layer coefficients  fits this framework. The usual ReLU with trained second-layer coefficients
$b_j\sigma(\<\bw_j,\bx\>)= b_j(\<\bw_j,\bx\>)_+$ is $2$-homogeneous but not differentiable.
In this setting, and under a convergence
assumption, they show that gradient flow minimizes the following norm
among interpolators:
\begin{align}
  \|f\|_{\sigma}:= \inf\Big\{\|\nu\|_{\sTV}:~ f(\bx)=\int \sigma(\<\bw,\bx\>)\, \nu(\de\bw) \mbox{ a.e.}\Big\}\, .
  \label{eq:TV_Norm} 
\end{align}
Here, minimization is over the finite signed measure $\nu$ with Hahn
decomposition  $\nu= \nu_+-\nu_-$,
and $\|\nu\|_{\sTV}:=\nu_+(\reals^d)+\nu_-(\reals^d)$ is the
associated total variation. The norm   $\|f\|_{\sigma}$ is a special
example of the variation norms introduced in \cite{kurkova1997dimension} and further studied in  \cite{kurkova2001bounds,kurkova2002comparison}. 

This norm differs in two  ways from the RKHS norm of \eqref{eq:NT_Norm}. Each is  defined in terms of a different integral operator,
  \[
    \ba\mapsto \int \<\ba(\bw),\bx\>\sigma'(\<\bw,\bx_i\>)\, \rho_0(\de\bw)
  \]
for \eqref{eq:NT_Norm} and
$$\nu\mapsto  \int \sigma(\<\bw,\bx\>)\, \nu(\de\bw)$$ for
\eqref{eq:TV_Norm}. However, more importantly, the norms
are very
different: in~\eqref{eq:NT_Norm} it is a Euclidean norm while in
\eqref{eq:TV_Norm} it is a total variation norm.
Intuitively, the total variation norm $\|\nu\|_{\sTV}$ promotes `sparse' measures $\nu$, and hence the functional
norm $\|f\|_{\sigma}$ promotes functions that depend primarily on a small number of directions in $\reals^d$
\cite{bach2017breaking}.

\subsection{Ridge regression in the linear regime}
\label{sec:Setup-RF}

We generalize the min-norm procedure of \eqref{eq:MinNormLin} to consider the ridge regression estimator:
\begin{align}
  \hba(\lambda) &:= \argmin{\ba\in\reals^p}\Big\{\frac{1}{n}\sum_{i=1}^n\big(y_i-f_{\slin}(\bx_i;\ba)\big)^2+\lambda\|\ba\|^2_2\Big\}\, ,\label{eq:NT_Ridge}\\
          f_{\slin}(\bx_i;\ba)&:= \<\ba,\bD f(\bx_i;\btheta_0)\>\, .\label{eq:TangentModel}
\end{align}
The min-norm estimator can be recovered by taking the limit of vanishing regularization $\lim_{\lambda\to 0}\hba(\lambda) = \hba(0^+)$ (with a slight abuse of notation, we will identify $\lambda=0$ with this limit).
Apart from being intrinsically interesting, the behavior of $\hba(\lambda)$ for $\lambda >0$ is a good
approximation of the behavior of the estimator produced by gradient flow with early stopping \cite{ali2019continuous}.
More precisely, letting $(\hba_{\sGF}(t))_{t\ge 0}$ denote the path of gradient flow initialized at $\hba_{\sGF}(0) =0$, there
exists a parametrization $t\mapsto \lambda(t)$, such that the test error at  $\hba_{\sGF}(t)$ is well approximated by the test error at $\hba(\lambda(t))$.

Note that the function class $\{ f_{\slin}(\bx_i;\ba):= \<\ba,\bD f(\bx_i;\btheta_0)\>:\; \ba\in\reals^p\}$ is a linear space,
which is linearly parametrized by $\ba$.
We consider two specific examples which are obtained by linearizing two-layer neural networks (see \eqref{eq:Two-Layers-Net}):
\begin{align}
  \cF^m_{\RF}&:=\Big\{f_{\slin}(\bx;\ba) = \sum_{i=1}^m a_i\sigma(\<\bw_i,\bx\>):\;\; a_i\in \reals\Big\}\, ,\label{eq:F_RF}\\
  \cF^m_{\NT}&:=\Big\{f_{\slin}(\bx;\ba) = \sum_{i=1}^m \<\ba_i,\bx\>\sigma'(\<\bw_i,\bx\>):\;\; \ba_i\in \reals^d\Big\}\, .\label{eq:F_NT}
\end{align}
 Namely,  $\cF^m_{\RF}$ (RF stands for `random features') is the class
  of functions obtained by linearizing a two-layer network
  with respect to second-layer weights and keeping the first layer fixed,
  and $\cF^m_{\NT}$ (NT stands for `neural tangent') is the class
   obtained by linearizing a two-layer network with respect to the first layer and keeping the second fixed.
The first example was introduced by \cite{balcan2006kernels} and \cite{rahimi2007random} and can be viewed as a linearization of the
two-layer neural networks in which only second-layer weights are
trained. Of course, since the network is linear in the second-layer
weights,
it coincides with its linearization. The second example is the
linearization of a neural network in which only the first-layer
weights are trained.
In both cases, we draw $(\bw_i)_{i\le m}\sim_{i.i.d.}\Unif(\S^{d-1})$ (the Gaussian initialization $\bw_i\sim\normal(0,\id_d/d)$ behaves very similarly).

Ridge regression \eqref{eq:NT_Ridge} within either model $\cF_{\RF}$
or  $\cF_{\NT}$ can be viewed as kernel ridge regression (KRR)
with respect to the kernels
\begin{align}
  K_{\RF,m}(\bx_1,\bx_2) &:=\frac{1}{m}\sum_{i=1}^m\sigma(\<\bw_i,\bx_1\>)\sigma(\<\bw_i,\bx_2\>)\, ,\\
  K_{\NT,m}(\bx_1,\bx_2) &:=\frac{1}{m}\sum_{i=1}^m\<\bx_1,\bx_2\>\sigma'(\<\bw_i,\bx_1\>)\sigma'(\<\bw_i,\bx_2\>)\, .
\end{align}
These kernels are random (because the weights $\bw_i$ are) and have finite rank, namely rank at most $p$, where $p=m$ in the first case
and $p=md$ in the second. The last property is equivalent to the fact that the RKHS is at most $p$-dimensional. 
As the number of neurons diverge, these kernels converge to their expectations
$K_{\RF}(\bx_1,\bx_2)$ and $K_{\NT}(\bx_1,\bx_2)$. Since
the distribution of $\bw_i$ is invariant under rotations in $\reals^d$, so are these kernels.
The kernels  $K_{\RF}(\bx_1,\bx_2)$ and $K_{\NT}(\bx_1,\bx_2)$  can therefore be written as functions
of $\|\bx_1\|_2$, $\|\bx_2\|_2$ and $\<\bx_1,\bx_2\>$. In particular, if we assume that data are normalized, say $\|\bx_1\|_2=\|\bx_2\|_2=\sqrt{d}$,
then we have the particularly simple form
\begin{align}
  K_{\RF}(\bx_1,\bx_2) & = H_{\RF,d}(\<\bx_1,\bx_2\>/d)\, , \label{eq:K_RF}\\
  K_{\NT}(\bx_1,\bx_2) &= d\, H_{\NT,d}(\<\bx_1,\bx_2\>/d)\, ,\label{eq:K_NT}
\end{align}
where
\begin{align}
H_{\RF,d}(q)& :=\E_{\bw}\{\sigma(\sqrt{d}\<\bw,\bfe_1\>) \sigma(\sqrt{d}\<\bw,q\bfe_1+\oq\bfe_2\>)\}\, ,\label{eq:HRF}\\
  H_{\NT,d}(q)&:=q\E_{\bw}\{\sigma'(\sqrt{d}\<\bw,\bfe_1\>) \sigma'(\sqrt{d}\<\bw,q\bfe_1+\oq\bfe_2\>)\}\, ,
\end{align}
with $\oq:=\sqrt{1-q^2}$.

The convergence  $K_{\RF,m}\to K_{\RF}$, $K_{\NT,m}\to K_{\NT,m}$
takes place under suitable assumptions,
pointwise  \cite{rahimi2007random}.
However, we would like to understand the qualitative behavior of the generalization error in the above linearized models.
\begin{enumerate}
\item[$(i)$] Does the procedure \eqref{eq:NT_Ridge}  share qualitative behavior
 with KRR, as discussed in Section \ref{sec:benign}? In particular, can min-norm interpolation be (nearly) optimal in the \RF\ or \NT\ models as well?
\item[$(ii)$] How large should $m$ be for the  generalization properties of \RF\ or \NT\ ridge regression
  to match those of  the associated kernel?
\item[$(iii)$] What discrepancies between KRR and \RF\ or \NT\ regression can we observe when $m$ is not sufficiently large?
\item[$(iv)$] Is there any advantage of one of the three methods (KRR, \RF, \NT) over the others?
\end{enumerate}

Throughout this section we assume an isotropic model for the
distribution of the covariates $\bx_i$,
namely we assume $\{(\bx_i,y_i)\}_{i\le n}$ to be i.i.d., with 
\begin{align}
  y_i = f^*(\bx_i)+\eps_i\, ,\;\;\;\; \bx_i\sim\Unif(\Sp^{d-1}(\sqrt{d}))\, ,
\end{align}
where $f^*\in L^2(\Sp^{d-1})$ is a square-integrable function on the sphere and $\eps_i$ is noise independent of
$\bx_i$, with $\E\{\eps_i\}=0$, $\E\{\eps_i^2\}=\tau^2$. We will also consider a modification of this model in
which $\bx_i\sim\normal(0,\id_d)$; the two settings are very close to each other in high dimension.
Let us emphasize that we do not make any regularity assumption about
the target function beyond square integrability, which is the bare minimum for the risk to be well defined.
On the other hand, the covariates have a simple isotropic distribution and the noise has  variance independent of $\bx_i$
(it is homoscedastic).

While homoscedasticity is not hard to relax to an upper bound on the noise variance, it is useful
to comment on the isotropicity assumption. The main content of this assumption is that the ambient dimension $d$ of the covariate
vectors does coincide with the intrinsic dimension of the data. If,
for instance, the $\bx_i$ lie on a $d_0$-dimensional
subspace in $\reals^d$, $d_0\ll d$, then it is intuitively clear that $d$ would have to be replaced by $d_0$ below. Indeed this
is a special case of a generalization studied in \cite{ghorbani2020neural}. An even more general setting is considered
in \cite{mei2020generalization}, where $\bx_i$ belongs to an abstract space. The key assumption there is that leading eigenfunctions of the associated kernel are delocalized.

We evaluate the quality of method \eqref{eq:NT_Ridge} using the square loss
\begin{align}
  \Risk(\lambda):= \E_{\bx}\big\{(f^*(\bx)-f_{\slin}(\bx;\hba(\lambda))^2\big\}\, .
\end{align}
The expectation is with respect to the test point $\bx\sim \Unif(\Sp^{d-1}(\sqrt{d}))$;
note that the risk is random because $\hba(\lambda)$ depends on the training data. However,
in all the results below, it concentrates around a non-random value.
We add subscripts, and write $\Risk_{\RF}(\lambda)$ or $\Risk_{\NT}(\lambda)$ to refer to the two classes of models above.

\subsection{Random features model}
\label{sec:RF}

We begin by considering the random features model $\cF_{\RF}$. 
A number of authors have established upper bounds on its minimax
generalization error for suitably chosen positive values of the
regularization \cite{rudi2017generalization,rahimi2009weighted}.
Besides the connection to neural networks,  $\cF_{\RF}$  can be viewed as a randomized approximation  for
the RKHS associated with $K_{\RF}$. A closely related approach in this context is provided by randomized subset selection,
also known as Nystr\"om's method \cite{williams2001using,bach2013sharp,alaoui2015fast,rudi2015less}.

The classical random features model $\cF_{\RF}$ is mathematically easier to analyze than the neural tangent model
 $\cF_{\NT}$,   and a precise picture can be established that covers
 the interpolation limit. Several elements of this picture have been
proved to generalize to the $\NT$ model as well, as discussed in the next subsection.

We focus on the high-dimensional regime, $m,n,d\to\infty$; as discussed in Section~\ref{sec:benign}, interpolation methods have
appealing properties in high dimension. Complementary asymptotic descriptions are obtained depending on how
$m,n,d$ diverge.
In Section \ref{sec:RF_Polynomial} we discuss the behavior at a
coarser scale, namely when $m$ and $n$ scale polynomially
in $d$: this type of analysis provides a simple quantitative answer to the question of how large $m$ should be to approach the $m=\infty$ limit.
Next, in Section \ref{sec:RF_Proportional}, we consider the proportional regime $m\asymp n\asymp d$.
This allows us to explore more precisely what happens in the
transition from underparametrized to overparametrized.

\subsubsection{Polynomial scaling}
\label{sec:RF_Polynomial}

The following characterization was proved in
\cite{mei2020generalization} (earlier work
by \cite{ghorbani2020linearized} established this result for
the two limiting cases $m=\infty$ and $n=\infty$). In what follows, we let $L^2(\gamma)$ denote
the space of square integrable functions on $\reals$, with respect to the standard Gaussian measure
$\gamma(\de x) = (2\pi)^{-1/2}e^{-x^2/2}\de x$, and we
write $\<\,\cdot\, ,\,\cdot\,\>_{L^2(\gamma)}$, $\|\,\cdot\,\|_{L^2(\gamma)}$
for the associated scalar product and norm.
\begin{theorem}\label{thm:RFPolynomial}
  Fix an integer $\ell>0$. Let the activation function $\sigma:\reals\to\reals$ be independent of $d$ and such that:
  $(i)$~$|\sigma(x)|\le c_0\exp(|x|^{c_1})$ for some constants $c_0>0$ and $c_1<1$, and 
  $(ii)$ $\<\sigma, q\>_{L^2(\gamma)} \neq 0$ for any non-vanishing polynomial $q$, with $\deg(q)\le \ell$.
  Assume  $\max((n/m),(m/n) )\ge  d^{\delta}$ and
  $d^{\ell+\delta}\le \min(m,n) \le d^{\ell+1-\delta}$ for some constant $\delta>0$. Then  for any $\lambda = O_d((m/n)\vee 1)$,
  and all $\eta>0$,
  \begin{align}
    \Risk_{\RF}(\lambda) = \|\proj_{>\ell} f^*\|_{L^2}^2 +o_d(1) \big(\| f^*\|_{L^2}^2+\| \proj_{>\ell}f^*\|_{L^{2+\eta}}^2+\tau^2\big)\, .
  \end{align}
\end{theorem}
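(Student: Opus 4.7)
The plan is to establish the theorem through a spherical-harmonic analysis of the random-features ridge estimator, exploiting the rotational invariance of the data distribution on $\Sp^{d-1}(\sqrt d)$. First I would decompose $f^* = \sum_{k\ge 0}\proj_k f^*$, where $\proj_k$ projects onto degree-$k$ spherical harmonics, and correspondingly decompose the Hermite expansion $\sigma = \sum_{k\ge 0}\mu_k(\sigma)h_k$. The qualitative claim to be proved is that degrees $k\le\ell$ are effectively learned (they contribute $o_d(1)$ to the risk), while degrees $k>\ell$ are entirely missed (they contribute their full $L^2$ norm). The bias--variance decomposition of ridge regression then reduces to two separate statements about a low-frequency subspace of dimension $\binom{d+\ell}{\ell}\asymp d^\ell$ and its orthogonal complement.

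The main technical step is a staircase decomposition of the random-feature matrix $\bPhi\in\reals^{n\times m}$, with $\bPhi_{ij}=m^{-1/2}\sigma(\<\bw_j,\bx_i\>)$. By the Hermite expansion, I would write $\bPhi = \bPhi_{\le\ell}+\bPhi_{>\ell}$, where $\bPhi_{\le\ell}$ collects Hermite components of degree $\le\ell$ and has rank at most $O(d^\ell)$, while $\bPhi_{>\ell}$ collects the tail. Under the scaling $\min(m,n)\ge d^{\ell+\delta}$ and $\max(m,n)\le d^{\ell+1-\delta}$ (with the opposite case handled symmetrically via $\bPhi\bPhi^\tr \leftrightarrow \bPhi^\tr\bPhi$), I would show, using Gegenbauer expansions and moment calculations on the sphere,
\begin{align*}
    \bigl\| \bPhi_{>\ell}\bPhi_{>\ell}^\tr - \mu_{>\ell}^2\,\id_n\bigr\|_{\op} = o_d(1), \qquad \mu_{>\ell}^2 := \sum_{k>\ell}\mu_k(\sigma)^2,
\end{align*}
so that on the orthogonal complement of the degree-$\le\ell$ subspace the kernel matrix $\bK = \bPhi\bPhi^\tr$ acts like an effective ridge $\lambda_{\eff}\asymp \mu_{>\ell}^2 = \Theta(1)$. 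The low-rank block $\bPhi_{\le\ell}\bPhi_{\le\ell}^\tr$ concentrates to the population Gram matrix on the space of degree-$\le\ell$ polynomials, whose eigenvalues can be bounded below using the hypothesis $\<\sigma,q\>_{L^2(\gamma)}\neq 0$ for non-vanishing polynomials $q$ of degree $\le\ell$ together with a small-ball estimate for these polynomials under $\Unif(\Sp^{d-1}(\sqrt d))$.

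Given this effective ridge structure, the bias--variance computation proceeds in parallel to Theorems \ref{thm:var_linear}--\ref{thm:bias_split_linear}. For the bias associated with $\proj_{\le\ell}f^*$: since the effective sample size and feature count in this finite-dimensional subspace satisfy $\min(m,n) \gg \dim = O(d^\ell)$ and the effective ridge is $O(1)$ while eigenvalues of the population operator on degree-$k$ harmonics scale like $\mu_k^2$ times a constant, the self-induced regularization is negligible on the prediction component, yielding an $o_d(1)$ contribution. For the bias on $\proj_{>\ell}f^*$: by orthogonality, the estimator is essentially blind to this component, so the contribution is $\|\proj_{>\ell}f^*\|_{L^2}^2$ plus a cross-term $o_d(1)\|\proj_{>\ell}f^*\|_{L^{2+\eta}}^2$ coming from higher-moment spillover. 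The variance is $O(\sigma_{\eff}^2 \cdot d^\ell/n)=o_d(1)\tau^2$, where $\sigma_{\eff}^2$ includes the contribution of $\proj_{>\ell}f^*$ as additional ``noise.''

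The hardest part will be the operator-norm concentration displayed above, which must hold \emph{simultaneously} for all the polynomially many scales of $m,n,d$. Standard matrix Bernstein bounds are inadequate because the rows of $\bPhi_{>\ell}$ are neither independent nor sub-Gaussian under the polynomial growth condition $|\sigma(x)|\le c_0\exp(|x|^{c_1})$ with $c_1<1$. The route I would pursue is a moment-method argument: expanding $\Trace[(\bPhi_{>\ell}\bPhi_{>\ell}^\tr - \mu_{>\ell}^2\id)^{2p}]$ as a sum over closed walks and using orthogonality of Hermite polynomials of degree $>\ell$ on the sphere to show that any off-diagonal Hermite contribution is suppressed by a factor $d^{-(\ell+1)/2}$. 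Combining this with the scaling gap $\max(m,n)\le d^{\ell+1-\delta}$ controls the total contribution. The condition $\max(n/m,m/n)\ge d^{\delta}$ is what keeps the estimator away from the transition $m\asymp n\asymp d^{\ell+1/2}$, where a secondary descent peak would appear and this simple concentration picture would fail.
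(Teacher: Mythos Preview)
Your proposal follows essentially the same route as the paper's discussion (the paper does not prove this theorem itself but cites \cite{mei2020generalization} and then sketches the mechanism): decompose $\bPhi=\bPhi_{\le\ell}+\bPhi_{>\ell}$ via the spherical-harmonic eigendecomposition of the kernel, show that the low-frequency eigenfunctions are densely sampled ($\bpsi_{\le\ell}^\sT\bpsi_{\le\ell}\approx n\id$, $\bphi_{\le\ell}^\sT\bphi_{\le\ell}\approx m\id$) while the high-frequency block has concentrated singular values and is nearly orthogonal to the low-frequency one, and then read off the risk. The paper phrases the high-frequency concentration as a multiplicative bound $\kappa_\ell^{1/2}n^{-\delta'}\le\sigma_j(\bPhi_{>\ell})/\sqrt m\le\kappa_\ell^{1/2}n^{\delta'}$ for every $\delta'>0$ rather than your additive $o_d(1)$ deviation from $\mu_{>\ell}^2\id_n$, but either form suffices for the risk analysis and your moment-method plan is the natural tool.
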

In words, as long as the number of parameters $m$ and the number of samples $n$ are well separated,
the test error is determined by the minimum of $m$ and $n$:
\begin{itemize}
\item For $m\ll n$, the approximation error dominates. If $d^{\ell}\ll m\ll d^{\ell+1}$, the model fits the projection of $f$ onto
  degree-$\ell$ polynomials perfectly but does not fit the higher degree components at all: $\hf_{\lambda}\approx \proj_{\le\ell}f$.
  This is consistent with a parameter-counting heuristic: degree-$\ell$ polynomials form a subspace of dimension $\Theta(d^{\ell})$
  and in order to approximate them we need a network with $\Omega(d^{\ell})$ parameters. Surprisingly, this transition is sharp.   
\item For $n\ll m$, the statistical error dominates. If $d^{\ell}\ll n\ll d^{\ell+1}$, $\hf_{\lambda}\approx \proj_{\le\ell}f$.
  This is again consistent with a parameter-counting heuristic: to learn degree-$\ell$ polynomials we need roughly as many samples as parameters.
\item Both of the above are achieved for any sufficiently small value of the regularization parameter $\lambda$. In particular, they apply to
  min-norm interpolation (corresponding to the case $\lambda=0^+$).
\end{itemize}
From a practical perspective, if the sample size $n$ is given, we might be interested in choosing the number of neurons $m$.
The above result indicates that the test error roughly decreases until the overparametrization threshold $m\approx n$,
and that there is limited improvement from increasing the network size
beyond $m\ge nd^{\delta}$.  At this point,
RF ridge regression achieves the same error as the corresponding kernel method. Indeed the statement of Theorem \ref{thm:RFPolynomial} holds
for the case of KRR as well, by identifying it with the limit $m=\infty$ \cite{ghorbani2020linearized}.

Note that the infinite width (kernel) limit $m=\infty$ corresponds to the setting already investigated
in Theorem \ref{thm:var_multiple_descent}. Indeed, the staircase
phenomenon in the $m=\infty$ case of Theorem \ref{thm:RFPolynomial}
corresponds to the multiple descent behavior seen in  Theorem \ref{thm:var_multiple_descent}. The two results
do not imply each other because  Theorem~\ref{thm:var_multiple_descent} assumes $f^*$ to have bounded
RKHS norm;  Theorem \ref{thm:RFPolynomial} does not make this assumption, but is not as sharp for
functions with bounded RKHS norm.

The significance of polynomials in Theorem \ref{thm:RFPolynomial} is related to the fact that the kernel $K_{\RF}$
is invariant under rotations (see \eqref{eq:K_RF}). As a consequence, the eigenfunctions of $K_{\RF}$ are
spherical harmonics, that is, restrictions of homogeneous harmonic polynomials in $\reals^d$ to the sphere $\S^{d-1}(\sqrt{d})$,
with eigenvalues given by their degrees.
\cite{mei2020generalization} have obtained analogous results for more general probability spaces  $(\cX,\P)$ for the covariates,
and more general random features models.
The role of low-degree polynomials is played by the top eigenfunctions of the associated kernel.

The mathematical phenomenon underlying Theorem \ref{thm:RFPolynomial}
can be understood by considering the feature
matrix $\bPhi\in \reals^{n\times m}$:
\begin{align}
  \label{eq:RF-Phimatrix}
  \bPhi:=
  \left[\begin{matrix}
    \sigma(\<\bx_1,\bw_1\>) &   \sigma(\<\bx_1,\bw_2\>) &   \cdots & \sigma(\<\bx_1,\bw_m\>)\\
    \sigma(\<\bx_2,\bw_1\>) &   \sigma(\<\bx_2,\bw_2\>) &   \cdots & \sigma(\<\bx_2,\bw_m\>)\\
    \vdots & \vdots& & \vdots\\
    \sigma(\<\bx_n,\bw_1\>) &   \sigma(\<\bx_n,\bw_2\>) &   \cdots & \sigma(\<\bx_n,\bw_m\>)\\
  \end{matrix}\right]\, .
\end{align}
The $i$th row of this matrix is the feature vector associated with the $i$th sample. We can decompose
$\bPhi$ according to the eigenvalue decomposition of $\sigma$, seen as an integral operator
from $L^2(\S^{d-1}(1))$ to $L^2(\S^{d-1}(\sqrt{d}))$:
$$a(\bw)\mapsto \int\sigma(\<\bx,\bw\>)\,
a(\bw)\,\tau_d(\de\bw)$$
(where $\tau_d$ is the uniform measure on
$\S^{d-1}(1)$).
This takes the form
\begin{align}
  \sigma(\<\bx,\bw\>) = \sum_{k=0}^{\infty}\eval_{k}\psi_k(\bx)\phi_k(\bw)\, ,
\end{align}
where $(\psi_j)_{j\ge 1}$ and $(\phi_j)_{j\ge 1}$ are two orthonormal
systems in $L^2(\S^{d-1}(\sqrt{d}))$ and  $L^2(\S^{d-1}(1))$
respectively, and the $\eval_{j}$ are singular values $\eval_0\ge
\eval_1\ge \dots \ge 0$.
(In the present example, $\sigma$ can be regarded as a self-adjoint operator on
$L^2(\S^{d-1}(\sqrt{d}))$ after rescaling the $\bw_j$, and hence the $\phi_j$ and $\psi_j$ can be taken to coincide up
to a rescaling, but this is not crucial.)

The eigenvectors are grouped into eigenspaces $\cV_{\ell}$ indexed by $\ell\in\integers_{\ge 0}$,
where $\cV_{\ell}$ consists of the degree-$\ell$ polynomials, and
$$\dim(\cV_{\ell}) =: B(d,\ell) =
\frac{d-2+2\ell}{d-2}\binom{d-3+\ell}{\ell},~ B(d,\ell)\asymp
d^{\ell}/\ell!.$$ 
We write $\eval^{(\ell)}$ for the eigenvalue
associated with eigenspace $\cV_{\ell}$:
it turns out that $\eval^{(\ell)}\asymp d^{-\ell/2}$, for a generic
$\sigma$; $(\eval^{(\ell)})^2B(d,\ell)\le C$ since $\sigma$
is square integrable.
Let $\bpsi_k = (\psi_k(\bx_1),\dots,\psi_k(\bx_n))^{\sT}$ be
the evaluation of the $k$th left eigenfunction at the $n$ data points, and let $\bphi_k = (\phi_k(\bw_1),\dots,\phi_k(\bw_m))^{\sT}$
be the evaluation of the $k$th right eigenfunction at the $m$ neuron
parameters. Further, let
$k(\ell) := \sum_{\ell'\le \ell}B(d,\ell')$.  Following our approach in Section~\ref{sec:benign}, we decompose $\bPhi$ into a `low-frequency' and a `high-frequency' component,
\begin{align}
  \bPhi &= \bPhi_{\le \ell}+\bPhi_{>\ell}\, ,\label{eq:PhiDecomposition}\\
  \bPhi_{\le \ell} & = \sum_{j=0}^{k(\ell)}\eval_j
  \bpsi_{j}\bphi_j^\top =
   \bpsi_{\le \ell}\Eval_{\le \ell} \bphi_{\le \ell}^\top\, ,
\end{align}
where $\Eval_{\le \ell}= \diag(\eval_1,\dots,\eval_{k(\ell)})$,
$\bpsi_{\le \ell}\in \reals^{n\times k(\ell)}$ is the
matrix whose $j$th column is $\bpsi_j$, and $\bphi_{\le
\ell}\in \reals^{m\times k(\ell)}$ is the
matrix whose $j$th column is $\bphi_j$.

Consider, to be definite, the overparametrized case $m\ge n^{1+\delta}$, and assume $ d^{\ell+\delta}\le n$.
Then we can think of $\bphi_j$, $\bpsi_j$, $j\le k(\ell)$ as densely sampled eigenfunctions.
This intuition is accurate  in the sense that
$\bpsi^{\sT}_{\le
\ell}\bpsi_{\le \ell}\approx n\id_{k(\ell)}$
and  $\bphi^{\sT}_{\le \ell}\bphi_{\le \ell}\approx
m\id_{k(\ell)}$ \cite{mei2020generalization}.
Further, if $n\le d^{\ell+1-\delta}$, the `high-frequency' part of the decomposition \eqref{eq:PhiDecomposition}
behaves similarly to noise
along directions orthogonal to the previous ones. Namely, $(i)$
$\bPhi_{>\ell}\bphi_{\le \ell}\approx 0$,
$\bpsi^{\sT}_{\le \ell}\bPhi_{>\ell}\approx 0$, and $(ii)$ its singular values (except those along the low-frequency components)
concentrate: for any $\delta'>0$, 
$$\kappa_{\ell}^{1/2} n^{-\delta'}\le\sigma_{n-k(\ell)}(\bPhi_{>\ell})/m^{1/2}\le
\sigma_{1}(\bPhi_{>\ell})/m^{1/2}\le \kappa_{\ell}^{1/2} n^{\delta'},$$
where $\kappa_{\ell}:=\sum_{j\ge k(\ell)+1}\eval_{j}^2$.

In summary, regression with respect to the random features $\sigma(\<\bw_{j},\,\cdot\,\>)$ turns
out to be essentially equivalent to kernel ridge regression with respect to a polynomial kernel of degree $\ell$,
where $\ell$ depends on the smaller of the sample size and the network size. Higher degree parts in the activation
function effectively behave as noise in the regressors. We will next see that this picture can become even more precise in the proportional
regime $m\asymp n$.
  
\subsubsection{Proportional scaling}
\label{sec:RF_Proportional}
  
Theorem \ref{thm:RFPolynomial} requires that $m$ and $n$ are well separated. When $m,n$ are close
to each other, the feature matrix \eqref{eq:RF-Phimatrix} is nearly square and we might expect its
condition number to be large. When this is the case, the variance component of the risk can also be large.

Theorem \ref{thm:RFPolynomial}  also requires the smaller of $m$ and $n$ to be well separated from $d^{\ell}$,
with $\ell$ any integer.
For $d^{\ell}\ll m\ll d^{\ell+1}$ the model
has enough degrees of freedom to represent (at least in principle) all polynomials of degree at most $\ell$ and not enough
to represent even a vanishing fraction of all polynomials of degree $\ell+1$. Hence it behaves in a particularly simple
way. On the other hand, when $m$ is comparable to $d^{\ell}$, the model can partially represent degree-$\ell$
polynomials, and its behavior will be more complex.  
Similar considerations apply to the sample size $n$.

\begin{figure}[t]
\centering
\includegraphics[width=0.75\textwidth]{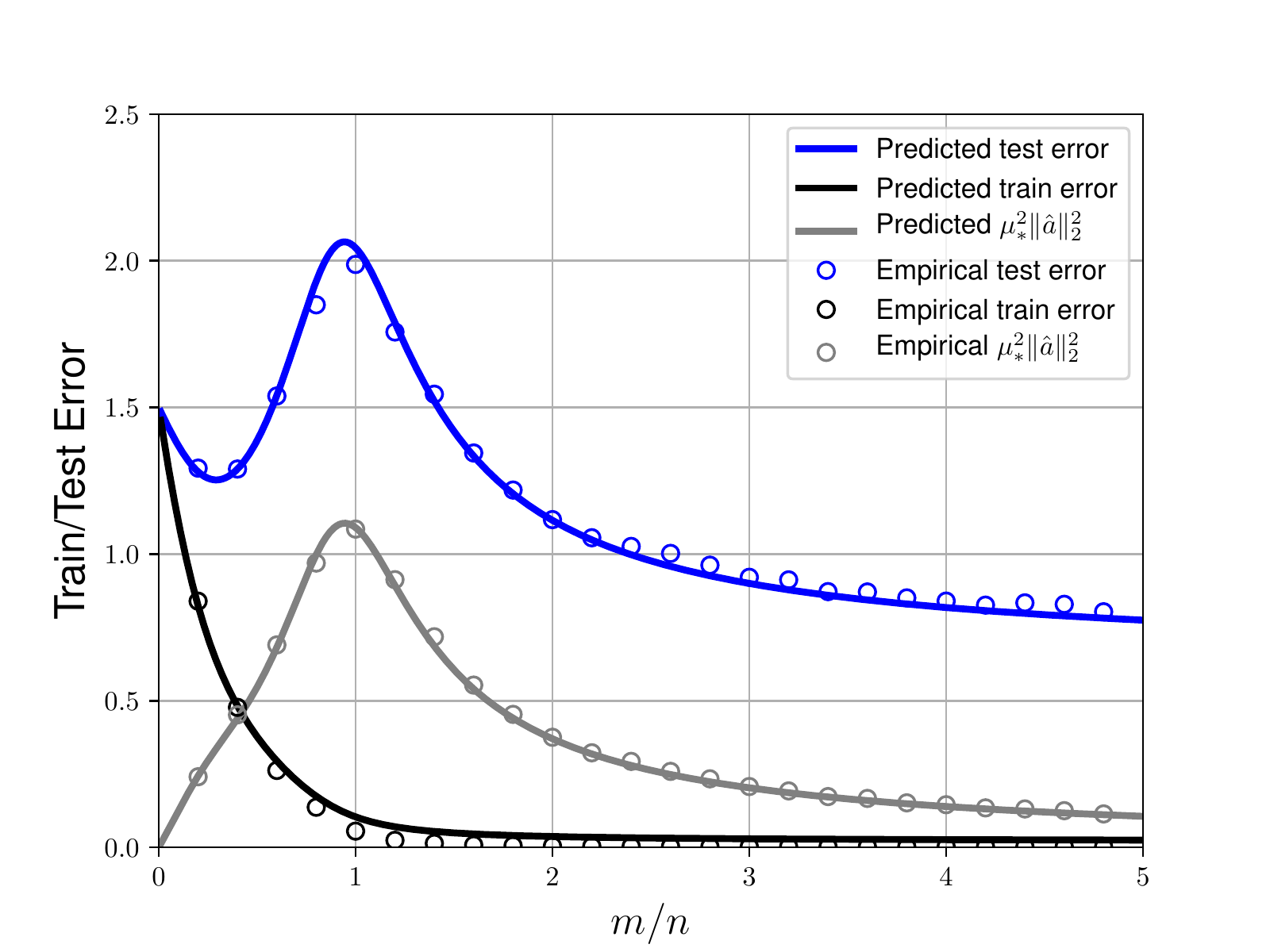} 
\caption{Train and test error of a random features model (two-layer
  neural net with random first layer) as a function of the
  overparametrization ratio $m/n$. Here $d=100$,  $n=400$,
    $\tau^2=0.5$, and the target function is $f^*=\<\bbeta_0,\bx\>$,
  $\|\bbeta_0\|_2=1$.
  The model is fitted using ridge
  regression with a small regularization parameter $\lambda=10^{-3}(m/d)$. Circles report the results of numerical
  simulations (averaged over 20 realizations), while lines are theoretical predictions for the $m,n,d\to\infty$ asymptotics. }  
\label{fig:proportional} 
\end{figure}

What happens when $m$ is comparable to $n$, and both are comparable to an integer power of $d$?
Figure \ref{fig:proportional} reports simulations
within the data model introduced above.
We performed ridge regression as per \eqref{eq:NT_Ridge}, with a small value of the regularization parameter, $\lambda=10^{-3}(m/d)$.
We report test error and train error for several network widths $m$, plotting them as a function of the overparametrization ratio $m/n$.

We observe that the train error decreases with the overparametrization ratio, and becomes very small for $m/n\ge 1$:
it is not exactly $0$ because we are using $\lambda>0$, but for $m/n>1$ it vanishes as $\lambda\to 0$.
On the other hand,
the test error displays a peak at the interpolation threshold $m/n=1$. For $\lambda=0^+$ the error actually diverges at this threshold. It  then
decreases and converges rapidly to an asymptotic value as $m/n\gg 1$.
If both $n/d\gg 1$, and $m/n\gg 1$, the asymptotic value of the test error is given by $\|\proj_{>1} f^*\|_{L^2}$:
the model is fitting
the degree-one polynomial component of the target
function perfectly and behaves trivially on higher degree components.
This matches the picture obtained under polynomial scalings, in Theorem \ref{thm:RFPolynomial}, and actually indicates that 
a far smaller separation between $m$ and $n$ is required than assumed in that theorem. Namely, $m/n\gg 1$  instead of $m/n\ge d^{\delta}$
appears to be sufficient for the risk to be dominated by the statistical error.

The peculiar behavior illustrated in Figure~\ref{fig:proportional}
was first observed empirically in neural networks and then shown to be ubiquitous for
numerous over-parametrized models \cite{geiger2019jamming,spigler2019jamming,belkin2019reconciling}.
It is commonly referred to as the `double descent phenomenon', after \cite{belkin2019reconciling}.

Figure \ref{fig:proportional} displays curves that are exact asymptotic predictions in the limit $m,n,d\to\infty$,
with $m/d\to\psi_{\wid}$, $n/d\to\psi_{\samp}$. Explicit formulas for these asymptotics were originally established in
\cite{mei2019generalization} using an approach from random matrix theory, which we will briefly outline.
The first step is to write the risk as an explicit
function of the matrices $\bX\in\reals^{n\times d}$ (the matrix whose $i$th row is the sample $\bx_i$),
$\bTheta\in\reals^{m\times d}$ (the matrix whose $j$th row is the sample $\btheta_j=\sqrt{d}\bw_j$), and
$\bPhi = \sigma(\bX\bTheta^{\sT}/\sqrt{d})$ (the feature matrix in \eqref{eq:RF-Phimatrix}). After a straightforward calculation,
one obtains
\begin{align}
  \Risk_{\RF}(\lambda) =&\E_{\bx}[f^*(\bx)^2] - \frac{2}{n} \by^\sT \bPhi(\bPhi^{\sT}\bPhi/n+\lambda\bI_m)^{-1} \bV  \label{eq:ExplicitRisk}\\
  &+  \frac{1}{n^2}\by^\sT \bPhi(\bPhi^{\sT}\bPhi/n+\lambda\bI_m)^{-1} \bU (\bPhi^{\sT}\bPhi/n+\lambda\bI_m)^{-1}  \bPhi^\sT \by \, ,\nonumber
\end{align}
where  $\bV\in\reals^m$, $\bU\in\reals^{m\times m}$ are matrices with entries
\begin{align}
  V_i &:= \E_{\bx}\{\sigma(\<\btheta_i,\bx\>/\sqrt{d})\, f^*(\bx)\}\, ,\\
  U_{ij}&:=\E_{\bx}\{\sigma(\<\btheta_i,\bx\>/\sqrt{d})\, \sigma(\<\btheta_j,\bx\>/\sqrt{d})\}\, .
\end{align}
Note that the matrix $\bU$ takes the form of an empirical kernel matrix, although expectation is taken over the covariates
$\bx$ and the kernel is evaluated at the neuron parameters $(\btheta_i)_{i\le m}$.
Namely, we have $U_{ij}=H_{\RF,d}(\<\btheta_i,\btheta_j\>/d)$, where the kernel $H_{\RF,d}$ is defined
exactly\footnote{The two kernels coincide because we are using the same distribution for $\bx_i$ and $\btheta_j$: while this symmetry simplifies some calculations, it is not really crucial.} as in \eqref{eq:HRF}.
Estimates similar to those of Section
\ref{sec:benign} apply here (see also \cite{el2010spectrum}):
since $m\asymp d$ we can approximate the kernel $H_{\RF,d}$  by a linear kernel in operator norm. Namely, if we
decompose $\sigma(x) = \mu_0+\mu_1 x+\sigma_{\perp}(x)$, where
$\E\{\sigma_{\perp}(G)\}=\E\{G\sigma_{\perp}(G)\}=0$,
and $\E \{\sigma_{\perp}(G)^2\}=\mu_*^2$, we have
\begin{align}
  \bU = \mu_0^2\bone\bone^{\sT}+\mu_1^2\bTheta\bTheta^{\sT}+\mu_*\,\id_m+\bDelta\, ,
\end{align}
where $\bDelta$ is an error term that vanishes asymptotically in operator norm. Analogously, $\bV$ can be approximated
as $\bV\approx a\bone+\bTheta\bb$ for suitable coefficients $a\in\reals$, $\bb\in \reals^d$.

Substituting these approximations for $\bU$ and $\bV$  in \eqref{eq:ExplicitRisk} yields an expression of the risk in
terms of the three (correlated) random matrices $\bX$, $\bTheta$, $\bPhi$.
Standard random matrix theory does not apply directly to compute the asymptotics of this expression. The main difficulty is
that the matrix $\bPhi$ does not have independent or nearly independent entries. It is instead obtained by applying a nonlinear function
to a product of matrices with (nearly) independent entries; see \eqref{eq:RF-Phimatrix}. The name
`nonlinear random matrix theory' has been coined to refer to this setting \cite{pennington2017nonlinear}.
Techniques from
random matrix theory have been adapted to this new class of random matrices. In particular, the leave-one-out
method can be used to derive a recursion for the resolvent, as first shown for this
type of matrices in  \cite{cheng2013spectrum}, and the moments method was first
used in \cite{fan2019spectral} (both of these papers consider symmetric random matrices, but these techniques extend to the asymmetric case).
Further results on kernel random matrices can be found in  \cite{do2013spectrum,louart2018random} and \cite{pennington2018spectrum}. 

Using these approaches, the exact asymptotics of   $\Risk_{\RF}(\lambda)$ was determined in
the proportional asymptotics $m,n,d\to\infty$ with $m/d\to \psi_{\wid}$ ( $\psi_{\wid}$
represents the number of neurons per dimension),
$n/d\to \psi_{\samp}$ ($\psi_{\samp}$ represents the number of samples
per dimension).
 The target function $f^*$ is assumed to be square
  integrable and such that $\proj_{>1}f^*$ is a Gaussian isotropic
  function.\footnote{Concretely, for each $\ell\ge 2$, let ${\boldsymbol f}_{\ell}=(f_{k,\ell})_{k\le B(d,\ell)}$ be the coefficients of
    $f^*$ in a basis of degree-$\ell$ spherical harmonics. Then  ${\boldsymbol f}_{\ell} \sim\normal(0,F_{\ell}^2\id_{B(d,\ell})$
    independently across $\ell$.} In this setting,
the risk takes the form
\begin{align}
  \Risk_\RF(\lambda) = &\|\proj_1f^*\|_{L^2}^2 \cuB(\ratio, \psi_{\wid}, \psi_{\samp}, \lambda/\mu_*^2)\\
  &+ (\tau^2 + \|\proj_{>1}f^*\|_{L^2}^2) \cuV(\ratio, \psi_{\wid}, \psi_{\samp}, \lambda/\mu_*^2) + 
\|\proj_{>1}f^*\|_{L^2}^2  +o_d(1)\, ,\nonumber
\end{align}
where $\ratio := |\mu_1|/\mu_*$. The functions $\cuB$, $\cuV\ge 0$ are explicit and
correspond to an effective bias term and an effective variance term. Note the additive term $\|\proj_{>1}f^*\|_{L^2}^2$:
in agreement with Theorem \ref{thm:RFPolynomial}, the nonlinear component of $f^*$ cannot be learnt at all
(recall that $m,n =O(d)$ here). Further $\|\proj_{>1}f^*\|_{L^2}^2$ is added to the noise strength in the `variance' term:
high degree components of $f^*$ are equivalent to white noise at small sample/network size.

The expressions for $\cuB$, $\cuV$ can be used to plot curves such as those in Figure \ref{fig:proportional}:
we refer to \cite{mei2020generalization} for explicit formulas.
As an interesting conceptual consequence, these results establish a universality phenomenon:
the risk under the random features model is asymptotically the same as the risk
of a mathematically simpler model. This  simpler model can be analyzed by
a direct application of standard random matrix theory \cite{hastie2019surprises}.

We refer to the simpler equivalent model as the `noisy features model.' In order to motivate it, recall the decomposition
$\sigma(x) = \mu_0+\mu_1 x+\sigma_{\perp}(x)$ (with the three components being orthogonal in $L^2(\gamma)$).
Accordingly, we decompose the feature matrix as
\begin{align*}
  \bPhi &= \bPhi_{\le 1}+ \bPhi_{>1}\\
        & = \mu_0\bone\bone^{\sT}+\frac{\mu_1}{\sqrt{d}}\bTheta\bX^{\sT}+\mu_*\tbZ\, ,
\end{align*}
where $\tZ_{ij} = \sigma_{\perp}(\<\bx_i,\btheta_j\>/\sqrt{d})/\mu_*$.
Note that the entries of $\tbZ$ have zero mean and are asymptotically
uncorrelated.
Further they are asymptotically uncorrelated with the entries\footnote{Uncorrelatedness holds only asymptotically,
  because the distribution of $\<\bx_i,\btheta_j\>/\sqrt{d}$ is not exactly Gaussian, but only asymptotically so,
  while the decomposition $\sigma(x) = \sigma_0+\sigma_1x+\sigma_{\perp}(x)$ is taken in $L^2(\gamma)$.}    of
$\bTheta\bX^{\sT}/\sqrt{d}$.

As we have seen in Section \ref{sec:RF_Polynomial}, the matrix $\tbZ$ behaves in many ways as a matrix with independent entries,
independent of $\bTheta,\bX$. In particular, if $\max(m,n)\ll d^2$ and either $m\gg n$ or $m\ll n$, its eigenvalues concentrate
around a deterministic value (see discussion below \eqref{eq:PhiDecomposition}).

The noisy features model is obtained by replacing $\tbZ$ with a matrix $\bZ$, with independent entries, independent of
$\bTheta$, $\bX$. Accordingly, we replace the target function with a linear function with additional noise. 
In summary:
\begin{align}
  \bPhi^{\NF} &= \mu_0\bone\bone^{\sT}+\frac{\mu_1}{\sqrt{d}}\bTheta\bX^{\sT}+\mu_*\bZ,\, \,\;\;\; (Z_{ij})_{i\le n, j\le m}\sim \normal(0,1)\, ,\\
  \by & = b_0\bone + \bX\bbeta + \tau_+ \tbg, \, \, \,\;\;\; (\tg_{i})_{i\le n}\sim \normal(0,1)\,  .
\end{align}
Here the random variables $(\tg_i)_{i\le n}, (Z_{ij})_{i\le n, j\le m}$ are mutually independent, and independent of  all the others, and the 
parameters $b_0,\bbeta,\tau_+$  are fixed by the conditions
$\proj_{\le 1}f^*(\bx) = b_0+\<\bbeta,\bx\>$ and
$\tau_+^2=\tau^2+\|\proj_{> 1}f^*\|_{L^2}^2$.

The next statement establishes asymptotic equivalence of the noisy and random features model. 
\begin{theorem}\label{thm:proportional}
  Under the data distribution introduced above, let $\Risk_{\RF}(\lambda)$ denote the risk of ridge regression in the random features model with regularization $\lambda$, and let
  $\Risk_{\NF}(\lambda)$ be the risk in the noisy features model. Then we have, in
  $n,m,d\to\infty$ with $m/d\to\psi_{\wid}$, $n/d\to\psi_{\samp}$, 
  \begin{align}
    \Risk_{\RF}(\lambda) = \Risk_{\NF}(\lambda)\cdot \big(1+o_n(1)\big)\, .
  \end{align}
\end{theorem}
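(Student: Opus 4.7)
The plan is to reduce the risk in both models to explicit quadratic forms in a common family of resolvents and to establish that these resolvents have the same deterministic equivalent in the proportional asymptotics. Starting from the closed-form expression~\eqref{eq:ExplicitRisk} for $\Risk_{\RF}(\lambda)$, an identical computation with $\bPhi$ replaced by $\bPhi^{\NF}$ and $\by$ by its noisy-features counterpart produces an analogous closed form for $\Risk_{\NF}(\lambda)$. The task is therefore to show that swapping $\tbZ$ for the i.i.d.\ Gaussian $\bZ$ and absorbing $\|\proj_{>1}f^*\|_{L^2}^2$ into the effective noise variance $\tau_+^2$ perturbs each of the quadratic forms in~\eqref{eq:ExplicitRisk} by only $o_n(1)$.

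The next step is to exploit the decomposition $\sigma(x)=\mu_0+\mu_1 x+\sigma_\perp(x)$ and the parallel decompositions of $\bU$ and $\bV$ sketched in the excerpt (the residuals in those kernel-like objects vanish in operator norm by results of the El Karoui type). Writing $\by = b_0\bone + \bX\bbeta + \br_{\perp}$, where $\br_{\perp}$ collects the degree-$>1$ part of $f^*$ evaluated at the $\bx_i$ together with the label noise, I would reorganize the risk so that the only place the nonlinear component of $\sigma$ enters is through $\tbZ$ and the only place the high-degree component of $f^*$ enters is through $\br_{\perp}$. Since the entries of $\br_{\perp}$ are asymptotically uncorrelated with $\bX$, $\bTheta$, and $\tbZ$, their contribution to the ridge quadratic form is equivalent, at the level of bilinear forms in the resolvent of $\bPhi^{\sT}\bPhi/n$, to that of an i.i.d.\ noise vector of variance $\|\proj_{>1}f^*\|_{L^2}^2$; this is exactly what upgrades $\tau^2$ to $\tau_+^2$ on the noisy-features side.

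The heart of the proof, and the main obstacle, is the Gaussian equivalence step: for a broad class of deterministic matrices $\bA$ and $\bM$, one must show that trace and bilinear forms of the type $\trace(\bA(\tbZ^{\sT}\bM\tbZ+\lambda\bI)^{-1})$, as well as close variants involving $\bX$ and $\bTheta$, have the same deterministic limit as when $\tbZ$ is replaced by an i.i.d.\ Gaussian matrix $\bZ$. This is delicate because the rows of $\tbZ$ share the vectors $\btheta_j$ and its columns share the vectors $\bx_i$, so its entries are highly dependent. I would follow the leave-one-out and moment-method strategy of nonlinear random matrix theory developed in \cite{cheng2013spectrum}, \cite{fan2019spectral}, \cite{louart2018random} and \cite{pennington2017nonlinear}: derive a self-consistent equation for the resolvent of $\bPhi^{\sT}\bPhi/n$ via leave-one-row-out; use combinatorial moment estimates on $\sigma_\perp(\langle\bx_i,\btheta_j\rangle/\sqrt d)$ together with concentration on the sphere to show that the nonlinear dependencies contribute only through the second moment $\mu_*^2=\Expect\sigma_\perp(G)^2$; and conclude that the resolvent satisfies the same fixed-point equation as in the fully Gaussian case, with Hanson--Wright-type inequalities providing the high-probability control.

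With the resolvent equivalence in hand, each of the $O(1)$ quadratic forms appearing in~\eqref{eq:ExplicitRisk} converges to the same deterministic limit in the two models, and the bias, variance, and cross terms therefore match up to $o_n(1)$; dividing by $\Risk_{\NF}(\lambda)$, which is bounded away from zero because $\|\proj_{>1}f^*\|_{L^2}^2$ enters additively as a lower bound, yields $\Risk_{\RF}(\lambda)=\Risk_{\NF}(\lambda)(1+o_n(1))$. Every step except the nonlinear random matrix equivalence is essentially bookkeeping; the universality of the resolvent is where the real technical work lives.
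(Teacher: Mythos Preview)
Your proposal is correct as a sketch and follows essentially the same route the paper outlines: start from the explicit risk formula~\eqref{eq:ExplicitRisk}, linearize $\bU$ and $\bV$ via the El~Karoui-type approximation, and then invoke the nonlinear random matrix theory machinery (leave-one-out resolvent recursions as in \cite{cheng2013spectrum}, moment methods as in \cite{fan2019spectral}) to establish that the resolvent of $\bPhi^{\sT}\bPhi/n$ has the same deterministic equivalent whether the nonlinear part is $\tbZ$ or an independent Gaussian $\bZ$. The paper does not give a self-contained proof either; it sketches exactly this strategy and attributes the full argument to \cite{mei2019generalization} and the surrounding nonlinear RMT literature, so your plan matches the paper's own account of how the result is obtained.
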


Knowing the exact asymptotics of the risk allows us to identify phenomena
that otherwise would be out of reach. A particularly interesting one is the optimality of interpolation
at high signal-to-noise ratio.
\begin{corollary}
  Define the signal-to-noise ratio of the random features model  as  $\SNR_d:=\|\proj_1 f^*\|_{L^2}^2/(\|\proj_{>1} f^*\|_{L^2}^2+\tau^2)$,
  and let $\Risk_{\RF}(\lambda)$ be the risk of ridge regression with regularization $\lambda$.
  Then there exists a critical value $\SNR_*>0$ such that the following hold.
\begin{itemize}
\item[$(i)$]~If $\lim_{d\to\infty}\SNR_{d}=\SNR_{\infty}> \SNR_*$, then the optimal regularization parameter is $\lambda= 0^+$,
  in the  sense that $\Risk_{\RF,\infty}(\lambda):=\lim_{d\to\infty}\Risk_{\RF}(\lambda)$ is monotone increasing for $\lambda\in(0,\infty)$.
 \item[$(ii)$]~If $\lim_{d\to\infty}\SNR_{d}=\SNR_{\infty}< \SNR_*$, then the optimal regularization parameter is $\lambda > 0$,
   in the  sense that $\Risk_{\RF,\infty}(\lambda):=\lim_{d\to\infty}\Risk_{\RF}(\lambda)$ is monotone decreasing for $\lambda\in(0,\lambda_0)$ with $\lambda_0>0$.
 \end{itemize}
 \end{corollary}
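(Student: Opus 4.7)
The plan is to reduce the problem to the noisy features model via Theorem~\ref{thm:proportional} and then analyze the resulting explicit rational function of $\lambda$. First, let $A := \|\proj_1 f^*\|_{L^2}^2$ and $B := \tau^2 + \|\proj_{>1} f^*\|_{L^2}^2$, so that $\SNR_d = A/B$. Using Theorem~\ref{thm:proportional} together with the formula stated just before it, the limiting risk has the form
\begin{equation*}
\Risk_{\RF,\infty}(\lambda) \; = \; A\,\cuB(\lambda/\mu_*^2) \;+\; B\,\cuV(\lambda/\mu_*^2) \;+\; \|\proj_{>1} f^*\|_{L^2}^2,
\end{equation*}
where $\cuB$ and $\cuV$ are the explicit effective bias and variance functions appearing in the asymptotic characterization, and they depend on the fixed aspect ratios $\psi_{\wid}, \psi_{\samp}$ and on $\ratio$.

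Second, I would differentiate: $\tfrac{d}{d\lambda}\Risk_{\RF,\infty}(\lambda) = \mu_*^{-2}\bigl[A\,\cuB'(\lambda') + B\,\cuV'(\lambda')\bigr]$ with $\lambda' := \lambda/\mu_*^2$. The sign of this derivative controls monotonicity. From the Stieltjes-transform expressions underlying $\cuB$ and $\cuV$ (derived via the random matrix theory approach of \cite{mei2019generalization}; these take the shape of rational functions of the solution $m(-\lambda')$ to a Marchenko--Pastur-type fixed-point equation analogous to \eqref{eq:FixedPointProportional}), one verifies by direct computation that $\cuB$ is strictly increasing and $\cuV$ is strictly decreasing on $(0,\infty)$, so that $\cuB'(\lambda') > 0$ and $\cuV'(\lambda') < 0$. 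Define the pointwise critical SNR
\begin{equation*}
\SNR_*(\lambda') \; := \; -\,\frac{\cuV'(\lambda')}{\cuB'(\lambda')} \; > \; 0, \qquad \SNR_* \; := \; \lim_{\lambda'\downarrow 0}\SNR_*(\lambda'),
\end{equation*}
so that the sign of $\tfrac{d}{d\lambda}\Risk_{\RF,\infty}(\lambda)$ equals the sign of $\SNR_\infty - \SNR_*(\lambda/\mu_*^2)$.

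Case (ii) is now immediate: if $\SNR_\infty < \SNR_*$, then by continuity there is an interval $\lambda' \in (0,\lambda_0')$ on which $\SNR_*(\lambda') > \SNR_\infty$, hence the risk is strictly decreasing there. For case (i), I need the global statement that $\SNR_*(\lambda') \le \SNR_*$ for \emph{all} $\lambda' > 0$, i.e.\ that $\lambda' \mapsto \SNR_*(\lambda')$ attains its supremum as $\lambda' \downarrow 0$. This is the main obstacle. The natural route is to express $-\cuV'/\cuB'$ explicitly in terms of $m(-\lambda')$ and its derivative (both computable from the fixed-point equation by implicit differentiation), and then show that the resulting function is non-increasing in $\lambda'$. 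Given the rationality of the expressions and the monotonicity $m'(-\lambda') > 0$ coming from the Stieltjes-transform fixed point, this should reduce to a bounded algebraic inequality; I would expect this step to be technically heavy but mechanical, and would fall back on differentiating the implicit equation for $m$ to eliminate $m'$ from the ratio. If, instead, monotonicity of $\SNR_*(\lambda')$ fails for some $(\psi_{\wid},\psi_{\samp},\ratio)$, one would need to strengthen the definition of $\SNR_*$ to $\sup_{\lambda'>0}\SNR_*(\lambda')$; the resulting dichotomy would still hold, only with a possibly different threshold.
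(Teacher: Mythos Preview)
The paper does not actually prove this corollary: it is stated immediately after Theorem~\ref{thm:proportional} as a consequence of the explicit asymptotic formulas for $\cuB$ and $\cuV$, with a pointer to \cite{mei2019generalization,mei2020generalization} for those formulas, and no further argument is given in either the main text or the appendix. So there is no in-paper proof to compare against; the corollary is effectively imported from the cited references.

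Your outline is exactly the natural route and matches how the cited work proceeds: use the asymptotic decomposition $\Risk_{\RF,\infty}(\lambda)=A\,\cuB+B\,\cuV+\text{const}$, differentiate, and reduce the dichotomy to a sign condition on $A\cuB'+B\cuV'$. Your observation that $\cuB$ is increasing and $\cuV$ decreasing in $\lambda'$ is correct and follows from the fixed-point characterization. You have also correctly isolated the only nontrivial step, namely that for part~$(i)$ one needs not merely $\SNR_*(\lambda')>0$ near zero but that $\SNR_*(\lambda')$ attains its supremum as $\lambda'\downarrow 0$ (equivalently, that the map is non-increasing). In \cite{mei2019generalization} this is handled by writing $\cuB$ and $\cuV$ explicitly as rational functions of the Stieltjes-transform solution and checking the required algebraic inequality directly---precisely the ``technically heavy but mechanical'' step you anticipate. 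Your fallback of redefining $\SNR_*$ as the supremum over $\lambda'$ would indeed give a correct (possibly weaker) statement, but the cited analysis shows it is unnecessary: the limit at $0^+$ is the supremum. So your plan is sound and essentially complete modulo carrying out that algebraic verification, which the present paper does not reproduce either.
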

 In other words, above a certain threshold in $\SNR$, (near)
 interpolation is required in order to achieve optimal risk,
 not just optimal rates.

The universality phenomenon of Theorem \ref{thm:proportional} first emerged in random matrix theory studies of (symmetric)
kernel inner product random matrices. In that case, the spectrum of such a random matrix
was shown in \cite{cheng2013spectrum}  to behave asymptotically as the
one of the sum of independent Wishart and Wigner matrices, which correspond respectively to the linear
and nonlinear parts of the kernel 
(see also \cite{fan2019spectral} where this remark is made more explicit). In the context of random features ridge regression, this type
of universality was first pointed out in \cite{hastie2019surprises},
which proved a special case of Theorem \ref{thm:proportional}.
In \cite{goldt2019modelling} and \cite{goldt2020gaussian}, a
universality conjecture was put forward  on the basis of statistical physics arguments and
proved to hold in online learning schemes (that is, if each sample is visited only once).

Universality is conjectured to hold in significantly broader settings than ridge-regularized least-squares.
This is interesting because analysing the noisy feature models is often significantly easier than the original random features model. For instance
\cite{montanari2019generalization} studied max margin classification under the universality hypothesis, and derived an
asymptotic characterization of the test error using Gaussian comparison inequalities.
Related results were obtained by \cite{taheri2020fundamental} and \cite{kini2020analytic}, among others.

Finally, a direct proof of universality for general strongly convex smooth losses  was recently proposed in
\cite{hu2020universality} using the Lindeberg interpolation method.

\subsection{Neural tangent model}
\label{sec:NT}

The neural tangent model  $\cF_{\NT}$  ---recall \eqref{eq:F_NT}---  has not (yet) been studied in as much detail as
the random features model. The fundamental difficulty is related to the fact that the features matrix $\bPhi\in\reals^{n\times md}$
no longer has independent columns:
\begin{align}
 \label{eq:NT-Phmatrix}
  \bPhi:=
  \left[\begin{matrix}
    \sigma'(\<\bx_1,\bw_1\>)\bx_1^{\sT} &   \sigma'(\<\bx_1,\bw_2\>) \bx_1^{\sT} &   \cdots & \sigma(\<\bx_1,\bw_m\>) \bx_1^{\sT} \\
    \sigma'(\<\bx_2,\bw_1\>) \bx_2^{\sT} &   \sigma(\<\bx_2,\bw_2\>) \bx_2^{\sT} &   \cdots & \sigma(\<\bx_2,\bw_m\>) \bx_2^{\sT} \\
    \vdots & \vdots& & \vdots\\
    \sigma'(\<\bx_n,\bw_1\>) \bx_n^{\sT} &   \sigma(\<\bx_n,\bw_2\>) \bx_n^{\sT} &   \cdots & \sigma(\<\bx_n,\bw_m\>)\bx_n^{\sT}\\
  \end{matrix}\right]\, .
\end{align}
Nevertheless, several results are available and point to a common conclusion: the generalization properties of \NT\, are
very similar to those of \RF, provided we keep the number of parameters constant, which amounts
to reducing the number of neurons according to $m_{\NT}d= p_{\NT} = p_{\RF}= m_{\RF}$.

Before discussing rigorous results pointing in this direction, it is important to emphasize that, even if the two models are
statistically equivalent, they can differ from other points of view. In particular, at prediction time both models have complexity
$O(md)$. Indeed, in the case of \RF\,  the most complex operation is the matrix vector multiplication $\bx\mapsto \bW\bx$,
while for \NT\, two such multiplications are needed $\bx\mapsto \bW\bx$ and $\bx\mapsto \bA\bx$ (here $\bA\in\reals^{m\times d}$
is the matrix with rows $(\ba_i)_{i\le m}$. If we keep the same number of parameters (which we can regard
as a proxy for expressivity of the model), we obtain complexity
$O(pd)$ for \RF\, and $O(p)$ for \NT.
Similar considerations apply at training time.
In other words, if we are constrained by computational complexity, in
high dimension \NT\ allows significantly better expressivity. 

A first element confirming this picture is provided by the following result, which partially generalizes
Theorem \ref{thm:RFPolynomial}. In order to state this theorem, we introduce a useful notation. Given a function $f:\reals\to\reals$,
such that $\E\{f(G)^2\}<\infty$, we let $\mu_k(f):=\E\{\He_k(G)f(G)\}$ denote the $k$th coefficient of $f$ in the basis of Hermite polynomials.
\begin{theorem}\label{thm:NTPolynomial}
  Fix an integer $\ell>0$. Let the activation function $\sigma:\reals\to\reals$ be weakly differentiable, independent of $d$, and such that:
  $(i)$~$|\sigma'(x)|\le c_0\exp(c_1x^2/4)$ for some constants $c_0>0$, and $c_1<1$,
  $(ii)$~there exist $k_1$, $k_2\ge 2\ell+7$ such that $\mu_{k_1}(\sigma'),\mu_{k_2}(\sigma')\neq 0$,
  and $\mu_{k_1}(x^2\sigma')/\mu_{k_1}(\sigma') \neq \mu_{k_1}(x^2\sigma')/\mu_{k_1}(\sigma')$, and $(iii)$~$\mu_k(\sigma)\neq 0$
  for all $k\le \ell+1$. Then the following holds.
  
  Assume either  $n=\infty$ (in which case we are considering pure approximation error) or $m=\infty$
(that is, the test error of kernel ridge regression)  and
$d^{\ell+\delta}\le \min(md;n) \le d^{\ell+1-\delta}$ for some constant $\delta>0$. Then, for any $\lambda = o_d(1)$
  and all $\eta>0$,
  \begin{align}
    \Risk_{\NT}(\lambda) = \|\proj_{>\ell} f^*\|_{L^2}^2 +o_d(1) \big(\| f^*\|_{L^2}^2+\tau^2\big)\, .
  \end{align}
\end{theorem}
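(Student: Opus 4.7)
The plan is to mirror the strategy used for Theorem~\ref{thm:RFPolynomial} on the random features model, replacing the spectral decomposition of $K_{\RF}$ by that of the NT kernel $K_{\NT}$. Since the theorem treats only the two limiting cases $n=\infty$ (pure approximation error in $\cF_{\NT}^m$) and $m=\infty$ (kernel ridge regression with $K_{\NT}$), both reduce to analysing a rotationally invariant operator on $L^2(\Sp^{d-1}(\sqrt d))$. By rotational invariance, its eigenspaces are the spaces of degree-$k$ spherical harmonics, of dimension $B(d,k)\asymp d^k/k!$, and the staircase picture $\ell\leftrightarrow d^\ell$ follows once we show that (a) all eigenvalues up to degree $\ell$ are bounded below by a positive constant times $d^{-k}$ (after appropriate normalization), and (b) the tail is well approximated, on the relevant scale, by a multiple of the identity.

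First I would compute the Gegenbauer/Hermite expansion of $K_{\NT}$. Writing the inner kernel as $\E_{\bw}\{\sigma'(\<\bw,\bx_1\>)\sigma'(\<\bw,\bx_2\>)\} = \sum_{k\ge 0}\mu_k(\sigma')^2 \, q_k(\<\bx_1,\bx_2\>/d)$ for appropriate Gegenbauer-type polynomials $q_k$, the extra prefactor $\<\bx_1,\bx_2\>$ in the definition of $K_{\NT}$ couples degree $k$ to degrees $k\pm 1$ through the three-term recurrence satisfied by $q_k$ (equivalently, by the Hermite basis in the Gaussian limit). The effective eigenvalue acting on degree-$k$ spherical harmonics is therefore a linear combination of $\mu_{k-1}(\sigma')^2$, $\mu_{k+1}(\sigma')^2$ and a term involving $\mu_k(x^2\sigma')$. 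Hypotheses $(ii)$ and $(iii)$ of the theorem are designed precisely to rule out cancellations in this combination for every $k\le \ell$, so that the eigenvalue on degree-$k$ harmonics is $\Theta(d^{-k})$ and the RKHS of $K_{\NT}$ contains all polynomials of degree at most $\ell$ with controlled norm.

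For the case $m=\infty$, I would then follow the matrix-decomposition strategy of \cite{ghorbani2020linearized} and \cite{mei2020generalization}: split the kernel matrix as $\bK = \bK_{\le\ell}+\bK_{>\ell}$ according to the degree projection. In the range $d^{\ell+\delta}\le n\le d^{\ell+1-\delta}$, the low-degree block $\bK_{\le\ell}$ acts on a $\sum_{k\le\ell}B(d,k)=\Theta(d^{\ell})$-dimensional subspace and concentrates around a well-conditioned deterministic operator via a small-ball / hypercontractivity argument in the spirit of \cite{koltchinskii2015bounding}. The high-degree block $\bK_{>\ell}$ has trace $\Theta(n)$ (coming from the diagonal, which dominates because the off-diagonal entries average degree $>\ell$ harmonics and hence are $O(d^{-(\ell+1)/2})$ by sub-Gaussian concentration, using the tail bound on $\sigma'$ in hypothesis $(i)$), so $\bK_{>\ell}\approx c\,\id_n+\bE$ with $\|\bE\|_{\op}=o(n)$. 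Plugging this decomposition into the KRR predictor and using $\lambda=o_d(1)$ then shows that the degree-$\le\ell$ component of $f^*$ is fit with vanishing error while the degree-$>\ell$ component is essentially projected to zero by the predictor, yielding $\Risk_{\NT}(\lambda) = \|\proj_{>\ell}f^*\|_{L^2}^2+o_d(1)$. The case $n=\infty$ is dual: I would analyse the feature Gram matrix $\Phi^\top\Phi/(md)$ in $L^2(\P)$, where $\Phi$ has columns $\sigma'(\<\bw_i,\cdot\>)\,(\cdot)_a$; the same Hermite manipulation shows that once $md\ge d^{\ell+\delta}$ the column span covers all degree-$\le\ell$ polynomials, while for $md\le d^{\ell+1-\delta}\ll B(d,\ell+1)$ it captures none of the degree-$(\ell+1)$ space in the relevant sense.

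The main obstacle is the first step: establishing that the NT kernel has the clean degree-graded spectral structure enjoyed by $K_{\RF}$. The coupling to degrees $k\pm 1$ introduced by the $\<\bx_1,\bx_2\>$ factor means we cannot simply read the eigenvalues off a single Hermite coefficient, and a genuine tridiagonal-matrix computation on each degree is required; this is the sole reason the theorem's hypothesis on $\sigma$ is more intricate than in the RF case. A secondary technical point is controlling the off-diagonal entries of $\bK_{>\ell}$, since the $\<\bx_i,\bx_j\>$ prefactor inflates their typical size by $\sqrt d$ relative to the RF case, so the concentration threshold must be pushed by an additional $\sqrt d$ factor — this explains why hypothesis $(ii)$ demands non-vanishing Hermite coefficients of $\sigma'$ at indices as large as $2\ell+7$ rather than the $\ell+1$ that would otherwise suffice.
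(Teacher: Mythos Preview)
Your proposal is correct and matches the paper's treatment: the paper does not give a self-contained proof here but explicitly observes that the $m=\infty$ case is a special case of Theorem~\ref{thm:RFPolynomial} (since $K_{\NT}$ is a rotationally invariant kernel on the sphere, falling under the analysis of \cite{ghorbani2020linearized,mei2020generalization}), exactly as you argue, while the $n=\infty$ approximation-error part is stated as new without a detailed proof in this survey. Your dual feature-Gram-matrix sketch and your reading of hypothesis~$(ii)$ as the non-degeneracy condition needed to handle the degree-coupling induced by the $\<\bx_1,\bx_2\>$ prefactor are precisely the right ingredients for that half.
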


In this statement we  abused notation in letting $m=\infty$ denote the case of KRR, and
letting $n=\infty$  refer to the approximation error:
  \begin{align}
\lim_{n\to\infty}    \Risk_{\NT}(\lambda) = \inf_{\hf\in \cF_{\NT}^m}\E\big\{[f^*(\bx)-\hf(\bx)]^2\big\}\, .
  \end{align}
  Note that here the \NT\, kernel is a rotationally invariant kernel on $\S^{d-1}(\sqrt{d})$ and hence takes the same form
  as the \RF\, kernel,  namely $K_{\NT}(\bx_1,\bx_2) = d\, H_{\NT,d}(\<\bx_1,\bx_2\>/d)$
  (see \eqref{eq:K_RF}). Hence the $m=\infty$ case of the last theorem is not new: it can be regarded as a special case of Theorem
  \ref{thm:RFPolynomial}.

  On the other hand, the $n=\infty$ portion of the last theorem is new. In words,
  if $d^{\ell+\delta}\le md \le d^{\ell+1-\delta}$, then $\cF^m_\NT$ can approximate degree-$\ell$ polynomials
  to an arbitrarily good relative accuracy, but is roughly orthogonal to polynomials of higher degree
  (more precisely, to polynomials that have vanishing projection onto degree-$\ell$ ones). 
  Apart from the technical assumptions, this result is identical to the $n=\infty$ case of Theorem  \ref{thm:RFPolynomial},
  with the caveat that, as mentioned above, the two models should be compared by keeping the number of parameters
  (not the number of neurons) constant.

  How do \NT\, models behave when both $m$ and $n$ are finite?
  By analogy with the \RF\, model, we would expect that the model undergoes an `interpolation' phase transition
  at $md\approx n$: the test error is bounded away from $0$ for $md\lesssim n$ and can instead vanish for $md\gtrsim n$.
  Note that finding an interpolating function $f\in \cF^m_{\NT}$ amounts to solving the system of linear equations
  $\bPhi\ba=\by$, and hence a solution exists for generic $\by$ if and only if $\rank(\bPhi) = n$.
  Lemma~\ref{lemma:TwoLayers} implies\footnote{To be precise, Lemma \ref{lemma:TwoLayers} assumes
  the covariate vectors $\bx_i\sim\normal(0,\id_d)$.} that this is indeed the case for $md\ge C_0n\log n$ and
  $n\le d^{\ell_0}$ for some constant $\ell_0$ (see \eqref{eq:LemmaLin3}).

  In order to study the test error, it is not sufficient to lower-bound the minimum singular value
  of $\bPhi$, but we need to understand the structure of this matrix: results in this direction were obtained in \cite{montanari2020interpolation}, for $m\le C_0 d$, for some constant $C_0$.  
  Following the same strategy of previous sections,
  we decompose 
  \begin{align}
    \bPhi &= \bPhi_0+\bPhi_{\ge 1},\\
    \bPhi_0& = \mu_1
  \left[\begin{matrix}
 \bx_1^{\sT} &   \bx_1^{\sT} &   \cdots & \bx_1^{\sT} \\
    \bx_2^{\sT} &  \bx_2^{\sT} &   \cdots &\bx_2^{\sT} \\
    \vdots & \vdots& & \vdots\\
    \bx_n^{\sT} & \bx_n^{\sT} &   \cdots & \bx_n^{\sT}\\
  \end{matrix}\right],
  \end{align}
  where $\mu_1:= \E\{\sigma'(G)]$ for $G\sim\normal(0,1)$. The
  empirical kernel matrix
  $\bK = \bPhi\bPhi^{\sT}/m$ then reads
  \begin{align}
    \bK & =\frac{1}{m} \bPhi_0\bPhi_0^{\sT}+\frac{1}{m} \bPhi_0\bPhi_{\ge 1}^{\sT}+\frac{1}{m} \bPhi_{\ge 1}\bPhi_0^{\sT}+\frac{1}{m} \bPhi_{\ge 1}\bPhi_{\ge 1}^{\sT}\\
        & = \mu_1^2\bX\bX^{\sT}+\frac{1}{m} \bPhi_{\ge 1}\bP^{\perp}\bPhi_{\ge 1}^{\sT}+\bDelta\, .
          \label{eq:DecompositionNTK}
  \end{align}
  Here $\bP\in\reals^{md\times md}$ is a block-diagonal projector, with $m$ blocks of dimension $d\times d$,
  with $\ell$th block given by 
  $$\bP_{\ell}:=\bw_{\ell}\bw^{\sT}_{\ell},~ \bP^{\perp} = \id_{md}-\bP ~\text{and}~
  \bDelta := (\bPhi_0\bPhi_{\ge 1}^{\sT}+\bPhi_{\ge 1}\bPhi_0^{\sT}+ \bPhi_{\ge 1}\bP\bPhi_{\ge 1}^{\sT})/m.$$
   For the diagonal entries we have (assuming for simplicity
   $\bx_i\sim\normal(0,\id_d)$), 
  \begin{align*}
    \E\Big\{\frac{1}{m} \big(\bPhi_{\ge 1}\bP^{\perp}\bPhi_{\ge 1}^{\sT}\big)_{ii}\Big\} &=
                                                                                           \E\big\{\<\bx_i,(\id_d-\bP_{\ell})\bx_i\>
                                                                                           (\sigma'(\<\bw_{\ell},\bx_i\>)-\mu_1)^2\big\}\\
                                                                                         &=\E\big\{\<\bx_i,(\id_d-\bP_{\ell})\bx_i\> \big\}\,\E\big\{ \sigma'(\<\bw_{\ell},\bx_i\>)-\mu_1)^2\big\}\\
                                                                                         & = (d-1)\E\{(\sigma'(G)-\E\sigma'(G))^2\}=: (d-1)v(\sigma),
  \end{align*}
  where the second equality follows because $(\id_d-\bP_{\ell})\bx_i$ and $\<\bw_{\ell},\bx_i\>$ are independent
  for  $\bx_i\sim\normal(0,\id_d)$, and the last expectation is with respect to $G\sim\normal(0,1)$.
  As proved in \cite{montanari2020interpolation} the matrix 
  $\bPhi_{\ge 1}\bP^{\perp}\bPhi_{\ge 1}^{\sT}$ is well approximated by this diagonal expectation.
  Namely, under the model above,  there exists a constant $C$ such that, with high probability:
  \begin{align}
    \Big\|\frac{1}{md} \bPhi_{\ge 1}\bP^{\perp}\bPhi_{\ge 1}^{\sT}-v(\sigma)\, \id_n\Big\|_{\op}\le
    \sqrt{\frac{n(\log d)^C}{md}}\, .\label{eq:ConcentrationNTK}
  \end{align}

  Equations \eqref{eq:DecompositionNTK} and \eqref{eq:ConcentrationNTK} suggest that for $m=O(d)$,
  ridge regression in the
  NT model can be approximated by ridge regression in the raw covariates, as long as the regularization parameter
  is suitably modified. 
  The next theorem confirms this intuition \cite{montanari2020interpolation}. We define
  ridge regression with respect to the raw covariates as per 
  \begin{align}
   \hbbeta(\gamma):= \argmin{}\Big\{\frac{1}{d}\big\|\by-\bX\bbeta\big\|_2^2+\gamma\|\bbeta\|_2^2\Big\}.
  \end{align}
  
  \begin{theorem}
    Assume $d^{1/C_0}\le m\le C_0d$, $n\ge d/C_0$  and $md\gg n$. Then with high probability there exists an interpolator. Further assume $\bx_i\sim\normal(0,\id_d)$ and  $f^*(\bx) = \<\bbeta_*,\bx\>$.
    Let $$\Risk_{\lin}(\gamma):=\E\{(f^*(\bx)-\<\hbbeta(\gamma),\bx\>)^2\}$$ denote the risk of ridge regression with respect to the raw features.
    
   Set $\lambda=\lambda_0(md/n)$ for some $\lambda_0\ge 0$. Then there exists a constant $C>0$ such that, with high probability, 
    \begin{align}
      \Risk_{\NT}(\lambda) = \Risk_{\lin}(\gamma_{\seff}(\lambda_0,\sigma)) + O\left(\sqrt{\frac{n(\log d)^C}{md}}\right),
    \end{align}
    where $\gamma_{\seff}(\lambda_0,\sigma):=(\lambda_0+v(\sigma))/\E\{\sigma'(G)\}^2$.
  \end{theorem}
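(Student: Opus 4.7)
My plan is to reduce NT ridge regression to kernel ridge regression against $\bK$, replace $\bK$ and the test-point kernel vector by their near-linear approximations, and identify the result with ridge regression on the raw covariates at the stated $\gamma_\seff$.

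First, I would put the predictor in kernel form. Writing $\phi(\bx) \in \reals^{md}$ for the NT features with blocks $\sigma'(\langle\bw_j,\bx\rangle)\bx$, the push-through identity gives
\begin{align*}
\hat{f}_{\NT}(\bx;\lambda) = K(\bx,\bX)^{\sT}\Big(\bK+\tfrac{n\lambda}{m}\,\id_n\Big)^{-1}\by,
\end{align*}
where $K(\bx,\bX)_i = m^{-1}\phi(\bx)^{\sT}\phi(\bx_i)$ and $\bK = \bPhi\bPhi^{\sT}/m$. Since $\lambda = \lambda_0(md/n)$, the effective kernel regularization is $n\lambda/m = \lambda_0 d$. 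The analogous kernelized form for linear ridge is $\langle\hbbeta(\gamma),\bx\rangle = \mu_1^2(\bX\bx)^{\sT}(\mu_1^2\bX\bX^{\sT}+\mu_1^2 d\gamma\,\id_n)^{-1}\by$.

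Next, I would use the decomposition~\eqref{eq:DecompositionNTK} and the concentration~\eqref{eq:ConcentrationNTK} to write $\bK + \lambda_0 d\,\id_n = \mu_1^2\,\bX\bX^{\sT} + d(v(\sigma)+\lambda_0)\,\id_n + \bE$, where $\|\bE\|$ is bounded by the concentration estimate plus the $\bDelta$ remainder from~\eqref{eq:DecompositionNTK}. Matching the two kernelized expressions above then forces $d\gamma = d(v(\sigma)+\lambda_0)/\mu_1^2$, that is, $\gamma_\seff(\lambda_0,\sigma) = (\lambda_0 + v(\sigma))/\mu_1^2$, exactly as claimed. Bounding the $\bDelta$ remainder (the cross term $m^{-1}\bPhi_0\bPhi_{\ge 1}^{\sT}$ and the projected piece $m^{-1}\bPhi_{\ge 1}\bP\bPhi_{\ge 1}^{\sT}$) should be routine via concentration, exploiting the conditional independence of $(\id_d-\bP_\ell)\bx_i$ and $\langle\bw_\ell,\bx_i\rangle$ under the Gaussian design. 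In parallel, I would approximate the test-point kernel vector by $\mu_1^2\bX\bx$: writing
\begin{align*}
K(\bx,\bx_i) = \frac{\langle\bx,\bx_i\rangle}{m}\sum_{j=1}^m\sigma'(\langle\bw_j,\bx\rangle)\sigma'(\langle\bw_j,\bx_i\rangle),
\end{align*}
a law of large numbers over the $\bw_j$ followed by a Taylor expansion around zero of the limiting function $H_d(\langle\bx,\bx_i\rangle/d) = \E_{\bw}[\sigma'(\langle\bw,\bx\rangle)\sigma'(\langle\bw,\bx_i\rangle)]$ (using $|\langle\bx,\bx_i\rangle|/d = O(d^{-1/2})$ with high probability) yields $K(\bx,\bX) = \mu_1^2\bX\bx + \br$, with $\br$ small in the $\bx$-averaged Euclidean norm.

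Assembling these steps through the resolvent identity $\bA^{-1}-\bB^{-1} = \bA^{-1}(\bB-\bA)\bB^{-1}$, with $\bA = \bK + \lambda_0 d\,\id_n$ and $\bB = \mu_1^2\bX\bX^{\sT} + d(v(\sigma)+\lambda_0)\,\id_n$, decomposes $\hat{f}_{\NT}(\bx;\lambda) - \langle\hbbeta(\gamma_\seff),\bx\rangle$ into one term from replacing $K(\bx,\bX)$ by $\mu_1^2\bX\bx$ and one from replacing $\bA$ by $\bB$. Both reduce, after Cauchy--Schwarz over $\bx$, to products of $\|\bE\|$ or $\E_\bx\|\br\|_2^2$ with the resolvent norms $\|\bA^{-1}\|$, $\|\bB^{-1}\|$, the spectral norm $\|\bX\|$, and $\|\by\|_2$; under the assumptions $n\asymp d$ and $md\gg n$, elementary bookkeeping then delivers the claimed $O(\sqrt{n(\log d)^C/(md)})$ error in the risk. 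The main obstacle is verifying that $\|\bA^{-1}\|$ is of order $1/d$ with high probability, so that the additive $d\,v(\sigma)\,\id_n$ piece in $\bK$ dominates the lower spectrum even in the presence of the $\bE$ perturbation; this should follow from~\eqref{eq:ConcentrationNTK} together with the operator-norm bound on $\bDelta$, but it is the step that most sensitively uses $\sigma$ being genuinely nonlinear (so that $v(\sigma)>0$).
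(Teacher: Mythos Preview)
Your proposal is correct and follows essentially the same route as the paper. The paper does not give a self-contained proof of this theorem; it cites \cite{montanari2020interpolation} and only provides the heuristic you reproduce: the decomposition $\bK = \mu_1^2\bX\bX^{\sT}+\frac{1}{m}\bPhi_{\ge 1}\bP^{\perp}\bPhi_{\ge 1}^{\sT}+\bDelta$ in \eqref{eq:DecompositionNTK} together with the concentration \eqref{eq:ConcentrationNTK}, which identifies the effective ridge $d\,v(\sigma)$ and hence $\gamma_{\seff}$. Your kernelized predictor formula, the scaling $n\lambda/m=\lambda_0 d$, and the matching of the two resolvents are exactly the mechanism the paper sketches.

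Where you go beyond the paper's text is in spelling out the two additional ingredients a full proof needs: the test-point kernel approximation $K(\bx,\bX)\approx\mu_1^2\bX\bx$ and the resolvent perturbation via $\bA^{-1}-\bB^{-1}=\bA^{-1}(\bB-\bA)\bB^{-1}$. Both are the natural completions of the argument and are indeed what is done in the cited reference. Your identification of the delicate point---that $\|\bA^{-1}\|_{\op}=O(1/d)$ requires $v(\sigma)>0$, i.e.\ genuine nonlinearity of $\sigma$---is also on target. The one place to be slightly careful is the $\bDelta$ term: the cross piece $\bPhi_0\bPhi_{\ge 1}^{\sT}/m$ has the structure $\mu_1\bX\bX^{\sT}\diag(s)$ with $\|s\|_\infty=O(m^{-1/2}\sqrt{\log n})$, which gives operator norm $O(d/\sqrt{m})=O(d\sqrt{n/(md)})$ under $n\asymp d$, matching the stated error; and $\bPhi_{\ge 1}\bP\bPhi_{\ge 1}^{\sT}/m$ has rank at most $m\le C_0 d$ and is handled by the same concentration machinery. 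So ``routine'' is fair, but these are the steps where the $\sqrt{n(\log d)^C/(md)}$ rate actually originates.
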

  Notice that the shift in regularization parameter matches the
heuristics given above (the scaling in $\lambda=\lambda_0(md/n)$ is introduced to match the typical scale of $\bPhi$).

\section{Conclusions and future directions}
\label{sec:future}

Classical statistical learning theory establishes guarantees on the
performance of a  statistical estimator $\hf$, by bounding the generalization error $\Risk(\hf) - \hRisk(\hf)$.
This is often thought of as a small quantity compared to the training
error $\Risk(\hf) - \hRisk(\hf)\ll \hRisk(\hf)$.
Regularization
methods are designed precisely with the aim of keeping  the generalization error $\Risk(\hf) - \hRisk(\hf) $ small.

The effort to understand deep learning has recently led to the discovery of a different learning
scenario, in which the test error $\Risk(\hf)$ is optimal or nearly optimal, despite being much larger than the training error.
Indeed in deep learning the training error often  vanishes or is extremely small. The model
is so rich that it overfits the data,
that is, $\hRisk(\hf)\ll \inf_f\Risk(f)$.
When pushed,
gradient-based training leads to interpolation or near-interpolation   $\hRisk(\hf)\approx 0$ \cite{zhang2016understanding}. We
regard this as a particularly illuminating limit case.

This behavior is especially puzzling from a statistical point of view, that is, if we view data
$(\bx_i,y_i)$
as inherently noisy. In this case $y_i-f^*(\bx_i)$ is of the order of the noise level and therefore,  for
a model that interpolates, $\hf(\bx_i)-f^*(\bx_i)$ is also large.
Despite this, near-optimal test error means that
$\hf(\bx_{\stest})-f^*(\bx_{\stest})$
must be small   at `most'  test points $\bx_{\stest}\sim
\P$.

As pointed out in Section \ref{sec:slt}, interpolation poses less of a conceptual problem if data are noiseless.
Indeed, unlike the noisy case, we can exhibit at least one interpolating solution that has vanishing test error, for any sample size:
the true function $f^*$. 
Stronger results can also be established in the noiseless case:
\cite{feldman2020does} proved that interpolation is necessary to achieve optimal
error rates when the data distribution is heavy-tailed in a suitable sense.

In this review we have focused on understanding when and why interpolation can be optimal or nearly optimal
even with noisy data.  Rigorous  work has largely focused on models that are linear in a certain feature space, with the 
featurization map being independent of the data. Examples are RKHSs, the features produced by random network layers,
or the neural tangent features defined by the Jacobian of the network at initialization.
Mathematical work has established that interpolation can indeed
be optimal and has described the underlying mechanism in a number of settings. While the scope of this analysis
might appear to be limited (neural networks are notoriously nonlinear in their parameters), it is relevant to deep learning in two ways.
First, in a direct way: as explained in Section \ref{sec:efficient}, there are training regimes in which an overparametrized neural network
is well approximated by a linear model that corresponds to the first-order Taylor expansion of the network around its initialization
(the `neural tangent' model). Second, in an indirect way: insights and hypotheses arising from the analysis of linear models can provide useful guidance for studying more complex settings.

Based on the work presented in this review, we can distill a few
insights worthy of exploration in broader contexts.

\vspace{0.2cm}

\noindent{\bf Simple-plus-spiky decomposition.} The function learnt in the 
overfitting (interpolating) regime takes the form
  \begin{align}
    \hf (\bx) = \hf_0(\bx)+\Delta(\bx)\, .\label{eq:Decomposition}
  \end{align}
  Here $\hf_0$ is simple in a suitable sense (for instance, it is smooth) and hence
  is far from interpolating the data, while $\Delta$ is spiky:
  it has large complexity and allows interpolation of the data, but it is small, in the sense that it has negligible effect on the test error, \emph{i.e.} 
  $\Risk(\hf_0+\Delta)\approx \Risk(\hf_0)$.

  In the case of linear models, the decomposition \eqref{eq:Decomposition} corresponds to
  a decomposition of $\hf$ into two orthogonal subspaces that do not depend on the data. Namely,  $\hf_0$
  is the projection of $\hf$ onto the top eigenvectors of the associated kernel and $\Delta$ is its orthogonal complement.
  In nonlinear models, the two components need not be orthogonal and the associated subspaces are likely to be data-dependent.

  Understanding whether such a decomposition  is possible, and what is its nature is a wide-open problem, which could be investigated
  both empirically and mathematically. A related question is whether the decomposition  \eqref{eq:Decomposition}
  is related to the widely observed `compressibility' of neural network models. This is the observation that the test error of deep
  learning models does not change significantly if ---after training--- the model is simplified by a suitable compression operation \cite{han2015deep}.

\vspace{0.2cm}

\noindent{\bf Implicit regularization.}
Not all interpolating models generalize equally well. This is easily seen in the case of
  linear models, where the set of interpolating models forms an affine space of dimension $p-n$ (where $p$ is the number of
  parameters).
  Among these, we can find models of arbitrarily large norm, that are arbitrarily far from the target regression function.
  Gradient-based training selects a specific model in this subspace, which is the closest in $\ell_2$ norm to the initialization.  

  The  mechanism by which the training algorithm selects a specific empirical risk minimizer is understood in only a handful of cases: we refer to Section~\ref{sec:implicit} for pointers to this literature.
  It would be important to understand how the model nonlinearity interacts with gradient flow dynamics.
  This  in turn impacts the decomposition \eqref{eq:Decomposition}, namely which part of the
  function $\hf$ is to be considered `simple' and which one is
  `spiky'. Finally, the examples of kernel machines, random features
  and neural tangent models show that---in certain regimes---the simple component $\hf_0$ is also regularized in a non-trivial way,
  a phenomenon that we called self-induced regularization. Understanding these mechanisms in a more general setting is an outstanding challenge.

  \vspace{0.2cm}
  
\noindent{\bf Role of dimension.}
As pointed out in Section \ref{sec:benign}, interpolation is
sub-optimal in a fixed dimension in the presence of noise, for certain kernel methods \cite{rakhlin2019consistency}.
  The underlying mechanism is as described above: for an interpolating model, $\hf(\bx_i) -f^*(\bx_i)$
  is of the order of the noise level. If $\hf$ and $f^*$ are sufficiently regular (for instance, uniformly continuous, both in $\bx$ and in $n$)
  $\hf(\bx_{\stest}) -f^*(\bx_{\stest})$ is expected to be of the same order
  when $\bx_{\stest}$ is close to the training set. This happens with constant probability in fixed dimension.
  However, this probability decays rapidly with the dimension.

  Typical data in deep learning applications are  high-dimensional (images, text, and so on). On the other hand,
  it is reasonable to believe that deep learning methods are not affected by the ambient dimension (the number of pixels
  in an image), but rather by an effective or intrinsic dimension. This is the case for random feature models \cite{ghorbani2020neural}.
  This raises the question of how deep learning methods escape the intrinsic limitations of interpolators in low  dimension.
  Is it because they construct a (near) interpolant $\hf$ that is highly irregular (not  uniformly continuous)?
  Or perhaps because the effective dimension is at least moderately large?  (After all the lower bounds mentioned above
  decrease rapidly with dimension.) What is the proper mathematical definition of effective dimension?

  \vspace{0.2cm}
  
\noindent{\bf Adaptive model complexity.} As mentioned above, in the case of linear models, the terms $\hf_0$ and $\Delta$ in the
  decomposition \eqref{eq:Decomposition} correspond to the projections of $\hf$ onto $\cV_k$ and $\cV_k^\perp$.
  Here $\cV_k$ is the space spanned by the  top $k$ eigenfunctions  of
  the kernel associated with the linear regression problem. 
  Note that this is the case also for the random features and neural tangent models of Section \ref{sec:NTK}. In this case the
  relevant kernel is the expectation of the finite-network kernel $\bD f(\btheta_0)^{\sT}\bD f(\btheta_0)$
  with respect to the choice of random weights at initialization.

  A crucial element of this behavior is the dependence of $k$ (the dimension of the eigenspace  $\cV_k$) on  various features of the
  problem at hand: indeed $k$ governs the complexity of the `simple' part of the model $\hf_0$, which is the one actually relevant for prediction. As discussed in Section \ref{sec:benign}, in kernel methods $k$ increases with the sample size $n$: as more data are used,
  the model $\hf_0$ becomes more complex. In random features
  and  neural tangent models (see Section~\ref{sec:NTK}),
  $k$ depends on the minimum of $n$ and the number of network parameters (which is proportional to the width for two-layer
  networks). The model complexity increases with sample size, but saturates when it reaches the number of network parameters.

  This suggests a general hypothesis that would be interesting to investigate beyond linear models. Namely, if a decomposition of the
  type \eqref{eq:Decomposition} is possible, then the complexity of
  the simple part $\hf_0$ increases with the sample size
  and the network size.

  \vspace{0.2cm}
  
\noindent{\bf Computational role of overparametrization.} We largely focused on the surprising discovery that overparametrization and
  interpolation do not necessarily hurt generalization, even in the presence of noise. However, we should emphasize once more that 
  the real motivation for working with overparametrized models is not statistical but computational. The empirical risk minimization problem for neural
  networks is computationally hard, and in general we cannot hope to
  be able to find a global minimizer using gradient-based
  algorithms. However, empirical evidence indicates that global
  optimization becomes tractable when the model is sufficiently overparametrized.

  The linearized and mean field theories of Section \ref{sec:efficient} provide general arguments to confirm this empirical finding.
  However, we are far from understanding precisely what amount of overparametrization is necessary, even in simple neural network models.

\section*{Acknowledgements}

PB, AM and AR acknowledge support from the NSF through award
DMS-2031883 and from the Simons Foundation through Award 814639
for the Collaboration on the
Theoretical Foundations of Deep Learning.
For insightful discussions on these topics, the authors also
thank the other members of that Collaboration
and many other collaborators and colleagues, including
Emmanuel Abbe,
Misha Belkin,
Niladri Chatterji,
Amit Daniely,
Tengyuan Liang,
Philip Long,
G\'abor Lugosi,
Song Mei,
Theodor Misiakiewicz,
Hossein Mobahi,
Elchanan Mossel,
Phan-Minh Nguyen,
Nati Srebro,
Nike Sun,
Alexander Tsigler,
Roman Vershynin,
and
Bin Yu. 
We thank Tengyuan Liang and Song Mei for insightful comments on the draft.
PB acknowledges support from the NSF through grant DMS-2023505.
AM acknowledges support from the ONR through grant N00014-18-1-2729.
AR acknowledges support from the NSF through grant DMS-1953181, and support from the MIT-IBM Watson AI Lab and the NSF AI Institute for Artificial Intelligence and Fundamental Interactions.

\bibliography{refs}

\appendix
\section{Kernels on $\Re^d$ with $d\asymp n$}
\subsection{Bound on the variance of the minimum-norm interpolant}

\begin{lemma}
	For any $\bX\in\Re^{n\times d}$ and any positive semidefinite $\bSigma\in\Re^{d\times d}$, for $n\lesssim d$ and any $k< d$,
	\begin{align}
		\trace \left((\bX\bX^\tr + d\gamma \bI_n)^{-2}\bX\bSigma\bX^\tr \right) \lesssim \frac{1}{\gamma} \left(\frac{\lambda_1 k}{n} + \lambda_{k+1}\right),
	\end{align}
	where $\lambda_1\geq \ldots  \geq \lambda_d$ are the eigenvalues of $\bSigma$.
\end{lemma}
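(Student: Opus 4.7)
The plan is to diagonalize $\bSigma$ and reduce the trace to a sum of scalar quadratic forms that split cleanly into a ``head'' and a ``tail''. Write $\bSigma = \sum_{i=1}^d \lambda_i \bv_i\bv_i^\tr$ with $\{\bv_i\}$ an orthonormal eigenbasis, set $\bu_i := \bX\bv_i\in\Re^n$ and $A := \bX\bX^\tr + d\gamma\bI_n$, and observe that
\begin{align*}
T := \trace\!\left(A^{-2}\bX\bSigma\bX^\tr\right) = \sum_{i=1}^d \lambda_i\, \bu_i^\tr A^{-2}\bu_i.
\end{align*}
The strategy is then to bound $\sum_{i\le k}$ using $\lambda_i\le \lambda_1$ and $\sum_{i>k}$ using $\lambda_i\le\lambda_{k+1}$, treating the two sums with different tools.

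For the head ($i\le k$), I will use the standard push-through identity $(\bX\bX^\tr+d\gamma\bI_n)^{-1}\bX = \bX(\bX^\tr\bX+d\gamma\bI_d)^{-1}$, applied twice, to obtain
\begin{align*}
\bX^\tr A^{-2}\bX \;=\; B\,(B+d\gamma\bI_d)^{-2}, \qquad B := \bX^\tr\bX.
\end{align*}
Since the nonzero eigenvalues of $B$ coincide with those of $\bX\bX^\tr$, the operator norm of $\bX^\tr A^{-2}\bX$ equals $\max_i \mu_i/(\mu_i+d\gamma)^2 \le 1/(4d\gamma)$ by AM--GM. Consequently $\bu_i^\tr A^{-2}\bu_i = \bv_i^\tr(\bX^\tr A^{-2}\bX)\bv_i \le 1/(4d\gamma)$, so
\begin{align*}
\sum_{i\le k}\lambda_i\, \bu_i^\tr A^{-2}\bu_i \;\le\; \frac{\lambda_1 k}{4\, d\gamma}.
\end{align*}

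For the tail ($i>k$), orthonormality of $\{\bv_i\}$ gives $\sum_{i>k}\bu_i\bu_i^\tr = \bX(\bI_d-\bP_k)\bX^\tr \preceq \bX\bX^\tr \preceq A$, so
\begin{align*}
\sum_{i>k}\bu_i^\tr A^{-2}\bu_i \;=\; \trace\!\Big(A^{-2}\!\sum_{i>k}\bu_i\bu_i^\tr\Big) \;\le\; \trace(A^{-1}) \;\le\; \frac{n}{d\gamma},
\end{align*}
yielding a tail contribution of at most $\lambda_{k+1} n/(d\gamma)$. Finally, invoking $n\lesssim d$ to trade $1/d$ against $1/n$ up to an absolute constant gives $T \lesssim \frac{1}{\gamma}(\lambda_1 k/n + \lambda_{k+1})$, as desired.

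The main obstacle, and the place where the push-through identity is essential, is the head term. A naive bound using $A^{-1}\preceq (d\gamma)^{-1}\bI_n$ would produce $\bu_i^\tr A^{-2}\bu_i \le \|\bu_i\|^2/(d\gamma)^2$, which cannot be controlled without a bound on $\|\bX\|_{\op}$ (and would typically be much weaker than what we need). The identity $\bX^\tr A^{-2}\bX = B(B+d\gamma\bI_d)^{-2}$ exploits the fact that the quadratic form lives in the range of $\bX^\tr$, converting the scale factor from $1/(d\gamma)^2$ into $1/(d\gamma)$ uniformly over all unit vectors $\bv_i$ -- this is exactly the self-induced ridge regularization highlighted in the body of the paper.
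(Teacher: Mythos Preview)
Your proof is correct. Both you and the paper decompose $\bSigma$ into its top-$k$ and tail eigenspaces and bound the two pieces separately, and the tail arguments are essentially identical (the paper writes $\trace(A^{-2}\bX\bX^\tr)=\sum_j \widehat\lambda_j/(\widehat\lambda_j+d\gamma)^2\le n/(4d\gamma)$, which is the same estimate as your $\trace(A^{-1})\le n/(d\gamma)$ up to the harmless factor $4$).

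The head term is where the two proofs genuinely diverge. The paper handles each $i\le k$ by a leave-one-out argument: it writes $A=A_{-i}+\bu_i\bu_i^\tr$ with $A_{-i}=d\gamma\bI_n+\bX(\bI_d-\bv_i\bv_i^\tr)\bX^\tr$, applies Sherman--Morrison to get
\[
\|A^{-1}\bu_i\|_2^2=\frac{\bu_i^\tr A_{-i}^{-2}\bu_i}{(1+\bu_i^\tr A_{-i}^{-1}\bu_i)^2}
\le \frac{1}{d\gamma}\cdot\frac{\bu_i^\tr A_{-i}^{-1}\bu_i}{(1+\bu_i^\tr A_{-i}^{-1}\bu_i)^2}\le\frac{1}{4d\gamma},
\]
and then sums. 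Your push-through identity $\bX^\tr A^{-2}\bX=B(B+d\gamma\bI_d)^{-2}$ followed by the scalar bound $t/(t+r)^2\le 1/(4r)$ reaches the same per-term bound $1/(4d\gamma)$ in one line, without ever isolating a rank-one piece. Your route is more elementary and makes the ``self-induced ridge'' mechanism completely transparent at the level of the operator norm; the paper's Sherman--Morrison route is a standard template that remains useful when the resolvent does not factor as cleanly through $\bX$ (e.g.\ when an extra weight matrix breaks the commutation that makes your identity work).
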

\begin{proof}
	This deterministic argument is due to T. Liang~\cite{liang2020amend}. We write $\bSigma=\bSigma_{\leq k}+\bSigma_{> k}$, with $\bSigma_{\leq k} = \sum_{i\leq k} \lambda_i \bu_i \bu_i^\tr$. Then by the argument in \cite[Remark 5.1]{liang2020just}, 
 	\begin{align}
 		\trace \left((\bX\bX^\tr + d\gamma \bI_n)^{-2}\bX\bSigma_{>k}\bX^\tr \right) \leq \lambda_{k+1} \sum_{i=1}^n \frac{\widehat{\lambda}_i}{(d\gamma + \widehat{\lambda}_i)^2} \leq \lambda_{k+1} \frac{n}{4d\gamma} \lesssim \frac{\lambda_{k+1}}{\gamma}
 	\end{align}
	where $\widehat{\lambda}_i$ are the eigenvalues of $\bX\bX^\tr$. Here we use the fact that $\frac{t}{(r+t)^2}\leq \frac{1}{4r}$ for all $t,r>0$. On the other hand,
 	\begin{align}
 		\trace \left((\bX\bX^\tr + d\gamma \bI_n)^{-2}\bX\bSigma_{\leq k}\bX^\tr \right) \leq \sum_{i\leq k} \lambda_i \norm{(d\gamma \boldsymbol{I}_n+\bX\bX^\tr)^{-1} \bX \bu_i}^2\, .\label{eq:BD_Sasha}
 	\end{align}
	Now, using the argument similar to that in \cite{bartlett2020benign}, we define $A_{-i} = d\gamma \boldsymbol{I}_n + \bX(\boldsymbol{I}_n-\bu_i\bu_i^\tr)\bX^\tr$, $\bv=\bX\bu_i$ and write
	\begin{align}
		\norm{(d\gamma \boldsymbol{I}_n+\bX\bX^\tr)^{-1} \bX \bu_i}^2 = \norm{(A_{-i}+\bv\bv^\tr)^{-1}\bv}^2 = \frac{\bv^\tr A_{-i}^{-2}\bv}{(1+\bv^\tr A_{-i}^{-1}\bv)^2} 
	\end{align}
	by the Sherman-Morrison formula. The last quantity is upper bounded by
	\begin{align}
		\frac{1}{d\gamma}\frac{\bv^\tr A_{-i}^{-1}\bv}{(1+\bv^\tr A_{-i}^{-1}\bv)^2} \leq \frac{1}{4\gamma d}.
	\end{align}
	Substituting in \eqref{eq:BD_Sasha}, we obtain an upper bound of 
	$$\frac{1}{4\gamma d}\sum_{i\leq k} \lambda_i \lesssim \frac{\lambda_1 k}{\gamma n},$$
	assuming $n\lesssim d$.
\end{proof}


 \subsection{Exact characterization in the proportional asymptotics}

     We will denote by $\bK= (h(\<\bx_i,\bx_j\>/d))_{i,j\le n}$ the
     kernel matrix. We will also denote by $\bK_1$ the linearized kernel 
     \begin{align}
       	\bK_1& = \beta \frac{\bX\bX^\tr}{d} + \beta \gamma \bI_n + \alpha \bone\bone^\tr\, ,\label{eq:K1def}\\
  \alpha &:= h(0)+ h''(0)\frac{\trace(\bSigma^2)}{2d^2},~ \beta := h'(0),\\
  \gamma &:= \frac{1}{h'(0)}\big[h(\trace(\bSigma)/d) - h(0)-h'(0)\trace(\bSigma/d)\big].
     \end{align}

 {	  
 \renewcommand{\thetheorem}{\ref{asmpt:kernel_nd_regime}}	  	 
 \begin{assumption}
      We assume that the coordinates of $\bz=\bSigma^{-1/2}\bx$ are
        independent, with zero mean and unit variance, so that
        $\bSigma=\En \bx\bx^\tr$.
        Further assume there are constants
        $0<\eta,M<\infty$, such that the following hold.
        \begin{enumerate}
        \item[$(a)$] For all $i\le d$,
          $\E[|\bz_i|^{8+\eta}]\le M$.
        \item[$(b)$] $\|\bSigma\| \le M$, $d^{-1}\sum_{i=1}^d\lambda_i^{-1}\le M$, where $\lambda_1,\dots,\lambda_d$ are the eigenvalues of $\bSigma$.
        \end{enumerate}
      \end{assumption}
  \addtocounter{theorem}{-1} 	
  }

 {	  
 \renewcommand{\thetheorem}{\ref{thm:npropd}}	  
 \begin{theorem}
   Let $0< M,\eta<\infty$ be fixed constants and suppose that
  Assumption~\ref{asmpt:kernel_nd_regime} holds with $M^{-1}\le d/n\le
  M$. Further assume that  $h$ is continuous on $\reals$ and smooth in a
  neighborhood of $0$ with $h(0)$, $h'(0)>0$, that $\|f^*\|_{L^{4+\eta}(\P)}\le M$ and that the $z_i$'s are $M$-sub-Gaussian.
  Let $y_i=f^*(\bx_i)+\xi_i$, $\E(\xi_i^2)= \sigma_{\xi}^2$,
  and $\bbeta_0 := \bSigma^{-1}\E[\bx f^*(\bx)]$.
  Let $\lambda_*>0$ be the unique positive solution of
        \begin{align}
          n \Big(1-\frac{\gamma}{\lambda_*}\Big) = \Trace\Big( \bSigma(\bSigma+\lambda_*\id)^{-1}\Big)\, .\label{eq:FixedPointProportional-2}
        \end{align}
        Define $\cuB(\bSigma,\bbeta_0)$ and $\cuV(\bSigma)$ by
        \begin{align}
          \cuV(\bSigma) &:=\frac{\Trace\big( \bSigma^2(\bSigma+\lambda_*\id)^{-2}\big)}{n-\Trace\big( \bSigma^2(\bSigma+\lambda_*\id)^{-2}\big)}
                          \, , \label{eq:VarianceProportional-2}\\
          \cuB(\bSigma,\bbeta_0) &:= \frac{\lambda_*^2\<\bbeta_0,(\bSigma+\lambda_*\id)^{-2}\bSigma\bbeta_0\>}
           {1-n^{-1} \Trace\big(
           \bSigma^2(\bSigma+\lambda_*\id)^{-2}\big)}\,
           .\label{eq:BiasProportional-2}
        \end{align}
        Finally, let $\biassquaredemp$ and  $\varianceemp$ denote the
        squared bias and variance for the minimum-norm interpolant. 
	Then there exist  $C,c_0>0$  (depending also on the constants in Assumption~\ref{asmpt:kernel_nd_regime})
        such that the following holds with probability at least $1-Cn^{-1/4}$
        (here $\proj_{>1}$ denotes the projector orthogonal to affine
        functions in $L^2(\P)$):
	\begin{align}
          \big|\biassquaredemp -
          \cuB(\bSigma,\bbeta_0)-\|\proj_{>1}f^*\|_{L^2}^2
           (1+\cuV(\Sigma)) \big|&\le C n^{-c_0}\, , \label{eq:bias_kernel_linear_precise-2}\\
           \big|	\varianceemp-  \sigma_{\xi}^2\cuV(\bSigma) \big|&\le C n^{-c_0}\, .\label{eq:var_kernel_linear_precise-2}
	\end{align}
      \end{theorem}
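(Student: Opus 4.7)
The plan is to reduce the kernel problem to a linearized ridge regression problem via the El Karoui-style approximation, then invoke the proportional-asymptotics theory for ridge regression (as in \cite{hastie2019surprises}) to identify the limits.

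\textbf{Step 1 (kernel linearization).} First I would make rigorous the claim, already cited in the text, that under Assumption~\ref{asmpt:kernel_nd_regime},
\[
\big\|\bK - \bK_1\big\|_{\op}\le C d^{-c_0}
\]
with high probability, where $\bK_1$ is defined in \eqref{eq:K1def}. The constant shift $\alpha\bone\bone^\tr$ matters only for the component of $\by$ along $\bone$; subtracting that component out reduces the problem to ridge regression on centered data. After removing $\bone\bone^\tr$, the min-norm interpolant $\hf(\bx) = K(\bx,\bX)^\tr \bK^{-1}\by$ can then be compared, in a perturbation sense, to the linear predictor $\widetilde\bbeta^\tr\bx$ obtained from $\bK_1$. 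The key observation is that the min-norm interpolant for $\bK_1$ is literally the solution to the ridge regression problem \eqref{eq:VanillaRidge} with penalty $\gamma$ on the raw covariates (up to the intercept). This is because inverting $\bK_1 = \beta(\bX\bX^\tr/d + \gamma \bI_n) + \alpha\bone\bone^\tr$ and applying the resulting weights $K_1(\bx,\bX)^\tr\bK_1^{-1}\by$ with $K_1(\bx,\bx')=\beta\<\bx,\bx'\>/d + \beta\gamma\indicator{\bx=\bx'}$ is precisely the ridgeless limit at the level of the sample but carries a ridge penalty at the level of the predictor.

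\textbf{Step 2 (perturbation control).} I would then control the operator-norm perturbation $\bK - \bK_1$ at the level of the bias and variance. Writing the variance as $\sigma_\xi^2\,\Trace\big((\bK)^{-1}\bK_{\text{off-diag}}(\bK)^{-1}\bSigma_{\text{pred}}\big)$-type expressions and using $\|\bK^{-1}\|_{\op} = O(1)$ (which follows from $\lambda_{\min}(\bK_1)\ge \beta\gamma$ together with the perturbation bound), the replacement $\bK\leftrightarrow\bK_1$ costs only $O(n^{-c_0})$. For the bias, the decomposition $f^* = \<\bbeta_0,\bx\>+\proj_{>1}f^*$ is essential: the linear part is handled directly by the ridge-regression equivalence, while the nonlinear component $\proj_{>1}f^*$ is uncorrelated with $\bx$ (by definition of $\bbeta_0=\bSigma^{-1}\E[\bx f^*]$) and, in the $d\asymp n$ regime, the values $(\proj_{>1}f^*(\bx_i))_{i\le n}$ behave like additional independent noise of variance $\|\proj_{>1}f^*\|_{L^2}^2$ on the training side — this is what produces the extra $\|\proj_{>1}f^*\|_{L^2}^2\cuV(\bSigma)$ contribution in \eqref{eq:bias_kernel_linear_precise-2}. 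The additive $\|\proj_{>1}f^*\|_{L^2}^2$ term (without the $\cuV$ factor) is the pure test-time contribution, since the estimator lives in the span of linear functions (up to the $O(n^{-c_0})$ error).

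\textbf{Step 3 (ridge asymptotics).} With the problem reduced to ridge regression on $\bX$ with penalty proportional to $\gamma$, effective noise variance $\sigma_\xi^2 + \|\proj_{>1}f^*\|_{L^2}^2$, and linear signal $\bbeta_0$, I would invoke the precise asymptotic characterization of bias and variance from \cite{hastie2019surprises} (and its generalizations). That theory states that in the proportional regime, the bias and variance are governed by the Stieltjes transform of the empirical spectral distribution of $\bSigma$, which leads exactly to the fixed-point equation \eqref{eq:FixedPointProportional-2} defining $\lambda_*$, and to the formulas \eqref{eq:VarianceProportional-2} and \eqref{eq:BiasProportional-2}. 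The factor $1/(1-n^{-1}\Trace(\bSigma^2(\bSigma+\lambda_*\id)^{-2}))$ appearing in both the bias and in $\cuV$ (which takes the form $\cuV/(1+\cuV)$ inversion) is the standard ``degrees of freedom'' correction, and its presence in front of $\|\proj_{>1}f^*\|_{L^2}^2$ in \eqref{eq:bias_kernel_linear_precise-2} is exactly the signal-leakage effect produced by treating the nonlinear component of $f^*$ as noise. Finally, uniqueness of $\lambda_*$ follows from monotonicity of the two sides of \eqref{eq:FixedPointProportional-2}.

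\textbf{Main obstacle.} The hardest part will be step 2, specifically controlling the effect of the $O(d^{-c_0})$ operator-norm perturbation on quadratic functionals of $\bK^{-1}$ when applied to vectors $\by$ that are themselves random and heavy-tailed (since only $L^{4+\eta}$ moments of $f^*$ are assumed). One must show that a perturbation of size $d^{-c_0}$ in operator norm does not corrupt the $o(1)$ correction in \eqref{eq:bias_kernel_linear_precise-2}–\eqref{eq:var_kernel_linear_precise-2} — this requires the eigenvalues of $\bK_1$ to be bounded away from zero (guaranteed by $\gamma>0$) and a careful treatment of the cross-terms coupling the nonlinear component of $f^*$ to the high-frequency part of the kernel. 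Once these uniform moment and concentration bounds are in place, the rest is an application of established ridge-regression asymptotics.
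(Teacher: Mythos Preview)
Your high-level strategy matches the paper's: linearize $\bK$ to $\bK_1$ via the El Karoui approximation, decompose $f^*$ into its affine and orthogonal parts, reduce to ridge regression on raw covariates, and invoke \cite{hastie2019surprises}. The identification of the extra $\|\proj_{>1}f^*\|_{L^2}^2\cuV(\bSigma)$ term as coming from the nonlinear part of $f^*$ acting like noise is also correct in spirit.

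The genuine gap is your treatment of the claim that $(\proj_{>1}f^*(\bx_i))_{i\le n}$ ``behave like additional independent noise.'' These values are \emph{not} independent of $\bX$, and no amount of operator-norm perturbation control on $\bK-\bK_1$ will give you this decoupling. What is actually needed is to show that quantities of the form
\[
\frac{1}{d}\sum_{i\neq j}\big[(\bZ\bSigma\bZ^{\tr}/d+\gamma\id_n)^{-1}\big]_{ij}\,g(\bz_i)g(\bz_j)
\]
are $o(1)$, where $g=\proj_{>1}f^*$ satisfies $\E g(\bz)=0$ and $\E[\bz g(\bz)]=0$. The paper devotes a sequence of lemmas (leave-one-out expansions of the resolvent, moment bounds on off-diagonal entries, and a second-moment argument over pairs and quadruples of indices) to this; the orthogonality of $g$ to affine functions is used repeatedly to kill the leading terms in the Taylor expansion of the resolvent in powers of the off-diagonal part. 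Your ``main obstacle'' paragraph locates the difficulty in the $O(d^{-c_0})$ operator-norm perturbation hitting heavy-tailed $\by$, but that step is comparatively routine once $\lambda_{\min}(\bK)\ge c_*>0$; the real obstruction is this resolvent--nonlinear-function decorrelation, and your proposal contains no mechanism for it.

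A secondary omission: your Step~1 linearizes only the training kernel $\bK$, but the bias and variance also involve the test-point kernel through the matrix $\bM$ with entries $M_{ij}=\E_{\bx}[h(\<\bx_i,\bx\>/d)h(\<\bx_j,\bx\>/d)]$ and the vector $\bv$ with $v_i=\E_{\bx}[h(\<\bx_i,\bx\>/d)f^*(\bx)]$. Approximating these by their linearized counterparts requires an entrywise Hermite-expansion analysis (the paper controls $\|\bM-\bM_0\|_F$ and $\|\bv-\bv_0\|_2$ explicitly), and this is where the $L^{4+\eta}$ assumption on $f^*$ enters. Without this, you cannot reduce the problem to a functional of $\bX$ alone.
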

	  
	  \addtocounter{theorem}{-1} 	
	  }								

      \begin{newremark}
        The result for the variance will be proved under weaker
        assumptions and in a stronger form than stated. In particular, it does not require any assumption on the target function $f_*$, and it holds with smaller error terms than stated.
      \end{newremark}

       \begin{newremark}
         Notice that by positive definiteness of the kernel, we have $h'(0),h''(0)\ge 0$. Hence the conditions that these are strictly positive
        is essentially a non-degeneracy requirement.
      \end{newremark}

      We note for future reference that the target function $f^*$ is decomposed as 
      \begin{align}
        f^*(\bx) = b_0+\<\bbeta_0,\bx\>+\proj_{>1}f^*(\bx)\, ,
      \end{align}
      where $b_0:=\E\{f^*(\bx)\}$,   $\bbeta_0 := \bSigma^{-1}\E[\bx f^*(\bx)]$ as defined above and $\E\{\proj_{>1}f^*(\bx)\}$,
      $\E\{\bx\proj_{>1}f^*(\bx)\}=\bfzero$.

     \subsubsection{Preliminaries}

     Throughout the proof, we will use $C$ for constants that depend uniquely on the constants in Assumption
     \ref{asmpt:kernel_nd_regime} and Theorem \ref{thm:npropd}. We also write that an inequality holds \emph{with very high probability}
     if, for any $A>0$, we can choose the constants $C$ in the inequality such that this holds with probability at least $1-n^{-A}$
     for all $A$ large enough.
     
  We will repeatedly use the following bound, see e.g. \cite{el2010spectrum}.
     \begin{lemma}\label{lemma:ApproxK}
       Under the assumptions of Theorem \ref{thm:npropd}, we have, with very high probability
       \begin{align}
\bK=\bK_1+\bDelta\, ,\;\;\;\;\; \|\bDelta\|_{\op}\le n^{-c_0}\, .
       \end{align}
       In particular, as long as $h$ is non-linear, we have $\bK\succeq  c_*\id_n$, $c_*=\beta\gamma>0$ with probability at least $1-Cn^{-D}$.
     \end{lemma}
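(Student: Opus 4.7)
The proof is the standard El Karoui type Taylor expansion of the kernel $h$, combined with operator norm estimates for the resulting random matrices. I will split $\bK$ into its diagonal and off‑diagonal parts and Taylor expand $h$ around $0$ for off‑diagonal entries (where $\<\bx_i,\bx_j\>/d$ concentrates at $0$ at rate $1/\sqrt d$) and around $\Trace(\bSigma)/d$ for diagonal entries (where $\|\bx_i\|_2^2/d$ concentrates at $\Trace(\bSigma)/d$). The constants $\alpha$, $\beta$, $\gamma$ in the definition of $\bK_1$ (see \eqref{eq:K1def}) are exactly what is needed so that the leading deterministic pieces of these two expansions match, leaving only stochastic fluctuations and Taylor remainders.

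\textbf{Off-diagonal expansion.} For $i\neq j$, sub‑Gaussianity of $\bz_i=\bSigma^{-1/2}\bx_i$ together with Hanson–Wright yields, with very high probability,
\begin{align*}
\max_{i\neq j}\Big|\frac{\<\bx_i,\bx_j\>}{d}\Big|\le C\sqrt{\frac{\log n}{d}}\, ,\qquad \max_i\Big|\frac{\|\bx_i\|_2^2}{d}-\frac{\Trace(\bSigma)}{d}\Big|\le C\sqrt{\frac{\log n}{d}}\, .
\end{align*}
Smoothness of $h$ at $0$ gives, for $i\neq j$,
\begin{align*}
  h\!\left(\tfrac{\<\bx_i,\bx_j\>}{d}\right)=h(0)+h'(0)\tfrac{\<\bx_i,\bx_j\>}{d}+\tfrac{h''(0)}{2}\Big(\tfrac{\<\bx_i,\bx_j\>}{d}\Big)^2+r_{ij}\, ,
\end{align*}
with $|r_{ij}|\le C (\log n/d)^{3/2}$. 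The constant term contributes $h(0)\bone\bone^{\sT}$ (minus its diagonal), the linear term contributes $h'(0)\bX\bX^{\sT}/d$ (minus its diagonal), and the operator norm of the remainder matrix $(r_{ij}\ind{i\neq j})$ is at most $n\max|r_{ij}|=O((\log n)^{3/2}/\sqrt d)=o(1)$ since $n\asymp d$.

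\textbf{The quadratic term is the main obstacle.} The matrix $\bQ$ with entries $Q_{ij}=(\<\bx_i,\bx_j\>/d)^2\ind{i\neq j}$ has individual entries of order $1/d$ but is an $n\times n$ matrix, so a naive bound gives $\|\bQ\|_\op=O(n/d)=\Theta(1)$; one must extract its deterministic mean. Writing $\<\bx_i,\bx_j\>^2=\Trace(\bx_i\bx_i^{\sT}\bx_j\bx_j^{\sT})$, the matrix $\bQ$ factors through the Gram matrix of the vectorized outer products $\mathrm{vec}(\bx_i\bx_i^{\sT})\in\reals^{d^2}$. Centering each $\bx_i\bx_i^{\sT}$ about $\bSigma$ and using $\E[\bQ_{ij}]=\Trace(\bSigma^2)/d^2$, one splits
\begin{align*}
  \bQ=\frac{\Trace(\bSigma^2)}{d^2}(\bone\bone^{\sT}-\id_n)+\bQ_{c}\, ,
\end{align*}
and bounds $\|\bQ_c\|_\op$ via a matrix Bernstein/moment argument on the centered vectors, using the fourth moment assumption $\E[|z_i|^{8+\eta}]\le M$ (the $8{+}\eta$ moment is what is needed to bound $\E\|\bQ_c\|_\op^{p}$ for $p$ growing with $n$). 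This yields $\|\bQ_c\|_\op\le n^{-c_0}$ with very high probability for some $c_0>0$. Multiplying by $h''(0)/2$ and adding back the diagonal, this piece is absorbed into the $\alpha\bone\bone^{\sT}$ correction in $\bK_1$, up to an $O(n^{-c_0})$ operator norm error.

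\textbf{Diagonal expansion and conclusion.} For the diagonal, Taylor expanding $h$ at $\Trace(\bSigma)/d$ and using the $1/\sqrt d$ concentration of $\|\bx_i\|_2^2/d$ yields
\begin{align*}
\bK_{ii}=h(\Trace(\bSigma)/d)+h'(\Trace(\bSigma)/d)\Big(\tfrac{\|\bx_i\|_2^2}{d}-\tfrac{\Trace(\bSigma)}{d}\Big)+O(\log n/d)\, ,
\end{align*}
uniformly in $i$. Comparing to the diagonal of $\bK_1$ (which by construction equals $h'(0)\|\bx_i\|_2^2/d+h(\Trace(\bSigma)/d)-h'(0)\Trace(\bSigma)/d+h''(0)\Trace(\bSigma^2)/(2d^2)$) and using $|h'(\Trace(\bSigma)/d)-h'(0)|=O(1/d)$ (from smoothness of $h$ at $0$ and $\Trace(\bSigma)/d=O(1/d)$ is not true—here it is just $O(1)$, but the product with the $1/\sqrt d$ fluctuation is $O(1/\sqrt d)$), the max diagonal error is $O(n^{-c_0})$. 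Combining all terms gives $\bK=\bK_1+\bDelta$ with $\|\bDelta\|_\op\le n^{-c_0}$. The lower bound then follows since $\bK_1\succeq \beta\gamma\,\id_n+\alpha\bone\bone^{\sT}\succeq \beta\gamma\,\id_n$ (using $h'(0),h''(0),h(0)\ge 0$ so $\alpha\ge 0$), and $\bK\succeq \bK_1-\|\bDelta\|_\op\id_n\succeq (\beta\gamma-n^{-c_0})\id_n$, which is $\succeq c_*\id_n$ for an appropriate constant $c_*>0$ (e.g., $\beta\gamma/2$) once $n$ is large enough.
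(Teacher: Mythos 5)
Your overall strategy is the right one: the paper itself gives no proof of this lemma (it simply invokes the result of \cite{el2010spectrum}), and your proposal reconstructs exactly that El Karoui--type argument -- off-diagonal Taylor expansion of $h$ at $0$, extraction of the rank-one mean of the quadratic term into $\alpha\bone\bone^{\tr}$, separate treatment of the diagonal, and a perturbation argument for the lower bound. Two steps, however, are not yet sound as written. First, the bound $\|\bQ_c\|_{\op}\le n^{-c_0}$ is the technical heart of the lemma, and your justification does not carry it: $\bQ_c$ is not a sum of independent self-adjoint matrices, so matrix Bernstein does not apply directly, and all the easy bounds (entrywise times $n$, Frobenius, row sums, or positive-semidefinite Gram domination after writing $\<\bx_i,\bx_j\>^2=\<\bx_i^{\otimes 2},\bx_j^{\otimes 2}\>$) only give $O(1)$, since the centered entries are of order $1/d$ and the Gram matrix has $\Theta(1)$ diagonal. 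What is actually needed is a moment (trace) method or decoupling argument tailored to kernel inner-product matrices, as in \cite{el2010spectrum,cheng2013spectrum,do2013spectrum,fan2019spectral}; this is also precisely where the $8+\eta$ moment assumption is consumed. As your write-up stands, that step is asserted rather than proved.

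Second, your diagonal treatment Taylor-expands $h$, and uses $h'$, at the point $\trace(\bSigma)/d$, which is of order one; but the hypotheses only give smoothness of $h$ in a neighborhood of $0$ and continuity elsewhere, so $h'(\trace(\bSigma)/d)$ need not exist -- your own parenthetical remark shows the confusion this creates. The fix is that no expansion at $\trace(\bSigma)/d$ is needed: the diagonal discrepancy is $[h(\|\bx_i\|_2^2/d)-h(\trace(\bSigma)/d)]-h'(0)[\|\bx_i\|_2^2/d-\trace(\bSigma)/d]-h''(0)\trace(\bSigma^2)/(2d^2)$, and each of the three terms is small separately once $\max_i\big|\|\bx_i\|_2^2/d-\trace(\bSigma)/d\big|\le C\sqrt{(\log n)/d}$; to get a rate $n^{-c_0}$ (rather than just $o(1)$) for the first term you need some modulus of continuity of $h$ near $\trace(\bSigma)/d$ (e.g.\ local Lipschitz), which does hold for the positive-definite kernels $h(t)=\sum_k a_k t^k$, $a_k\ge 0$, actually in play. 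Relatedly, the ``in particular'' clause requires $\gamma>0$, which you take for granted: it follows from nonnegativity of the coefficients of $h$, nonlinearity, and $\trace(\bSigma)/d\ge 1/M$ (a consequence of Assumption~4.11(b)), and your final constant should be stated as $\beta\gamma-n^{-c_0}$ (or $\beta\gamma/2$), as you do, rather than $\beta\gamma$ itself.
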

     
     Define the matrix $\bM\in\reals^{n\times n}$, and the vector $\bv\in\reals^n$ by
     \begin{align}
      M_{ij}&:=\E_{\bx}\left\{h\Big(\frac{1}{d}\<\bx_i,\bx\>\Big)
                h\Big(\frac{1}{d}\<\bx_j,\bx\>\Big)\right\} \, ,\label{eq:Mdef}\\
        v_i&:=\E_{\bx}\left\{h\Big(\frac{1}{d}\<\bx_i,\bx\>\Big) f^*(\bx)\right\}\, .\label{eq:Vdef}
     \end{align}
     Our first lemma provides useful approximations of these quantities.
     \begin{lemma}\label{lemma:ApproxMV}
       Define (here expectations are over $G\sim\normal(0,1)$):
       \begin{align}
         \bv_0&:= \ba_0 b_0+\frac{1}{d} h'(0)\bX\bSigma\bbeta_0\, ,\\
                a_{i,0} &:= \E\Big\{h\Big(\sqrt{\frac{Q_{ii}}{d}} \, G\Big)\Big\}\, , \;\;\;\; Q_{ij}: =\frac{1}{d}\<\bx_i,\bSigma\bx_j\>\, .
       \end{align}
       and
       \begin{align}
         \bM_0&:= \ba\ba^{\tr}+\bB \, ,\;\;\;\;\;\; \bB:= \frac{1}{d}\,\bD\bQ\bD\, , \label{eq:M0def1}\\
         a_i &:= a_{i,0}+a_{i,1}\, ,\;\;\;\; a_{i,1} = \frac{1}{6} \Big(\frac{Q_{ii}}{d}\Big)^{3/2} h^{(3)}(0)
               \sum_{j=1}^d\frac{(\bSigma^{1/2} \bx_i)_j^3}{\|\bSigma^{1/2} \bx_i\|_2^3}\E(z_j^3)\, ,\label{eq:A1def}\\
         \bD &:={\rm diag}(D_1,\dots,D_n)\, ,\;\;\;\;\;\; D_i:= \E \Big\{h'\Big(\sqrt{\frac{Q_{ii}}{d}} \, G\Big)\Big\}\label{eq:M0def2}
     \, .
       \end{align}
       Then the following hold with very high probability
       (in other words, for any $A>0$ there exists $C$ such that the following hold  with probability at least $1-n^{-A}$ for all $n$ large enough)
       \begin{align}
           \max_{i\le n}\big|v_i-v_{0,i}\big| & \le  C\frac{\sqrt{\log d}}{d^{3/2}}\, ,\label{eq:Vbound}\\
         \max_{i\neq j\le n}\big|M_{ij}-M_{0,ij}\big| & \le  C\frac{\log d}{d^{5/2}}\, , \label{eq:MBound1}\\
           \max_{i\le n}\big|M_{ii}-M_{0,ii}\big|& \le  C\frac{\log d}{d^{2}}\, . \label{eq:MBound2}
       \end{align}
       In particular, this implies $\|\bv-\bv_0\|_2\le Cd^{-1}\sqrt{\log d}$, $\|\bM-\bM_0\|_F\le C d^{-3/2}\log d$.
       \end{lemma}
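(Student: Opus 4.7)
The plan is to derive all three bounds by Taylor-expanding $h$ around $0$ and controlling the resulting moments of $U_i:=\langle\bx_i,\bx\rangle/d$ with $\bx_1,\ldots,\bx_n$ held fixed. A preliminary step will be to show, via the Hanson-Wright inequality applied to the sub-Gaussian vector $\bz=\bSigma^{-1/2}\bx$, that with very high probability $|Q_{ij}|\le C\sqrt{(\log d)/d}$ for $i\neq j$, $|Q_{ii}-d^{-1}\Trace(\bSigma)|\le C\sqrt{(\log d)/d}$, and $\max_{i,k}|(\bSigma^{1/2}\bx_i)_k|\le C\sqrt{\log d}$. Conditionally on $\bx_i$, $U_i$ will then have mean zero, variance $Q_{ii}/d=O(1/d)$, and explicit higher moments expressible in terms of $\bSigma^{1/2}\bx_i$ and the moments of the coordinates $z_k$.

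For the $\bM$ bounds I would first handle the off-diagonal case by conditioning on $(\bx_i,\bx_j)$ and expanding
\[
h(U_i)h(U_j)=h(0)^2+h(0)h'(0)(U_i+U_j)+h'(0)^2 U_iU_j+\tfrac12 h(0)h''(0)(U_i^2+U_j^2)+R
\]
with $|R|\le C(|U_i|+|U_j|)^3$. Integrating against $\P$, the cross term $h'(0)^2\E U_iU_j=h'(0)^2Q_{ij}/d$ will reproduce the off-diagonal part of $\bB$, while the other pieces will be absorbed into $\ba\ba^{\sT}$ (with $a_i$ split into $a_{i,0}$ plus the cubic Gaussian-mismatch correction $a_{i,1}$); the estimate $\E(|U_i|+|U_j|)^3=O((\log d)^{3/2}/d^{3/2})$, combined with the concentration bound on $|Q_{ij}|$, will then deliver the claimed $(\log d)/d^{5/2}$. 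For $M_{ii}$ one conditions only on $\bx_i$: the Gaussian surrogate $\sqrt{Q_{ii}/d}\,G$ matches the moments of $U_i$ up to order two exactly, the third-moment mismatch is precisely what $a_{i,1}$ is designed to absorb, and the remainder is one order weaker, giving $(\log d)/d^2$.

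For $v_i$ I would decompose $f^*=b_0+\langle\bbeta_0,\bx\rangle+\proj_{>1}f^*$ and apply $h(U_i)=h(0)+h'(0)U_i+\tfrac12 h''(0)U_i^2+R$ termwise. The constant piece $b_0\E h(U_i)$ will match $b_0\,a_{i,0}$ up to $O(d^{-2})$ once the cubic Gaussian-mismatch is accounted for. The linear piece will produce $d^{-1}h'(0)\langle\bx_i,\bSigma\bbeta_0\rangle$ exactly at first order, and the quadratic correction
\[
\tfrac{1}{2d^2}h''(0)\sum_k(\bSigma^{1/2}\bx_i)_k^2(\bSigma^{1/2}\bbeta_0)_k\E z_k^3
\]
will be $O(\sqrt{\log d}/d^{3/2})$ by Cauchy-Schwarz together with the bound $\|\bSigma^{1/2}\bbeta_0\|_2\le\|f^*\|_{L^2}$. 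For the $\proj_{>1}f^*$ piece the $h(0)$ and $h'(0)U_i$ contributions will vanish thanks to $\E\proj_{>1}f^*=0$ and $\E[\bx\proj_{>1}f^*]=0$, and only a quadratic-in-$U_i$ residual will remain.

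The hard part will be this last contribution, $\E[U_i^2\proj_{>1}f^*]=d^{-2}\bx_i^{\sT}T\bx_i$ with $T=\E[\bx\bx^{\sT}\proj_{>1}f^*]$: a naive Cauchy-Schwarz bound gives only $O(d^{-1})$, which is too weak. To reach $\sqrt{\log d}/d^{3/2}$ one must exploit that $\proj_{>1}f^*$ is orthogonal to affine functions in $L^2(\P)$: projecting $\langle\bx,\bSigma\bx\rangle$ onto its non-affine component sharpens $\Trace(\bSigma T)$, and a Hanson-Wright concentration of the quadratic form $\bx_i^{\sT}T\bx_i$ about this mean then delivers the required rate. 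This is the most delicate step, and is where the spectral assumption $d^{-1}\sum_i\lambda_i^{-1}\le M$ on $\bSigma$ and the $L^{4+\eta}$ integrability of $f^*$ genuinely interact.
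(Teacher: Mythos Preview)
Your treatment of $v_i$ is on the right track and essentially matches the paper's: a second-order expansion suffices there, the dominant leftover is the quadratic form $\bx_i^{\sT}\bF_2\bx_i$ with $\bF_2=\E[(f^*(\bx)-b_0)\bx\bx^{\sT}]$ (your $T$ plus a harmless linear piece), and Hanson--Wright is precisely the tool that brings it down to $C\sqrt{d\log d}$. The paper does this via a Hermite expansion rather than a raw Taylor expansion, but the two are equivalent at this order. One small correction: the spectral lower bound $d^{-1}\sum\lambda_i^{-1}\le M$ is not actually used at this step---what drives the Hanson--Wright bound is $\|\bF_2\|_{\op}\le C$, which follows from sub-Gaussianity of $\bx$ and $\|f^*\|_{L^2}\le C$.

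The genuine gap is in your bound for $M_{ij}$. A second-order Taylor expansion of $h(U_i)h(U_j)$ leaves a remainder with $\E|R|\asymp \E(|U_i|+|U_j|)^3=\Theta(d^{-3/2})$, a full power of $d$ too large for the target $d^{-5/2}$; there is no legitimate way to ``combine with $|Q_{ij}|$'' to recover the missing factor, since $R$ contains terms like $\tfrac{1}{6}h(0)h^{(3)}(0)U_i^3$ that involve no $U_j$ whatsoever. In fact this term is exactly what the correction $a_{i,1}$ is designed to match: one has $\E U_i^3=d^{-3}\sum_k(\bSigma^{1/2}\bx_i)_k^3\E z_k^3$, and $h(0)\cdot\tfrac{1}{6}h^{(3)}(0)\E U_j^3$ equals $a_{i,0}a_{j,1}$ up to $O(d^{-5/2}\sqrt{\log d})$. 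By lumping this into $R$ and taking absolute values before integrating, you destroy the cancellation that the definition of $M_0$ was built to exploit. The paper expands to Hermite degree~$4$: the degree-$5$ remainder is then $O(d^{-5/2})$; the degree-$3$ pieces $h_{i,0}h_{j,3}M_{i,j}(0,3)$ are identified with $a_{i,0}a_{j,1}$; and the remaining degree-$3$ and degree-$4$ cross moments $M_{i,j}(1,2),\,M_{i,j}(0,4),\,M_{i,j}(1,3),\,M_{i,j}(2,2)$ are each shown to be $O((\log d)/d)$ by computing the non-Gaussian part of the corresponding mixed moment of $\bz$ on the concentration event. The same issue, one order less severe, afflicts your $M_{ii}$ argument: a second-order remainder is $O(d^{-3/2})$, not $O(d^{-2})$, so you need at least a third-order expansion there as well.
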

       \begin{proof}
         Throughout the proof we will work on the intersection $\cE_1\cap\cE_2$
         of following events, which hold with very high probability by standard concentration arguments. These events are defined by 
        \begin{align}
          \cE_1& := \Big\{C^{-1}\le \frac{1}{\sqrt{d}}\|\bSigma\bz_i\|_2\le C ; \;\; \frac{1}{\sqrt{d}}\|\bSigma\bz_i\|_\infty\le C \sqrt{\frac{\log d}{d}} \; \;
          \forall i\le n\Big\}\label{eq:Event1}\\
          &= \Big\{C^{-2}\le \frac{1}{d}\<\bx_i,\bSigma\bx_i\>\le C^2 ; \;\; \frac{1}{\sqrt{d}}\|\bSigma^{1/2}\bx_i\|_\infty\le C \sqrt{\frac{\log d}{d}} \; \;
          \forall i\le n\Big\}\, ,
        \end{align}
        and
        \begin{align}
               \cE_2& := \Big\{ \frac{1}{d}\sum_{\ell=1}^d (\bSigma\bz_i)_\ell (\bSigma\bz_j)^2_\ell \le \frac{\log d}{d^{1/2}}\,;\;\;
                      \frac{1}{d}|\<\bz_i,\bSigma\bz_j\>|\le C \sqrt{\frac{\log d}{d}} \,;\;\;
                      \frac{1}{d}|\<\bz_i,\bSigma^2\bz_j\>|\le C \sqrt{\frac{\log d}{d}}\;\; \forall i\neq j\le n\Big\}
                      \label{eq:Event2}\\
          &= \Big\{ \frac{1}{d}\sum_{\ell=1}^d (\bSigma^{1/2}\bx_i)_\ell (\bSigma^{1/2}\bx_j)^2_\ell \le \frac{\log d}{d^{1/2}}\, ; \;\;
            \frac{1}{d}|\<\bx_i,\bx_j\>|\le C \sqrt{\frac{\log d}{d}}\, \;\;  \frac{1}{d}|\<\bx_i,\bSigma\bx_j\>|\le C \sqrt{\frac{\log d}{d}}\;\;
            \forall i\neq j\le n\Big\}\, .\nonumber
        \end{align} 
        Recall that, by assumption, $h$ is smooth on an interval $[-t_0,t_0]$, $t_0>0$.
        On the event $\cE_2$,
        we have $\<\bx_i,\bx_j\>/d\in[-t_0,t_0]$ for all $i\neq j$.
        If $h$ is not smooth everywhere, we
        can always modify it outside $[-t_0/2,t_0/2]$ to obtain a kernel $\tih$ that is smooth everywhere.
        Since $\bx$ is sub-Gaussian, as long as $\|\bx_i\|/\sqrt{d}\le C$ for all $i\le n$ (this happens on $\cE_1$)
        we have (for $\bx\sim\P$), $\<\bx_i,\bx\>/d\in [-t_0/2,t_0/2]$ with probability at least $1-e^{-d/C}$. Further using the
        fact that $f$ is bounded in Eqs.~\eqref{eq:Mdef}, \eqref{eq:Vdef}, we get, 
        \begin{align}
              M_{ij}&:=\E_{\bx}\left\{\tih\Big(\frac{1}{d}\<\bx_i,\bx\>\Big)
                \tih\Big(\frac{1}{d}\<\bx_j,\bx\>\Big)\right\} +O(e^{-d/C})\, ,\\
        v_i&:=\E_{\bx}\left\{\tih\Big(\frac{1}{d}\<\bx_i,\bx\>\Big) f^*(\bx)\right\} +O(e^{-d/C})\, ,
        \end{align}
        where the term $O(e^{-d/C})$ is uniform over $i,j\le n$. Analogously, in the definition of $\bv_0$, $\bM_0$
        (more precisely, in defining $\ba_0$, $\bD$), we can replace $h$ by $\tih$ at   the price of an $O(e^{-d/C})$ error.
        Since these error terms are negligible as compared to the ones in the statement, we shall hereafter neglect them
        and set $\tih =h$ (which corresponds to
        defining arbitrarily the derivatives of $h$ outside a neighborhood of $0$).

        We denote by $h_{i,k}$ the $k$-th coefficient of $h((Q_{ii}/d)^{1/2} x)$ in the basis of Hermite polynomials.
        Namely:
        \begin{align}
          h_{i,k} =  \E \Big\{h\Big(\sqrt{\frac{Q_{ii}}{d}} \, G\Big)\, \He_k(G)\Big\} = \Big(\frac{Q_{ii}}{d}\Big)^{k/2}
           \E \Big\{h^{(k)}\Big(\sqrt{\frac{Q_{ii}}{d}} \, G\Big)\Big\}\, .\label{eq:hik}
        \end{align}
        Here $h^{(k)}$ denotes the $k$-th derivative of $h$ (recall that by the argument above we can assume, without loss of generality, that
        $h$ is $k$-times differentiable).

        We write $h_{i,>k}$ for the remainder after the first $k$
        terms of the Hermite expansion have been removed:
        \begin{align}
          h_{i,>k}\Big(\sqrt{\frac{Q_{ii}}{d}} \, x\Big) &:= h\Big(\sqrt{\frac{Q_{ii}}{d}} \, x\Big) -\sum_{\ell=0}^k \frac{1}{\ell!} h_{i,\ell}\, \He_\ell(x)
          \label{eq:FirstHermite}\\
                                                         &= h\Big(\sqrt{\frac{Q_{ii}}{d}} \, x\Big) -\sum_{\ell=0}^k \frac{1}{\ell!}
                                                             \Big(\frac{Q_{ii}}{d}\Big)^{\ell/2} \E \Big\{h^{(k)}\Big(\sqrt{\frac{Q_{ii}}{d}} \, G\Big)\Big\} \, \He_\ell(x)
          \, .\nonumber
        \end{align}
        Finally, we denote by $\hh_{i,>k}(x)$ the  remainder after the first $k$ terms in the Taylor expansion have been subtracted:
        \begin{align}
          \hh_{>k}(x) &:= h( x) -\sum_{\ell=0}^k \frac{1}{\ell!}
                                                     h^{(\ell)}(0)\, x^{\ell}\, .
        \end{align}
        Of course $h-\hh_{>k}$ is a polynomial of degree $k$, and therefore its projection orthogonal to the first $k$
        Hermite polynomials vanishes, whence
        \begin{align}
          h_{i,>k}\Big(\sqrt{\frac{Q_{ii}}{d}} \, x\Big) &= \hh_{>k}\Big(\sqrt{\frac{Q_{ii}}{d}} \, x\Big) -\sum_{\ell=0}^k \frac{1}{\ell!}
                                                             \Big(\frac{Q_{ii}}{d}\Big)^{\ell/2} \E \Big\{\hh_{>k}^{(\ell)}\Big(\sqrt{\frac{Q_{ii}}{d}} \, G\Big)\Big\} \, \He_\ell(x)
          \, .\label{eq:RemainderTaylor}
        \end{align}
        Note that, by smoothness of $h$, we have $|\hh_{>k}^{(\ell)}(t)|\le C\min(|t|^{k+1-\ell},1)$, and therefore
        \begin{align}
\left|\frac{1}{\ell!}
          \Big(\frac{Q_{ii}}{d}\Big)^{\ell/2} \E \Big\{\hh_{>k}^{(\ell)}\Big(\sqrt{\frac{Q_{ii}}{d}} \, G\Big)\Big\} \right|\le C d^{-(k+1)/2}\, .
        \end{align}
       We also have that  $|\hh_{>k}(t)|\le C\min(1,|t|^{k+1})$.
        Define $\bv_i=\bSigma^{1/2}\bx_i/\sqrt{d}$, $\|\bv_i\|_2^2=Q_{ii}$. For any fixed $m\ge 2$,
        by Eq.~\eqref{eq:RemainderTaylor} and the triangle inequality,
        \begin{align}
          \E_{\bz}\Big\{\Big|h_{i,>k}\Big(\frac{1}{\sqrt{d}}\<\bv_i,\bz\>\Big)\Big|^m\Big\}^{1/m}&\stackrel{(a)}{\le}
                                                                                                   \E\Big\{\Big|\hh_{>k}\Big(\frac{1}{\sqrt{d}}\<\bv_i,\bz\>\Big)\Big|^m\Big\}^{1/m} +Cd^{-(k+1)/2}
                                                                                                   \sum_{\ell=0}^k \E\Big\{\Big|\He_{\ell}\Big(\frac{\<\bv_i,\bz\>}{\|\bv_i\|_2}
                                                                                                   \Big)\Big|^m\Big\}^{1/m}\nonumber\\
          &\le
          C\, \Big(\frac{Q_{ii}}{d}\Big)^{(k+1)/2} +C\, d^{-(k+1)/2}\le C\, d^{-(k+1)/2}\, ,\label{eq:UB_HighDeg}
        \end{align}
        where the inequality $(a)$ follows since $\<\bv_i,\bz\>$ is
        $C$-sub-Gaussian.
        Note that Eqs.~\eqref{eq:FirstHermite}, \eqref{eq:UB_HighDeg}  can also be rewritten as
        \begin{align}
          h \Big(\frac{1}{d}\<\bx_i,\bx\>\Big)&=\sum_{\ell=0}^k\frac{1}{\ell!} h_{i,\ell}\,\He_{\ell}   \Big(\frac{1}{\sqrt{d Q_{ii}}}\<\bx_i,\bx\>\Big)
          +h_{i,>k} \Big(\frac{1}{d}\<\bx_i,\bx\>\Big)\, , \label{eq:HermiteApprox1}\\
                                              \E\Big\{\Big| h_{i,>k} \Big(\frac{1}{d}\<\bx_i,\bx\>\Big)\Big|^m\}^{1/m}& \le  C\, d^{-(k+1)/2}\, . \label{eq:HermiteApprox2}
        \end{align}
        
        We next prove Eq.~\eqref{eq:Vbound}.  Using Eq.~\eqref{eq:HermiteApprox1} with $k=2$ and recalling $\He_0(x) = 1$,
        $\He_1(x)=x$, $\He_2(x)=x^2-1$, we get
        \begin{align*}
          v_i &= \E_{\bx}\left\{h\Big(\frac{1}{d}\<\bx_i,\bx\>\Big) f^*(\bx)\right\}\\
              &= h_{i,0}  \E_{\bx}\left\{f^*(\bx)\right\}+  \frac{h_{i,1}}{\sqrt{d Q_{ii}}}\<\bx_i,\E_{\bx}\left\{\bx f^*(\bx)\right\}\>\\
          &\qquad{}
          +\frac{h_{i,2}}{2dQ_{ii}}\E_{\bx}\left\{f^*(\bx)(\<\bx,\bx_i\>^2-dQ_{ii})\right\}+
            \E_{\bx}\left\{h_{i,>2}\Big(\frac{1}{d}\<\bx_i,\bx\>\Big) f^*(\bx)\right\}\\
              &= h_{i,0}  b_0+  \frac{h_{i,1}}{\sqrt{d Q_{ii}}}\<\bSigma\bbeta_0,\bx_i\>
          +\frac{h_{i,2}}{2dQ_{ii}}\<\bx_i,\bF_2\bx_i\> +
            \E_{\bx}\left\{h_{i,>2}\Big(\frac{1}{d}\<\bx_i,\bx\>\Big) f^*(\bx)\right\}
          \, .
        \end{align*}
       Here we defined the  $d\times d$ matrix $\bF_2= \E\{[f^*(\bx)-b_0]\bx\bx^{\tr}\}$.
       Recalling the definitions of $h_{i,k}$, in Eq.~\eqref{eq:hik}, we get $h_{i,0}=a_{i,0}$.
       Comparing other terms we obtain that the following holds with very high probability,
        \begin{align*}
          |v_i-v_{0,i}|\le & \frac{1}{d}\Big|\E \Big\{h'\Big(\sqrt{\frac{Q_{ii}}{d}} \, G\Big)\Big\}- h'(0)\Big| \cdot |\<\bSigma\bbeta_0,\bx_i\>|
          +\frac{1}{d^2}\Big|\E \Big\{h''\Big(\sqrt{\frac{Q_{ii}}{d}} \, G\Big)\Big\}\Big|\cdot\Big|\<\bx_i,\bF_2\bx_i\>\Big|\\
                           &+\Big|\E_{\bx}\left\{h_{i,>2}\Big(\frac{1}{d}\<\bx_i,\bx\>\Big) f^*(\bx)\right\}\Big|\\
          \stackrel{(a)}{\le} & \frac{1}{d}\times \frac{C}{d} \times C\log d+ \frac{C}{d^2} \big|\<\bx_i,\bF_2\bx_i\>\big|+C\, d^{-3/2}\\
          \le & \frac{C}{d^2} \Big|\<\bx_i,\bF_2\bx_i\>\Big|+C\, d^{-3/2}.
        \end{align*}
        Here the  inequality $(a)$ follows since $|\E \{h'(Z)-h'(0)\}|\le C\E\{Z^2\}$ by smoothness of $h$ and Taylor expansion,
       $\max_{i\le n} |\<\bSigma\bbeta_0,\bx_i\>|\le C\sqrt{\log n}$ by sub-Gaussian tail bounds, and we used Eq.~\eqref{eq:HermiteApprox2}
       for the last term.

       The proof of
       Eq.~\eqref{eq:Vbound} is completed by showing that, with very high probability, $\max_{i\le n}|\<\bx_i,\bF_2\bx_i\>|\le
       C\|\proj_{>1}f^*\|_{L^2}\sqrt{d\log d}$. Without loss of generality, we assume here $\|\proj_{>1}f^*\|_{L^2}= 1$. In order to
       show this claim, note that (defining $\proj_{>0}f^*(\bx):=f^*(\bx)-\E f^*(\bx)$)
       \begin{align}
        \E \<\bx_i,\bF_2\bx_i\>= \trace(\bSigma\bF_2) \le C\E\{\proj_{>0}f^*(\bx)\|\bx\|_2^2\} \le \Var(\|\bx\|_2^2)^{1/2}\le C\sqrt{d}\, .
       \end{align}
       Further notice that
       \begin{align}
         \|\bF_2\|_{\op}&= \max_{\|\bv\|_2=1}|\<\bv,\bF_2\bv\>| \\
                        & = \max_{\|\bv\|_2=1}\big|\E\big\{\proj_{>0}f^*(\bx)\<\bv,\bx\>^2\big\}\big| \\
                        & \le \max_{\|\bv\|_2=1}\E\big\{\<\bv,\bx\>^4\big\}^{1/2}\le  C\, .
         \end{align}
       By the above and the Hanson-Wright inequality
       \begin{align}
         \P\big(\<\bx_i,\bF_2\bx_i\> \ge C\sqrt{d}+t \big)
&\le 2\, \exp\Big(-c\Big(\frac{t^2}{\|\bF_2\|_F^2} \wedge\frac{t}{\|\bF_2\|_{\op}}\Big)\Big)\le 2\, e^{-c((t^2/d)\wedge t)}\, ,
       \end{align}
and similarly for the lower tail.
By taking a union bound over $i\le n$, we obtain $\max_{i\le n}|\<\bx_i,\bF_2\bx_i\>|\le C\sqrt{d\log d}$ as claimed, thus
completing the proof of Eq.~\eqref{eq:Vbound}. 

        We next prove Eq.~\eqref{eq:MBound1}. We claim that this bound
        holds for any realization in $\cE_1\cap \cE_2$. Therefore we can fix
        without loss of generality $i=1$, $j=2$. We use Eq.~\eqref{eq:HermiteApprox1} with $k=4$. Using Cauchy-Schwarz and
        Eqs.~\eqref{eq:HermiteApprox1}, \eqref{eq:HermiteApprox2}, we get
        \begin{align}
          M_{12} &= \sum_{\ell_1,\ell_2=0}^4\frac{1}{\ell_1!\ell_2!} h_{1,\ell_1}h_{2,\ell_2}M_{1,2}(\ell_1,\ell_2)+ \Delta_{12}\, ,\\
          M _{1,2}(\ell_1,\ell_2)&:=\E_{\bx}\left\{\He_{\ell_1}\Big(\frac{1}{\sqrt{d Q_{11}}}\<\bx_1,\bx\>\Big)
                                     \He_{\ell_2}\Big(\frac{1}{\sqrt{d Q_{22}}}\<\bx_2,\bx\>\Big)\right\} \, ,\;\;\;\;|\Delta_{12}|\le Cd^{-5/2}\, .
        \end{align}
        Note that, by Eq.~\eqref{eq:hik}, $|h_{ik}|\le Cd^{-k/2}$, and $M_{1,2}(\ell_1,\ell_2)$
        is bounded on the event $\cE_1\cap\cE_2$, by the  sub-Gaussianity of $\bz$.
        Comparing with Eqs.~\eqref{eq:M0def1}, \eqref{eq:M0def2}, we get
\begin{align}
  |M_{12}-M_{0,12}|&\le\Big|\sum_{\ell_1,\ell_2=0}^4 \frac{1}{\ell_1!\ell_2!} h_{1,\ell_1}h_{2,\ell_2}M_{1,2}(\ell_1,\ell_2)-M_{0,12}\Big|+Cd^{-5/2}\\
                     &\phantom{AAAAA}+2\sum_{(\ell_1,\ell_2)\in \cS} \big|h_{1,\ell_1}h_{2,\ell_2}M_{1,2}(\ell_1,\ell_2)\big|\label{eqMainMbound}\\
  &\phantom{AAAAA}+
2 \Big|\frac{1}{6}h_{1,0}h_{2,3}M_{1,2}(0,3)-a_{1,0}a_{2,1}\Big|+|a_{1,1}a_{2,1}| 
  + Cd^{-5/2}\, ,\nonumber\\
 \cS&:= \big\{(0,1), (0,2), (0,4), (1,2), (1,3), (2,2)\big\}\, ,
\end{align}
where in the inequality we used the identities $h_{1,0}h_{2,0}M_{1,2}(0,0)= h_{1,0}h_{2,0}=a_{1,0}a_{2,0}$, and
\begin{align*}
  h_{1,1}h_{2,1}M_{1,2}(1,1) = \frac{1}{d^2}\<\bx_1,\bSigma\bx_2\>\E h'\Big(\sqrt{\frac{Q_{11}}{d}}G\Big) \E h'\Big(\sqrt{\frac{Q_{22}}{d}}G\Big) =
  B_{12}\, .
\end{align*}

We next bound each of the terms above separately.

We begin with the terms $(\ell_1,\ell_2)\in\cS$. Since by Eq.~\eqref{eq:hik}, $|h_{ik}|\le Cd^{-k/2}$,
for each of these pairs, we need to show  $|M_{1,2}(\ell_1,\ell_2)|\le Cd^{(\ell_1+\ell_2-5)/2}\log d$.
Consider $(\ell_1,\ell_2) = (0,k)$, $k\in\{1,2,4\}$. Set $\bw = \bSigma^{1/2}\bx_2/\sqrt{d Q_{22}}$, $\|\bw\|_2=1$,
and write $\He_k(x) = \sum_{m=0}^kc_{k,\ell}x^{\ell}$. If $\bg$ is a standard Gaussian vector, we have $\E_{\bg}\He_{k}\big(\<\bw,\bg\>)=0$
and therefore
        \begin{align}
          M _{1,2}(0,k)&=\E_{\bz}\left\{  \He_{k}\big(\<\bw,\bz\>\big)\right\}-\E_{\bg}\left\{  \He_{k}\big(\<\bw,\bg\>\big)\right\}\\
                       & = \sum_{\ell=0}^kc_{k,\ell} \sum_{i_1,\dots,i_\ell\le n} w_{i_1}\cdots w_{i_\ell}\big\{\E(z_{i_1}\cdots z_{i_{\ell}})-
                         \E(g_{i_1}\cdots g_{i_{\ell}})\big\}\, .
        \end{align}
        Note that the only non-vanishing terms in the above sum are those in which all of the indices appearing in $(i_1,\dots, i_{\ell})$
        appear at least twice, and at least one of the indices appears at least $3$ times (because otherwise the two expectations are equal).
        This immediately implies $M _{1,2}(0,1)=M _{1,2}(0,2)=0$. Analogously, all terms $\ell\le 2$ vanish in the above sum.
        
As for $k=4$, we have (recalling $\He_4(x) = x^4-3x^2$):
        \begin{align}
         M _{1,2}(0,4)& =\left|\sum_{i_1,\dots,i_4\le n} w_{i_1}\cdots w_{i_4}\big\{\E(z_{i_1} \cdots z_{i_{4}})-
                           \E(g_{i_1}\cdots g_{i_{4}})\big\} \right|\\
          &\le \sum_{i\le n} w_{i}^4\big|\E(z_{i}^4)-3\big| \le C\|\bw\|^2_{\infty} \|\bw\|_2^2\le \frac{C\log d}{d}\, ,
        \end{align}
        where the last inequality follows since $\|\bw\|_2=1$ by construction and $\|\bw\|_{\infty} \le C\sqrt{(\log d)/d}$ on
        $\cE_1\cap\cE_2$.

        Next consider $(\ell_1,\ell_2) = (1,2)$. Setting $\bw_i= \bSigma^{1/2}\bx_i/\sqrt{d Q_{ii}}$, $i\in\{1,2\}$, we get
        \begin{align}
          M _{1,2}(1,2)&=\E_{\bz}\left\{\He_{1}\big(\<\bw_1,\bz\>\big)  \He_{2}\big(\<\bw_2,\bz\>\big)\right\}-
                         \E_{\bg}\left\{\He_{1}\big(\<\bw_1,\bg\>\big)  \He_{2}\big(\<\bw_2,\bg\>\big)\right\}\\
          & = \E_{\bz}\left\{\big(\<\bw_1,\bz\>\big) \big(\<\bw_2,\bz\>\big)^2\right\}-
            \E_{\bg}\left\{\big(\<\bw_1,\bg\>\big) \big(\<\bw_2,\bg\>\big)^2\right\}\\
          & = \sum_{i_1,i_2,i_3\le n} w_{1,i_1}w_{2,i_2}w_{2,i_3}\big\{\E(z_{i_1} z_{i_2}z_{i_{2}})-
            \E(g_{i_1}g_{i_2} g_{i_{3}})\big\}\\
          & = \sum_{i=1}^n w_{1,i}w_{2,i}w_{2,i}\E(z_i^3)\, .
        \end{align}
        Therefore, on $\cE_1\cap\cE_2$,
 \begin{align}
   \big|M _{1,2}(1,2)\big|&\le C\Big|\sum_{i=1}^n w_{1,i}w_{2,i}^2\Big|\le \frac{C\log d}{d}\, .
 \end{align}
 
 Next consider $(\ell_1,\ell_2) = (1,3)$. Proceeding as above (and noting that the degree-one term in $\He_3$ does not contribute),
 we get
    \begin{align}
          M _{1,2}(1,3)&=\E_{\bz}\left\{\He_{1}\big(\<\bw_1,\bz\>\big)  \He_{3}\big(\<\bw_2,\bz\>\big)\right\}-
                         \E_{\bg}\left\{\He_{1}\big(\<\bw_1,\bg\>\big)  \He_{3}\big(\<\bw_2,\bg\>\big)\right\}\\
          & = \E_{\bz}\left\{\big(\<\bw_1,\bz\>\big) \big(\<\bw_2,\bz\>\big)^3\right\}-
            \E_{\bg}\left\{\big(\<\bw_1,\bg\>\big) \big(\<\bw_2,\bg\>\big)^3\right\}\\
       & = \sum_{i_1,\dots,i_4\le d} w_{1,i_1}w_{2,i_2}w_{2,i_3}w_{2,i_4}\big\{\E(z_{i_1} z_{i_2}z_{i_{2}}z_{i_4})-
            \E(g_{i_1}g_{i_2} g_{i_{3}}g_{i_4})\big\}\\
          & = \sum_{i=1}^d w_{1,i}w_{2,i}^3(\E(z_i^4)-3)\, .
    \end{align}
      Therefore, on $\cE_1\cap\cE_2$
 \begin{align}
   \big|M _{1,2}(1,3)\big|&\le C\Big|\sum_{i=1}^d w_{1,i}w_{2,i}^3\Big|\le C\|\bw_1\|_{\infty}\|\bw_2\|_{\infty}\|\bw_2\|_2^2\le \frac{C\log d}{d}\, .
 \end{align}
 Finally, for $(\ell_1,\ell_2)=(2,2)$, proceeding as above we get
 \begin{align}
   M _{1,2}(2,2)&= \left|\sum_{i=1}^d w_{1,i}^2w_{2,i}^2(\E(z_i^4)-3)\right|\le C\|\bw_1\|_{\infty}^2\|\bw_2\|_2^2\le \frac{C\log d}{d}\, .
 \end{align}

 Next consider the term $|h_{1,0}h_{2,3}M_{1,2}(0,3)/6-a_{1,0}a_{2,1}|$ in Eq.~\eqref{eqMainMbound}. Using the fact that
 $h_{1,0}=a_{1,0}$ is bounded, we get
    \begin{align}
      \Big|\frac{1}{6}h_{1,0}h_{2,3}M_{1,2}(0,3)-a_{1,0}a_{2,1}\Big|\le C\big|h_{2,3}M_{1,2}(0,3)-6a_{2,1}\big|\, .
      \end{align}
Recalling $\He_3(x) = x^3-3x$, and letting $\bw =\bSigma^{1/2} \bx_2/\|\bSigma^{1/2} \bx_2\|_2$:
        \begin{align}
         M _{1,2}(0,3)& =\sum_{i_1,\dots,i_3\le d} w_{i_1}w_{i_2}w_{i_3}\big\{\E(z_{i_1} z_{i_2}z_{i_{3}})-
                           \E(g_{i_1}g_{i_2} g_{i_{3}})\big\} \\
          &=\sum_{i\le d} w_{i}^3\E(z_{i}^3) \, .
        \end{align}
        In particular, on the event $\cE_1\cap\cE_2$, $|M _{1,2}(0,3)|\le C\sqrt{(\log d)/d}$.
        Comparing  the definitions of $a_{2,1}$ and $h_{2,3}$, we get
    \begin{align}
      \big|h_{1,0}h_{2,3}M_{1,2}(0,3)-a_{1,0}a_{2,1}\big|&\le
      C|M_{1,2}(0,3)|\times\left(\frac{Q_{22}}{d}\right)^{3/2}\Big| \E \Big\{h^{(3)}\Big(\sqrt{\frac{Q_{ii}}{d}} \, G\Big)\Big\} -h^{(3)}(0)\Big|\\
                                                         &\le C\sqrt{\frac{\log d}{d}}\times
                                                           \frac{1}{d^{3/2}}\times \frac{1}{d^{1/2}} \le \frac{C(\log d)^{1/2}}{d^{5/2}}
                              \, .                             
      \end{align}

      Finally, consider term $|a_{1,1}a_{2,1}|$ in Eq.~\eqref{eqMainMbound}.   By the above estimates, we get $|a_{2,1}|\le C d^{-2}(\log d)^{1/2}$,
      and hence this term is negligible as well. This completes the proof of Eq.~\eqref{eq:MBound1}. 
      
         Equation~\eqref{eq:MBound2} follows by a similar argument, which we omit.
       \end{proof}

       \subsubsection{An estimate on the entries of the resolvent}

       \begin{lemma}\label{lemma:ExpResolvent}
         Let $\bZ=(z_{ij})_{i\le n, j\le d}$ be a random matrix with
         iid rows $\bz_1,\dots,\bz_n\in\reals^d$ that are zero mean and $C$-sub-Gaussian.
         Further assume $C^{-1}\le n/d\le C$. Let $\bS\in\reals^{d\times d}$ be a symmetric
         matrix such that $\bfzero \preceq \bS\preceq C\id_d$ for some finite constant $C>1$. 
         Finally, let $g:\reals^d\to\reals$ be a measurable function such that $\E\{g(\bz_1)\} = \E\{\bz_1g(\bz_1)\}=0$,
         and $\E\{g(\bz_1)^2\}=1$.

         Then, for any $\lambda>0$ there exists a finite constant $C$ such that, for  any $i\neq j$,
         \begin{align}
           \Big|\E\Big\{\big(\bZ\bS\bZ^{\tr}/d+\lambda\id_n\big)^{-1}_{i,j}g(\bz_i)g(\bz_j)\Big\}\Big|\le
           C\, d^{-3/2}\, .
         \end{align}
       \end{lemma}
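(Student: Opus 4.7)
The main idea is to extract the cancellation produced by the orthogonalities $\E g(\bz)=0$, $\E[\bz g(\bz)]=0$, which should buy an extra factor of $d^{-1/2}$ beyond the naive $O(d^{-1})$ bound on the off-diagonal resolvent entry.

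\textbf{Step 1 (Schur complement setup).} I would first partition $\{1,\dots,n\}=\{i,j\}\cup\cJ$ and apply block matrix inversion. Writing $\bZ_{\cJ}$ for the submatrix of rows indexed by $\cJ$ and $\tilde\bA:=\bZ_{\cJ}\bS\bZ_{\cJ}^{\sT}/d+\lambda\id$, one gets $(\bA^{-1})_{i,j}=-D_{ij}/(D_iD_j-D_{ij}^2)$ where $D_{kl}:=\bz_k^{\sT}\bW\bz_l/d+\lambda\mathbf{1}_{k=l}$ for $k,l\in\{i,j\}$, and by push-through
\begin{align*}
\bW \;=\; \bS-\bS\bZ_{\cJ}^{\sT}\tilde\bA^{-1}\bZ_{\cJ}\bS/d \;=\; \lambda\,\bS^{1/2}\bigl(\bS^{1/2}\bZ_{\cJ}^{\sT}\bZ_{\cJ}\bS^{1/2}/d+\lambda\id\bigr)^{-1}\bS^{1/2}.
\end{align*}
The crucial point is that $\bW$ depends only on $\bZ_{\cJ}$ (hence is independent of $\bz_i,\bz_j$), that $\bW\succeq\bfzero$ with $\|\bW\|_{\op}\le \|\bS\|_{\op}\le C$, and that $\bA\succeq\lambda\id$ gives the deterministic lower bounds $D_i,D_j\ge\lambda$ and $\det\bC\ge\lambda^2$.

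\textbf{Step 2 (Algebraic decomposition and remainder bound).} I would split using the identity
\begin{align*}
\frac{D_{ij}}{D_iD_j-D_{ij}^2} \;=\; \frac{D_{ij}}{D_iD_j} \;+\; \frac{D_{ij}^3}{(D_iD_j)(D_iD_j-D_{ij}^2)}.
\end{align*}
For the remainder, the denominator is bounded below by $\lambda^4$ deterministically, so it suffices to bound $\E|D_{ij}^3 g(\bz_i)g(\bz_j)|\le (\E D_{ij}^6)^{1/2}$ by Cauchy--Schwarz (using $\|g\|_2=1$ and independence of $\bz_i,\bz_j$ given $\bW$). Sub-Gaussianity applied to the bilinear form $\bz_i^{\sT}\bW\bz_j$ yields $\E(\bz_i^{\sT}\bW\bz_j)^6\le C\|\bW\|_F^6$, and the bound $\|\bW\|_F\le\sqrt{d}\|\bW\|_{\op}\le C\sqrt{d}$ gives $\E D_{ij}^6\le C d^3/d^6 = C/d^3$, so the remainder contributes $O(d^{-3/2})$.

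\textbf{Step 3 (Main term via orthogonality---the hardest step).} For the main term, I would condition on $\bW$, observe that $D_i=D_i(\bz_i)$, $D_j=D_j(\bz_j)$, and that $D_{ij}$ is bilinear in $(\bz_i,\bz_j)$. Factorizing the expectation yields
\begin{align*}
\E\Big[\frac{D_{ij}\,g(\bz_i)g(\bz_j)}{D_iD_j}\,\Big|\,\bW\Big] \;=\; \frac{1}{d}\,\langle \bv_i,\bW\bv_j\rangle,\qquad \bv_k:=\E\Big[\frac{\bz_k g(\bz_k)}{D_k}\,\Big|\,\bW\Big].
\end{align*}
Now comes the key step: using the exact identity $1/D_k=1/\bar D - \delta_k/(D_k\bar D)$ with $\bar D:=\E[D_k|\bW]$ and $\delta_k:=D_k-\bar D$, the leading piece $\E[\bz_k g(\bz_k)]/\bar D$ vanishes by the hypothesis $\E[\bz g(\bz)]=0$, leaving $\bv_k = -\E[\bz_k g(\bz_k)\delta_k/(D_k\bar D)\,|\,\bW]$. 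For any unit vector $\bu$, two applications of Cauchy--Schwarz give
\begin{align*}
|\langle\bu,\bv_k\rangle| \;\le\; \frac{1}{\lambda\bar D}\,(\E\langle\bu,\bz_k\rangle^4)^{1/4}(\E\delta_k^4)^{1/4}\,\|g\|_2.
\end{align*}
Sub-Gaussianity controls the first factor by $C$, and Hanson--Wright applied to the centered quadratic form $d\delta_k=\bz_k^{\sT}\bW\bz_k-\E[\bz_k^{\sT}\bW\bz_k]$ gives $\E\delta_k^4\le C\|\bW\|_F^4/d^4\le C/d^2$, so $\|\bv_k\|_2\le C/\sqrt{d}$. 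Consequently the main term is bounded by $(1/d)\|\bW\|_{\op}\|\bv_i\|\|\bv_j\|\le C/d^2$, which is in fact stronger than needed. Combining the two contributions and integrating over $\bW$ (all bounds depend only on deterministic quantities $\|\bS\|_{\op}$ and the sub-Gaussian constant) gives the claimed $O(d^{-3/2})$ bound.

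The main technical obstacle is Step 3: setting up the identity $1/D_k = 1/\bar D + (\text{correction})$ so that the orthogonality $\E[\bz g(\bz)]=0$ eliminates the leading term and the correction factor $\delta_k$ is controlled (via Hanson--Wright) at scale $1/\sqrt{d}$, which is precisely what produces the extra $d^{-1/2}$ beyond the naive estimate.
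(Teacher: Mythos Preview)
Your proof is correct and follows essentially the same approach as the paper: the leave-two-out Schur complement with the matrix $\bW$ (the paper's $\bR_0$), the same split of the resolvent entry into a main term $D_{ij}/(D_iD_j)$ plus a cubic remainder bounded via Hanson--Wright, and the same key idea of expanding $D_k^{-1}$ around $\bar D^{-1}$ so that the orthogonality $\E[\bz g(\bz)]=0$ kills the leading contribution while the fluctuation $\delta_k$ is controlled at scale $d^{-1/2}$. The only cosmetic difference is that you package the main term via the vectors $\bv_k=\E[\bz_k g_k/D_k\,|\,\bW]$ and bound $\langle\bv_i,\bW\bv_j\rangle/d$, which in fact yields $O(d^{-2})$ for that piece, slightly sharper than the paper's $O(d^{-3/2})$ obtained by expanding both $q_1^{-1}$ and $q_2^{-1}$ and bounding the surviving cross term directly with H\"older.
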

       \begin{proof}
         Without loss of generality, we can consider $i=1$, $j=2$.  Further, we let $\bZ_0\in\reals^{(n-2)\times d}$
         be the matrix comprising the last $n-2$ rows of $\bZ$, and $\bU\in \reals^{d\times 2}$ be the matrix with columns
         $\bU\bfe_1= \bz_{1}$,  $\bU\bfe_2= \bz_{2}$. We finally define the matrices $\bR_0\in\reals^{d\times d}$ and
         $\bY=(Y_{ij})_{i,j\le 2}$:
         \begin{align}
           \bR_0 & := \lambda\bS^{1/2}\big(\bS^{1/2}\bZ_0^{\tr}\bZ_0\bS^{1/2}/d+\lambda\id_{d}\big)^{-1}\bS^{1/2}\, ,\\
           \bY& := \big(\bZ\bS\bZ^{\tr}/d+\lambda\id_n\big)^{-1}\, .
         \end{align}
         Then, by a simple linear algebra calculation, we have
         \begin{align}
           \bY &= \Big(\bU^{\tr}\bR_0\bU/d +\lambda\id_2\Big)^{-1}\, ,\\
             Y_{12}& = -\frac{\<\bz_{1},\bR_0\bz_2\>/d}{(\lambda+\<\bz_{1},\bR_0\bz_{2}\>/d)
               (\lambda+\<\bz_{1},\bR_0\bz_{2}\>/d)-\<\bz_{1},\bR_0\bz_2\>^2/d^2}\, .
         \end{align}
         Note that since $\bR_0\succeq 0$, we have $\<\bz_{1},\bR_0\bz_2\>^2\le \<\bz_{1},\bR_0\bz_{1}\>\<\bz_{2},\bR_0\bz_{2}\>$, and therefore
         \begin{align}
           Y_{12} & = Y_{12}^{(1)}+Y_{12}^{(2)}\, ,\\
           Y_{12}^{(1)} &:= -\frac{\<\bz_{1},\bR_0\bz_2\>/d}{(\lambda+\<\bz_{1},\bR_0\bz_{1}\>/d)
               (\lambda+\<\bz_{2},\bR_0\bz_{2}\>/d)}\, ,\\
           |Y_{12}^{(2)}|& \le \frac{1}{\lambda^4d^3}|\<\bz_{1},\bR_0\bz_2\>|^3\, .
         \end{align}
         Denote by $\E_{+}$ expectation with respect to $\bz_{1},\bz_2$ (conditional on $(\bz_i)_{2<i\le n}$).
         We  have
         \begin{align*}
           \big|\E_+\{Y_{12}\, g(\bz_{1})\, g(\bz_2)\}\big| & \le \big|\E_+\{Y_{12}^{(1)}\, g(\bz_{1})\, g(\bz_2)\}\big| +
                                                              \E_+\{(Y_{12}^{(2)})^2\}^{1/2}\, \E_+\{g(\bz_{1})^2\, g(\bz_2)^2\}^{1/2} \\
          &\le \big|\E_+\{Y_{12}^{(1)}\, g(\bz_{1})\, g(\bz_2)\}\big| +
            \E_+\{(Y_{12}^{(2)})^2\}^{1/2}\\
                                                        & \le \big|\E_+\{Y_{12}^{(1)}\, g(\bz_{1})\, g(\bz_2)\}\big| +C\, d^{-3/2}\, .
         \end{align*}
         Here the last step follows by the Hanson-Wright inequality.
         We therefore only have to bound the first term. Defining $q_j:= \lambda+\<\bz_{j},\bR_0\bz_{j}\>/d$,
         $\oq_j=\E_+ q_j$, $g_j=g(\bz_j)$, $j\in\{1,2\}$,
         \begin{align*}
           \big|\E_+\{Y_{12}^{(1)}\, g_1\, g_2\}\big| & \le
                                                             \Big|\E_+\Big\{\oq^{-2}\frac{\<\bz_1,\bR_0\bz_{2}\>}{d} g_1g_2\Big\}\Big|\\
         &\qquad{}  +2
           \Big|\E_+\Big\{\Big(q_1^{-1}-\oq^{-1}\Big)\oq^{-2}\frac{\<\bz_1,\bR_0\bz_{2}\>}{d} g_1g_2\Big\}\Big|\\
                                                      &\qquad{}+ \Big|\E\Big\{\Big(q_1^{-1}-\oq^{-1}\Big) \Big(q_2^{-1}-\oq^{-1}\Big)\frac{\<\bz_1,\bR_0\bz_{2}\>}{d}
                                                        g_1g_2\Big\}\Big|\\
                                                      &\stackrel{(a)}{\le} \Big|\E_+\Big\{\Big(q_1^{-1}-\oq^{-1}\Big) \Big(q_2^{-1}-\oq^{-1}\Big)
                                                        \frac{\<\bz_1,\bR_0\bz_{2}\>}{d} g_1g_2\Big\}\Big|\\
                                                           &\le  \frac{1}{\lambda^4}\E\Big\{|q_1-\oq| |q_2-\oq|\Big|\frac{\<\bz_1,\bR_0\bz_{2}\>}{d}\Big|
                                                             |g_1g_{2}|\Big\}\, .
         \end{align*}
         Here $(a)$ follows from the orthogonality of $g(\bz)$ to linear functions.

         We then conclude
         \begin{align*}
           \big|\E_+\{Y^{(1)}_{12}\, g_1\, g_2\}\big| &\stackrel{(a)}{\le}  C
                                                             \E_+\big\{|q_1-\oq|^8\big\}^{1/4}
                                                             \E_+\big\{(\<\bz_1,\bR_0\bz_{2}\>/d)^4\big\}^{1/4}\\
           &\le  C \E_+\big\{|\<\bz_1,\bR_0\bz_1\>/d-\E_+\<\bz_1,\bR_0\bz_1\>/d|^8\big\}^{1/4}
             \E_+\big\{(\<\bz_1,\bR_0\bz_2\>/d)^4\big\}^{1/4}\\
                                                           & \stackrel{(b)}{\le}  C (d^{-1/2})^2\times Cd^{-1/2}\le Cd^{-3/2}\, .
         \end{align*}
         Here $(a)$ follows from H\"older's inequality and $(b)$ from
         the Hanson-Wright inequality using the fact that $\|\bR_0\|_{\op}$ is bounded. The proof is completed by taking expectation over $(\bz_i)_{2<i\le n}$.
       \end{proof}

       \begin{lemma}\label{lemma:CovResolvent}
         Under the definitions and assumptions of
         Lemma~\ref{lemma:ExpResolvent}, let
         $Y_{ij}:=(\bZ\bS\bZ^{\tr}/d+\lambda\id_n)^{-1}_{i,j}$.
         Then, for any tuple of four distinct indices
         $i,j,k,l$, we have
         \begin{align}
           \big|\E\{Y_{ij}Y_{kl}g(\bz_i)g(\bz_j) g(\bz_k)g(\bz_l)\}\big|\le Cd^{-5/2}\, .
         \end{align}
       \end{lemma}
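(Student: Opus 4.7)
My plan is to extend the leave-rows-out argument of Lemma~\ref{lemma:ExpResolvent} from a single off-diagonal resolvent entry to a product of two such entries on disjoint pairs of indices, making essential use of the conditional independence of the four rows $\bz_i,\bz_j,\bz_k,\bz_l$ given the remaining rows, together with the orthogonality conditions $\E g(\bz_1)=0$ and $\E[\bz_1 g(\bz_1)]=\bfzero$.

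First I would condition on all rows of $\bZ$ other than $\bz_i,\bz_j,\bz_k,\bz_l$; call this submatrix $\bZ_0$ and define $\bR_0 := \lambda\bS^{1/2}(\bS^{1/2}\bZ_0^{\tr}\bZ_0\bS^{1/2}/d+\lambda\id_d)^{-1}\bS^{1/2}$. A Schur-complement identity expresses the $4\times 4$ principal submatrix of $\bY$ indexed by $\{i,j,k,l\}$ as $(\bU^{\tr}\bR_0\bU/d+\lambda\id_4)^{-1}$, with $\bU=[\bz_i\,|\,\bz_j\,|\,\bz_k\,|\,\bz_l]\in\reals^{d\times 4}$. Writing this inverse as a Neumann series around its diagonal $\diag(q_i,q_j,q_k,q_l)$, $q_m:=\lambda+\langle\bz_m,\bR_0\bz_m\rangle/d$, and off-diagonal perturbation $N_{ab}:=\langle\bz_a,\bR_0\bz_b\rangle/d$ for $a\neq b$, each off-diagonal entry splits as $Y_{ab}=Y^{(1)}_{ab}+Y^{(2)}_{ab}$ with $Y^{(1)}_{ab}=-N_{ab}/(q_aq_b)$ and $Y^{(2)}_{ab}$ a polynomial of degree $\ge 3$ in the $N$'s divided by products of $q$'s. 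By Hanson-Wright, every $N_{ab}$ has $L^p$-norm of order $d^{-1/2}$, so $Y^{(2)}_{ab}$ has $L^p$-norm of order $d^{-3/2}$.

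The main work is analyzing $\E_+\{Y^{(1)}_{ij}Y^{(1)}_{kl}\,g_ig_jg_kg_l\}$. I would substitute $1/q_m=1/\bar q+\epsilon_m$ with $\bar q:=\lambda+\E_+\langle\bz_1,\bR_0\bz_1\rangle/d$ deterministic given $\bZ_0$ and $\epsilon_m:=-(q_m-\bar q)/(\bar q q_m)$ a function of $\bz_m$ alone, and expand the product $1/(q_iq_jq_kq_l)$ into $16$ sub-terms indexed by $S\subseteq\{i,j,k,l\}$. By row independence, each sub-term factors over the two pairs $\{i,j\}$ and $\{k,l\}$, and each pair-factor has the form $\E_+\{N_{ab}g_ag_b\prod_{m\in S\cap\{a,b\}}\epsilon_m\}$. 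Expanding $N_{ab}=d^{-1}\sum_{p,q}(\bR_0)_{pq}z_{a,p}z_{b,q}$ and invoking $\E[\bz g(\bz)]=\bfzero$ on the side $r\in\{a,b\}\setminus S$, any pair-factor in which at most one of $a,b$ lies in $S$ vanishes identically. Hence only the $S=\{i,j,k,l\}$ sub-term survives, and by Hölder's inequality together with the bounds $\|\epsilon_m\|_{L^p}=O(d^{-1/2})$ and $\|N_{ab}\|_{L^p}=O(d^{-1/2})$, it is $O(d^{-3})$.

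The chief obstacle will be handling the cross terms $\E_+\{Y^{(1)}_{ij}Y^{(2)}_{kl}\,g_ig_jg_kg_l\}$ and its symmetric twin. A naive Hölder bound combining $\|Y^{(1)}\|_{L^p}=O(d^{-1/2})$ with $\|Y^{(2)}\|_{L^p}=O(d^{-3/2})$ gives only $O(d^{-2})$, one power of $d^{-1/2}$ short of the target. To recover the missing factor I would use the explicit Neumann form of $Y^{(2)}_{kl}$: every term of order $\ge 2$ in the expansion involves a product of off-diagonal bilinears $N_{k r_1}N_{r_1 r_2}\cdots N_{r_s l}$ whose intermediate indices $r_1,\ldots,r_s$ run through $\{i,j\}$ (since $N$ is off-diagonal), so at least one row $\bz_{r}$ with $r\in\{i,j\}$ appears together with the row $\bz_i$ or $\bz_j$ already present in $Y^{(1)}_{ij}$. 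Reapplying the row-independence cancellation inside the cross term—now exploiting that $g(\bz_r)$ is orthogonal to both constants and linear functions of $\bz_r$—produces an extra $d^{-1/2}$ in moment and yields an $O(d^{-5/2})$ bound. The $Y^{(2)}_{ij}Y^{(2)}_{kl}$ contribution is immediate from Hölder and is $O(d^{-3})$. Taking the outer expectation over $\bZ_0$ preserves all bounds and concludes the proof.
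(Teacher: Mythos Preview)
Your overall architecture—Schur complement on the four distinguished rows, Neumann expansion around the diagonal, and exploiting the orthogonality of $g(\bz_r)$ to constants and linears—matches the paper's. But there is a concrete error in the expansion that breaks the tail estimates.

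In the $4\times 4$ block $(\bU^{\tr}\bR_0\bU/d+\lambda\id_4)^{-1}$, the off-diagonal entry $Y_{ab}$ has a nonvanishing \emph{order-two} Neumann term: for $\{a,b\}=\{i,j\}$ the intermediate index ranges over $\{k,l\}$, giving $\sum_{r\in\{k,l\}}q_a^{-1}N_{ar}q_r^{-1}N_{rb}q_b^{-1}$. So your remainder $Y^{(2)}_{ab}$ starts at degree $2$, not degree $\ge 3$, and its $L^p$-norm is only $O(d^{-1})$, not $O(d^{-3/2})$. (You yourself acknowledge ``every term of order $\ge 2$'' later, but the earlier norm bound and the subsequent bookkeeping use the wrong $d^{-3/2}$.) Two consequences: (a) your ``$Y^{(2)}_{ij}Y^{(2)}_{kl}$ is $O(d^{-3})$ by H\"older'' is false—H\"older gives only $O(d^{-2})$; (b) the cross term $Y^{(1)}_{ij}Y^{(2)}_{kl}$ is only $O(d^{-3/2})$ by H\"older, so you are two powers of $d^{-1/2}$ short, not one.

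The paper fixes exactly this by expanding one order further, writing $Y_{12}=Y^{(1)}_{12}+Y^{(2)}_{12}+Y^{(3)}_{12}+Y^{(\ge 4)}_{12}$ with $Y^{(\ell)}$ the exact degree-$\ell$ term and $\E_+\{|Y^{(\ell)}|^k\}^{1/k}\le c_kd^{-\ell/2}$. The remainder then legitimately contributes $O(d^{-5/2})$ by H\"older, and one treats the pairs $(\ell_1,\ell_2)\in\{(1,1),(1,2),(1,3),(2,2)\}$ individually. For each such pair one identifies those indices $m$ whose row $\bz_m$ appears only linearly (through a single $N$) apart from $q_m^{-1}$ and $g_m$; orthogonality of $g_m$ to linears lets you replace $q_m^{-1}$ by $q_m^{-1}-\bar q^{-1}=O_{L^p}(d^{-1/2})$, gaining one factor per such index. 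In $(1,2)$ one gains on three indices (total $d^{-3}$); in $(2,2)$ on two indices (total $d^{-3}$); in $(1,3)$ on two indices (total $d^{-3}$). Your heuristic that the intermediate indices of $Y^{(2)}_{kl}$ lie in $\{i,j\}$ is correct and is precisely what makes these cancellations work, but you must carry out the $(2,2)$ case explicitly rather than dispatch it by a norm bound that does not hold.
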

       \begin{proof}
         The proof is analogous to the one of Lemma \ref{lemma:ExpResolvent}. Without loss of generality,
         we set $(i,j,k,l) = (1,2,3,4)$, denote by $\bZ_0\in \reals^{(n-4)\times d}$ the matrix with rows $(\bz_{\ell})_{\ell\ge 5}$, and define the $d\times d$ matrix
          \begin{align}
            \bR_0 & := \lambda\bS^{1/2}\big(\bS^{1/2}\bZ_0^{\tr}\bZ_0\bS^{1/2}/d+\lambda\id_{n-2}\big)^{-1}
                    \bS^{1/2}\, .
          \end{align}
          We then have that $\bY = (Y_{ij})_{i,j\le 4}$ is given by
          \begin{align}
            \bY &= (\diag(\bq)+\bA)^{-1}\, ,\\
            q_i & := \oq+ Q_i\, ,\;\;\; \oq:=\lambda+\trace(\bR_0)/d\, ,\;\;
                  Q_i =  (\<\bz_i,\bR_0\bz_i\>-\E  \<\bz_i,\bR_0\bz_i\>)/d \, , \\
            A_{ij} & := \begin{cases}
              \<\bz_i,\bR_0\bz_j\>/d & \mbox{ if $i\neq j$,}\\
              0 & \mbox{ if $i = j$.}\\
              \end{cases}
          \end{align}
          In what follows we denote by $\E_+$ expectation with respect
          to $(\bz_i)_{i\le 4}$, with $\bZ_0$ fixed.
          Note that, by the Hanson-Wright inequality,
          $\E_+\{|A_{ij}|^k\}^{1/k}\le c_k\, d^{-1/2}$,
          $\E_+\{|Q_{i}|^k\}^{1/k}\le c_k\, d^{-1/2}$ for each $k\ge 1$.
          We next compute the Taylor expansion of $Y_{12}$ and $Y_{3,4}$ in powers of $\bA$ to get
          \begin{align}
            Y_{12}& = Y^{(1)}_{12}+Y^{(2)}_{12}+Y^{(3)}_{12}+Y^{(4)}_{12}\, ,\\
            Y^{(1)}_{12}&:= -q_{1}^{-1}A_{12}q_2\, ,\\
            Y^{(2)}_{12}&:= q_1^{-1}A_{13}q_3^{-1}A_{32}q_2^{-1} +q_1^{-1}A_{14}q_4^{-1}A_{41}q_2^{-1}\, ,\\
            Y^{(3)}_{12}& := -\sum_{i_1\neq i_2, i_1\neq 1 i_2\neq 2} q_1^{-1}A_{1i_1}q_{i_1}^{-1}A_{i_1i_2}q_{i_2}^{-1}
                          A_{i_22}q_{2}^{-1}\, ,
          \end{align}
          and similarly for $Y_{34}$.
          It is easy to show that $\E_+\{|Y^{(\ell)}_{ab}|^k\}^{1/k}\le c_kd^{-\ell/2}$, for all $k\ge 1$. Therefore,
          using $\E\{g(\bz_i)^2\}\le C$ and Cauchy-Schwarz inequality, and writing $g_i = g(\bz_i)$:
          \begin{align}
            \big|\E\{Y_{12}Y_{34}g_1g_2g_3g_4\}\big|= \sum_{\ell_1+\ell_2\le 4}
            \big|\E\{Y^{(\ell_1)}_{12}Y^{(\ell_2)}_{34}g_1g_2g_3g_4\}\big|+ Cd^{-5/2}\, .
          \end{align}
          The proof is completed by bounding each of the terms above, which we now do. By symmetry
          it is sufficient to consider $\ell_1\le \ell_2$ and therefore we are left with the $4$ pairs
          $(\ell_1,\ell_2)\in \{(1,1),(1,2),(1,3),(2,2)\}$.

          \vspace{0.1cm}
          
          \noindent{\bf Term $(\ell_1,\ell_2) = (1,1)$.} By the same argument as in the proof of Lemma \ref{lemma:ExpResolvent}, we have $|\E\{A_{ij}q_i^{-1}q_j^{-1}g_ig_j\} |\le Cd^{-3/2}$ and therefore
          \begin{align}
            \big|\E_+\{Y^{(1)}_{12}Y^{(1)}_{34}g_1g_2g_3g_4\}\big|= \big|\E_+\{A_{12}q_1^{-1}q_2^{-1}g_1g_2\} \big|\cdot \big|\E\{A_{34}q_3^{-1}q_4^{-1}g_3g_4\} \big|\le Cd^{-3}\, .
          \end{align}

          \vspace{0.1cm}
          
          \noindent{\bf Term $(\ell_1,\ell_2) = (1,2)$.} Note that each of the two terms in the definition of
          $Y^{(2)}_{34}$ contributes a summand with the same
          structure.  Hence we can consider just the one 
          resulting in the largest expectation, say $q_3^{-1}A_{31}q_{1}^{-1}A_{14}q_4^{-1}$
          \begin{align*}
            \big|\E_+\{Y^{(1)}_{12}Y^{(2)}_{34}g_1g_2g_3g_4\}\big|&= 2   \big|\E_+\{q_{1}^{-1}A_{12}q^{-1}_2
                                                                  q_3^{-1}A_{31}q_{1}^{-1}A_{14}q_4^{-1}g_1g_2g_3g_4\}\big|\\
 & \stackrel{(a)}{=} 2   \big|\E_+\{q_{1}^{-2}A_{12}(q^{-1}_2-\oq^{-1})
   (q_3^{-1}-\oq^{-1})A_{31}A_{14}(q_4^{-1}-\oq^{-1})g_1g_2g_3g_4\}\big|\\
                                                                & \stackrel{(b)}{\le} C \E_+\{|A_{12}|^{p}\}^{1/p} \E\{|A_{13}|^{p}\}^{1/p} \E\{|A_{13}|^{p}\}^{1/p}\E_+\{|q^{-1}_2-\oq^{-1}|^p\}^{1/p}\E\{|q^{-1}_3-\oq^{-1}|^p\}^{1/p}\\
            &\phantom{AAAAA}\cdot \E\{|q^{-1}_4-\oq^{-1}|^p\}^{1/p}\|g\|^4_{L^{2}}\\
                                                                &\stackrel{(c)}{\le} Cd^{-3}\, .
          \end{align*}
          Here $(a)$ holds because $g_i$ is orthogonal to $\bz_i$ for $i\in\{2,3,4\}$ and hence the terms $\oq^{-1}$ 
          have vanishing contribution; $(b)$~ by H\"older for $p= 12$, and using the fact that $q_i^{-1}$ is bounded; $(c)$~ by the above bounds on the moments of $A_{ij}$, $Q_i$, plus
          $|q^{-1}_i-\oq^{-1}|\le C|Q_i|$.
          
          \vspace{0.1cm}
          
          \noindent{\bf Term $(\ell_1,\ell_2) = (1,3)$.} Taking into account symmetries,
          there are only two distinct terms to  consider in the sum
          defining $Y^{(3)}_{34}$, which we can identify with the
          following ones:
          \begin{align*}
            \big|\E_+\{Y^{(1)}_{12}Y^{(3)}_{34}&g_1g_2g_3g_4\}\big|\le
                                                              C\big|\E_+\big\{q_{1}^{-1}A_{12}q^{-1}_2
                                                                  q_3^{-1}A_{31}q_{1}^{-1}A_{12}q_{2}^{-1}A_{24}q_4^{-1}g_1g_2g_3g_4\big\}\big|\\
            &+C\big|\E_+\big\{q_{1}^{-1}A_{12}q^{-1}_2
                                                                  q_3^{-1}A_{31}q_{1}^{-1}A_{13}q_{3}^{-1}A_{34}q_4^{-1}g_1g_2g_3g_4\big\}\big|=: C\cdot T_1+C\cdot T_2\, .
          \end{align*}
          Notice that in the first term $\bz_3$ only appears in $q_3$, $A_{31}$, and $g_3$, and similarly $\bz_4$ only appears in $q_4$, $A_{24}$, and $g_4$. Hence
          \begin{align*}
            T_1=   \big|\E_+\big\{q_{1}^{-1}A_{12}q^{-1}_2
                                                                  (q_3^{-1}-\oq^{-1})A_{31}q_{1}^{-1}A_{12}q_{2}^{-1}A_{24}(q_4^{-1}-\oq^{-1})g_1g_2g_3g_4\big\} \big|\le C d^{-3}\, ,
          \end{align*}
          where the last inequality follows again by H\"older.
          Analogously, for the second term we have
          \begin{align*}
  T_2  =   \big|\E_+\big\{q_{1}^{-1}A_{12}(q^{-1}_2-\oq^{-1})
                                                                  q_3^{-1}A_{31}q_{1}^{-1}A_{32}q_{3}^{-1}A_{24}(q_4^{-1}-\oq^{-1})g_1g_2g_3g_4\big\} \big|\le C d^{-3}\, ,
          \end{align*}
          This proves the desired bound for $(\ell_1,\ell_2) = (1,3)$.
          
          \vspace{0.1cm}
          
          \noindent{\bf Term $(\ell_1,\ell_2) = (2,2)$.} There are four terms that arise from the sum in the
          definition of $Y^{(2)}_{ij}$. By symmetry, these are equivalent by pairs
          \begin{align*}
            \big|\E_+\{Y^{(2)}_{12}Y^{(2)}_{34}g_1g_2g_3g_4\}\big|&\le
                                                              2\big|\E_+\big\{q_{1}^{-1}A_{13}q^{-1}_3A_{32}q_2^{-1}
                                                                  q_3^{-1}A_{31}q_{1}^{-1}A_{14}q_{4}^{-1}g_1g_2g_3g_4\big\}\big|\\
            &\qquad{} +   2\big|\E_+\big\{q_{1}^{-1}A_{13}q^{-1}_3A_{32}q_2^{-1}
              q_3^{-1}A_{32}q_{2}^{-1}A_{24}q_{4}^{-1}g_1g_2g_3g_4\big\}\big|\\
            &\le
              2\big|\E_+\big\{q_{1}^{-1}A_{13}q^{-1}_3A_{32}(q_2^{-1}-\oq^{-1})
                                                                  q_3^{-1}A_{31}q_{1}^{-1}A_{14}(q_{4}^{-1}-\oq^{-1})g_1g_2g_3g_4\big\}\big|\\
            &\qquad{} +   2\big|\E_+\big\{(q_{1}^{-1}-\oq^{-1})A_{13}q^{-1}_3A_{32}q_2^{-1}
              q_3^{-1}A_{32}q_{2}^{-1}A_{24}(q_{4}^{-1}-\oq^{-1})g_1g_2g_3g_4\big\}\big|\\
                                                                &\le Cd^{-3}\, .
          \end{align*}
          This completes the proof of this lemma.
       \end{proof}

        \begin{lemma}\label{lemma:CovResolvent2}
          Under the definitions and assumptions of Lemma \ref{lemma:CovResolvent}, further assume $\E\{|g(\bz)|^{2+\eta}\}\le C$ for some constants $0<C,\eta<\infty$.
          for any triple of four distinct indices
         $i,j,k$, we have
         \begin{align}
           \big|\E\{Y_{ij}Y_{jk}g(\bz_i) g(\bz_j)^2g(\bz_k)\}\big|&\le Cd^{-3/2}\, ,\label{eq:3Vertices}\\
           \big|\E\{Y_{ij}^2g(\bz_i)^2 g(\bz_l)^2\}\big|&\le Cd^{-1}\, . \label{eq:2Vertices}
         \end{align}
       \end{lemma}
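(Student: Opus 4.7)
The plan is to follow the same leave-one-out and Taylor-expansion strategy used in Lemmas~\ref{lemma:ExpResolvent} and \ref{lemma:CovResolvent}, but now carefully tracking which cancellations come from the orthogonality $\E[\bz g(\bz)] = \bfzero$ versus which do not (because $g(\bz)^2$ does not enjoy that cancellation). Throughout, let $\bR_0$ denote the reduced resolvent built from the rows of $\bZ$ that carry none of the distinguished indices, set $\oq := \lambda + \trace(\bR_0)/d$, $q_a := \lambda + \<\bz_a,\bR_0\bz_a\>/d = \oq + Q_a$ with $\E_+\{Q_a\} = 0$, and $A_{ab} := \<\bz_a,\bR_0\bz_b\>/d$ for $a\neq b$. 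By the Hanson-Wright inequality and $\|\bR_0\|_{\op}\le 1$, one has $\E_+\{|A_{ab}|^p\}^{1/p}\le c_p d^{-1/2}$ and $\E_+\{|Q_a|^p\}^{1/p}\le c_p d^{-1/2}$ for every fixed $p$.

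For the three-index estimate \eqref{eq:3Vertices}, I would first invoke the Schur-complement identity to write the $3\times 3$ block $(Y_{ab})_{a,b\in\{i,j,k\}}$ as $(\diag(\bq) + \bA)^{-1}$ and then Taylor-expand in $\bA$ to obtain
\begin{align*}
  Y_{ij} &= -q_i^{-1}A_{ij}q_j^{-1} + Y_{ij}^{(\ge 2)},\qquad
  Y_{jk} = -q_j^{-1}A_{jk}q_k^{-1} + Y_{jk}^{(\ge 2)},
\end{align*}
where $\E_+\{|Y_{ab}^{(\ge\ell)}|^p\}^{1/p}\le c_p d^{-\ell/2}$. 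The leading contribution to $\E\{Y_{ij}Y_{jk}g_ig_j^2g_k\}$ is therefore $\oq^{-4}\E\{A_{ij}A_{jk}g_ig_j^2g_k\}$ up to remainders that carry at least one factor $(q_a^{-1}-\oq^{-1})$ (bounded by $C|Q_a|$, hence by $Cd^{-1/2}$ in $L^p$) or an additional $A$-factor. For the leading term, conditioning first on $\bz_j$ and $\bR_0$ gives
\begin{align*}
  \E\{A_{ij}A_{jk}g_ig_j^2g_k\mid \bz_j,\bR_0\}
   = \tfrac{1}{d^2}\, g_j^2\, (\bR_0\bz_j)^{\tr}\E[\bz_i g_i]\,\E[\bz_k g_k]^{\tr}(\bR_0\bz_j) = 0,
\end{align*}
using independence of $(\bz_i,\bz_k)$ from $(\bz_j,\bR_0)$ and $\E[\bz_a g(\bz_a)]=\bfzero$. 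Hence the leading term vanishes identically, and the surviving contributions are bounded using H\"older's inequality by
\[
  C\,\E_+\{|A_{ij}|^{p}\}^{1/p}\,\E_+\{|A_{jk}|^p\}^{1/p}\,\E_+\{|q_a^{-1}-\oq^{-1}|^p\}^{1/p}\,\|g\|_{L^{p'}}^{4}
  \;\le\; C d^{-3/2},
\]
with an analogous bound for the $Y^{(\ge 2)}$ remainders, which are smaller.

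For \eqref{eq:2Vertices}, the approach is simpler because we do not need orthogonality to cancel anything: the leading term $\oq^{-4} A_{ij}^2\, g_i^2 g_j^2$ is already at the required scale. Indeed, conditioning on $\bz_j$ and $\bR_0$ and letting $\bG := \E[\bz_i\bz_i^{\tr}g(\bz_i)^2]$ (which is bounded in operator norm by a constant depending on $M,\eta$ because $z_i$ is sub-Gaussian and $\|g\|_{L^{2+\eta}}\le C$ by scaling and the implicit $2+\eta$ integrability),
\begin{align*}
  \E\{A_{ij}^2 g_i^2 g_j^2\mid \bz_j,\bR_0\}
   = \tfrac{1}{d^2}\, g(\bz_j)^2\, \bz_j^{\tr}\bR_0 \bG \bR_0 \bz_j
   \le \tfrac{C}{d^2}\, g(\bz_j)^2\, \|\bz_j\|_2^2,
\end{align*}
whose expectation over $\bz_j$ is $\le C/d$ since $\E\{g(\bz_j)^2\|\bz_j\|_2^2\}\le C d$ by sub-Gaussianity combined with the $2+\eta$-moment assumption on $g$. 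The remainders from replacing $q_a$ by $\oq$ and the higher-order terms in the Taylor expansion contribute at most $O(d^{-3/2})$, a smaller order.

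The main obstacle will be the bookkeeping in the three-index case: one must check that every cross term arising from the expansion either has at least three $A$-factors (hence is $O(d^{-3/2})$ in $L^1$) or admits an extra $(q_a^{-1}-\oq^{-1})$ factor beyond the vanishing leading expression. This is mostly mechanical, following the same case analysis as in Lemma~\ref{lemma:CovResolvent}, but must carefully exploit the fact that $g(\bz_j)^2$ does \emph{not} give any orthogonality to linear functions, so the cancellation mechanism is driven entirely by the $g(\bz_i)$ and $g(\bz_k)$ vertices. The $2+\eta$-integrability hypothesis is precisely what is needed to bound $\E\{g(\bz)^2\|\bz\|_2^2\}$ and the various $L^p$ products that appear in H\"older's inequality.
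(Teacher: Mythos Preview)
Your proposal is correct and follows essentially the same strategy as the paper: Taylor-expand the resolvent entries, exploit the orthogonality $\E[\bz_a g(\bz_a)]=\bfzero$ at the vertices $i$ and $k$ (where $g$ appears to the first power), and finish with H\"older. Two minor economies in the paper's version are worth noting. For \eqref{eq:3Vertices} the paper never expands $q_j^{-2}$ at all: since $g(\bz_j)^2$ carries no orthogonality, it simply keeps $q_j^{-2}$ as a bounded factor and replaces $q_i^{-1}\to(q_i^{-1}-\oq^{-1})$ and $q_k^{-1}\to(q_k^{-1}-\oq^{-1})$ directly, yielding $Cd^{-2}$ in one stroke without the term-by-term bookkeeping you anticipate. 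For \eqref{eq:2Vertices} the paper bypasses your conditioning computation entirely and just applies H\"older to $Y_{ij}^2 g_i^2 g_j^2$, using $\E\{|Y_{ij}|^p\}^{1/p}\le C_p d^{-1/2}$ together with the $(2+\eta)$-moment bound on $g$; your argument via $\bG=\E[\bz\bz^{\tr}g(\bz)^2]$ is correct but unnecessary.
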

       \begin{proof}
         This proof is very similar to the one of Lemma
         \ref{lemma:CovResolvent}, and we will follow the same
         notation introduced there.

         Consider Eq.~\eqref{eq:3Vertices}. Without loss of generality, we take $(i,j,k)=(1,2,3)$.
         Since $\E\{|Y^{(\ell)}_{ij}|^k\}\le c_kd^{-\ell/2}$, we have
         \begin{align}
           \big|\E_+\{Y_{12}Y_{23}g_1 g_2^2g_3\}\big|\le   \big|\E_+\{Y_{12}^{(1)}Y^{(1)}_{23}g_1 g_2^2g_3\}\big|+
           Cd^{-3/2}\, .
         \end{align}
         Further
         \begin{align*}
           \big|\E_+\{Y_{12}^{(1)}Y^{(1)}_{23}g_1 g_2^2g_3\}\big|& =      \big|\E_+\{q_1^{-1}A_{12}q_2^{-2}A_{23}q_{3}^{-1}g_1 g_2^2g_3\}\big|\\
                                                               & =  \big|\E_+\{(q_1^{-1}-\oq^{-1})A_{12}q_2^{-2}A_{23}(q_{3}^{-1}-\oq^{-1})g_1 g_2^2g_3\}\big|\\
           &\le C d^{-2}\, ,
         \end{align*}
         where the last bound follows from H\"older inequality.

         Finally, Eq.~\eqref{eq:2Vertices} follows immediately by H\"older inequality since $\E\{|Y_{ij}|^k\}^{1/k}\le C_kd^{-1/2}$ for all $k$.
       \end{proof}

       \begin{theorem}\label{thm:ExpResolvent}
         Let $\bZ=(z_{ij})_{i\le n, j\le d}$ be a random matrix with iid rows $\bz_1,\dots,\bz_n\in\reals^d$, with zero mean $C$-sub-Gaussian.
         Let $\bS\in\reals^{d\times d}$ be a symmetric
         matrix such that $\bfzero\preceq \bS\preceq C\id_d$ for some finite constant $C>1$. 
         Finally, let $g:\reals^d\to\reals$ be a measurable function such that $\E\{g(\bz_1)\} = \E\{\bz_1g(\bz_1)\}=0$,
         and $\E\{|g(\bz_1)|^{4+\eta}\}\le C$.

         Then, for any $\lambda>0$, with probability at least $1-Cd^{-1/4}$, we have
         \begin{align}
           \left|\frac{1}{d}\sum_{i< j\le n}\big(\bZ\bS\bZ^{\tr}/d+\lambda\id_n\big)^{-1}_{i,j}g(\bz_i)g(\bz_j)\right|\le
           C\, d^{-1/8}\, .\label{eq:MainExpResolvent}
         \end{align}
       \end{theorem}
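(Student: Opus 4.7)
The plan is to apply the second moment method (Chebyshev's inequality) using the three key moment/covariance estimates already established in Lemmas~\ref{lemma:ExpResolvent}, \ref{lemma:CovResolvent}, and \ref{lemma:CovResolvent2}. Write
\begin{align*}
S := \frac{1}{d}\sum_{i<j\le n}Y_{ij}\, g(\bz_i)g(\bz_j),\qquad Y_{ij}:=\big(\bZ\bS\bZ^{\tr}/d+\lambda\id_n\big)^{-1}_{i,j}.
\end{align*}
I will bound $|\E S|$ and $\Var(S)$ separately and then conclude by Chebyshev.

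First, Lemma~\ref{lemma:ExpResolvent} gives $|\E\{Y_{ij}g(\bz_i)g(\bz_j)\}|\le Cd^{-3/2}$ for all $i\neq j$. Since $n\asymp d$, summing over the $\binom{n}{2}=O(d^2)$ pairs yields $|\E S|\le \frac{1}{d}\cdot O(d^2)\cdot Cd^{-3/2}=O(d^{-1/2})$, which is already much smaller than the target $d^{-1/8}$.

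Second, expand $\Var(S)=\frac{1}{d^2}\sum_{(i,j),(k,l)}\Cov(Y_{ij}g_ig_j, Y_{kl}g_kg_l)$, with $i<j$, $k<l$, and split into three cases by the size of $\{i,j\}\cap\{k,l\}$. For the four-distinct-index case, Lemma~\ref{lemma:CovResolvent} bounds $|\E[Y_{ij}Y_{kl}g_ig_jg_kg_l]|\le Cd^{-5/2}$, and the product of marginal expectations is $O(d^{-3})$ by Lemma~\ref{lemma:ExpResolvent}; with $O(n^4)=O(d^4)$ such index tuples the contribution is $\frac{1}{d^2}\cdot O(d^4)\cdot O(d^{-5/2})=O(d^{-1/2})$. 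For the three-distinct-index case (one shared index), Lemma~\ref{lemma:CovResolvent2} gives $|\E[Y_{ij}Y_{jk}g_ig_j^2g_k]|\le Cd^{-3/2}$; there are $O(d^3)$ such triples, so the contribution is $\frac{1}{d^2}\cdot O(d^3)\cdot O(d^{-3/2})=O(d^{-1/2})$. For the diagonal case $\{i,j\}=\{k,l\}$, the same lemma yields $|\E[Y_{ij}^2 g_i^2g_j^2]|\le Cd^{-1}$; with $O(d^2)$ pairs this contributes $\frac{1}{d^2}\cdot O(d^2)\cdot O(d^{-1})=O(d^{-1})$. Combining, $\Var(S)\le C d^{-1/2}$.

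Third, Chebyshev's inequality at level $t=d^{-1/8}$ gives $\P(|S-\E S|\ge d^{-1/8})\le \Var(S)/d^{-1/4}\le Cd^{-1/4}$. Combined with the deterministic bound $|\E S|\le Cd^{-1/2}\le \tfrac{1}{2}d^{-1/8}$ for $d$ large, this yields $|S|\le Cd^{-1/8}$ with probability at least $1-Cd^{-1/4}$, which is exactly~\eqref{eq:MainExpResolvent}.

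The only nontrivial step is checking that the covariance bookkeeping in Step~2 really does not produce a larger term than expected; the delicate point is the three-index contribution, which saturates the variance at $d^{-1/2}$ (matching the four-index contribution). Everything else is a routine counting argument, and no new probabilistic input is required beyond the three lemmas already proved. The assumption $\E\{|g(\bz_1)|^{4+\eta}\}\le C$ enters precisely where Lemma~\ref{lemma:CovResolvent2} is invoked, and the orthogonality conditions $\E g=0$, $\E\{\bz_1 g(\bz_1)\}=0$ are what make each of the three moment bounds decay with $d$.
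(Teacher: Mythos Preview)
Your proof is correct and follows essentially the same approach as the paper's: a second-moment bound obtained by splitting the double sum according to the overlap of the index pairs, invoking Lemmas~\ref{lemma:CovResolvent} and~\ref{lemma:CovResolvent2} for the three cases, and finishing with Chebyshev. The only cosmetic difference is that the paper bounds $\E\{S^2\}$ directly (so Lemma~\ref{lemma:ExpResolvent} is not invoked separately for the mean), whereas you split into $|\E S|$ and $\Var(S)$; both routes yield $\E\{S^2\}\le Cd^{-1/2}$ and the same conclusion.
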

       \begin{proof}
         Denote by $X$ the sum on the left-hand side of Eq.~\eqref{eq:MainExpResolvent}, and define
         $Y_{ij}:=(\bZ\bS\bZ^{\tr}/d+\lambda\id_n)^{-1}_{i,j}$, $g_i=g(\bz_i)$. Further,
         let
         $\cI_m:=\{(i,j,k,l): \; i<j\le n,  k<l\le n,
         |\{i,j\}\cap\{k,j\}|=m\}$,  $m\in\{0,1\}$. Then we have
         \begin{align*}
           \E\{X^2\}& = \frac{1}{d^2}\sum_{i<j}\sum_{k<l}\E\{Y_{ij}Y_{kl}g_ig_jg_kg_l\}\\
                    &\le  \frac{1}{d^2}\sum_{(i,j,k,l)\in\cI_0}\E\{Y_{ij}Y_{kl}g_ig_jg_kg_l\}
                      +\frac{1}{d^2}\sum_{(i,j,k,l)\in\cI_1}\E\{Y_{ij}Y_{kl}g_ig_jg_kg_l\}+
                      +\frac{1}{d^2}\sum_{i<j}\E\{Y_{ij}^2g^2_ig_j^2\}\\
                    &\le Cd^2 \big|\E\{Y_{12}Y_{34}g_1g_2g_3g_4\}\big|+ Cd \big|\E\{Y_{12}Y_{23}g_1g^2_2g_3\}\big|
                      + C\big|\E\{Y^2_{12}g^2_1g^2_2\}\big|\\
                    &\le Cd^{-1/2}\, .
         \end{align*}
         The proof is completed by Chebyshev inequality.
       \end{proof}
       
      \subsubsection{Proof of Theorem \ref{thm:npropd}: Variance term}

      Throughout this section we will refer to the events $\cE_1$, $\cE_2$ defined in Eqs.~\eqref{eq:Event1}, \eqref{eq:Event2}. 
      The variance is given by
        \begin{align}
          \varianceemp = \sigma_{\xi}^2 \E_{\bx}\big\{K(\bx,\bX)^{\tr} K(\bX,\bX)^{-2} K(\bx,\bX)\big\}\, .
        \end{align}

        The following lemma allows us to take the expectation with respect to $\bx$.
      \begin{lemma}\label{lemma:VarApprox}
        Under the assumptions of Theorem \ref{thm:npropd}, define  $\bM_0\in\reals^{n\times n}$ as in the statement of
        Lemma \ref{lemma:ApproxMV}.
        Then, with very high probability, we have
        \begin{align}
          \left| \frac{1}{\sigma_{\xi}^2}\varianceemp - \<\bM_0,\bK^{-2}\>\right|\le
          \frac{C\log d}{d}\, .
        \end{align}
      \end{lemma}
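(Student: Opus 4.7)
The plan is to rewrite $\varianceemp$ as a trace inner product between $\bM$ (defined in Lemma~\ref{lemma:ApproxMV}) and $\bK^{-2}$, and then approximate $\bM$ by $\bM_0$ using the Frobenius bound implicit in Lemma~\ref{lemma:ApproxMV}, leveraging the lower bound on the spectrum of $\bK$ from Lemma~\ref{lemma:ApproxK}. Concretely, I would proceed as follows.

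First, by Fubini and the cyclic property of trace,
\begin{align*}
\frac{1}{\sigma_{\xi}^2}\varianceemp
&= \E_{\bx}\big\{\Trace\big(\bK^{-2} K(\bx,\bX)K(\bx,\bX)^{\tr}\big)\big\}
= \Trace\big(\bK^{-2}\bM\big) = \<\bM,\bK^{-2}\>,
\end{align*}
where the entries of $\bM$ are exactly as in \eqref{eq:Mdef}. Therefore
\begin{align*}
\Big|\frac{1}{\sigma_{\xi}^2}\varianceemp - \<\bM_0,\bK^{-2}\>\Big| = \big|\<\bM-\bM_0,\bK^{-2}\>\big| \le \|\bM-\bM_0\|_F\,\|\bK^{-2}\|_F.
\end{align*}

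Next I would control each factor on the right on an event of very high probability. From Lemma~\ref{lemma:ApproxMV}, the off-diagonal entries satisfy $|M_{ij}-M_{0,ij}|\le C\log d/d^{5/2}$ and the diagonal entries satisfy $|M_{ii}-M_{0,ii}|\le C\log d/d^2$, so
\begin{align*}
\|\bM-\bM_0\|_F^2 \le n(n-1)\Big(\frac{C\log d}{d^{5/2}}\Big)^2 + n\Big(\frac{C\log d}{d^{2}}\Big)^2 \le \frac{C(\log d)^2}{d^3},
\end{align*}
using $M^{-1}\le d/n\le M$, hence $\|\bM-\bM_0\|_F \le C\log d/d^{3/2}$. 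From Lemma~\ref{lemma:ApproxK}, on an event of very high probability $\bK\succeq c_*\id_n$, so $\|\bK^{-2}\|_{\op}\le c_*^{-2}$ and thus $\|\bK^{-2}\|_F\le \sqrt{n}\,c_*^{-2}\le C\sqrt{d}$. Taking the intersection of the two very-high-probability events and combining gives
\begin{align*}
\big|\<\bM-\bM_0,\bK^{-2}\>\big| \le \frac{C\log d}{d^{3/2}}\cdot C\sqrt{d} = \frac{C\log d}{d},
\end{align*}
which is the claim. The argument is routine once Lemmas~\ref{lemma:ApproxMV} and~\ref{lemma:ApproxK} are in hand; the only subtle point worth noting is that the Frobenius bound on $\|\bM-\bM_0\|_F$ is dominated by the off-diagonal contribution (since there are $\Theta(n^2)$ of them), and this is precisely where the sharp off-diagonal estimate \eqref{eq:MBound1} pays off—replacing it with the weaker diagonal rate would cost a factor $\sqrt{d}$ and yield only an $O(\log d/\sqrt d)$ bound.
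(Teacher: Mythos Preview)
Your proof is correct and follows essentially the same route as the paper: write $\varianceemp/\sigma_\xi^2=\langle\bM,\bK^{-2}\rangle$, bound $|\langle\bM-\bM_0,\bK^{-2}\rangle|$ by $\|\bM-\bM_0\|_F\cdot\sqrt{n}\,\|\bK^{-2}\|_{\op}$, and plug in the Frobenius bound from Lemma~\ref{lemma:ApproxMV} together with the spectral lower bound on $\bK$ from Lemma~\ref{lemma:ApproxK}. The only cosmetic difference is that the paper quotes $\|\bM-\bM_0\|_F\le Cd^{-3/2}\log d$ directly from Lemma~\ref{lemma:ApproxMV}, whereas you rederive it from the entrywise estimates; both are fine.
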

      \begin{proof}
        First notice that, defining $\bM$ as in Eq.~\eqref{eq:Mdef}, we have
        \begin{align}
          \frac{1}{\sigma_{\xi}^2}\varianceemp = \<\bM,\bK^{-2}\>\, .
        \end{align}
        We then have,  with very high probability,
        \begin{align}
          \left| \frac{1}{\sigma^2}\varianceemp - \<\bM_0,\bK^{-2}\>\right|&\le
                                                                                               \left|\<\bM- \bM_0,\bK^{-2}\>\right|\\
                                                                                             &\le \|\bM- \bM_0\|_F \sqrt{n}\|\bK^{-2}\|_{\op}\\
                                                                                             &\stackrel{(a)}{\le} \frac{C\log d}{d^{3/2}}\times\sqrt{d}\times \|\bK^{-1}\|^2_{\op}\\
          &\stackrel{(b)} {\le} \frac{C\log d}{d}\, ,
        \end{align}
       where $(a)$ follows from Lemma \ref{lemma:ApproxMV} and $(b)$ from  Lemma \ref{lemma:ApproxK}. 
        \end{proof}
        In the following we define $\bB_0\in\reals^{n\times n}$ via
        \begin{align}
          \bB_0:= \frac{h'(0)}{d^2}\bX\bSigma\bX^{\tr}\, .\label{eq:B0Def}
        \end{align}
        The next lemma shows that $\bB_0$ is a good approximation for $\bB$, defined in Eq.~\eqref{eq:M0def1}.
        
        \begin{lemma}\label{lemma:BB0}
          Let $\bB$ be defined as per Eq.~\eqref{eq:M0def1}.
          With very high probability,  we have $\|\bB-\bB_0\|_{\op}\le Cd^{-3/2}$ and
          $\|\bB-\bB_0\|_*\le Cd^{-1/2}$.
        \end{lemma}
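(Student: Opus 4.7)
The plan is to use that each $D_i$ differs from $h'(0)$ by only $O(1/d)$, so $\bD$ is a small diagonal perturbation of $h'(0)\id$ and hence $\bB = d^{-1}\bD\bQ\bD$ is a small perturbation of $d^{-1}h'(0)^{2}\bQ$. The lemma then reduces to standard bounds on $\|\bQ\|_{\op}$ and on the diagonal matrix of perturbations, measured respectively in operator and nuclear norm.

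First, I would establish the Taylor estimate for $D_i$. On the event $\cE_1$ of \eqref{eq:Event1}, which holds with very high probability, $Q_{ii}\in[C^{-2},C^{2}]$ uniformly in $i\le n$. By the assumed smoothness of $h$ near the origin,
\[
h'\!\Bigl(\sqrt{Q_{ii}/d}\,g\Bigr) \;=\; h'(0) + \sqrt{Q_{ii}/d}\,g\,h''(0) + \tfrac{1}{2}(Q_{ii}/d)\,g^{2}\,h'''(0) + O\!\bigl((Q_{ii}/d)^{3/2}|g|^{3}\bigr),
\]
and taking expectation over $g\sim\normal(0,1)$ kills the odd-order terms, yielding $D_i = h'(0) + (Q_{ii}/(2d))h'''(0) + O(d^{-2})$. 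Uniformly over $i\le n$ this gives $|\delta_i|\le C/d$ where $\delta_i := D_i - h'(0)$. Setting $\bDelta := \diag(\delta_1,\ldots,\delta_n)$, I then have $\|\bDelta\|_{\op}\le C/d$ and $\|\bDelta\|_{*} = \sum_i |\delta_i|\le Cn/d \le C$ under Assumption~\ref{asmpt:kernel_nd_regime}.

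Next, I would bound $\|\bQ\|_{\op}$. The matrix $\bZ := \bX\bSigma^{-1/2}$ has i.i.d.\ isotropic, $C$-sub-Gaussian rows, so standard matrix concentration (\emph{e.g.}\ \cite{v-hdp-18}) gives $\|\bZ\|_{\op}\le C(\sqrt{n}+\sqrt{d})\le C\sqrt{d}$ with very high probability. Hence $\|\bX\|_{\op}^{2}\le \|\bSigma\|_{\op}\|\bZ\|_{\op}^{2}\le Cd$, and so
\[
\|\bQ\|_{\op} \;=\; \bigl\|\bX\bSigma\bX^{\tr}/d\bigr\|_{\op} \;\le\; \|\bSigma\|_{\op}\,\|\bX\|_{\op}^{2}/d \;\le\; C.
\]

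Finally, I would combine the pieces. Writing $\bD = h'(0)\id + \bDelta$, direct expansion gives
\[
\bB - \bB_0 \;=\; \frac{h'(0)}{d}\bigl(\bDelta\bQ + \bQ\bDelta\bigr) + \frac{1}{d}\bDelta\bQ\bDelta,
\]
once the leading $d^{-1}h'(0)^{2}\bQ$ is absorbed into $\bB_0$ via its definition~\eqref{eq:B0Def}. The operator-norm bound then follows from $\|AB\|_{\op}\le\|A\|_{\op}\|B\|_{\op}$:
\[
\|\bB-\bB_0\|_{\op} \;\le\; \frac{2|h'(0)|}{d}\|\bDelta\|_{\op}\|\bQ\|_{\op} + \frac{1}{d}\|\bDelta\|_{\op}^{2}\|\bQ\|_{\op} \;\le\; C/d^{2},
\]
stronger than the claimed $Cd^{-3/2}$. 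For the nuclear norm, using $\|AB\|_{*}\le\|A\|_{*}\|B\|_{\op}$,
\[
\|\bB-\bB_0\|_{*} \;\le\; \frac{2|h'(0)|}{d}\|\bDelta\|_{*}\|\bQ\|_{\op} + \frac{1}{d}\|\bDelta\|_{*}\|\bDelta\|_{\op}\|\bQ\|_{\op} \;\le\; C/d,
\]
again beating the claimed $Cd^{-1/2}$. The argument is purely perturbative; there is no serious obstacle beyond the two standard ingredients (Taylor expansion of $D_i$ on the event $\cE_1$, and the sub-Gaussian operator-norm bound on $\bX$).
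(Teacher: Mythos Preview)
Your approach is essentially the paper's: bound $\|\bD-h'(0)\id\|_{\op}$ via Taylor expansion, then control the perturbation $\bD\bQ\bD/d$ away from its leading part. Two remarks are worth making.

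First, a discrepancy you glossed over: as written in \eqref{eq:B0Def}, $\bB_0$ carries the prefactor $h'(0)$, not $h'(0)^2$. Your expansion of $\bB=d^{-1}(h'(0)\id+\bDelta)\bQ(h'(0)\id+\bDelta)$ has leading term $d^{-1}h'(0)^{2}\bQ$, which does \emph{not} equal $\bB_0=d^{-1}h'(0)\bQ$ unless $h'(0)=1$; with the stated definition the difference $\bB-\bB_0$ contains the order-$1/d$ term $d^{-1}h'(0)(h'(0)-1)\bQ$, and the lemma as stated would be false. This is a typo in the paper: its own proof, and the downstream cancellation of $\beta^{2}$ against $\bK_0^{-2}$ in the variance computation, only make sense with $\bB_0=h'(0)^{2}d^{-2}\bX\bSigma\bX^{\tr}$. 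Your argument is correct under the intended definition, but you should flag the mismatch rather than silently ``absorb'' the wrong power.

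Second, the paper uses only the crude Lipschitz bound $|h'(x)-h'(0)|\le C|x|$ to get $\|\bD-h'(0)\id\|_{\op}\le C/\sqrt{d}$, and then obtains the nuclear-norm estimate via the trivial $\|\,\cdot\,\|_*\le n\|\,\cdot\,\|_{\op}$. Your second-order Taylor expansion, exploiting $\E G=0$, sharpens this to $C/d$ and yields the stronger conclusions $\|\bB-\bB_0\|_{\op}\le C/d^{2}$ and $\|\bB-\bB_0\|_*\le C/d$. This refinement is correct and harmless for the downstream argument, which only needs the weaker rates.
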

        \begin{proof}
         Notice that $\bB= \bD\bX\bSigma\bX^{\tr}\bD/d^2$ and, on $\cE_1\cap\cE_2$,
              \begin{align}
                \big\|\bD-h'(0)\id\|_{\op} = \max_{i\le n} \Big|\E h'\Big(\sqrt{\frac{Q_{ii}}{d}}G\Big)-h'(0)\Big|\le \frac{C}{\sqrt{d}}\, .\label{eq:BoundD}
              \end{align}
              We then have
              \begin{align}
                \big\|\bB- \bB_0\big\|_{\op}&\le \frac{C}{\sqrt{d}} \Big\|\frac{1}{d^2}\bX\bSigma\bX^{\tr}\Big\|_{\op}\le \frac{C}{d^{5/2}}\|\bX\|_{\op}^2\le \frac{C}{d^{3/2}}\, .
              \end{align}
              This immediately implies $\|\bB-\bB_0\|_*\le n\|\bB-\bB_0\|_{\op}\le C/\sqrt{d}$.
            \end{proof}
            
          \begin{lemma}\label{lemma:BKLemma}
            Under the assumptions of Theorem \ref{thm:npropd}, let  $\bB$ be defined as per Eq.~\eqref{eq:M0def1}and
            $\bB_0$ as per Eq.~\eqref{eq:B0Def}. Also, recall the definition of $\bK_1$ in Eq.~\eqref{eq:K1def}. Then, with very high probability, we have
            \begin{align}
              \big|\<\bB,\bK^{-2}\>- \<\bB_0,\bK_1^{-2}\>\big|\le C\, n^{-c_0}\, .
              \end{align}
            \end{lemma}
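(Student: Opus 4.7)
The plan is to split the difference via the triangle inequality into two terms that are controlled by matching each discrepancy to an appropriate norm:
\begin{align*}
\big|\<\bB,\bK^{-2}\>-\<\bB_0,\bK_1^{-2}\>\big|
\le \big|\<\bB-\bB_0,\bK^{-2}\>\big| + \big|\<\bB_0,\bK^{-2}-\bK_1^{-2}\>\big|.
\end{align*}
Each term will be handled via trace/operator duality, $|\<\bA,\bC\>|\le\|\bA\|_*\|\bC\|_{\op}$, using bounds already available in the excerpt. The main technical input is that, by Lemma~\ref{lemma:ApproxK}, the matrices $\bK$ and $\bK_1$ are both bounded below by $c_*\id_n$ with very high probability (since $\|\bDelta\|_{\op}\le n^{-c_0}$ and $\bK_1\succeq \beta\gamma\,\id_n$), so $\|\bK^{-1}\|_{\op},\|\bK_1^{-1}\|_{\op}\le C$.

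For the first term, I would apply Lemma~\ref{lemma:BB0}, which gives $\|\bB-\bB_0\|_*\le C/\sqrt{d}$, together with $\|\bK^{-2}\|_{\op}\le C$, to obtain
\begin{align*}
\big|\<\bB-\bB_0,\bK^{-2}\>\big|\le \|\bB-\bB_0\|_*\|\bK^{-2}\|_{\op}\le C\,d^{-1/2}\le C\,n^{-c_0}
\end{align*}
for any $c_0\le 1/2$. For the second term, I would use the resolvent-type identity
\begin{align*}
\bK^{-2}-\bK_1^{-2} = \bK^{-1}(\bK_1-\bK)\bK_1^{-1}\bK_1^{-1} + \bK^{-1}\bK^{-1}(\bK_1-\bK)\bK_1^{-1},
\end{align*}
so that $\|\bK^{-2}-\bK_1^{-2}\|_{\op}\le 2\|\bK^{-1}\|_{\op}^{2}\|\bK_1^{-1}\|_{\op}\|\bK-\bK_1\|_{\op}\le C n^{-c_0}$ by Lemma~\ref{lemma:ApproxK}. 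Then
\begin{align*}
\big|\<\bB_0,\bK^{-2}-\bK_1^{-2}\>\big|\le \|\bB_0\|_*\cdot\|\bK^{-2}-\bK_1^{-2}\|_{\op},
\end{align*}
and since $\bB_0 = (h'(0)/d^2)\bX\bSigma\bX^{\sT}\succeq 0$, its nuclear norm equals its trace, which on the event $\cE_1$ satisfies
\begin{align*}
\|\bB_0\|_*=\frac{h'(0)}{d^2}\sum_{i=1}^n\<\bx_i,\bSigma\bx_i\> \le C\,\frac{n}{d}\cdot\max_{i\le n}\frac{1}{d}\|\bSigma^{1/2}\bx_i\|_2^2 \le C,
\end{align*}
using Assumption~\ref{asmpt:kernel_nd_regime} and $n\asymp d$. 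Combining the two bounds gives the claim.

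The two steps above are both essentially routine once Lemmas~\ref{lemma:ApproxK} and~\ref{lemma:BB0} are in hand; the only minor subtlety is choosing the right trace/operator pairing so that $\|\bB-\bB_0\|_*$ is paired with $\|\bK^{-2}\|_{\op}$ (rather than the other way around, which would force us to bound $\|\bK^{-2}\|_*=O(n)$ and ruin the estimate). If a sharper exponent $c_0$ is desired, one could instead exploit the Frobenius bound $\|\bB-\bB_0\|_F\le Cd^{-3/2}$ implicit in the proof of Lemma~\ref{lemma:BB0} together with $\|\bK^{-2}\|_F\le \sqrt{n}\,\|\bK^{-2}\|_{\op}$, but since the target accuracy is only $n^{-c_0}$ for some $c_0>0$, the simple nuclear/operator pairing suffices.
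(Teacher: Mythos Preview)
Your proof is correct and follows essentially the same strategy as the paper: triangle inequality with an intermediate term, then control each piece via Lemmas~\ref{lemma:ApproxK} and~\ref{lemma:BB0} and trace--operator duality. The only cosmetic differences are that the paper splits through $\langle\bB,\bK_1^{-2}\rangle$ rather than $\langle\bB_0,\bK^{-2}\rangle$, and for the $\bK^{-2}-\bK_1^{-2}$ term it uses the Frobenius pairing $\|\bB\|_F\sqrt{n}\,\|\bK^{-2}-\bK_1^{-2}\|_{\op}$ (after bounding $\|\bB\|_F\le C\sqrt{(\log d)/d}$ entrywise) instead of your nuclear--operator pairing with $\|\bB_0\|_*\le C$; your version is arguably cleaner.
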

            \begin{proof}
              Throughout this proof, we work under events $\cE_1\cap\cE_2$ defined in the proof of Lemma \ref{lemma:ApproxMV}.
              Recall that $\max_{i\le n}|D_i|$ is bounded (see, e.g., Eq.~\eqref{eq:BoundD}), whence
              \begin{align}
                |B_{ij}|\le \frac{C}{d^2}\big| \<\bx_i,\bSigma\bx_j\>\big| \le \begin{cases}
                  C/d & \mbox{ if $i=j$,}\\
                  C(\log d)^{1/2}/d^{3/2} & \mbox{ if $i\neq j$,}
                \end{cases}
              \end{align}
              whence $\|\bB\|_F\le C\, \sqrt{(\log d)/d}$. Using Lemma \ref{lemma:ApproxK}, we have
              \begin{align}
                \big|\<\bB,\bK^{-2}\>-\<\bB,\bK_1^{-2}\>\big|& \le \|\bB\|_Fn^{1/2}\|\bK^{-2}-\bK_1^{-2}\|_{\op}\nonumber\\
                                                             & \le C \sqrt{(\log d)/d}\times n^{1/2} [\lambda_{\min}(\bK)\wedge  \lambda_{\min}(\bK_1)]^{-3}\|\bK-\bK_1\|_{\op}\label{eq:BLinKernel}\\
                                                             &\le C \sqrt{\log d}\|\bK-\bK_1\|_{\op}\le C\,n^{-c_0}\, .  \nonumber
              \end{align}

              Using again  Lemma \ref{lemma:ApproxK} together with
              Lemma \ref{lemma:BB0}, we obtain that the following
              holds with very high probability:
              \begin{align*}
                \Big|\<\bB,\bK_1^{-2}\>- \<\bB_0,\bK_1^{-2}\>\Big|&\le \lambda_{\min}(\bK_1)^{-2}
                                                                                                     \Big\|\bB-\bB_0\Big\|_*\\
                                                                                                   & \le \frac{C}{d^{1/2}} \, .
              \end{align*}
               The desired claim follows from this display alongside Eq.~\eqref{eq:BLinKernel}.
            \end{proof}

\begin{lemma}\label{lemma:AK}
  Under the assumptions of Theorem \ref{thm:npropd}, let $\ba$ be defined as in Lemma~\ref{lemma:ApproxMV}.
Then, with very high probability we have
            \begin{align}
              0\le  \<\ba,\bK^{-2}\ba\>\le \frac{C}{n}\, .
              \end{align}
            \end{lemma}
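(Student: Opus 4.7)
The lower bound is immediate since $\bK^{-2}\succeq 0$. For the upper bound, the plan is to exploit the fact that, on the events $\cE_1\cap\cE_2$ used throughout this appendix, the vector $\ba$ is close to the constant vector $\alpha\bone$, and to use the rank-one structure of the $\alpha\bone\bone^\tr$ piece of $\bK_1$ together with $\bK\approx\bK_1$ (Lemma~\ref{lemma:ApproxK}).

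\textbf{Step 1: approximate $\ba$ by $\alpha\bone$.} Taylor-expanding $h$ gives $a_{i,0}=\E\{h(\sqrt{Q_{ii}/d}\,G)\}=h(0)+h''(0)Q_{ii}/(2d)+O(1/d^{2})$, while $\alpha=h(0)+h''(0)\E Q_{ii}/(2d)$. By the Hanson-Wright inequality, $|Q_{ii}-\E Q_{ii}|\le C\sqrt{(\log n)/d}$ uniformly in $i\le n$ with very high probability, and the bound on $a_{i,1}$ in \eqref{eq:A1def} yields $|a_{i,1}|\le C\sqrt{\log d}/d^{2}$ on $\cE_{1}$. Writing $\ba=\alpha\bone+\bdelta$, this gives $\|\bdelta\|_{\infty}\le C\sqrt{\log n}/d^{3/2}$, hence $\|\bdelta\|_{2}\le C\sqrt{\log n}/d$ (using $n\asymp d$).

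\textbf{Step 2: reduce to controlling $\alpha^{2}\|\bK^{-1}\bone\|^{2}$.} By the triangle inequality,
\begin{equation*}
\langle\ba,\bK^{-2}\ba\rangle=\|\bK^{-1}\ba\|_{2}^{2}\le 2\alpha^{2}\|\bK^{-1}\bone\|_{2}^{2}+2\|\bK^{-1}\bdelta\|_{2}^{2}.
\end{equation*}
Since $\lambda_{\min}(\bK)\ge c_{*}>0$ by Lemma~\ref{lemma:ApproxK}, the second term is bounded by $C\log n/d^{2}=O(1/n)$, so it suffices to show $\alpha^{2}\|\bK^{-1}\bone\|_{2}^{2}=O(1/n)$.

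\textbf{Step 3: Sherman-Morrison on the rank-one part.} Write $\bK=\tilde\bL+\alpha\bone\bone^{\tr}$ with $\tilde\bL:=\bL+\bDelta$, where $\bL:=\beta\bX\bX^{\tr}/d+\beta\gamma\bI_{n}$ and $\|\bDelta\|_{\op}\le n^{-c_{0}}$. Standard sub-Gaussian sample-covariance bounds give $\|\tilde\bL\|_{\op}\le C$ and $\lambda_{\min}(\tilde\bL)\ge \beta\gamma/2$ with very high probability, so $\tilde\bL\succ 0$ is well-conditioned. By Sherman-Morrison,
\begin{equation*}
\bK^{-1}\bone=\frac{\tilde\bL^{-1}\bone}{1+\alpha\,\bone^{\tr}\tilde\bL^{-1}\bone}.
\end{equation*}
Setting $\tilde s:=\bone^{\tr}\tilde\bL^{-1}\bone$, we have $\tilde s\ge n/\|\tilde\bL\|_{\op}\ge n/C$, while $\|\tilde\bL^{-1}\bone\|_{2}^{2}\le \tilde s/\lambda_{\min}(\tilde\bL)\le C\tilde s$. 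Therefore
\begin{equation*}
\|\bK^{-1}\bone\|_{2}^{2}\le \frac{C\tilde s}{(1+\alpha\tilde s)^{2}}\le \frac{C\tilde s}{(\alpha\tilde s)^{2}}=\frac{C}{\alpha^{2}\tilde s}\le \frac{C'}{\alpha^{2}\,n}.
\end{equation*}
Multiplying by $\alpha^{2}$ gives the desired $O(1/n)$ bound. Combined with Step~2 this proves the lemma.

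\textbf{Anticipated obstacle.} The subtlety is that a direct operator-norm replacement of $\bK$ by $\bK_{1}$ in $\bone^{\tr}\bK^{-2}\bone$ is too loose: since $\|\bone\|_{2}^{2}=n$ and $\|\bK^{-2}-\bK_{1}^{-2}\|_{\op}=O(n^{-c_{0}})$, one only gets $O(n^{1-c_{0}})$, which does not even vanish. The point of Step~3 is that the bound $\|\bK^{-1}\bone\|_{2}^{2}=O(1/(\alpha^{2}n))$ rests on a cancellation that must be extracted via the rank-one Sherman-Morrison identity applied to $\bK$ itself (not to $\bK_{1}$), with the perturbation $\bDelta$ absorbed into $\tilde\bL$ where it only affects constants.
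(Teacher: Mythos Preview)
Your proof is correct and follows essentially the same strategy as the paper: decompose $\ba$ as a constant multiple of $\bone$ plus a small remainder, bound the remainder's contribution using $\lambda_{\min}(\bK)\ge c_*$, and handle the constant-vector part via Sherman--Morrison applied to $\bK=\tilde\bL+\alpha\bone\bone^{\tr}$ (the paper writes this as $\bK=\bK_*+\alpha\bone\bone^{\tr}$, with $\bK_*=\tilde\bL$). The only cosmetic differences are that you center $\ba$ at $\alpha$ rather than at $h(0)$, which yields a slightly sharper bound on $\|\bdelta\|_\infty$, and that you avoid estimating the cross term $\langle\bone,\bK^{-2}\bdelta\rangle$ separately by using the triangle inequality on $\|\bK^{-1}\ba\|_2$ instead.
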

            \begin{proof}
              Notice that the lower bound is trivial since $\bK$ is positive semidefinite.
              We will write
              \begin{align}
                \bK &= \alpha\, \bfone\bfone^{\tr}+\bK_*\, ,\\
                \ba &= h(0)\bfone + \tba\, .
              \end{align}
              By standard bounds on the norm of matrices with i.i.d. rows (and using $\|\bSigma\|_{\op}\le C$), we have
              $0\preceq \bX\bX^\tr/d\preceq C\,\id$, with probability at least $1-C\exp(-n/C)$.
              Therefore, by Lemma \ref{lemma:ApproxK}, and since $\beta\gamma>0$ is bounded away from zero by assumption,
              with very high probability we have $C^{-1}\id \preceq \bK_*\preceq C\id$, for a suitable constant $C$.
              Note that $\tba = (\ba_0-h(0)\bfone)+\ba_1$. 
              Under event $\cE_1\cap \cE_2$, the following holds by smoothness of $h$:
              \begin{align}
                \|\ba_0-h(0)\bfone\|_{\infty} = \max_{i\le d}\left|\E \left\{h\Big(\sqrt{\frac{Q_{ii}}{d}} G\Big)- h(0)\right\}\right|\le \frac{C}{d}\, .
              \end{align}
              On the other hand, recalling the definition of $\ba_1$ in Eq.~\eqref{eq:A1def}, we have, always on $\cE_1\cap\cE_2$,
              \begin{align}
               \|\ba_{1}\|_{\infty}& \le C \frac{1}{d^{3/2}}  \max_{i\le d}Q^{3/2}_{ii}\times
                                     d\times \max_{i\le n}\frac{\|\bSigma^{1/2} \bx_i\|_{\infty}^3}{\|\bSigma^{1/2} \bx_i\|_2^3}\\
                &\le C \frac{1}{d^{3/2}} \times
                  d\times \Big(\frac{\log d}{d}\Big)^{3/2}\le C\frac{(\log d)^{3/2}}{d^2}\, .
              \end{align}
              Therefore we conclude that $\|\tba\|_{\infty}\le C/d$, whence $\|\tba\|_2\le C/\sqrt{d}$.

              We therefore obtain, again using Lemma \ref{lemma:ApproxK},
              \begin{align}
                \Big|\<\ba,\bK^{-2}\ba\> -h(0)^2\<\bfone,\bK^{-2}\bfone\>-2h(0)\<\bfone,\bK^{-2}\tba\>\Big| =
                \<\tba,\bK^{-2}\tba\>\le
               \lambda_{\min}(\bK)^{-2}\|\tba\|_2^2\le \frac{C}{d}\, .\label{eq:aKa}
              \end{align}
              We are therefore left with the task of controlling the two terms $\<\bfone,\bK^{-2}\bfone\>$ and $\<\tba,\bK^{-2}\bfone\>$.
              We will assume $h(0)\neq 0$ because otherwise there is nothing to control. Since $h$ is a positive semidefinite kernel, this
              also implies $h(0)>0$ and $\alpha\ge h(0)>0$. By an application of the Sherman-Morrison formula, we get
              \begin{align}
                \<\bfone,\bK^{-2}\bfone\> & = \<\bfone,(\bK_*+\alpha\bfone\bfone^{\tr})^{-2}\bfone\>\\
                                          & = \frac{\<\bfone,\bK_*^{-2}\bfone\>}{(1+\alpha\<\bfone,\bK_*^{-1}\bfone\>)^2}\\
                & \le \frac{1}{\alpha^2} \frac{\<\bfone,\bK_*^{-2}\bfone\>}{\<\bfone,\bK_*^{-1}\bfone\>^2}\le \frac{C}{\alpha^2}\frac{1}{\|\bfone\|^2}\le \frac{C}{d}\, ,
              \end{align}
              where we used the above remark       $C^{-1}\id \preceq \bK_*\preceq C\id$.

              Using again Sherman-Morrison formula,
              \begin{align}
                \<\bfone,\bK^{-2}\tba\> & = \frac{\<\tba,\bK_*^{-2}\bfone\>}{1+\alpha \<\bfone,\bK_*^{-1}\bfone\>}-
                                          \frac{\alpha\<\bfone,\bK_*^{-2}\bfone\>\<\tba,\bK_*^{-1}\bfone\>}{(1+\alpha \<\bfone,\bK_*^{-1}\bfone\>)^2}\, ,\\
          \big| \<\bfone,\bK^{-2}\tba\>\big|         &\le C \frac{\|\tba\|_2\|\bfone\|_2}{\alpha\|\bfone\|_2^2} + \frac{\|\bfone\|_2^3\|\tba\|_2}{\alpha\|\bfone\|_2^4}\\
                                        & \le \frac{C}{d}\, .
              \end{align}
              Using the last two displays in Eq.~\eqref{eq:aKa} yields the desired claim.
            \end{proof}

            \begin{proof}[Proof of Theorem \ref{thm:npropd}: Variance term]
              By virtue of Lemmas \ref{lemma:VarApprox}, \ref{lemma:BKLemma}, \ref{lemma:AK}, we have
               \begin{align}
                 \frac{1}{\sigma^2_{\xi}}\varianceemp &= \<\bB_0,\bK_1^{-2}\>+{\rm Err}(n)\\
                 & = \<\bB_0,(\bK_0+\alpha\bfone\bfone^{\tr})^{-2}\>   +{\rm Err}(n)\, .
               \end{align}
              Here and below we denote by ${\rm Err}(n)$ an error term bounded as  $ |{\rm Err}(n)|\le Cn^{-c_0}$ with very high probability, and we defined
               \begin{align}
                 \bK_0  &:= \beta \frac{\bX\bX^\tr}{d} + \beta \gamma \bI_n \, .
               \end{align}
               By an application of the Sherman-Morrison formula, and recalling that $\beta\gamma>0$ is bounded away from zero, we get
               \begin{align}
                 \frac{1}{\sigma_{\xi}^2}\varianceemp =&\trace(\bB_0\bK_0^{-2})-
                                                  \frac{2\alpha}{1+\alpha A_1}\, \trace(\bB_0\bK_0^{-2}\bfone\bfone^{\tr}\bK_0^{-1})\\
                                                 &+ \frac{\alpha^2A_2}{(1+\alpha A_1)^2}\, \trace(\bB_0\bK_0^{-1}\bfone\bfone^{\tr}\bK_0^{-1}) +{\rm Err}(n)\, ,
               \end{align}
               where $A_\ell:= \<\bfone,\bK_0^{-\ell}\bfone\>$, $\ell\in\{1,2\}$. By standard bounds on the norm of matrices with i.i.d. rows
               (and using $\|\bSigma\|_{\op}\le C$), we have
               $0\preceq \bX\bX^\tr/d\preceq C\,\id$. Therefore  $C^{-1}\id \preceq \bK_0\preceq C\id$, for a suitable constant $C$,
               with very high probability. This implies  $d/C \le A_\ell\le Cd$ for $\ell\in\{1,2\}$ and some constant $C>0$.
               Further $\|\bB_0\|_{\op}\le C\|\bX\|_{\op}^2/d^2\le C/d$.
               Therefore, (since $\alpha>0$):
               \begin{align}
                 \left|  \frac{1}{\sigma_{\xi}^2}\varianceemp -\trace(\bB_0\bK_0^{-2})\right| &
                                                                                                    \le
                                                                                                  \frac{C}{d}\, \big|\< \bfone,\bK_0^{-1}\bB_0\bK_0^{-2}\bfone\>\big|+\frac{C}{d}\,
                                                                                                  \<\bfone,\bK_0^{-1}\bB_0\bK_0^{-1}\bfone\> +{\rm Err}(n)\\
                 &\le \frac{C}{d}+{\rm Err}(n)\, .
               \end{align}

               We are therefore left with the task of evaluating the asymptotics of
               \begin{align}
             \trace(\bB_0\bK_0^{-2})       = 
                 \trace\big(\bX\bSigma\bX^{\tr}  (\bX\bX^\tr+\gamma d\bI_n)^2\big)\, .
               \end{align}
               However, this is just the variance of ridge regression with respect to the simple features $\bX$, with ridge regularization
               proportional to $\gamma$. We apply the results of \cite{hastie2019surprises} to obtain the claim.
             \end{proof}

      \subsubsection{Proof of Theorem \ref{thm:npropd}: Bias term}
      
      We recall the decomposition
      \begin{align}
        f^*(\bx) = b_0 +\<\bbeta_0,\bx\> +f^*_{\sNL}(\bx)=: f^*_{\sL}(\bx)+ f^*_{\sNL}(\bx)\, ,
      \end{align}
      where $b_0$, $\bbeta_0$ are defined by the orthogonality conditions  $\E\{f^*_{\sNL}(\bx)\}=\E\{\bx f^*_{\sNL}(\bx)\}=0$.
      This yields $b_0=\E\{f^*(\bx)\}$ and $\bbeta_0 = \bSigma^{-1}\E\{f^*(\bx)\bx\}$. 
      We denote by $\bff^* = (f^*(\bx_1),\dots,f^*(\bx_n))^{\sT}$ the vector of noiseless responses, which we correspondingly decompose as
      $\bff^* = \bff^*_{\sL}+\bff^*_{\sNL}$. Recalling the definition of $\bM$, $\bv$ in Eqs.~\eqref{eq:Mdef}, \eqref{eq:Vdef}, the bias reads
      \begin{align}
        \biassquaredemp &= \<\bff^*,\bK^{-1}\bM\bK^{-1}\bff^*\>-2\<\bv,\bK^{-1}\bff^*\>+\|f^*\|_{L^2}^2\, .
      \end{align}
      We begin with an elementary lemma on the norm of $\bff^*$.
      \begin{lemma}\label{lemma:Fnorm}
        Assume $\E\{f^*(\bx)^4\}\le C_0$ for a constant $C_0$ (in particular, this is the case if $\E\{|f^*(\bx)|^{4+\eta}\}\le C_0$). Then,
        there exists a constant $C$ depending uniquely on $C_0$ such that the following hold:
        \begin{enumerate}
        \item[$(a)$] $|b_0|\le C$, $\|\bSigma^{1/2}\bbeta_0\|_2\le C$, $\E\{f^*_{\sNL}(\bx)^2\}\le C$.
        \item[$(b)$] With probability at least $1-Cn^{-1/4}$, we have $|\|\bff^*\|_2^2/n-\|f^*\|_{L^2}^2|\le n^{-3/8}$.
        \item[$(c)$]With probability at least $1-Cn^{-1/4}$, we have $|\|\bff_{\sNL}^*\|_2^2/n-\|f_{\sNL}^*\|_{L^2}^2|\le n^{-3/8}$.
        \end{enumerate}
      \end{lemma}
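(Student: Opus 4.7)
The plan is to handle the three parts essentially independently: part (a) is a direct consequence of the orthogonality of the decomposition $f^* = b_0+\<\bbeta_0,\cdot\>+f^*_{\sNL}$ in $L^2(\P)$, while parts (b) and (c) will follow from Chebyshev's inequality once a fourth moment bound is established.

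For part (a), I would first observe that by Jensen's inequality $b_0^2=(\E f^*(\bx))^2\le \E\{f^*(\bx)^2\}\le \E\{f^*(\bx)^4\}^{1/2}\le C_0^{1/2}$. Next, since the three functions $1$, $\bx\mapsto \<\bbeta_0,\bx\>$ and $f^*_{\sNL}$ are pairwise orthogonal in $L^2(\P)$ by the very definition of $b_0$, $\bbeta_0$, we have the Pythagorean identity
\begin{align*}
  \|f^*\|_{L^2}^2 = b_0^2 + \E\{\<\bbeta_0,\bx\>^2\} + \E\{f^*_{\sNL}(\bx)^2\} = b_0^2+\|\bSigma^{1/2}\bbeta_0\|_2^2+\E\{f^*_{\sNL}(\bx)^2\}.
\end{align*}
Each summand is therefore bounded by $\|f^*\|_{L^2}^2\le C_0^{1/2}$, yielding (a) with $C=C_0^{1/4}$.

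For part (b), note that $\|\bff^*\|_2^2/n=n^{-1}\sum_{i=1}^n f^*(\bx_i)^2$ is a sample mean of i.i.d.\ random variables with mean $\|f^*\|_{L^2}^2$ and variance $\Var(f^*(\bx)^2)\le \E\{f^*(\bx)^4\}\le C_0$. Chebyshev's inequality with $t=n^{-3/8}$ then gives
\begin{align*}
  \P\Big(\big|\|\bff^*\|_2^2/n-\|f^*\|_{L^2}^2\big|>n^{-3/8}\Big)\le \frac{C_0}{n\cdot n^{-3/4}}=\frac{C_0}{n^{1/4}},
\end{align*}
which is the claimed bound.

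For part (c), the same Chebyshev argument applies, provided I can show $\E\{f^*_{\sNL}(\bx)^4\}\le C$ for a constant depending only on $C_0$ (and the sub-Gaussian constant $M$ of Assumption~\ref{asmpt:kernel_nd_regime}). By the triangle inequality in $L^4(\P)$, $\|f^*_{\sNL}\|_{L^4}\le \|f^*\|_{L^4}+|b_0|+\|\<\bbeta_0,\cdot\>\|_{L^4}$. The first term is bounded by $C_0^{1/4}$, the second by part~(a). For the third, write $\<\bbeta_0,\bx\>=\<\bSigma^{1/2}\bbeta_0,\bz\>$ where $\bz$ is $M$-sub-Gaussian with independent components; this is an $M\|\bSigma^{1/2}\bbeta_0\|_2$-sub-Gaussian scalar random variable, and so has all moments bounded in terms of $\|\bSigma^{1/2}\bbeta_0\|_2\le C$ from part~(a). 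The only mild subtlety is making sure that these moment estimates genuinely depend only on the constants $C_0,M$ in the hypotheses, which follows routinely. Once $\E\{f^*_{\sNL}(\bx)^4\}\le C$ is established, Chebyshev applied exactly as in (b) yields the conclusion. No part of the argument looks delicate — the main (minor) care is just in tracking that the constant in (c) absorbs the sub-Gaussian tail of $\<\bbeta_0,\bx\>$ rather than requiring a separate hypothesis on $f^*_{\sNL}$.
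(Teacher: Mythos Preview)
Your proposal is correct and follows essentially the same approach as the paper: part~(a) via the Pythagorean decomposition, parts~(b) and~(c) via Chebyshev on the sample mean, with the fourth-moment bound for $f^*_{\sNL}$ obtained from the triangle inequality in $L^4$ and sub-Gaussianity of $\langle\bbeta_0,\bx\rangle$. Your observation that the constant in~(c) also depends on the sub-Gaussian parameter $M$ (not only on $C_0$) is accurate and matches what the paper's own proof actually uses.
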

      \begin{proof}
        By Jensen's inequality we have $\E\{f^*(\bx)^2\}\le C$. By orthogonality of $f^*_{\sNL}$ to linear and constant functions, we
        also have $\E\{f^*(\bx)^2\}=b_0^2+\E\{\<\bbeta_0,\bx\>^2\}+\E\{f^*_{\sNL}(\bx)^2\} = b_0^2+\|\bSigma^{1/2}\bbeta_0\|_2^2+\E\{f^*_{\sNL}(\bx)^2\}$,
        which proves claim $(a)$.

        To prove $(b)$, simply call $Z=\|\bff^*\|_2^2/n-\|f^*\|_{L^2}^2$, and note that $\E\{Z^2\}= (\E\{f^*(\bx)^4\}-\E\{f^*(\bx)^2\}^2)/n\le C/n$.
        The claim follows by Chebyshev inequality.

        Finally, $(c)$ follows by the same argument as for claim $(b)$, once we bound $\|f^*_{\sNL}\|_{L^4}$. In order to show this, notice
        that, by triangle inequality, $\|f^*_{\sNL}\|_{L^4}\le \|f^*\|_{L^4}+\|f_0\|_{L^4}+\|f_1\|_{L^4}$, where $f_0(\bx) = b_0$, $f_1(\bx)=\<\bbeta_0,\bx\>$.
        Since $\bx=\bSigma\bz$, with $\bz$ $C$-sub-Gaussian, $\|f^*_{\sNL}\|_{L^4}\le \|f^*\|_{L^4}+b_0+C\|\bSigma^{1/2}\bbeta_0\|_2\le C$.
      \end{proof}
      
      \begin{lemma}\label{eq:Bias0}
        Under the assumptions of Theorem \ref{thm:npropd}, let $\bM_0$, $\bv_0$ be defined as in the statement of Lemma
        \ref{lemma:ApproxMV}. Then, with probability at least $1-Cn^{-1/4}$, we have
      \begin{align}
        \big|\biassquaredemp - \biassquaredemp_0\big|&\le \frac{C\log d}{\sqrt{d}}\, ,\\
        \biassquaredemp_0&:= \<\bff^*,\bK^{-1}\bM_0\bK^{-1}\bff^*\>-2\<\bv_0,\bK^{-1}\bff^*\>+\|f^*\|_{L^2}^2\
      \end{align}
    \end{lemma}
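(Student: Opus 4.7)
The plan is to subtract the two expressions directly. Since $\biassquaredemp$ and $\biassquaredemp_0$ share the common additive term $\|f^*\|_{L^2}^2$, we have
\[
\biassquaredemp - \biassquaredemp_0 = \bigl\langle\bff^*,\bK^{-1}(\bM-\bM_0)\bK^{-1}\bff^*\bigr\rangle - 2\bigl\langle\bv-\bv_0,\bK^{-1}\bff^*\bigr\rangle,
\]
so the task reduces to controlling each of these two pieces separately. Throughout, I will work on the high-probability event on which the conclusions of Lemmas \ref{lemma:ApproxK}, \ref{lemma:ApproxMV}, and \ref{lemma:Fnorm} all hold; a union bound shows this event has probability at least $1-Cn^{-1/4}$, matching the claim.

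First I would invoke Lemma \ref{lemma:ApproxK} to obtain $\bK \succeq c_*\id_n$ with very high probability, which gives a deterministic upper bound $\|\bK^{-1}\|_{\op}\le 1/c_*=:C$. Then, applying Cauchy--Schwarz to the quadratic term,
\[
\bigl|\bigl\langle\bff^*,\bK^{-1}(\bM-\bM_0)\bK^{-1}\bff^*\bigr\rangle\bigr| \;\le\; \|\bff^*\|_2^2\,\|\bK^{-1}\|_{\op}^2\,\|\bM-\bM_0\|_{\op}.
\]
From Lemma \ref{lemma:ApproxMV}, $\|\bM-\bM_0\|_{\op} \le \|\bM-\bM_0\|_F \le C\log d/d^{3/2}$, while Lemma \ref{lemma:Fnorm}(b) yields $\|\bff^*\|_2^2 \le Cn$ with probability $\ge 1-Cn^{-1/4}$. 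Under the proportional regime $n\asymp d$, these combine to give a bound of order $\log d/\sqrt{d}$.

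For the linear term, a similar Cauchy--Schwarz step gives
\[
\bigl|\bigl\langle\bv-\bv_0,\bK^{-1}\bff^*\bigr\rangle\bigr| \;\le\; \|\bv-\bv_0\|_2\,\|\bK^{-1}\|_{\op}\,\|\bff^*\|_2 \;\le\; C\cdot\frac{\sqrt{\log d}}{d}\cdot\sqrt{n},
\]
where we used $\|\bv-\bv_0\|_2\le Cd^{-1}\sqrt{\log d}$ (Lemma \ref{lemma:ApproxMV}) and $\|\bff^*\|_2\le C\sqrt{n}$ (Lemma \ref{lemma:Fnorm}). Under $n\asymp d$ this is of order $\sqrt{\log d}/\sqrt{d}$, which is dominated by the quadratic term. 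Adding the two bounds yields $|\biassquaredemp-\biassquaredemp_0|\le C\log d/\sqrt{d}$ as desired.

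The only real subtlety---and the main obstacle---is that we are not given pointwise boundedness of $f^*$ but only $\|f^*\|_{L^{4+\eta}(\P)}\le M$. Consequently we cannot treat $\|\bff^*\|_2^2$ as deterministically $O(n)$; this is precisely why Lemma \ref{lemma:Fnorm} relies on Chebyshev's inequality applied to $Z=\|\bff^*\|_2^2/n-\|f^*\|_{L^2}^2$, and why the probability $1-Cn^{-1/4}$ (rather than a very-high-probability statement) enters the conclusion. Once this concentration is in hand, all remaining estimates are deterministic consequences of the high-probability bounds supplied by Lemmas \ref{lemma:ApproxK} and \ref{lemma:ApproxMV}.
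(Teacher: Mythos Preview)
Your proposal is correct and follows essentially the same argument as the paper: the same difference-of-bias decomposition, the same Cauchy--Schwarz/operator-norm bounds, and the same appeal to Lemmas \ref{lemma:ApproxK}, \ref{lemma:ApproxMV}, and \ref{lemma:Fnorm} for $\|\bK^{-1}\|_{\op}$, $\|\bM-\bM_0\|_F$, $\|\bv-\bv_0\|_2$, and $\|\bff^*\|_2$. Your closing remark correctly pinpoints why the probability bound is $1-Cn^{-1/4}$ rather than ``very high probability.''
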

    \begin{proof}
      We have
      \begin{align}
        \big|\biassquaredemp - \biassquaredemp_0\big|& \le \big|  \<\bff^*,\bK^{-1}(\bM-\bM_0)\bK^{-1}\bff^*\>\big|+
                                                       2\big| \<\bv-\bv_0,\bK^{-1}\bff^*\>\big|\\
                                                     &\le \|\bM-\bM_0\|_F \|\bK^{-1}\bff^*\|^2_2+ 2 \|\bv-\bv_0\|_2 \|\bK^{-1}\bff^*\|_2\\
                                                     &\le \|\bM-\bM_0\|_F \|\bK^{-1}\|^2_{\op}
                                                       \| \bff^*\|^2_2+ 2 \|\bv-\bv_0\|_2 \|\bK^{-1}\|_{\op}\|\bff^*\|_2\\
                                                     &\le C\frac{\log d}{d^{3/2}} \times n + C\frac{\log d}{d} \times \sqrt{n} \le \frac{C\log d}{\sqrt{d}}\, .
      \end{align}
      Here, in the last line, we used   Lemmas \ref{lemma:ApproxK},    \ref{lemma:ApproxMV} and the fact that $\|\bff^*\|^2\le Cn$ by Lemma \ref{lemma:Fnorm}.
    \end{proof}

    In view of the last lemma, it is sufficient to work with    $\biassquaredemp_0$. We decompose it as
    \begin{align}
      \biassquaredemp_0&=   \biassquaredemp_{\sL}+ \biassquaredemp_{\sNL}+ \biassquaredemp_{\smix}+\|f^*_{\sNL}\|_{L^2}^2\, ,\label{eq:Bias0Decomp}\\
       \biassquaredemp_{\sL}&:= \<\bff_{\sL}^*,\bK^{-1}\bM_0\bK^{-1}\bff_{\sL}^*\>-2\<\bv_0,\bK^{-1}\bff_{\sL}^*\>+\|f^*_{\sL}\|_{L^2}^2\, ,\\
       \biassquaredemp_{\sNL}&:= \<\bff_{\sNL}^*,\bK^{-1}\bM_0\bK^{-1}\bff_{\sNL}^*\>\, ,\\
       \biassquaredemp_{\smix}&:= 2\<\bff_{\sL}^*,\bK^{-1}\bM_0\bK^{-1}\bff_{\sNL}^*\>-2\<\bv_0,\bK^{-1}\bff_{\sNL}^*\>\, .
    \end{align}
    We next show that the contribution of the constant term in $f^*_{\sL}(\bx)$ and $\bM_0$ is negligible.
    \begin{lemma}\label{lemma:DropConstant}
           Under the assumptions of Theorem \ref{thm:npropd}, let
           $\bM_0$, $\bB$, $\bv_0$ be defined as in the statement of
           Lemma~\ref{lemma:ApproxMV}. Further define
           \begin{align}
             R_{\sL}&:= \<\bX\bbeta_0,\bK^{-1}\bB\bK^{-1}\bX\bbeta_0\>-\frac{2h'(0)}{d}\<\bX\bSigma\bbeta_0,\bK^{-1}\bX\bbeta_0\>+\<\bbeta_0,\bSigma\bbeta_0\>\, ,\label{eq:RLdef}\\
       R_{\sNL}&:= \<\bff_{\sNL}^*,\bK^{-1}\bB\bK^{-1}\bff_{\sNL}^*\>\, , \label{eq:RNLdef}\\
       R_{\smix}&:= 2\<\bX\bbeta_0,\bK^{-1}\bB\bK^{-1}\bff_{\sNL}^*\>-\frac{2h'(0)}{d}\<\bX\bSigma\bbeta_0,\bK^{-1}\bff_{\sNL}^*\>\, .
           \end{align}
           Then, with very high probability we have
        \begin{align}
         \big| \biassquaredemp_{\sL}-R_{\sL}\big|\le \frac{C}{n}\, ,\\
          \big| \biassquaredemp_{\sNL}-R_{\sNL}\big|\le \frac{C}{n}\, ,\\
         \big|   \biassquaredemp_{\smix}-R_{\smix}\big|\le \frac{C}{n}\, .
       \end{align}   
    \end{lemma}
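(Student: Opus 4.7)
The plan is to prove all three bounds simultaneously by expanding each difference $\biassquaredemp_X - R_X$ ($X\in\{\sL,\sNL,\smix\}$) algebraically, using $\bff^*_{\sL}=b_0\bone+\bg$ (with $\bg:=\bX\bbeta_0$), $\bM_0=\ba\ba^\tr+\bB$, $\bv_0=b_0\ba_0+(h'(0)/d)\bX\bSigma\bbeta_0$, and $\|f^*_{\sL}\|_{L^2}^2=b_0^2+\<\bbeta_0,\bSigma\bbeta_0\>$. After this substitution the $\bg$-only summands exactly reproduce $R_{\sL}$, $R_{\sNL}$, $R_{\smix}$; the remaining ``extra'' terms are linear combinations (with coefficients $1$, $b_0$, or $b_0^2$) of bilinear forms involving $\bK^{-1}\bone$, $\bK^{-1}\ba$, and $\bK^{-1}\bB\bK^{-1}$. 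The plan is to bound each such term by $O(1/n)$ with very high probability.

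The quantitative inputs come from Sherman-Morrison applied to the decomposition $\bK\approx\bK_*+\alpha\bone\bone^\tr$ supplied by Lemma \ref{lemma:ApproxK}, together with $C^{-1}\bI\preceq\bK_*\preceq C\bI$. This yields $\bK^{-1}\bone=\bK_*^{-1}\bone/(1+\alpha s)$ with $s:=\<\bone,\bK_*^{-1}\bone\>\asymp n$, hence $\|\bK^{-1}\bone\|_2\le C/\sqrt{n}$ and $\<\bone,\bK^{-1}\bone\>=1/\alpha+O(1/n)$; writing $\ba=h(0)\bone+\tilde\ba$ with $\|\tilde\ba\|_2\le C/\sqrt{d}$ (as in the proof of Lemma \ref{lemma:AK}) further gives $\<\bone,\bK^{-1}\ba\>=h(0)/\alpha+O(1/n)$ and $\<\ba_0,\bK^{-1}\bone\>=h(0)/\alpha+O(1/n)$. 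The definitional identity $\alpha=h(0)+O(1/d)$ forces $h(0)/\alpha=1+O(1/n)$, so the pure-$b_0^2$ bracket arising in $\biassquaredemp_{\sL}-R_{\sL}$ telescopes as $(1+O(1/n))^2-2(1+O(1/n))+1=O(1/n)$. The $\bone$-containing $\bB$-sandwich terms are handled by Cauchy-Schwarz together with $\|\bB\|_{\op}\le C/d$ and $\|\bK^{-1}\bone\|_2\le C/\sqrt{n}$ (using $\bB\succeq 0$).

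The main obstacle will be controlling inner products of the form $\<\bv,\bK^{-1}\ba\>$ for $\bv\in\{\bg,\bff^*_{\sNL}\}$, which arise in the $b_0$-linear terms, in $\<\bg,\bK^{-1}\ba\>^2$ from $\biassquaredemp_{\sL}-R_{\sL}$, and in $\<\ba,\bK^{-1}\bff^*_{\sNL}\>^2=\biassquaredemp_{\sNL}-R_{\sNL}$. Naive Cauchy-Schwarz with $\|\bK^{-1}\ba\|_2\le C/\sqrt{n}$ and $\|\bv\|_2\le C\sqrt{n}$ only yields $O(1)$, whereas $O(1/\sqrt{n})$ is needed. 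To extract the extra factor $1/\sqrt{n}$ I would split $\<\bv,\bK^{-1}\ba\>=h(0)\<\bv,\bK^{-1}\bone\>+\<\bv,\bK^{-1}\tilde\ba\>$ and apply Sherman-Morrison once more to write $\<\bv,\bK^{-1}\bone\>=\<\bv,\bK_*^{-1}\bone\>/(1+\alpha s)=O(\<\bv,\bK_*^{-1}\bone\>/n)$. For $\bv=\bff^*_{\sNL}$, the bound $|\<\bK_*^{-1}\bone,\bff^*_{\sNL}\>|=O(\sqrt{n})$ follows from Theorem \ref{thm:ExpResolvent} applied with $g$ equal to $f^*_{\sNL}$ in $\bz$-coordinates, which by construction of $\proj_{>1}$ is orthogonal to constants and linear functions. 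For $\bv=\bg$, a leave-one-out argument on $\bK_*$ decouples each coordinate $(\bK_*^{-1}\bone)_i$ from $\bx_i$, reducing $\<\bg,\bK_*^{-1}\bone\>$ to a sum of nearly independent centered terms of variance $O(1)$, giving $O(\sqrt{n})$ by Chebyshev. The residual $\<\bv,\bK^{-1}\tilde\ba\>$ requires the finer decomposition $\tilde\ba=c\bone+\bw$ with $\|\bw\|_\infty=o(1/\sqrt{n})$ (from concentration of $Q_{ii}$ around $\trace(\bSigma)/d$), so that the $c\bone$-part falls under the preceding analysis while the $\bw$-part yields $O(1/\sqrt{n})$ by a direct quadratic-form bound of the Theorem \ref{thm:ExpResolvent} type. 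This last refinement is where the bulk of the analytic work of the lemma resides.
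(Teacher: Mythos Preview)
The paper omits the proof entirely, saying only that it is ``very similar to the one of Lemma~\ref{lemma:AK}.'' Your algebraic expansion is correct, and your identification of the obstacle is a real contribution: for the pure-$b_0^2$ bracket and the $\bB$-sandwich terms, Sherman--Morrison plus $\|\bK^{-1}\bone\|_2\le C/\sqrt{n}$, $\|\bB\|_{\op}\le C/d$ suffice exactly as in Lemma~\ref{lemma:AK}; but for $\<\bg,\bK^{-1}\ba\>^2$ and $\<\bff^*_{\sNL},\bK^{-1}\ba\>^2$ the Lemma~\ref{lemma:AK} bound $\|\bK^{-1}\ba\|_2\le C/\sqrt{n}$ together with $\|\bv\|_2\le C\sqrt{n}$ gives only $O(1)$, so the paper's hint is too terse here.

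Two of your proposed tools are misapplied, however. First, Theorem~\ref{thm:ExpResolvent} controls $\sum_{i<j}Y_{ij}g(\bz_i)g(\bz_j)$, a bilinear form with $g$ on both indices; the quantity $\<\bone,\bK_*^{-1}\bff^*_{\sNL}\>=\sum_{i,j}Y_{ij}g(\bz_j)$ has a different structure, and in any case that theorem only holds with probability $1-Cd^{-1/4}$, not ``very high probability.'' Second, $\|\bw\|_\infty=o(1/\sqrt{n})$ would give only $\|\bw\|_2=o(1)$, too weak for Cauchy--Schwarz; what actually holds (from Hanson--Wright on $Q_{ii}$) is $\|\bw\|_\infty\le C\sqrt{\log d}/d^{3/2}$, hence $\|\bw\|_2\le C\sqrt{\log n}/n$, and then $|\<\bv,\bK^{-1}\bw\>|\le C\sqrt{\log n/n}$ by Cauchy--Schwarz directly---no resolvent argument needed for the $\bw$-part.

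For the remaining piece $|\<\bX\bbeta_0,\bK_*^{-1}\bone\>|$, the cleanest route is not a bespoke leave-one-out but the anisotropic local law the paper itself invokes in Lemma~\ref{lemma:LastLinear} (citing \cite{knowles2017anisotropic}) to obtain $|\<\bX\bbeta_0,\bK_0^{-1}\bone\>|\le C\sqrt{d\log d}$ with very high probability; transferring from $\bK_0$ to $\bK_*=\bK_0+\bDelta$ costs a negligible perturbation. For $\<\bff^*_{\sNL},\bK_*^{-1}\bone\>$ the same local-law machinery (or your leave-one-out sketch, which is the right idea but needs to be carried out) applies. In either case the resulting bound carries a logarithm, giving $C\log n/n$ rather than $C/n$; this is harmless for the $n^{-c_0}$ conclusion of Theorem~\ref{thm:npropd} but is strictly weaker than the lemma as stated.
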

    \begin{proof}
    The proof of this lemma is very similar to the one of Lemma \ref{lemma:AK}, and we omit it. 
    \end{proof}

    \begin{lemma}\label{lemma:LastLinear}
      Under the assumptions of Theorem \ref{thm:npropd}, let $\cuB(\bSigma,\bbeta_0)$ be defined as in
      Eq.~\eqref{eq:BiasProportional-2}, and $R_{\sL}$ be defined as in the statement of Lemma
      \ref{lemma:DropConstant}. Let $a\in (0,1/2)$. Then we have, with very high probability
           \begin{align}
            \big| R_{\sL}-\cuB(\bSigma,\bbeta_0)\big| \le C\, n^{-a}\,.
           \end{align}
         \end{lemma}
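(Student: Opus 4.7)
The plan is to reduce $R_{\sL}$ by a chain of operator-norm approximations to the squared bias of ordinary ridge regression with design $\bX/\sqrt{d}$, true parameter $\bbeta_0$ and ridge level $\gamma$, and then invoke the precise proportional-regime asymptotics for that well-studied random matrix functional.

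First I would replace $\bK$ by the purely linear kernel $\bK_0 := \beta(\bX\bX^\tr/d + \gamma\id_n)$. Lemma~\ref{lemma:ApproxK} gives $\|\bK-\bK_1\|_{\op}\le n^{-c_0}$ with very high probability, and the resolvent identity $\bK^{-1}-\bK_1^{-1}=\bK^{-1}(\bK_1-\bK)\bK_1^{-1}$ propagates this bound through the bilinear forms in $R_{\sL}$. Using $\|\bB\|_{\op}\lesssim 1/d$ (which follows from $\|\bX\|^2_{\op}\lesssim d$ and boundedness of $\bD$), together with $\|\bX\bbeta_0\|_2^2\lesssim n$ and $\|\bX\bSigma\bbeta_0\|_2^2\lesssim n$ under the assumed bounds on $\bSigma$ and $\bbeta_0$, the cumulative cost of this substitution is $O(n^{-c_0})$. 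The rank-one piece $\alpha\bone\bone^\tr$ separating $\bK_1$ from $\bK_0$ is then removed via the Sherman--Morrison identity exactly as in the proof of Lemma~\ref{lemma:AK}, leaving an $O(n^{-1/2})$ remainder. Next, Lemma~\ref{lemma:BB0}---interpreted with the correct leading constant $h'(0)^2$, as its proof makes clear---replaces $\bB$ by $\beta^2\bX\bSigma\bX^\tr/d^2$ at an additional $O(n^{-1/2})$ cost.

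After these substitutions the $\beta$ factors cancel, and the push-through identity
\[
\gamma\bigl(\bX^\tr\bX/d+\gamma\id_d\bigr)^{-1}=\id_d-\bX^\tr\bigl(\bX\bX^\tr/d+\gamma\id_n\bigr)^{-1}\bX/d
\]
together with a direct expansion of $\|\bSigma^{1/2}\tilde\bbeta\|_2^2$ for $\tilde\bbeta:=\gamma(\bX^\tr\bX/d+\gamma\id_d)^{-1}\bbeta_0$ matches each of the three terms of $R_{\sL}$ line by line. Consequently
\[
R_{\sL}=\gamma^2\bigl\langle\bbeta_0,(\bX^\tr\bX/d+\gamma\id_d)^{-1}\bSigma(\bX^\tr\bX/d+\gamma\id_d)^{-1}\bbeta_0\bigr\rangle+O(n^{-c_0})\, ,
\]
which is exactly the conditional squared bias of ridge regression at regularization $\gamma$ for the linear response $\langle\bbeta_0,\bx\rangle$.

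Under Assumption~\ref{asmpt:kernel_nd_regime} in the proportional regime $d\asymp n$ with sub-Gaussian whitened covariates, anisotropic local laws for sample covariance resolvents (such as those used in \cite{hastie2019surprises}) identify this quadratic form with $\cuB(\bSigma,\bbeta_0)$ up to an $O(n^{-a})$ error for any $a\in(0,1/2)$; the fixed point $\lambda_*$ defined by \eqref{eq:FixedPointProportional-2} emerges as the self-consistent renormalization of the explicit ridge $\gamma$ by the empirical spectrum of $\widehat\bSigma:=\bX^\tr\bX/d$, via the standard Stieltjes-transform calculation. The earlier steps are essentially deterministic manipulations whose error bounds follow from the operator-norm concentration already established in Lemmas~\ref{lemma:ApproxK} and~\ref{lemma:BB0}; the main technical obstacle is this final step, namely obtaining a sharp rate of convergence in the anisotropic local law that is uniform over fixed $\bbeta_0$ with $\|\bSigma^{1/2}\bbeta_0\|_2\le C$ and over anisotropic $\bSigma$ satisfying the hypotheses.
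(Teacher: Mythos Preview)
Your overall strategy---reduce $R_{\sL}$ through $\bK\to\bK_1\to\bK_0$ and $\bB\to\bB_0$ to the conditional bias of linear ridge regression, then invoke the proportional-regime asymptotics of \cite{hastie2019surprises}---is exactly the paper's route, and your final identification via the push-through identity is correct (as is your catch that the constant in $\bB_0$ should be $h'(0)^2$).

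There is, however, a real gap in the rank-one removal step. The Sherman--Morrison argument in Lemma~\ref{lemma:AK} works because the sandwiched vector $\ba$ equals $h(0)\bfone$ plus an $O(d^{-1/2})$ perturbation, so the large denominator $1+\alpha\langle\bfone,\bK_0^{-1}\bfone\rangle\asymp n$ directly kills the contribution. Here the sandwiched vector is $\bu=\bX\bbeta_0$, which has zero-mean i.i.d.\ entries and is in no way aligned with $\bfone$. After Sherman--Morrison the resulting error is of order $d^{-1}\,|\langle\bu,\bK_0^{-1}\bfone\rangle|$ (times a factor that is $O(1)$ by naive operator-norm bounds), and Cauchy--Schwarz only gives $|\langle\bu,\bK_0^{-1}\bfone\rangle|\le\|\bu\|_2\|\bK_0^{-1}\|_{\op}\|\bfone\|_2\lesssim n$, leaving an $O(1)$ remainder---not $O(n^{-1/2})$. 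The paper closes this by invoking the anisotropic local law of \cite{knowles2017anisotropic} to get $|\langle\bbeta_0,\bX^{\sT}\bK_0^{-1}\bfone\rangle|\le C\sqrt{d\log d}$ with very high probability, which yields the required $O(\sqrt{(\log d)/d})$. So the nontrivial random-matrix input is already needed at this intermediate step, not only at the final comparison with $\cuB(\bSigma,\bbeta_0)$; referring back to Lemma~\ref{lemma:AK} does not suffice.
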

         \begin{proof}
           Recall the definition of $\bK_1$ in Eq.~\eqref{eq:K1def}. and define $\tR_{\sL}$ as $R_{\sL}$
           (cf. Eq.~\eqref{eq:RLdef}) except with $\bB$ replaced by $\bB_0$ defined in Eq.~\eqref{eq:B0Def}, and
           $\bK$ replaced by $\bK_1$ defined in Eq.~\eqref{eq:K1def}. Namely:
           \begin{align}
             \tR_{\sL}&:= \<\bX\bbeta_0,\bK_1^{-1}\bB_0\bK_1^{-1}\bX\bbeta_0\>-\frac{2h'(0)}{d}\<\bX\bSigma\bbeta_0,\bK_1^{-1}\bX\bbeta_0\>+\<\bbeta_0,\bSigma\bbeta_0\>\, .
           \end{align}
           Letting $\bu= \bX\bbeta_0=\bZ\bSigma^{1/2}\bbeta_0$, note that $\|\bu\|_2\le \|\bZ\|_{\op}\|\bSigma^{1/2}\bbeta_0\|_2\le C\sqrt{n}$ with very high probability (using Lemma \ref{lemma:Fnorm}). We then have
           \begin{align*}
             \big|R_{\sL}-\tR_{\sL}\big|& \le \big|\<\bu,\bK^{-1}\bB\bK^{-1}\bu\>-\<\bu,\bK^{-1}\bB_0\bK^{-1}\bu\>\big|\\
                                       &+   \big|\<\bu,\bK^{-1}\bB_0\bK^{-1}\bu\>-\<\bu,\bK_1^{-1}\bB_0\bK_1^{-1}\bu\>\big|
                                          +\frac{C}{d}\big|\<\bX\bSigma\bbeta_0,\bK^{-1}\bu\>-\<\bX\bSigma\bbeta_0,\bK_1^{-1}\bu\>\big|\\
                                        &=: E_1+E_2+E_3\, .
           \end{align*}
           We bound each of the three terms with very high
           probability:
           \begin{align}
             E_1  &\le \|\bB-\bB_0\|_{\op}\cdot \|\bK^{-1}\|_{\op}^2\cdot\|\bu\|_2^2\le \frac{C}{d^{3/2}}\times C\times Cn\le \frac{C}{n^{1/2}}\, ,\label{eq:E1}\\
             E_2 & \le \big( \|\bB_0\bK^{-1}\bu\|_2+\|\bB_0\bK_1^{-1}\bu\|_2\big)\|\bu\|_2 \|\bK^{-1}-\bK^{-1}_1\|_{\op}\nonumber\\
                  &\le \|\bB_0\|_{\op}\big( \|\bK^{-1}\|_{\op}+\|\bK_1^{-1}\|_{\op}\big)\|\bu\|^2_2 \|\bK^{-1}-\bK^{-1}_1\|_{\op}\label{eq:E2}\\
                  & \le \frac{C}{d}\times C\times Cn\times n^{-c_0} \le C\, n^{-c_0}\, ,\nonumber\\
             E_3 & \le \frac{C}{d}\|\bX\|_{\op}\|\bSigma\bbeta_0\|_2 \|\bu\|_2 \|\bK^{-1}-\bK^{-1}_1\|_{\op}\label{eq:E3}\\
                  & \le \frac{C}{d}\times C\sqrt{n}\times C\times C\sqrt{n} \times Cn^{-c_0} \le Cn^{-c_0}\, .\nonumber
           \end{align}
           Here in Eq.~\eqref{eq:E1} we used Lemma \ref{lemma:ApproxK} and Lemma \ref{lemma:BB0}; in Eq.~\eqref{eq:E2}  Lemma \ref{lemma:ApproxK}
           and the fact that $\|\bB_0\|_{\op}\le C/d$; in Eq.~\eqref{eq:E3}, Lemma \ref{lemma:ApproxK}  and $\|\bX\|_{\op}\le C\sqrt{d}$. 
           Hence we conclude that
           \begin{align}
             \big|R_{\sL}-\tR_{\sL}\big|& \le Cn^{-c_0}\, .\label{eq:RtR}
           \end{align}

           Finally define $\ttR_{\sL}$ as $\tR_{\sL}$, with $\bK_1$ replaced by  $\bK_0= \beta \frac{\bX\bX^\tr}{d} + \beta \gamma \bI_n$.
           \begin{align*}
             \big|\tR_{\sL}-\ttR_{\sL}\big|& \le
                                             \big|\<\bu,(\bK_1^{-1}+\bK_0^{-1})\bB_0(\bK_1^{-1}-\bK_0^{-1})\bu\>\big|
                                             +\frac{C}{d}\big|\<\bX\bSigma\bbeta_0,(\bK^{-1}_1-\bK_0^{-1})\bu\>\big|\\
                                           & =: G_1+G_2\, .
           \end{align*}
           By the Sherman-Morrison formula, for any two vectors $\bw_1,\bw_2\in\reals^n$, we have
           \begin{align}
             \big|\<\bw_1,(\bK_1^{-1}-\bK_0^{-1})\bw_2\>\big| &= \alpha
                                                                \frac{\big|\<\bfone,\bK_0^{-1}\bw_1\>\<\bfone,\bK_0^{-1}\bw_2\>\big|}{1+\alpha\<\bfone,\bK_0^{-1}\bfone\>}\\
             &\le \frac{C}{d}\big|\<\bfone,\bK_0^{-1}\bw_1\>\big|\cdot\big|\<\bfone,\bK_0^{-1}\bw_2\>\big|\, .
           \end{align}
           Further notice that
           \begin{align*}
             |\<\bu,\bK^{-1}_0,\bfone\>| = |\<\bbeta_0,\bX^{\tr}(\beta\bX\bX^{\tr}/d+\beta\gamma\bI_n)^{-1}\bfone\>|\le C\sqrt{d\log d}\,,
           \end{align*}
           where the last inequality holds with very high probability by \cite[Theorem 3.16]{knowles2017anisotropic} (cf. also Lemma 4.4 in the same paper).
           We therefore have
           \begin{align}
             G_1&\le\frac{C}{d} \big|\<\bu, (\bK_1^{-1}+\bK_0^{-1})\bB_0\bK_0^{-1}\bfone\>\big|\cdot
                  \big|\<\bu,\bK_0^{-1}\bfone\>\big| \\
                & \le \frac{C}{d}\|\bB_0\|_{\op}\|\bu\|_2\|\bone\|_2  \big|\<\bu,\bK_0^{-1}\bfone\>\big| \\
                & \le \frac{C}{d}\times \frac{1}{d} \times \sqrt{d}\times  \sqrt{d}\times \sqrt{d\log d}\le C\sqrt{\frac{\log d}{d}}\, .
           \end{align}
           Analogously
           \begin{align}
             G_2& \le \frac{C}{d^2} \big|\<\bX\bSigma\bbeta_0,\bK_0^{-1}\bfone\>\big|  \cdot
                  \big|\<\bu,\bK_0^{-1}\bfone\>\big|\\
                & \le \frac{C}{d^2}\|\bX\|_{\op}\|\bSigma\bbeta_0\|_2\|\bK_{0}^{-1}\|_{\op}\|\bfone\|_2
                  \big|\<\bu,\bK_0^{-1}\bfone\>\big|\\
             &\le \frac{C}{d^2}\times C\sqrt{d}\times C\times C\times C\sqrt{n} \times C\sqrt{d\log d} \le C\sqrt{\frac{\log d}{d}}\, .
           \end{align}
           Summarizing
           \begin{align}
             \big|\tR_{\sL}-\ttR_{\sL}\big|& \le C\sqrt{\frac{\log d}{d}}\, .\label{eq:tRttR}
           \end{align}

           We are left with the task of estimating $\ttR_{\sL}$ which we rewrite explicitly as
           \begin{align}
             \ttR_{\sL} =\gamma^2\big\|\bSigma^{1/2}(\bX\bX^{\sT}+\gamma\id_n)^{-1}\bbeta_0\big\|_2^2\, .
           \end{align}
           We recognize in this the bias of ridge regression with respect to the linear features $\bx_i$, when the responses are also linear
           $\<\bbeta_0,\bx_i\>$. Using the results of \cite{hastie2019surprises}, we obtain that, for any $a\in (0,1/2)$, the following holds
           with very high probability.
           \begin{align}
            \big| \ttR_{\sL}-\cuB(\bSigma,\bbeta_0)\big| \le C\,
            n^{-c_0}\,. \label{eq:ttR_B}
           \end{align}
           The proof is completed by  using Eqs.~\eqref{eq:RtR}, \eqref{eq:tRttR}, \eqref{eq:ttR_B}.
         \end{proof}

         We next consider the nonlinear term  $R_{\sNL}$, cf.~Eq.~\eqref{eq:RNLdef}.
         \begin{lemma}\label{lemma:LastNL}
           Under the assumptions of Theorem \ref{thm:npropd}, let $\cuV(\bSigma)$ be defined as in
      Eq.~\eqref{eq:VarianceProportional-2}, and $R_{\sNL}$ be defined as in the statement of Lemma
      \ref{lemma:DropConstant}.  Then there exists $c_0>0$ such that, with probability at least $1-Cn^{-1/4}$,
           \begin{align}
             \big| R_{\sNL}-\cuV(\bSigma)\|\proj_{>1}f^*\|_{L^2}^2\big| \le C\, n^{-c_0}\, .
           \end{align}
         \end{lemma}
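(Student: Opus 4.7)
Since $\|\proj_{>1} f^*\|_{L^2}^2 = \|f^*_{\sNL}\|_{L^2}^2$ by construction, the target is to show
$$R_{\sNL} = \langle \bff^*_{\sNL}, \bK^{-1}\bB\bK^{-1}\bff^*_{\sNL}\rangle \;\approx\; \|f^*_{\sNL}\|_{L^2}^2 \cdot \cuV(\bSigma).$$
The plan mirrors the variance analysis (Lemmas \ref{lemma:VarApprox}, \ref{lemma:BKLemma}, \ref{lemma:AK}), but with the bilinear form evaluated on the random vector $\bff^*_{\sNL}$ rather than on a deterministic linear functional. First I would carry out the same operator-norm replacements as in Lemma \ref{lemma:LastLinear}: replace $\bK$ by $\bK_1$ (Lemma \ref{lemma:ApproxK}) and $\bB$ by $\bB_0$ (Lemma \ref{lemma:BB0}), then excise the rank-one $\alpha\bfone\bfone^\tr$ piece of $\bK_1$ via Sherman--Morrison to arrive at $\ttR_{\sNL} := \langle \bff^*_{\sNL}, \bA \bff^*_{\sNL}\rangle$ with $\bA := \bK_0^{-1}\bB_0\bK_0^{-1}$. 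Using $\|\bff^*_{\sNL}\|_2^2 \le Cn$ w.p.~$\ge 1 - Cn^{-1/4}$ (Lemma \ref{lemma:Fnorm}(c)) and the fact that $|\langle \bfone, \bK_0^{-1}\bff^*_{\sNL}\rangle| \le C\sqrt{d\log d}$ with very high probability (by \cite{knowles2017anisotropic}, as in Lemma \ref{lemma:LastLinear}), each of these reductions costs only $Cn^{-c_0}$.

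The core task is to show $\ttR_{\sNL} \approx \|f^*_{\sNL}\|_{L^2}^2 \cdot \trace(\bA)$; once this is established, the desired identity follows by invoking the variance calculation from the proof of Theorem \ref{thm:npropd}, which already gives $\trace(\bA) = \cuV(\bSigma) + O(n^{-c_0})$ (this is essentially what is proved in Lemmas \ref{lemma:VarApprox}, \ref{lemma:BKLemma}). I would split into diagonal and off-diagonal contributions,
$$\ttR_{\sNL} = \sum_{i} A_{ii}\, f^*_{\sNL}(\bx_i)^2 + \sum_{i\ne j} A_{ij}\, f^*_{\sNL}(\bx_i) f^*_{\sNL}(\bx_j).$$
For the diagonal term, a standard leave-one-out decomposition writes $A_{ii} = A_{ii}^{(-i)} + \Delta_i$ with $A_{ii}^{(-i)}$ independent of $\bx_i$ and $|\Delta_i|$ controlled by resolvent perturbation bounds (Schur complement plus Hanson--Wright). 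Then $\sum_i A_{ii}^{(-i)} [f^*_{\sNL}(\bx_i)^2 - \|f^*_{\sNL}\|_{L^2}^2]$ is a sum of conditionally mean-zero terms whose variance is $O(n^{-1})$, while $\sum_i A_{ii}^{(-i)} = \trace(\bA) + O(n^{-c_0})$, so by Chebyshev the diagonal contribution equals $\trace(\bA)\cdot \|f^*_{\sNL}\|_{L^2}^2 + O(n^{-c_0})$ with the required probability.

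The main obstacle is the off-diagonal sum, which requires extending the concentration result of Theorem \ref{thm:ExpResolvent} from a plain resolvent $\bY = (\bZ\bS\bZ^\tr/d + \lambda\id)^{-1}$ to the sandwiched operator $\bA \propto \bY\,(\bZ\bSigma^2\bZ^\tr/d)\,\bY$. The crucial structural input, $\E[g(\bz)] = \E[\bz g(\bz)] = 0$ for $g = f^*_{\sNL}$, is exactly the hypothesis of Theorem \ref{thm:ExpResolvent} and remains true here. Following the strategy of Lemmas \ref{lemma:ExpResolvent}, \ref{lemma:CovResolvent}, \ref{lemma:CovResolvent2}, I would derive moment estimates $|\E\{A_{ij}g_ig_j\}| = O(d^{-5/2})$, $|\E\{A_{ij}A_{kl}g_ig_jg_kg_l\}| = O(d^{-9/2})$ for distinct $i,j,k,l$, and the analogous triple and double bounds (all sharper than in Theorem \ref{thm:ExpResolvent} by the extra $1/d$ coming from the prefactor of $\bB_0$). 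The proofs use a leave-two-out (respectively, leave-four-out) Woodbury expansion: after removing the rows indexed by the tuple in question, one expands $A_{ij}$ as a Taylor series in the corresponding low-rank perturbation, and the orthogonality of $g(\bz)$ to both constants and linear functions cancels the leading-order contributions, leaving remainders controllable by H\"older and Hanson--Wright. Assembling the four moment bounds via a second-moment Chebyshev argument as in the proof of Theorem \ref{thm:ExpResolvent} then yields $|\sum_{i\ne j}A_{ij}g_ig_j| \le Cn^{-c_0}$ with probability at least $1-Cn^{-1/4}$, completing the proof.
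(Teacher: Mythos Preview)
Your overall architecture matches the paper: reduce $R_{\sNL}$ to $\ttR_{\sNL}=\<\bff^*_{\sNL},\bK_0^{-1}\bB_0\bK_0^{-1}\bff^*_{\sNL}\>$ via the same operator-norm replacements, split into diagonal and off-diagonal parts, and finish by identifying $\trace(\bK_0^{-1}\bB_0\bK_0^{-1})$ with the ridge-regression variance $\cuV(\bSigma)$. The difference lies in how the two ``hard'' pieces are handled.

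For the off-diagonal sum you propose to redo Lemmas \ref{lemma:ExpResolvent}--\ref{lemma:CovResolvent2} from scratch for the sandwiched operator $\bA=\bY(\bZ\bSigma^2\bZ^{\tr}/d)\bY$, via leave-two-out and leave-four-out Woodbury expansions. This is plausible and the orthogonality input you identify is exactly right, but it is substantially more algebra than needed. The paper instead uses the resolvent-derivative identity
\[
\bA^{-1}\bM\bA^{-1}=\tfrac{1}{t}\big[\bA^{-1}-(\bA+t\bM)^{-1}\big]+t\,\bA^{-1}\bM\bA^{-1}\bM(\bA+t\bM)^{-1},
\]
applied with $\bA=\bZ\bSigma\bZ^{\tr}/d+\gamma\id_n$ and $\bM=\bZ\bSigma^2\bZ^{\tr}/d$. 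Pairing against $U_{ij}=f^*_{\sNL}(\bx_i)f^*_{\sNL}(\bx_j)\bfone_{i\neq j}$, the first two terms are \emph{plain} resolvents, so Theorem \ref{thm:ExpResolvent} applies directly (twice), while the remainder is bounded crudely by $Ct\|\bU\|_*/d\le Ct$. Optimizing $t=d^{-1/16}$ gives the off-diagonal bound with no new moment computations. This trick buys a considerable shortcut.

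For the diagonal you propose a leave-one-out decomposition $A_{ii}=A_{ii}^{(-i)}+\Delta_i$ followed by Chebyshev. The paper takes a shorter route: it invokes the isotropic local law of \cite{knowles2017anisotropic} to get $\max_{i}\big|[\bA^{-1}\bM\bA^{-1}]_{ii}-n^{-1}\trace(\bA^{-1}\bM\bA^{-1})\big|\le Cn^{-1/8}$ directly, and then couples $\sum_i f^*_{\sNL}(\bx_i)^2$ to $n\|f^*_{\sNL}\|_{L^2}^2$ via Lemma \ref{lemma:Fnorm}. Your leave-one-out route would also work but again is more labor.

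In short: your proposal is correct in outline, but the paper's proof is more economical---the resolvent-difference identity reduces the sandwiched quadratic form to two black-box calls to Theorem \ref{thm:ExpResolvent}, and the diagonal is dispatched by an existing local law rather than a bespoke leave-one-out.
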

         \begin{proof}
           Define
           \begin{align}
             \ttR_{\sNL}&:= \<\bff_{\sNL}^*,\bK_0^{-1}\bB_0\bK_0^{-1}\bff_{\sNL}^*\>\\
                        &= \frac{1}{d^2}\<\bff_{\sNL}^*,(\bX\bX^{\tr}/d+\gamma \id_n)^{-1}\bX\bSigma\bX^{\tr}
                          (\bX\bX^{\tr}/d+\gamma \id_n)^{-1}\bff_{\sNL}^*\>\\
             &= \frac{1}{d^2}\<\bff_{\sNL}^*,(\bZ\bSigma\bZ^{\tr}/d+\gamma \id_n)^{-1}\bZ\bSigma^2\bZ^{\tr}
                          (\bZ\bSigma\bZ^{\tr}/d+\gamma \id_n)^{-1}\bff_{\sNL}^*\>\, .
           \end{align}
           By the same argument as in the proof of Lemma \ref{lemma:LastLinear}, we have, with very high probability,
           \begin{align}
             \big|R_{\sNL}-\ttR_{\sNL}\big|&\le C\sqrt{\frac{\log d}{d}}\, .\label{eq:RNL-firstBound}
           \end{align}

           We next use the following identity, which holds for any two symmetric matrices $\bA$, $\bM$, and any $t\neq 0$,
           \begin{align}
             \bA^{-1}\bM\bA^{-1} = \frac{1}{t}\big[\bA^{-1}-(\bA+t\bM)^{-1}\big]+
             t \bA^{-1}\bM\bA^{-1}\bM(\bA+t\bM)^{-1}\, .
           \end{align}
           Therefore, for any matrix $\bU$ and any $t>0$, we have
           \begin{align}
             \big|\<\bA^{-1}\bM\bA^{-1},\bU\>\big| \le  \frac{1}{t}\big|\<\bA^{-1},\bU\>\big|
             + \frac{1}{t} \big|\<(\bA+t\bM)^{-1},\bU\>\big|+
             t \|\bA^{-1}\|_{\op}^2\|\bM\|_{\op}^2\|(\bA+t\bM)^{-1}\|_{\op} \|\bU\|_*\, .\label{eq:ApproxDerivative}
           \end{align}
           We apply this inequality to $\bA =\bZ\bSigma\bZ^{\tr}/d+\gamma \id_n$,
           $\bM =\bZ\bSigma^2\bZ^{\tr}/d$ and $U_{ij} = f^*_{\sNL}(\bx_i)  f^*_{\sNL}(\bx_i) \bfone_{i\neq j}$.
           Note that $\|\bA^{-1}\|_{\op}, \|\bM\|_{\op},\|(\bA+t\bM)^{-1}\|_{\op} \le C$. Further
           $\|\bU\|_*\le 2\|\bff^*_{\sNL}\|_2^2\le Cn$ with probability at least $1-Cn^{-1/4}$ by Lemma \ref{lemma:Fnorm}.
           Finally for any $t\in (0,1)$, by Theorem \ref{thm:ExpResolvent}, the following hold
           with probability at least $1-Cd^{-1/4}$:
           \begin{align}
             \frac{1}{d}\big|\<\bA^{-1},\bU\>\big|\le C\, d^{-1/8}\, ,\;\;\;\;\;\;
             \frac{1}{d} \big|\<(\bA+t\bM)^{-1},\bU\>\big|\le C\, d^{-1/8}\, .
           \end{align}
           Therefore, applying Eq.~\eqref{eq:ApproxDerivative} we obtain
           \begin{align}
             \frac{1}{d}\big|\<\bA^{-1}\bM\bA^{-1},\bU\>\big|\le \frac{1}{t} C\, d^{-1/8}+ Ct\le Cd^{-1/16}\, ,
           \end{align}
           where in the last step we selected $t=d^{-1/16}$. Recalling the definitions of $\bA,\bM,\bU$, we have proved:
           \begin{align}
             \left|\ttR_{\sNL}-\frac{1}{d^2}\sum_{i=1}^n[\bA^{-1}\bM\bA^{-1}]_{ii}f_{\sNL}^*(\bx_i)^2\right|\le C d^{-1/16}\, .
           \end{align}
           We are therefore left with the task of controlling the diagonal terms. Using the results of \cite{knowles2017anisotropic}, we get
           \begin{align}
            \max_{i\le n}\left|[\bA^{-1}\bM\bA^{-1}]_{ii}-\frac{1}{n}\trace(\bA^{-1}\bM\bA^{-1})\right|\le Cn^{-1/8}\,.
           \end{align}
           Further $|\|\bff^*_{\sNL}\|_2^2/n-\|f^*_{\sNL}\|_{L^2}^2|\le Cn^{-1/2}$ with probability at least $1-Cn^{-1/4}$ by Lemma
           \ref{lemma:Fnorm}. Therefore, with probability at least
             $1-Cd^{-1/4}$,
           \begin{align}
             &\left|\ttR_{\sNL}-V_{\mbox{\tiny \rm RR}}\|f_{\sNL}^*\|_{L^2}^2\right|\le C d^{-1/16}\, , \\
             &V_{\mbox{\tiny \rm RR}}:= \frac{1}{d^2}\big\|\bSigma^{1/2}\bX^{\tr}(\bX\bX^{\tr}/d+\gamma \id_n)^{-1}\big\|_F^2 \, .
           \end{align}
           We finally recognize that the term $V_{\mbox{\tiny \rm RR}}$ is just the variance of ridge regression with respect to the linear features $\bx_i$, and using \cite{hastie2019surprises}, we obtain
           \begin{align}
             \left|\ttR_{\sNL}-\cuV(\bSigma)\|f_{\sNL}^*\|_{L^2}^2\right|\le C d^{-1/16}\, .
           \end{align}
           The proof of the lemma is concluded by using the last equation together with Eq.~\eqref{eq:RNL-firstBound}.
         \end{proof}

         \begin{lemma}\label{lemma:LastMIX}
           Under the assumptions of Theorem \ref{thm:npropd},
           $R_{\smix}$ be defined as in the statement of
           Lemma~\ref{lemma:DropConstant}. Then we have, with
           probability at least $1-Cd^{-1/4}$,
           \begin{align}
             \big| R_{\smix}\big| \le C\, n^{-1/16}\, .
           \end{align}
         \end{lemma}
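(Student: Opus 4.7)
My strategy follows exactly the two-stage template used for Lemmas~\ref{lemma:LastLinear} and~\ref{lemma:LastNL}. In the first stage I will replace the empirical kernel $\bK$ and the matrix $\bB$ by their simplified deterministic surrogates $\bK_0=(\beta/d)\bX\bX^{\tr}+\beta\gamma\id_n$ and $\bB_0=(h'(0)/d^2)\bX\bSigma\bX^{\tr}$, producing a controlled remainder. Concretely, define
\begin{align*}
  \ttR_{\smix} := 2\<\bX\bbeta_0,\bK_0^{-1}\bB_0\bK_0^{-1}\bff_{\sNL}^*\> - \frac{2h'(0)}{d}\<\bX\bSigma\bbeta_0,\bK_0^{-1}\bff_{\sNL}^*\>.
\end{align*}
The difference $|R_{\smix}-\ttR_{\smix}|$ is bounded by expanding $\bK^{-1}\bB\bK^{-1}-\bK_0^{-1}\bB_0\bK_0^{-1}$ into telescoping pieces, applying $\|\bB-\bB_0\|_{\op}\le Cd^{-3/2}$ (Lemma~\ref{lemma:BB0}), $\|\bK-\bK_1\|_{\op}\le n^{-c_0}$ (Lemma~\ref{lemma:ApproxK}), the Sherman--Morrison removal of the rank-one piece $\alpha\bone\bone^{\tr}$ (exactly as in the passage from $\tR_{\sL}$ to $\ttR_{\sL}$), and the a priori bounds $\|\bX\bbeta_0\|_2,\|\bX\bSigma\bbeta_0\|_2\le C\sqrt{n}$ together with $\|\bff_{\sNL}^*\|_2\le C\sqrt{n}$ (Lemma~\ref{lemma:Fnorm}). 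This contributes at most $Cn^{-c_0}$, well below the target rate.

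In the second stage I handle the double-resolvent term in $\ttR_{\smix}$ using the derivative identity of Lemma~\ref{lemma:LastNL}: applied with $\bA=\bK_0$ and $\bM=\bX\bSigma\bX^{\tr}/d$, it gives
\begin{align*}
  \bK_0^{-1}\bM\bK_0^{-1} = \frac{1}{t}\big[\bK_0^{-1}-(\bK_0+t\bM)^{-1}\big] + t\,\bK_0^{-1}\bM\bK_0^{-1}\bM(\bK_0+t\bM)^{-1}.
\end{align*}
Paired against the rank-one matrix $\bU_{ij}=\<\bbeta_0,\bx_i\>\,f^*_{\sNL}(\bx_j)/d$ (whose nuclear norm is $\|\bX\bbeta_0\|_2\|\bff^*_{\sNL}\|_2/d\le C$ with high probability), the last term is $O(t)$ by the operator-norm bounds of Lemmas~\ref{lemma:ApproxK} and~\ref{lemma:BB0}. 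Together with the second piece of $\ttR_{\smix}$, the task is thereby reduced to bounding single-resolvent bilinear forms of the type
\begin{align*}
  \Psi(\bA):=\frac{1}{d}\<\bX\bbeta_0,\bA^{-1}\bff_{\sNL}^*\> = \frac{1}{d}\sum_{i,j\le n}[\bA^{-1}]_{ij}\,L(\bz_i)\,N(\bz_j),
\end{align*}
for $\bA\in\{\bK_0,\bK_0+t\bM\}$, where $L(\bz):=\<\bbeta_0,\bSigma^{1/2}\bz\>$ is linear and $N(\bz):=f^*_{\sNL}(\bSigma^{1/2}\bz)$ satisfies the two orthogonality conditions $\E N(\bz)=0$ and $\E[\bz N(\bz)]=0$ (the latter by definition of $\bbeta_0$).

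The main technical step is an asymmetric analog of Theorem~\ref{thm:ExpResolvent}: I claim that $|\Psi(\bA)|\le Cd^{-1/8}$ with probability $1-Cd^{-1/4}$. The diagonal contribution $d^{-1}\sum_i[\bA^{-1}]_{ii}L(\bz_i)N(\bz_j)$ is handled by writing $[\bA^{-1}]_{ii}=\bar D+ (\text{fluctuation})$, where $\bar D$ concentrates around a $\bSigma$-dependent deterministic value via~\cite{knowles2017anisotropic}; the deterministic piece yields $\bar D\cdot d^{-1}\sum_i L(\bz_i)N(\bz_i)=O(d^{-1/2})$ by the CLT applied to iid mean-zero summands (using $\E[L\cdot N]=\<\bbeta_0,\E[\bx f^*_{\sNL}(\bx)]\>=0$), while the fluctuation piece is handled by Cauchy--Schwarz. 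For the off-diagonal contribution I reproduce the leave-two-out expansion of Lemma~\ref{lemma:ExpResolvent} with $g_1g_2$ replaced by $L_1N_2+L_2N_1$: the leading term $\E[\bar q^{-2} A_{12}(L_1N_2+L_2N_1)]$ vanishes because $\E[\bz_j N(\bz_j)]=0$ forces each summand in the expansion of $A_{12}=\<\bz_1,\bR_0\bz_2\>/d$ to contain a zero factor, even though $L$ itself is not orthogonal to linear functions (this is precisely the point where the asymmetric pairing is benign---one-sided orthogonality suffices). The subleading terms are bounded by H\"older's inequality using the sub-Gaussian tails of $L$ and the $L^{4+\eta}$ bound on $N$ from Lemma~\ref{lemma:Fnorm}, yielding entry-wise estimates $Cd^{-3/2}$; the analogs of Lemmas~\ref{lemma:CovResolvent} and~\ref{lemma:CovResolvent2} then give a variance bound on $\Psi(\bA)$ of order $d^{-1/4}$, and Chebyshev's inequality concludes the pointwise estimate. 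Finally, substituting into the derivative identity and optimizing $t=d^{-1/16}$ gives $|\ttR_{\smix}|\le Cd^{-1/16}$, which combined with the stage-one bound yields the claim.

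The main obstacle is the asymmetric resolvent estimate: I expect that verifying that the moment and cancellation bookkeeping of Lemmas~\ref{lemma:ExpResolvent}--\ref{lemma:CovResolvent2} goes through verbatim when the two sides of the pairing carry different integrability and different orthogonality properties will require a careful but essentially mechanical recheck of each Taylor-expansion term, with particular attention to the mixed quantities $\E[\bz_kL(\bz)]\neq 0$ and $\E[\bz_kN(\bz)]=0$ so that at every stage only the cancellations coming from $N$ are invoked.
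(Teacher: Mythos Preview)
Your proposal is correct and matches the paper's approach exactly: the paper's proof of this lemma consists of the single sentence ``The proof of this lemma is analogous to the one of Lemma~\ref{lemma:LastNL} and we omit it,'' and your two-stage template (reduce to $\ttR_{\smix}$ via Lemmas~\ref{lemma:ApproxK}, \ref{lemma:BB0} and Sherman--Morrison, then apply the derivative identity together with an asymmetric analogue of Theorem~\ref{thm:ExpResolvent}) is precisely that analogy spelled out. Your observation that one-sided orthogonality of $N$ suffices is the right one: in the expansion of $\E_+[q_1^{-1}A_{12}q_2^{-1}L_1N_2]$, the term $\E_+[(q_2^{-1}-\bar q^{-1})A_{12}L_1N_2]$ that cannot be killed by $N$-orthogonality still comes out $O(d^{-3/2})$ after taking $\E_{\bz_1}[A_{12}L_1]=\<\bSigma^{1/2}\bbeta_0,\bR_0\bz_2\>/d$, since the remaining linear functional of $\bz_2$ has bounded $L^p$ norm and the $(q_2^{-1}-\bar q^{-1})$ fluctuation contributes the needed $d^{-1/2}$.
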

         \begin{proof}
           The proof of this lemma is analogous to the one of Lemma \ref{lemma:LastNL} and we omit it. 
         \end{proof}

         We are now in a position to prove Theorem~\ref{thm:npropd}.

         \begin{proof}[Proof of Theorem \ref{thm:npropd}: Bias term]
           Using Lemma \ref{eq:Bias0}, Eq.~\eqref{eq:Bias0Decomp} and Lemma \ref{lemma:DropConstant}, we obtain
           that, with very high probability,
           \begin{align}
             \big| \biassquaredemp-(R_{\sL}+R_{\sNL}+R_{\smix}+\|f^*_{\sNL}\|_{L^2}^2) \big|\le C\sqrt{\frac{\log n}{n}}\, .
           \end{align}
           Hence the proof is completed by using Lemmas \ref{lemma:LastLinear}, \ref{lemma:LastNL}, \ref{lemma:LastMIX}.
         \end{proof}
         
         \subsubsection{Consequences: Proof of Corollary 4.14}

         We denote by 
         $\lambda_1\ge \dots \ge\lambda_d$ the eigenvalues of $\bSigma$ in   decreasing order.
         
First  note that the left hand side of Eq.~\eqref{eq:FixedPointProportional-2} is strictly increasing in $\lambda_*$, while the right
hand side is strictly decreasing. By considering the limits as $\lambda_*\to 0$ and $\lambda_*\to\infty$, it is easy to
see that this equation admits indeed a unique solution.

Next denoting by $F(x) :=\Trace\Big( \bSigma(\bSigma+x\id)^{-1}\Big)$
the function appearing on the right hand side of Eq.~\eqref{eq:FixedPointProportional-2}, we have, for $x\ge c_*\lambda_{k+1}$,
\begin{align}
  F(x) & = \sum_{i=1}^{d}\frac{\lambda_i}{x+\lambda_i}\ge  \sum_{i=k+1}^{d}\frac{\lambda_i}{x+\lambda_i}\\
       & \ge \frac{c_*}{(1+c_*)x}  \sum_{i=k+1}^{d}\lambda_i =:\uF(x)\, .
\end{align}
Let $\ulambda_*$ be the unique non-negative solution  of $n(1-(\gamma/\ulambda_*))=\uF(\ulambda_*)$. Then, the above inequality
implies that whenever $\ulambda_*\ge c_*\lambda_{k+1}$ we have $\lambda_*\ge \ulambda_*$.
Solving explicitly for $\ulambda_*$, we get
\begin{align}
  \frac{(1+c_*)\gamma}{c_*\lambda_{k+1}}+\frac{r_k(\bSigma)}{n}\ge  (1+c_*)\;\;\Rightarrow\;\;
  \lambda_*\ge \gamma+\frac{c_*}{1+c_*}\frac{1}{n}\sum_{i=k+1}^{d}\lambda_i\, .
\end{align}
Next,  we upper bound
\begin{align}
  \Trace\big( \bSigma^2(\bSigma+\lambda_*\id)^{-2}\big)& = \sum_{i=1}^{d}\frac{\lambda_i^2}{(\lambda_i+\lambda_*)^2}\\
  &\le k+\frac{1}{\lambda_*^2}\sum_{i=k+1}^d \lambda_i^2\\
                                                       &\le k + (1+c_*^{-1})^2n^2 \frac{\sum_{i=k+1}^d \lambda_i^2}{(n\gamma/c_*+\sum_{i=k+1}^{d}\lambda_i)^2}\, .\label{eq:TrS2}
\end{align}
If we assume that the right-hand side is less than $1/2$, using Theorem \ref{thm:npropd}, we
obtain that, with high probability,
\begin{align}
  \frac{1}{\sigma_{\xi}^2}\varianceemp\le
  k + (1+c_*^{-1})^2n^2 \frac{\sum_{i=k+1}^d \lambda_i^2}{(n\gamma/c_*+\sum_{i=k+1}^{d}\lambda_i)^2}+n^{-c_0}\, .
\end{align}

Next, considering again Eq.~\eqref{eq:FixedPointProportional-2} and upper bounding the right-hand side, we get
\begin{align}
  n \Big(1-\frac{\gamma}{\lambda_*}\Big) \le k +\frac{1}{\lambda_*} \sum_{i=k+1}^d\lambda_i\, .
\end{align}
Hence, using the assumption that the right hand side of Eq.~\eqref{eq:TrS2} is upper bounded by $1/2$, which implies $k\le n/2$,
we get
\begin{align}
  \lambda_*\le 2\gamma+\frac{2}{n}\sum_{i=k+1}^d\lambda_i\, .
\end{align}
Next consider the formula for the bias term, Eq.~\eqref{eq:BiasProportional-2}. Denoting by $(\beta_{0,i})_{i\le p}$ the
coordinates of $\bbeta_0$ in the basis of the eigenvectors of $\bSigma$, we get
\begin{align}
  \lambda_*^2\<\bbeta_0,(\bSigma+\lambda_*\id)^{-2}\bSigma\bbeta_0\> &= \sum_{i=1}^{d}
                                                                       \frac{\lambda_*^2\lambda_i\beta_{0,i}^2}{(\lambda_i+\lambda_*)^2}\\
                                                                     &\le \lambda_*^2\sum_{i=1}^{k}\lambda_i^{-1}\beta_{0,i}^2+ \sum_{i=1}^{d}\lambda_i\beta_{0,i}^2\\
  &\le 4\Big(\gamma+\frac{1}{n}\sum_{i=k+1}^d\lambda_i\Big)^2 \|\bbeta_{0,\le k}\|_{\bSigma^{-1}}^2+\|\bbeta_{0,>k}\|_{\bSigma}^2\, .
\end{align}
Together with Theorem \ref{thm:npropd}, this implies the desired bound on the bias.

\section{Optimization in the linear regime}

{	
\renewcommand{\thetheorem}{\ref{thm:Linearization}}

\begin{theorem} 
  Assume
  \begin{align}
    \Lip(\bD f_n)\, \|\by-f_n(\btheta_0)\|_{2}< \frac{1}{4} \sigma^2_{\min}(\bD f_n(\btheta_0))\, .
    \label{eq:ConditionLinearization-2}
  \end{align}
  Further define $$\sigma_{\max}:=\sigma_{\max}(\bD f_n(\btheta_0)), \sigma_{\min}:=\sigma_{\min}(\bD  f_n(\btheta_0)).$$
  Then the following hold for all $t>0$:
  \begin{enumerate}
  \item The empirical risk decreases exponentially fast to $0$, with  rate $\lambda_0 = \sigma^2_{\min}/(2n)$:
    \begin{align}
      \hRisk(\btheta_t)\le \hRisk(\btheta_0) \, e^{-\lambda_0 t}\, .
      \label{eq:R-Linearization-2}
    \end{align}
  \item The parameters stay close to the initialization and are closely tracked by those of the linearized flow.
    Specifically, letting $L_n:=\Lip(\bD f_n)$,
  \begin{align}
    \|\btheta_t-\btheta_0\|_2& \le \frac{2}{\sigma_{\min}}\, \|\by-f_n(\btheta_0)\|_2\, ,\label{eq:CloseToInit-2}\\
    \|\btheta _t-\obtheta_t\|_2 &\le
      \Big\{\frac{32\sigma_{\max}}{\sigma^2_{\min}}
      \|\by-f_n(\btheta_0)\|_{2}+ \frac{16L_n}{\sigma^3_{\min}}
      \|\by-f_n(\btheta_0)\|^2_{2}\Big\} \notag\\*
    & \qquad\qquad {}
      \wedge
      \frac{180L_n\sigma_{\max}^2}{\sigma^5_{\min}}
      \|\by-f_n(\btheta_0)\|^2_{2}
\,.                                  \label{eq:Coupling-2}
  \end{align}
\item The models constructed by gradient flow and by the linearized flow are similar on test data.
  Specifically, writing $f^{\slin}(\btheta) = f(\btheta_0)+\bD f(\btheta_0) (\btheta-\btheta_0)$, we have  
  \begin{align}
      \lefteqn{\|f(\btheta_t)
      -f^{\slin}(\obtheta_t)\|_{L^2(\P)}}
      & \notag\\*
      & \le \Big\{4\, \Lip(\bD f)\frac{1}{\sigma_{\min}^2}+
    180\|\bD
    f(\btheta_0)\|_{\op}\frac{L_n\sigma_{\max}^2}{\sigma_{\min}^5}\Big\}\|\by-f_n(\btheta_0)\|_2^2\, .  \label{eq:ModelDeviationLinearized-2}
  \end{align}
\end{enumerate}
\end{theorem}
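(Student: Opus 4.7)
The plan is a bootstrap argument establishing part~1 and \eqref{eq:CloseToInit-2} simultaneously, followed by a Gronwall-type perturbation argument for the coupling \eqref{eq:Coupling-2}, and a Taylor expansion for part~3.

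For part~1 and \eqref{eq:CloseToInit-2}, let $\br_t := \by - f_n(\btheta_t)$ and consider the bootstrap interval on which $\sigma_{\min}(\bD f_n(\btheta_t)) \ge \sigma_{\min}/2$. The key identity
\[
n\|\dot\btheta_t\|_2^2 = \tfrac{1}{n}\|\bD f_n(\btheta_t)^\top \br_t\|_2^2 = -\tfrac{1}{2}\tfrac{d}{dt}\|\br_t\|_2^2
\]
combines with $\|\bD f_n(\btheta_t)^\top \br_t\|_2 \ge (\sigma_{\min}/2)\|\br_t\|_2$ to give both the Polyak--Lojasiewicz inequality $-\tfrac{d}{dt}\|\br_t\|_2^2 \ge (\sigma_{\min}^2/2n)\|\br_t\|_2^2$, hence \eqref{eq:R-Linearization-2}, and (by dividing the identity by the same lower bound on $n\|\dot\btheta_t\|_2$) the sharp velocity estimate $\|\dot\btheta_t\|_2 \le -(2/\sigma_{\min})\tfrac{d}{dt}\|\br_t\|_2$, whose integration over $[0,t]$ is \eqref{eq:CloseToInit-2}. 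The Jacobian Lipschitz bound $\sigma_{\min}(\bD f_n(\btheta_t)) \ge \sigma_{\min} - L_n\|\btheta_t-\btheta_0\|_2$ together with \eqref{eq:ConditionLinearization-2} then closes the bootstrap, showing this interval is $[0,\infty)$.

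For the coupling, let $\phi_t := \btheta_t - \obtheta_t$ and $R_t := f_n(\btheta_t) - f_n(\btheta_0) - \bD f_n(\btheta_0)(\btheta_t-\btheta_0)$, which satisfies $\|R_t\|_2 \le (L_n/2)\|\btheta_t-\btheta_0\|_2^2$ by Taylor. Subtracting the two flows produces
\[
\dot\phi_t = -\tfrac{1}{n}K_0\phi_t + \tfrac{1}{n}[\bD f_n(\btheta_t)-\bD f_n(\btheta_0)]^\top \br_t - \tfrac{1}{n}\bD f_n(\btheta_0)^\top R_t,
\]
with $K_0 := \bD f_n(\btheta_0)^\top\bD f_n(\btheta_0)$. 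The first bound in \eqref{eq:Coupling-2} follows by dropping the dissipative term and bounding $\|\phi_t\|_2 \le \int_0^t \|\dot\btheta_s - \dot{\obtheta}_s\|_2\, ds$: splitting the integrand as $\tfrac{L_n}{n}\|\btheta_s-\btheta_0\|_2\|\br_s\|_2 + \tfrac{\sigma_{\max}}{n}\|\br_s-\bar{\br}_s\|_2$ and integrating using the exponential decays of $\|\br_s\|_2$ (stage~1) and of $\|\bar{\br}_s\|_2$ (exact for the linearized flow). For the sharper second bound, decompose $\phi_t$ along $\mathrm{range}(\bD f_n(\btheta_0)^\top)$ and its orthogonal complement: only the Jacobian-mismatch term has a component in the orthogonal subspace, so that part is bounded by $\lesssim L_n\|\by-f_n(\btheta_0)\|_2^2/\sigma_{\min}^3$; on the parallel subspace $K_0$ acts with eigenvalues at least $\sigma_{\min}^2$, so Duhamel's formula with damping rate $\sigma_{\min}^2/n$ converts the two quadratic forcing terms into the $L_n\sigma_{\max}^2\|\by-f_n(\btheta_0)\|_2^2/\sigma_{\min}^5$ contribution.

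For part~3, decompose
\[
f(\btheta_t) - f^{\slin}(\obtheta_t) = \big[f(\btheta_t) - f(\btheta_0) - \bD f(\btheta_0)(\btheta_t-\btheta_0)\big] + \bD f(\btheta_0)\phi_t.
\]
The bracket has $L^2(\P)$-norm at most $(\Lip(\bD f)/2)\|\btheta_t-\btheta_0\|_2^2$ by Taylor's theorem in the population Hilbert norm, estimated via \eqref{eq:CloseToInit-2}; the second term is bounded by $\|\bD f(\btheta_0)\|_{\op}\|\phi_t\|_2$ with the tight half of \eqref{eq:Coupling-2} inserted. The principal obstacle is the sharp second bound in the coupling: because the Taylor-remainder forcing $R_s/n$ does not decay with $s$, a naive integration of $\|\dot\phi_s\|_2$ would yield a $t$-linear growth, and it is essential to exploit the dissipation of $K_0$ on the parallel subspace to trade this constant forcing against the $n/\sigma_{\min}^2$ time scale of the exponential Green's function, producing the $\sigma_{\max}^2/\sigma_{\min}^5$ factor.
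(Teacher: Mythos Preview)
Your proposal is correct. Parts~1, \eqref{eq:CloseToInit-2}, and~3 proceed essentially as in the paper: the same bootstrap on the interval where $\sigma_{\min}(\bD f_n(\btheta_t))\ge \sigma_{\min}/2$, the same velocity estimate $\|\dot\btheta_t\|_2\le -(2/\sigma_{\min})\tfrac{d}{dt}\|\by-\by_t\|_2$, and the same splitting $f(\btheta_t)-f^{\slin}(\obtheta_t)=[f(\btheta_t)-f^{\slin}(\btheta_t)]+\bD f(\btheta_0)\phi_t$ for the population bound.

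For the sharp second half of \eqref{eq:Coupling-2} you take a genuinely different route. The paper works in \emph{prediction space}: it first derives an ODE for $\by_t-\oby_t$, applies Gr\"onwall against the kernel $\bK_t$ to obtain
\[
\|\by_t-\oby_t\|_2\le 2e^{-\lambda_0 t/4}\|\by-\by_0\|_2\Big\{1\wedge \tfrac{10\sigma_{\max}L_n}{\sigma_{\min}^3}\|\by-\by_0\|_2\Big\},
\]
and only then integrates $\tfrac{d}{dt}\|\btheta_t-\obtheta_t\|_2$ using this refined bound in the $\tfrac{\sigma_{\max}}{n}\|\by_t-\oby_t\|_2$ term. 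You instead stay in \emph{parameter space}, splitting $\phi_t$ along $\mathrm{range}(\bD f_n(\btheta_0)^\top)$ and its kernel, and use Duhamel with damping $K_0\succeq \sigma_{\min}^2$ on the parallel component to absorb the non-decaying Taylor-remainder forcing $B_t$. Both arguments are valid; the paper's approach yields the exponentially decaying bound on $\|\by_t-\oby_t\|_2$ as a reusable byproduct, while your spectral decomposition is more direct for the parameter coupling and in fact delivers the slightly sharper scaling $L_n\sigma_{\max}/\sigma_{\min}^4$ rather than $L_n\sigma_{\max}^2/\sigma_{\min}^5$ (so your stated bound is correct but a bit looser than what your method actually gives). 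One minor remark: your phrase ``dropping the dissipative term'' for the first half of \eqref{eq:Coupling-2} is just a regrouping, since $-\tfrac{1}{n}K_0\phi_t+B_t=\tfrac{1}{n}\bD f_n(\btheta_0)^\top(\br_t-\bar\br_t)$ exactly; nothing is actually discarded.
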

	\addtocounter{theorem}{-1}	
	}							
\begin{proof}
  Throughout the proof we let $L_n:=\Lip(\bD f_n)$, and we
  use $\dot{\ba}_t$ to denote the derivative of quantity $\ba_t$ with respect to time.

  Let $\by_t = f_n(\btheta_t)$. By the gradient flow equation,
  \begin{align}
    \dot{\by}_t = \bD f_n(\btheta_t)\, \dot{\btheta}_t =
    -\frac{1}{n}\bD f_n(\btheta_t)\bD f_n(\btheta_t)^{\sT}(\by_t-\by)\, .
  \end{align}
  Defining the empirical kernel at time $t$, $\bK_t:= \bD f_n(\btheta_t)\bD f_n(\btheta_t)^{\sT}$,
  we thus have
  \begin{align}
    \dot{\by}_t &=   -\frac{1}{n}\bK_t(\by_t-\by)\, ,\label{eq:YtODE}\\
    \frac{\de\phantom{t}}{\de t}\|\by_t-\by\|_2^2 &= -\frac{2}{n} \<\by_t-\by,\bK_t(\by_t-\by)\>\, .
 \end{align}
 Letting $r_*:= \sigma_{\min}/(2L_n)$ and $t_*:=\inf\{t:\; \|\btheta_t-\btheta_0\|_2>r_*\}$,
 we have $\lambda_{\min}(\bK_t)\ge (\sigma_{\min}/2)^2$ for all $t\le t_*$, whence
 \begin{align}
   t\le t_* \;\; \Rightarrow\;\;  \|\by_t-\by\|_2^2 \le \|\by_0-\by\|_2^2e^{-\lambda_0 t}\, ,\label{eq:ResidualNorm}
 \end{align}
 with $\lambda_0 =\sigma_{\min}^2/(2n)$.

  Note that, for any $t\le t_*$, $\sigma_{\min}(\bD f_n(\btheta_t))\ge \sigma_{\min}/2$.
 Therefore, by the gradient flow equations, for any $t\le t_*$,
 \begin{align}
   \|\dot\btheta_t\|_2&= \frac{1}{n}\big\|\bD f_n(\btheta_t)^{\sT}(\by_t-\by)\big\|_2\, ,\\
   \frac{\de\phantom{t}}{\de t} \|\by_t-\by\|_2& =- \frac{1}{n}\cdot \frac{\|\bD f_n(\btheta_t)^{\sT}(\by_t-\by)\big\|_2^2}{\|\by_y-\by\|_2}\\
   &\le -\frac{\sigma_{\min}}{2n} \|\bD f_n(\btheta_t)^{\sT}(\by_t-\by)\big\|_2\, .
 \end{align}
Therefore, by Cauchy-Schwartz,
 \begin{align}\label{eq:CS-DistanceFromInit}
   \frac{\de\phantom{t}}{\de t}\Big(\|\by_t-\by\|_2+\frac{\sigma_{\min}}{2}\|\btheta_t-\btheta_0\|_2\Big)\le
   \frac{\de\phantom{t}}{\de t} \|\by_t-\by\|_2 +\frac{\sigma_{\min}}{2}\|\dot\btheta_t\|_2\le 0\, .
 \end{align}
 This implies, for all $t\le t_*$, 
 \begin{align}
 \|\btheta_t-\btheta_0\|_2\le \frac{2}{\sigma_{\min}} \|\by-\by_0\|_2\, . \label{eq:BoundDistanceFromInit}
 \end{align}
 Assume by contradiction $t_*<\infty$. The last equation together with the assumption \eqref{eq:ConditionLinearization-2}
 implies $\|\btheta_{t_*}-\btheta_0\|_2<r_*$, which contradicts the definition of $t_*$. We conclude that $t_*=\infty$,
 and Eq.~\eqref{eq:R-Linearization-2} follows from Eq.~\eqref{eq:ResidualNorm}.

 Equation \eqref{eq:CloseToInit-2} follows from Eq.~\eqref{eq:BoundDistanceFromInit}.

 In order to prove Eq.~\eqref{eq:Coupling-2}, let $\oby_t:= f_n(\btheta_0)+\bD f_n(\btheta_0)(\obtheta_t-\btheta_0)$.
 Note that this satisfies an equation similar to \eqref{eq:YtODE}, namely
  \begin{align}
    \dot{\oby}_t &=   -\frac{1}{n}\bK_0(\oby_t-\by)\,\label{eq:BYtODE} .
  \end{align}
  Define the difference $\br_t:= \by_t-\oby_t$. We then have $\dot{\br}_t = -(\bK_t/n)\br_t-((\bK_t-\bK_0)/n)(\oby_t-\by)$,
  whence
  \begin{align}
    \frac{\de\phantom{t}}{\de t}\|\br_t\|_2^2&=-\frac{2}{n}\<\br_t,\bK_t\br_t\>-\frac{2}{n}\<\br_t,(\bK_t-\bK_0)(\oby_t-\by)\>\\
    &\le -\frac{2}{n}\lambda_{\min}(\bK_t)\|\br_t\|^2_2+\frac{2}{n}\|\br_t\|_2\big\|\bK_t-\bK_0\big\|_{\op}\|\oby_t-\by\|_2\, .
  \end{align}
  Using $2\lambda_{\min}(\bK_t)/n\ge \lambda_0$ and $\|\oby_t-\by_t\|_2\le \|\by_0-\by\|_2e^{-\lambda_0t/2}$, we get
  \begin{align}
    \frac{\de\phantom{t}}{\de t}\|\br_t\|_2&=-\frac{\lambda_0}{2}\|\br_t\|_2+\frac{1}{n}\big\|\bK_t-\bK_0\big\|_{\op}\|\by_0-\by\|_2
                                             \, e^{-\lambda_0t/2}\, .
  \end{align}
  Note that
  \begin{align}
    \big\| \bK_t-\bK_0\big\|_{\op} &= \big\|\bD f_n(\btheta_t) \bD f_n(\btheta_t)^{\sT}-\bD f_n(\btheta_0) \bD f_n(\btheta_0)^{\sT}\big\|_{\op}\\
                                   & \le 2 \big\|\bD f_n(\btheta_0)\big\|_{\op}\big\|\bD f_n(\btheta_t)-\bD f_n(\btheta_0)\big\|_{\op} +\big\|\bD f_n(\btheta_t)-\bD f_n(\btheta_0)\big\|_{\op}^2\\
    &\le 2\sigma_{\max}L_n\|\btheta_t-\btheta_0\|_{\op}+L_n^2\|\btheta_t-\btheta_0\|^2_{\op}\\
                                     & \le \frac{5}{2}\sigma_{\max}L_n\|\btheta_t-\btheta_0\|_{\op}\, .\label{eq:BoundKdiff}
  \end{align}
  (In the last inequality, we used the fact that $L_n\|\btheta_t-\btheta_0\|_{\op}\le \sigma_{\min}/2$
  by definition of $r_*$.)
  Applying Gr\"onwall's inequality, and using $\br_0=0$, we obtain
  \begin{align}
    \|\br_t\|_2&\le e^{-\lambda_0t/2} \|\by_0-\by\|_2\int_0^t \frac{1}{n}\big\|\bK_s-\bK_0\big\|_{\op}\de s\\
               &\le e^{-\lambda_0t/2} t\|\by_0-\by\|_2\sup_{s\in[0,t]}\frac{1}{n}\big\|\bK_s-\bK_0\big\|_{\op}\\
               &\le e^{-\lambda_0t/4}\frac{2}{\lambda_0} \|\by_0-\by\|_2\sup_{s\ge 0}\frac{1}{n}\big\|\bK_s-\bK_0\big\|_{\op}\\
               & \stackrel{(a)}{\le}  e^{-\lambda_0t/4}\frac{2}{\lambda_0} \|\by_0-\by\|_2\frac{5}{2n}L_n\sigma_{\max}\sup_{s\ge 0}\|\btheta_s-\btheta_0\|_2\\
    & \stackrel{(b)}{\le} e^{-\lambda_0t/4}\frac{2}{\lambda_0} \|\by_0-\by\|_2\frac{5}{2n}L_n\sigma_{\max}\cdot \frac{2}{\sigma_{\min}}\|\by_0-\by\|_2\\
               & \le 20\, e^{-\lambda_0t/4}\frac{\sigma_{\max}}{\sigma_{\min}^3}L_n\|\by-\by_0\|_2^2\, .
  \end{align}
  Here in $(a)$ we used Eq.~\eqref{eq:BoundKdiff} and in $(b)$ Eq.~\eqref{eq:BoundDistanceFromInit}.
  Further using $\|\br_t\|_2\le \|\by_t-\by\|_2+\|\oby_t-\by\|_2\le 2\|\by_0-\by\|\exp(-\lambda_0t/2)$, we get
  \begin{align}
    \|\by_t-\oby_t\|_2&\le 2e^{-\lambda_0t/4}\|\by-\by_0\|_2
                        \Big\{1\wedge \frac{10\sigma_{\max}}{\sigma_{\min}^3}L_n\|\by-\by_0\|_2\Big\}\, .\label{eq:YdiffBound}
  \end{align}
 Recall the gradient flow equations for $\btheta_t$ and $\obtheta_t$:
 \begin{align}
   \dot{\btheta}_t & = \frac{1}{n}\bD f_n(\btheta_t)^{\sT}(\by-\by_t)\, ,\\
   \dot{\obtheta}_t & = \frac{1}{n}\bD f_n(\btheta_0)^{\sT}(\by-\oby_t)\, .
 \end{align}
 Taking the difference of these equations, we get
 \begin{align}
   \frac{\de\phantom{t}}{\de t}\|\btheta_t-\obtheta_t\|_2 &\le \frac{1}{n}\big\|\bD f_n(\btheta_t)-\bD f_n(\btheta_0)\big\|_{\op}
   \|\by_t-\by\|_2+ \frac{1}{n}\big\|\bD f_n(\btheta_0)\big\|_{\op} \|\by_t-\oby_t\|_2\\
                                                          &\le \frac{L_n}{n}\|\btheta_t-\btheta_0\|_2 \|\by_t-\by\|_2+ \frac{\sigma_{\max}}{n}\|\by_t-\oby_t\|_2\\
                                                          &\stackrel{(a)}{\le} \frac{L_n}{n}\cdot \frac{2}{\sigma_{\min}}\, \|\by-\by_0\|_2^2e^{-\lambda_0 t/2}
                                       +\frac{\sigma_{\max}}{n}\cdot 2e^{-\lambda_0t/4}\|\by-\by_0\|_2
                        \Big\{1\wedge \frac{10\sigma_{\max}}{\sigma_{\min}^3}L_n\|\by-\by_0\|_2\Big\}            
 \end{align}
 where in $(a)$ we used Eqs.~\eqref{eq:CloseToInit-2}, \eqref{eq:ResidualNorm} and \eqref{eq:YdiffBound}.
 Integrating the last expression (thanks to $\obtheta_0=\btheta_0$), we get
 \begin{align}
   \|\btheta_t-\obtheta_t\|_2 &\le\frac{8L_n}{\sigma_{\min}^3}\|\by-\by_0\|_2^2+
                                \Big\{\frac{16\sigma_{\max}}{\sigma_{\min}^2}\|\by-\by_0\|_2\wedge\frac{160\sigma^2_{\max}}{\sigma_{\min}^5}       L_n\|\by-\by_0\|_2^2\Big\}\, .
 \end{align}
 Simplifying, we get Eq.~\eqref{eq:Coupling-2}.

 Finally, to prove Eq.~\eqref{eq:ModelDeviationLinearized-2}, write
 \begin{align}
   \|f(\btheta_t)-f_{\slin}(\obtheta_t)\|_{L^2}\le    \underbrace{\|f(\btheta_t)-f_{\slin}(\btheta_t)\|_{L^2}}_{E_1}   +
   \underbrace{\|f_{\slin}(\btheta_t)-f_{\slin}(\obtheta_t)\|_{L^2}}_{E_2}\, .
 \end{align}
 By writing $f(\btheta_t)-f_{\slin}(\btheta_t)= \int_{0}^t \frac{\de\phantom{s}}{\de s} [f(\btheta_s)-f_{\slin}(\btheta_s)]\de s$, we get
 \begin{align}
   E_1&= \left\| \int_0^t [\bD f(\btheta_s)-\bD f(\btheta_0)] \dot{\btheta}_s\de s\right\|_{L^2}\\
      &\le \Lip(\bD f)\sup_{s\ge 0}\|\btheta_s-\btheta_0\|_2\int_0^t\|\dot{\btheta}_s\|_2\de s\\
   &\le \Lip(\bD f)\cdot\frac{4\|\by-\by_0\|_2^2}{\sigma_{\min}^2}\, .
 \end{align}
 In the last step we used Eq.~\eqref{eq:CloseToInit-2} and noted that the same argument to prove the latter indeed
 also bounds the integral $\int_0^t\|\dot{\btheta}_s\|_2\de s$ (see Eq.~\eqref{eq:CS-DistanceFromInit}).

 Finally, to bound term $E_2$, note that $f_{\slin}(\btheta_t)-f_{\slin}(\obtheta_t) =\bD f(\btheta_0)(\btheta_t-\obtheta_t)$,
 and using Eq.~\eqref{eq:Coupling-2}, we get
 \begin{align}
   E_2\le 180\|\bD f(\btheta_0)\|_{\op}\frac{L_n\sigma_{\max}^2}{\sigma_{\min}^5}\|\by-\by_0\|_2^2\, .
 \end{align}
 Equation \eqref{eq:ModelDeviationLinearized-2} follows by putting together the above bounds for $E_1$ and $E_2$.
\end{proof}

We next pass to the case of two-layers networks:
\begin{align}
  f(\bx;\btheta) := \frac{\alpha}{\sqrt{m}}\sum_{j=1}^m b_j\sigma(\<\bw_j,\bx\>),~~
  \btheta= (\bw_1,\dots,\bw_m)\, .\label{eq:Two-Layers-Net-2}
\end{align}
  {	
	 \renewcommand{\thetheorem}{\ref{lemma:TwoLayers}}
\begin{lemma}
  Under Assumption 5.2, 
  further assume
 $\{(y_i,\bx_i)\}_{i\le n}$ to be i.i.d. with $\bx_i \sim_{iid}\normal(0,\id_d)$,  and  $y_i$ $B^2$-sub-Gaussian.
 Then there exist constants $C_i$,
  depending uniquely on $\sigma$, such that the following hold with probability at least $1-2\exp\{-n/C_0\}$,
  provided $md\ge C_0 n\log n$, $n\le d^{\ell_0}$ (whenever not specified, these hold for both
  $\btheta_0\in\{\btheta_0^{(1)},\btheta_0^{(2)}\}$):
  \begin{align}
    \|\by-f_n(\btheta^{(1)}_0)\|_2 & \le C_1\big(B+\alpha)\sqrt{n}\, \label{eq:LemmaLin1-2}\\
                         \|\by-f_n(\btheta^{(2)}_0)\|_2 & \le C_1B\sqrt{n}\, , \label{eq:LemmaLin2-2}\\ 
    \sigma_{\min}(\bD f_n(\btheta_0)) &\ge C_2\alpha \sqrt{d}\, , \label{eq:LemmaLin3-2}\\
    \sigma_{\max}(\bD f_n(\btheta_0)) &\le C_3\alpha \big(\sqrt{n}+\sqrt{d}\big)\, , \label{eq:LemmaLin4-2}\\
    \Lip(\bD f_n) & \le C_4\alpha\sqrt{\frac{d}{m}}\big(\sqrt{n}+\sqrt{d}\big)\, . \label{eq:LemmaLin5-2}
  \end{align}
  Further
  \begin{align}
    \|\bD f(\btheta_0)\|_{\op}& \le C_1'\alpha\, , \label{eq:LemmaLin6-2}\\
    \Lip(\bD f) & \le    C_4' \alpha\sqrt{\frac{d}{m}}\, . \label{eq:LemmaLin7-2}
    \end{align}
  \end{lemma}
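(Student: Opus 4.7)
The plan is to dispose of the seven bounds in roughly three tiers of difficulty, leaving the minimum singular value bound \eqref{eq:LemmaLin3-2} as the substantive step. Throughout I would work on the intersection of two standard high-probability events: $\|\bX\|_{\op}\le C(\sqrt n+\sqrt d)$ and $\max_{i\le n}\|\bx_i\|_2\le C\sqrt d$, both of which hold with probability $1-2e^{-n/C_0}$ for Gaussian $\bx_i$ by sub-Gaussian concentration. For \eqref{eq:LemmaLin1-2} and \eqref{eq:LemmaLin2-2}, note that $\|\by\|_2\le C B\sqrt n$ with very high probability by sub-Gaussianity of $y_i$, so it suffices to control $\|f_n(\btheta_0)\|_2$. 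For initialization $\btheta_0^{(2)}$ the pairing $b_{m/2+i}=-b_i$ with $\bw_{m/2+i}=\bw_i$ makes $f(\bx;\btheta_0^{(2)})\equiv 0$, giving \eqref{eq:LemmaLin2-2}. For $\btheta_0^{(1)}$, each $f(\bx_i;\btheta_0^{(1)})$ is an average of $m$ bounded i.i.d.\ terms scaled by $\alpha$, so Hoeffding/Bernstein plus a union bound over $i\le n$ yields $|f(\bx_i;\btheta_0^{(1)})|\le C\alpha$, hence \eqref{eq:LemmaLin1-2}.

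For the operator-norm bounds \eqref{eq:LemmaLin4-2} and \eqref{eq:LemmaLin6-2}, I would pass to the kernel matrix $\bK_0 = \bD f_n(\btheta_0)\bD f_n(\btheta_0)^{\sT}$. Writing $\bv^{\sT}\bK_0\bv = (\alpha^2/m)\sum_{j=1}^m \|\bX^{\sT}\bD_j\bv\|_2^2$ with $\bD_j=\diag(\sigma'(\<\bw_j,\bx_i\>))_{i\le n}$, and using $|\sigma'|\le C$ together with $\|\bX\|_{\op}\le C(\sqrt n+\sqrt d)$, gives $\|\bK_0\|_{\op}\le C\alpha^2(n+d)$, whence \eqref{eq:LemmaLin4-2}. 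The population version \eqref{eq:LemmaLin6-2} follows by the same manipulation with $\|\bX\|_{\op}$ replaced by the trivial bound $\|\bx\|_{L^2(\P)}\lesssim\sqrt d$ absorbed into the rescaling $\alpha/\sqrt m\cdot\sqrt m$. For the Lipschitz bounds \eqref{eq:LemmaLin5-2} and \eqref{eq:LemmaLin7-2}, I would expand $\sigma'(\<\bw_{1,j},\bx_i\>)-\sigma'(\<\bw_{2,j},\bx_i\>)$ using the bounded $\sigma''$ from Assumption~5.2 to bring in $|\<\bW_{1,j}-\bW_{2,j},\bx_i\>|$. Applied to $\bv=(\bv_j)_j\in\reals^{md}$ with $\|\bv\|_2=1$, Cauchy-Schwarz gives
\begin{align*}
\|(\bD f_n(\btheta_1)-\bD f_n(\btheta_2))\bv\|_2^2 &\le \frac{\alpha^2 C^2}{m}\max_{i\le n}\|\bV\bx_i\|_2^2\cdot\|\bM\|_F^2,
\end{align*}
where $M_{ij}=\<\bW_{1,j}-\bW_{2,j},\bx_i\>$ so $\|\bM\|_F^2 =\|\bX(\bW_1-\bW_2)^{\sT}\|_F^2\le\|\bX\|_{\op}^2\|\btheta_1-\btheta_2\|_2^2\lesssim(n+d)\|\btheta_1-\btheta_2\|_2^2$, and $\|\bV\bx_i\|_2^2\le\|\bx_i\|_2^2\le Cd$. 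This delivers the claimed $\alpha\sqrt{d/m}(\sqrt n+\sqrt d)$. Its population analogue \eqref{eq:LemmaLin7-2} follows identically with $n+d$ replaced by the trivial $d$.

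The heart of the lemma, and the main obstacle, is the lower bound \eqref{eq:LemmaLin3-2}. Here I would first compute the expected NTK kernel matrix $\overline{\bK}:=\E_{\bw}[\bK_0]$, whose $(i,i')$ entry equals $\alpha^2\<\bx_i,\bx_{i'}\> H(\<\bx_i,\bx_{i'}\>/\|\bx_i\|\|\bx_{i'}\|)$ for a function $H$ determined by the Hermite coefficients $\mu_\ell(\sigma')$. Under Assumption~5.2, the first $\ell_0$ Hermite coefficients of $\sigma$ (and hence of $\sigma'$) are non-zero, which by the arguments of \cite{mei2020generalization} and \cite{ghorbani2020linearized} implies $\lambda_{\min}(\overline{\bK})\ge c\alpha^2 d$ uniformly in $n\le d^{\ell_0}$. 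Second, I would use matrix Bernstein or a standard decoupling argument—applied to the centered sum $\bK_0-\overline{\bK}=(\alpha^2/m)\sum_{j=1}^m \bZ_j$ of $m$ independent rank-$n$ random matrices $\bZ_j$ with bounded operator norm $\lesssim \alpha^2 d$—to conclude that with probability $1-2e^{-n/C_0}$, $\|\bK_0-\overline{\bK}\|_{\op}\le c'\alpha^2 d\sqrt{n\log n/(md)}$, which is dominated by the lower bound on $\lambda_{\min}(\overline{\bK})$ precisely when $md\ge C_0 n\log n$. Combining, $\sigma_{\min}(\bD f_n(\btheta_0))^2=\lambda_{\min}(\bK_0)\ge C_2^2\alpha^2 d$, as claimed. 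The Hermite-based lower bound on $\overline{\bK}$ and the uniformity in $n\le d^{\ell_0}$ is where all the work—and essentially all the substance invoked from the literature—resides; the rest is bookkeeping.
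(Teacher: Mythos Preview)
Your overall plan matches the paper's proof closely: the paper also cites \cite{oymak2020towards} for \eqref{eq:LemmaLin3-2} and \eqref{eq:LemmaLin5-2}, uses the trace bound $\<\bM,\bX\bX^{\sT}\>\le\Trace(\bM)\|\bX\|_{\op}^2$ for \eqref{eq:LemmaLin4-2}, and handles \eqref{eq:LemmaLin3-2} by lower-bounding the expected NTK kernel (via Hermite non-degeneracy, giving $\lambda_{\min}(\bK)\ge c_0$) and then concentrating. So the strategy is right.

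Two places deserve care. First, for \eqref{eq:LemmaLin1-2} the terms $b_j\sigma(\<\bw_j,\bx_i\>)$ are not bounded under Assumption~5.2 (only $\sigma',\sigma''$ are), merely sub-Gaussian conditional on $\|\bx_i\|\le C\sqrt d$; and a union bound over $i$ gives probability $1-ne^{-cm}$, which need not be $1-e^{-n/C}$ when $m$ is small. The paper avoids both issues by controlling $F(\bW)=\|f_n(\btheta_0^{(1)})\|_2$ directly: $\E F^2\le Cn$, and $F$ is $C\|\bX\|_{\op}$-Lipschitz in $\bW$, so log-Sobolev concentration on $(\mathbb S^{d-1})^m$ yields the claim without a union bound.

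Second, and more substantively, your sketch for the population bound \eqref{eq:LemmaLin6-2} is not right as written. Invoking $\|\bx\|_{L^2(\P)}\lesssim\sqrt d$ in place of $\|\bX\|_{\op}$ would produce $\|\bD f(\btheta_0)\|_{\op}\le C\alpha\sqrt d$, which is too loose by $\sqrt d$, and nothing in the ``rescaling $\alpha/\sqrt m\cdot\sqrt m$'' cancels that factor. The paper gets the dimension-free bound by working with the adjoint and recognizing a projector: writing $\|\bD f(\btheta_0)^*h\|_2^2=\<\bQ_h,\bP\>$ where $\bP$ is the integral operator with kernel $\<\bx_1,\bx_2\>$ (which is the orthogonal projector onto linear functions in $L^2(\P)$, hence $\|\bP\|_{\op}=1$), one gets $\<\bQ_h,\bP\>\le\|\bP\|_{\op}\Trace(\bQ_h)\le C\alpha^2\|h\|_{L^2}^2$. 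Equivalently, in the forward direction, Cauchy--Schwarz over $j$ gives $|[\bD f(\btheta_0)\bv](\bx)|\le C\alpha\|\bV\bx\|_2$ and then $\E_\bx\|\bV\bx\|_2^2=\|\bv\|_2^2$ (not $d\|\bv\|_2^2$). The same caveat applies, more mildly, to your ``replace $n+d$ by $d$'' heuristic for \eqref{eq:LemmaLin7-2}: it is actually the $\max_i\|\bx_i\|^2\le Cd$ factor that must be replaced by an $L^2$ moment, and done correctly this yields an even stronger bound $C\alpha/\sqrt m$, so the lemma's stated $C\alpha\sqrt{d/m}$ certainly holds.
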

  	\addtocounter{theorem}{-1} 
  }
  \begin{proof}
    Since the $y_i$ are $B^2$ sub-Gaussian, we have $\|\by\|_2\le C_1B\sqrt{n}$ with the stated probability.
    Equation~\eqref{eq:LemmaLin2-2} follows since by construction $f_n(\btheta^{(2)}_0)=0$.

    For Eq.~\eqref{eq:LemmaLin1-2} we claim that $\|f_n(\btheta_0^{(1)})\|_2\le C_1\alpha\sqrt{n}$ with the claimed probability.
    To show this, it is sufficient of  course to consider $\alpha=1$. Let $F(\bX,\bW):= \|f_n(\btheta_0^{(1)})\|_2$, where
    $\bX\in\reals^{n\times d}$ contains as rows the vectors $\bx_i$, and $\bW$ the vectors $\bw_i$.
    We also write
    $\btheta_0^{(1)}=\btheta_0$ for simplicity. We
    have
    \begin{align}
      \E\{F(\bX,\bW)\}^2&\le \E\{\|f_n(\btheta_0)\|_2^2\} = n \E\{f(\bx_1;\btheta_0)^2\}\\
                        & = n\Var\{\sigma(\<\bw_1,\bx_1\>)\} \le Cn\, .
    \end{align}
    Next, proceeding as in the proof of \cite[Lemma 7]{oymak2020towards}
    (letting $\bb = (b_j)_{j\le m}$)
    \begin{align*}
      \big|F(\bX,\bW_1)-F(\bX,\bW_2)\big|&\le \frac{1}{\sqrt{m}}\big\|\sigma(\bX\bW_1^{\sT})\bb-\sigma(\bX\bW_2^{\sT})\bb\big\|_2\\
                                         &\le \big\|\sigma(\bX\bW_1^{\sT})-\sigma(\bX\bW_2^{\sT})\big\|_F\\
                                         &\le C \big\|\bX\bW_1^{\sT}-\bX\bW_2^{\sT}\big\|_F\\
                                         &\le C \|\bX\|_{\op}\big\|\bW_1^{\sT}-\bW_2^{\sT}\big\|_F\, .
    \end{align*}
    We  have $\|\bX\|_{\op}\le 2(\sqrt{n}+\sqrt{d})$ with the probability at least $1-2\exp\{-(n\vee d)/C)$ \cite{v-hdp-18}.
  On this event, $F(\bX,\, \cdot\,)$ is $2(\sqrt{n}+\sqrt{d})$-Lipschitz with respect to $\bW$. Recall that the uniform measure on
  the sphere of radius $\sqrt{d}$ satisfies a log-Sobolev inequality with $\Theta(1)$ constant, \cite[Chapter 5]{ledoux2001concentration},
  that the log-Sobolev constant for a product measure is the same as the worst constant of each of the terms.  
  We  then have
  \begin{align}
    \P\big(F(\bX,\bW)\ge \E F(\bX,\bW)+t) \le e^{-dt^2/C(n+d)}+2e^{-(n\vee d)/C} \, .
    \end{align}
    Taking $t=C_1\sqrt{n}$ for a sufficiently large constant $C_1$ implies that the right-hand
    side is at most $2\exp(-(n\vee d)/C)$, which proves the claim.

    Notice that all the following inequalities are homogeneous in
    $\alpha>0$. Hence, we will assume---without loss of
    generality---that $\alpha=1$.
    Equation    \eqref{eq:LemmaLin3-2} follows from \cite[Lemma 4]{oymak2020towards}. Indeed this lemma implies
    \begin{align}
      m\ge \frac{C(n+d)\log n}{d\lambda_{\min}(\bK)} \;\;\;\Rightarrow\;\;\;     \sigma_{\min}(\bD f_n(\btheta_0))\ge
      c_0\sqrt{d\lambda_{\min}(\bK)}\, ,
    \end{align}
    where $\bK$ is the empirical NT kernel
    \begin{align}
      \bK = \frac{1}{d}\E\big\{\bD f_n(\btheta_0)\bD f_n(\btheta_0)\big\} = \big(K_{\NT}(\bx_i,\bx_j)\big)_{i,j\le n}\, .
    \end{align}
    Under Assumption 5.2 
    (in particular $\sigma'$ having non-vanishing Hermite coefficients $\mu_{\ell}(\sigma)$
    for all $\ell\le \ell_0$), and $n\le d^{\ell_0}$, we have $\lambda_{\min}(\bK)\ge c_0$ with the stated probability,
    see for instance \cite{el2010spectrum}. This implies the claim.

    For Eq.~\eqref{eq:LemmaLin4-2}, note that, for any vector $\bv\in \reals^n$, $\|\bv\|_2=1$
    we have
    \begin{align}
      \big\|\bD f_n(\btheta_0)^{\sT}\bv\big\|_2^2&= \frac{1}{m}\sum_{i,j\le n}\sum_{\ell=1}^m
      v_i\sigma'(\<\bw_{\ell},\bx_i\>)   v_j\sigma'(\<\bw_{\ell},\bx_j\>)\<\bx_i,\bx_j\>\\
                                                 &=
                                                 \<\bM,\bX,\bX^{\sT}\>\,,\\
      M_{ij}&:=\frac{1}{m}\sum_{\ell=1}^m
      v_i\sigma'(\<\bw_{\ell},\bx_i\>)   v_j\sigma'(\<\bw_{\ell},\bx_j\>)\, .
    \end{align}
    Since $\bM\succeq 0$, we have
    \begin{align}
      \big\|\bD f_n(\btheta_0)^{\sT}\bv\big\|_2^2& \le \Trace(\bM)\|\bX\|_{\op}^2\\
                                                 &  =\frac{1}{m}\sum_{\ell=1}^m\sum_{i=1}^n  v^2_i\sigma'(\<\bw_{\ell},\bx_i\>)^2  \,\cdot\, \|\bX\|_{\op}^2\\
                                                 &\le B^2\|\bv\|^2_2\|\bX\|_{\op}^2\, .
    \end{align}
    Hence $\sigma_{\max}(\bD f_n(\btheta_0))\le B\|\bX\|_{\op}$ and
    the claim follows from standard estimates of
    operator norms of random matrices with independent entries.

    Equation \eqref{eq:LemmaLin5-2} follows from  \cite[Lemma 5]{oymak2020towards}, which yields
    (after adapting to the different normalization of the $\bx_i$, and using the fact that $\max_{i\le n}\|\bx_i\|_2\le C\sqrt{d}$
    with probability at least $1-2\exp(-d/C)$):
    \begin{align*}
      \big\|\bD f_n(\btheta_1)-\bD f_n(\btheta_2)\big\|_{\op}\le C\sqrt{\frac{d}{m}} \|\bX\|_{\op} \|\btheta_1-\btheta_2\|_2\, .
      \end{align*}
      (Here $\|\btheta_1-\btheta_2\|_2=\|\bW_1-\bW_2\|_F$, where $\bW_i\in\reals^{m\times d}$ is the matrix
      whose rows are the weight vectors.) The claim follows once more by using  $\|\bX\|_{\op}\le 2(\sqrt{n}+\sqrt{d})$ with
      probability at least $1-2\exp\{-(n\vee d)/C)$.

      In order to prove Eq.~\eqref{eq:LemmaLin6-2}, note that, for $h\in L^2(\reals^d,\P)$,
      \begin{align}
        \big\|\bD f(\btheta_0)^* h\big\|_2&= \E\{Q_h(\bx_1,\bx_2)\,P(\bx_1,\bx_2)\}\, ,\\
        Q_h(\bx_1,\bx_2) & :=\frac{1}{m}\sum_{\ell=1}^m\sigma'(\<\bw_{\ell},\bx_1\>) h(\bx_1)
                         \sigma'(\<\bw_{\ell},\bx_2\>) h(\bx_2)\, ,\\
        P(\bx_1,\bx_2)& := \<\bx_1,\bx_2\>\, .
      \end{align}
      Here expectation is with respect to independent random vectors $\bx_1,\bx_2\sim\P$. Denote
      by $\bQ_h$ and $\bP$ the integral operators in  $L^2(\reals^d,\P)$ with kernels $Q_h$ and $P$.
      It is easy to see that $\bP$ is the projector onto the subspace of linear functions, and $\bQ_h$
      is positive semidefinite. Therefore
      \begin{align}
        \big\|\bD f(\btheta_0)^* h\big\|_2&\le \Trace(\bQ_h) =
        \frac{1}{m}\sum_{\ell=1}^m\E\big\{\sigma'(\<\bw_{\ell},\bx_1\>)^2 h(\bx_1)^2\big\}\\
        &\le B^2\|h\|_{L^2}^2\, .
      \end{align}
      This implies $\|\bD f(\btheta_0)\|_{\op}\le B$.

      In order to prove Eq.~\eqref{eq:LemmaLin7-2}, define
      $\Delta_{\ell}(\bx):=\sigma'(\<\bw_{1,\ell},\bx\>)-\sigma'(\<\bw_{2,\ell},\bx\>)$. 
 Let $h\in L^2(\reals^d,\P)$ and note that
      \begin{align}
        \big\|\bD f(\btheta_0)^*h\big\|_2^2
        &= \frac{1}{m}\sum_{\ell=1}^m  \Big\|\E\big\{\bx h(\bx) \Delta_{\ell}(\bx)\big\}\Big\|_2^2\\
        &\le \frac{1}{m}\sum_{\ell=1}^m  \E\big\{\|\bx\|\,  |h(\bx) \Delta_{\ell}(\bx)|\big\}^2\\
        &\le  \frac{1}{m}\sum_{\ell=1}^m  \E\big\{\|\bx\|^2\,  \Delta_{\ell}(\bx)^2\big\} \, \|h\|_{L^2}\, .
      \end{align}
      Note that $|\Delta_{\ell}(\bx)|\le B\, |\<\bw_{1,\ell}-\bw_{2,\ell},\bx\>|$. Using this and the last expression above, we get
      \begin{align}
        \big\|\bD f(\btheta_0)\big\|_{\op}^2&\le  \frac{B^2}{m}\sum_{\ell=1}^m  \E\big\{\|\bx\|^2\,  \<\bx, \bw_{1,\ell}-\bw_{2,\ell}\>^2\big\}\\
        &\le \frac{B^2}{m}(d+2) \sum_{\ell=1}^m  \|\bw_{1,\ell}-\bw_{2,\ell}\|_2^2 = \frac{B^2}{m}(d+2) \|\bW_1-\bW_2\|_F^2\, ,
      \end{align}
      where the second inequality follows from the Gaussian identity $\E\{\|\bx\|^2\bx\bx^{\sT}\} = (d+2)\id_d$.
      This proves Eq.~\eqref{eq:LemmaLin7-2}.
      \end{proof}

	  {	
	  \renewcommand{\thetheorem}{\ref{thm:Two-Layers-Linear}}
      \begin{theorem} 
         Consider the two layer neural network of \eqref{eq:Two-Layers-Net-2}
  under the assumptions of Lemma~\ref{lemma:TwoLayers}.
  Further let $\oalpha :=\alpha/(1+\alpha)$ for initialization $\btheta_0=\btheta_0^{(1)}$ and
  $\oalpha :=\alpha$ for $\btheta_0=\btheta_0^{(2)}$.
  Then there exist constants $C_i$,
  depending uniquely on $\sigma$, such that
 if $md\ge C_0 n\log n$, $d\le n\le d^{\ell_0}$ and
  \begin{align}
    \oalpha\ge C_0 \sqrt{\frac{n^2}{md}}\,
    ,\label{eq:ConditionLinearization2-2}
  \end{align}
  then,
  with probability at least $1-2\exp\{-n/C_0\}$,
  the following hold for all $t\ge 0$.
  \begin{enumerate}
  \item Gradient flow converges exponentially fast to a global
  minimizer. Specifically, letting
    $\lambda_* = C_1\alpha^2d/n$, we have
    \begin{align}
      \hRisk(\btheta_t)\le \hRisk(\btheta_0) \, e^{-\lambda_* t}\, .\label{eq:TrainingConvergence-2}
    \end{align}
  \item The model constructed by gradient flow and linearized flow are similar on test data,
    namely
    \begin{align}
      \|f(\btheta_t) -f_{\slin}(\obtheta_t)\|_{L^2(\P)}\le C_1\left\{\frac{\alpha}{\oalpha^2}\sqrt{\frac{n^2}{md}} +\frac{1}{\oalpha^2}
      \sqrt{\frac{n^5}{md^4}}\right\}\, .\label{eq:ModelDistanceLin-2}
      \end{align}
    \end{enumerate}
  \end{theorem}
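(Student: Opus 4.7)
The plan is to derive Theorem~\ref{thm:Two-Layers-Linear} as a direct consequence of Theorem~\ref{thm:Linearization}, with Lemma~\ref{lemma:TwoLayers} supplying all the quantitative ingredients. All bounds in Lemma~\ref{lemma:TwoLayers} hold on a single event of probability at least $1-2\exp\{-n/C_0\}$, so after a union bound the remainder of the argument is entirely deterministic. The first (and conceptually most important) preprocessing step is to consolidate the two initializations into one. Combining \eqref{eq:LemmaLin1-2} and \eqref{eq:LemmaLin2-2}, and using the identities $1+\alpha=\alpha/\oalpha$ for $\btheta_0^{(1)}$ and $1=\alpha/\oalpha$ for $\btheta_0^{(2)}$ (with the sub-Gaussian constant $B$ absorbed into $C_0$), I would write
\[
\|\by-f_n(\btheta_0)\|_2 \,\le\, C\,\frac{\alpha}{\oalpha}\sqrt{n}
\]
uniformly in the two cases. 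This step is exactly where the bifurcation into the two initializations gets hidden inside a single scalar parameter.

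Next I would verify the linearization hypothesis \eqref{eq:ConditionLinearization-2}. From \eqref{eq:LemmaLin3-2} and \eqref{eq:LemmaLin5-2}, together with $d\le n$ (so that $\sqrt{n}+\sqrt{d}\asymp\sqrt{n}$), one has $\sigma_{\min}^2\gtrsim \alpha^2 d$ and $\Lip(\bD f_n)\lesssim \alpha\sqrt{nd/m}$, so
\[
\Lip(\bD f_n)\,\|\by-f_n(\btheta_0)\|_2 \,\lesssim\, \frac{\alpha^2}{\oalpha}\sqrt{\frac{n^2 d}{m}},
\]
which is bounded by $\sigma_{\min}^2/4$ exactly when $\oalpha\gtrsim \sqrt{n^2/(md)}$. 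Hence assumption \eqref{eq:ConditionLinearization2-2}, with $C_0$ large enough, is equivalent to the hypothesis of Theorem~\ref{thm:Linearization}. Its conclusion \eqref{eq:R-Linearization-2} then gives \eqref{eq:TrainingConvergence-2} immediately with rate $\lambda_0=\sigma_{\min}^2/(2n)\gtrsim \alpha^2 d/n$.

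For \eqref{eq:ModelDistanceLin-2} I would substitute the ingredients of Lemma~\ref{lemma:TwoLayers} into \eqref{eq:ModelDeviationLinearized-2}. The first term, $\Lip(\bD f)\,\|\by-f_n(\btheta_0)\|_2^2/\sigma_{\min}^2$, produces, upon using $\Lip(\bD f)\lesssim \alpha\sqrt{d/m}$, the contribution $(\alpha/\oalpha^2)\sqrt{n^2/(md)}$. The second term, $\|\bD f(\btheta_0)\|_{\op}\,L_n\sigma_{\max}^2/\sigma_{\min}^5\cdot \|\by-f_n(\btheta_0)\|_2^2$, upon plugging in $\|\bD f(\btheta_0)\|_{\op}\lesssim \alpha$, $\sigma_{\max}^2\lesssim \alpha^2 n$, and $\sigma_{\min}^5\gtrsim \alpha^5 d^{5/2}$, produces a contribution of the form $(1/\oalpha^2)\sqrt{n^5/(md^4)}$; here it is essential to take the second branch of the minimum in \eqref{eq:Coupling-2}, since in the overparametrized regime of interest this is the branch that gives the correct scaling in $n,m,d$ and avoids picking up an extra factor of $n/d$. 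Summing the two contributions gives \eqref{eq:ModelDistanceLin-2}.

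The main obstacle is not conceptual but careful bookkeeping: one must track through every inequality how the non-zero initial output at $\btheta_0^{(1)}$ and the vanishing initial output at $\btheta_0^{(2)}$ both feed into the unified parameter $\oalpha$, and how the various powers of $\alpha$ from $\sigma_{\min}$, $\sigma_{\max}$, $\Lip(\bD f_n)$, and $\|\bD f(\btheta_0)\|_{\op}$ combine with the $(\alpha/\oalpha)^2 n$ scaling of $\|\by-f_n(\btheta_0)\|_2^2$. A secondary point is that \eqref{eq:LemmaLin5-2} and \eqref{eq:LemmaLin7-2} provide global Lipschitz bounds for the Jacobian, so no localization of the gradient flow trajectory beyond that already supplied by Theorem~\ref{thm:Linearization} is needed.
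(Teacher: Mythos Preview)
Your proposal is correct and follows essentially the same route as the paper's proof: verify the hypothesis \eqref{eq:ConditionLinearization-2} of Theorem~\ref{thm:Linearization} by substituting the estimates of Lemma~\ref{lemma:TwoLayers} (which is exactly how \eqref{eq:ConditionLinearization2-2} arises), and then read off both conclusions from points~1 and~3 of that theorem. The paper's own proof is in fact terser than yours---it simply says the results ``follow from Theorem~\ref{thm:Linearization} using the estimates in Lemma~\ref{lemma:TwoLayers}''---so your more detailed bookkeeping of the powers of $\alpha$, $\oalpha$, $n$, $m$, $d$ is a welcome elaboration rather than a deviation.
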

  	\addtocounter{theorem}{-1} 
  }
\begin{proof}
  Throughout the proof, we use $C$ to denote constants depending only on $\sigma$, that might change from line to line.
  Using Lemma \ref{lemma:TwoLayers}, the condition \eqref{eq:ConditionLinearization-2} reads
  \begin{align}
    \alpha\sqrt{\frac{dn}{m}}\cdot\frac{\alpha}{\oalpha}\sqrt{n}\le C \big(\alpha\sqrt{d}\big)^2\, .
  \end{align}
  which is equivalent to Eq.~\eqref{eq:ConditionLinearization2-2}. We can therefore apply Theorem \ref{thm:Linearization}.

  Equation \eqref{eq:TrainingConvergence-2} follows from Theorem \ref{thm:Linearization}, point 1,
  using the lower bound on $\sigma_{\min}$ given in Eq.~\eqref{eq:LemmaLin3-2}.

  Equation \eqref{eq:ModelDistanceLin-2} follows from Theorem \ref{thm:Linearization}, point 3, using the
   estimates in Lemma \ref{lemma:TwoLayers}.
 \end{proof}

\end{document}